\DeclareMathOperator*{\argmin}{arg\,min}
\g@addto@macro{\endabstract}{\@setabstract}
\newcommand{\authorfootnotes}{\renewcommand\thefootnote{\@fnsymbol\c@footnote}}%
\newtheorem{prop}{Proposition}[section]
\newtheorem{lemma}[prop]{Lemma}
\newtheorem{theorem}[prop]{Theorem}
\newtheorem{defn}[prop]{Definition}
\newtheorem{corollary}[prop]{Corollary}
\newtheorem{remark}[prop]{Remark}
\newtheorem{example}[prop]{Example}
\newcommand{\BX}{{\bf X}}
\def\Var{{\rm Var}}
\def\Cov{{\rm Cov}\,}
\def\E{{\rm\mathbb{E}}}
\def\det{{\rm det}\,}
\def\sign{{\rm sign}\,}
\def\diag{{\rm diag}\,}
\def\sp{{\rm span}\,}
\def\Rank{{\rm Rank}\,}
\def\Matri{{\rm Matri}\,}
\title{Local behavior of critical points of isotropic\\ Gaussian random fields}
\author{Paul Marriott, Weinan Qi, Yi Shen}
\date{}
\begin{document}


\maketitle

\authorfootnotes
 Department of Statistics and Actuarial Science, University of Waterloo. Waterloo, ON N2L 3G1, Canada.\par\bigskip
\thanks{Email: pmarriott@uwaterloo.ca, w7qi@uwaterloo.ca, yi.shen@uwaterloo.ca}\par
\thanks{This work is supported by NSERC grant 2020-04356.}

\abstract
In this paper we examine isotropic Gaussian random fields defined on $\mathbb R^N$ satisfying certain conditions. Specifically, we investigate the type of a critical point situated within a small vicinity of another critical point, with both points surpassing a given threshold. It is shown that the Hessian of the random field at such a critical point is equally likely to have a positive or negative determinant. Furthermore, as the threshold tends to infinity, almost all the critical points above the threshold are local maxima and the saddle points with index $N-1$. Consequently, we conclude that the closely paired critical points above a high threshold must comprise one local maximum and one saddle point with index $N-1$.

\section{Introduction}
Let $X=\{X(\bm{t}),\bm{t}\in\mathbb{R}^N\}$ be an isotropic Gaussian random field defined on $R^N, N=1,2,...$. The critical points of $X$ are the points at which its gradient vanishes. They are naturally classified into different types such as local maxima, saddle points of different kinds, and local minima. This paper explores the local interactions between critical points for which the values of $X$ exceed a threshold $u$. More specifically, we pose and address the following question:\\

\textit{When two critical points surpassing $u$ are situated in close proximity to one another, what types can they belong to?}\\

The motivation for studying the critical points of Gaussian random fields largely stems from random topology, an emerging area of probability that deals with the topological features of random objects, such as random sets or random graphs. Its application in statistics is known as topological data analysis, and has also garnered significant research interest. For an overview of this field, readers can refer to \cite{adl10} or \cite{was}. An important mathematical tool employed in these areas is Morse theory, which suggests that topological information related to the homology groups of a set can be derived from the critical points of a function defined on that set, provided that the function meets certain non-degeneracy conditions \cite{mor}. As a result, understanding the behavior of critical points, such as their locations and heights, proves highly beneficial.

It has long been believed that for a stationary Gaussian random field satisfying some mild conditions, the locations of critical points exceeding $u$ will converge to a (homogeneous) Poisson point process as $u$ tends to infinity \cite{ald}. Putting its proof aside, another issue when applying this so-called Poisson clumping heuristic is that in real-world scenarios, it is not feasible for $u$ to truly tend to infinity. Consequently, it is often necessary to consider the deviation from the Poisson limit, which is due to the interactions between the critical points, as Poisson limit corresponds to the independent case. For example, if the covariance is positive and a critical point above a high threshold already exists at a particular location, the value of the random field will be elevated around that point, conditional on this event. This could intuitively increase the chance of a point in the affected area to be a critical point exceeding the threshold. This is what pushes us to understand the behavior of the critical points in close vicinity of another critical point surpassing a threshold.

The remaining sections of this paper are structured as follows. In Chapter \ref{proj2:sec:bs} we introduce the basic settings and notations. Chapter \ref{proj2:sec:cs} consists of a detailed analysis of the covariance of the random field and its second derivatives. Using these findings, Chapter \ref{proj2:sec:mr1} explores the behavior of two critical points when their distance approaches 0, and shows that in this case, the determinants of their Hessian matrices must have opposite signs. Chapter \ref{proj2:sec:mr2} further reveals that as the threshold $u$ tends to infinity, only the local maxima and the saddle points with index $N-1$ will remain. Therefore, a pair of closely situated critical points of a Gaussian random field above a high threshold predominantly comprises one local maximum and one saddle point with index $N-1$.

\section{Basic Settings and Notations}\label{proj2:sec:bs}

Let $\mathbb{R}^N$ be the $N-$dimensional Euclidean space, and endow it with the usual Euclidean norm $\|\cdot\|_N$, or simply $\|\cdot\|$ when the dimension is clear. Let $\lambda^n$ be the $n$-dimensional Lebesgue measure. We use $\partial$ and $\bar{\quad}$ for the boundary and the closure of a set, respectively.

In the following sections, we will make heavy calculation of matrices and their sub-matrices.
Let $\mathbb{N}:=\{0,1,\dots\}$ and
$\mathbb{Z}^+:=\{1,2,\dots\}$.
For any matrix $\bm{M}\in\mathbb{R}^{m\times n}$ ($m,n\in\mathbb{Z}^+$), denote by $\bm{M}_{(i)}$ its $i$-th row and by $\bm{M}^{(j)}$ its $j$-th column for $i=1,\dots,m$ and $j=1,\dots,n$.
For any $a,b\in\mathbb{Z}^+$ such that $a\le b$, denote
by $a:b$ the set of all integers in $[a,b]$. 
For any $S_1\subset 1:m$ and $S_2\subset 1:n$, denote by
$\bm{M}[S_1,S_2]$ the sub-matrix of $\bm{M}$ formed by the entries with row indices in $S_1$ and column indices in $S_2$. For a real vector $\bm{a}\in \mathbb{R}^n$ (row or column), we use $\bm{a}[i]$ to denote its $i$-th coordinate for $i=1,\dots,n$. Denote by $\bm{I}_n$ the identity matrix of size $n$,  by $\bm{0}_{n}$ and $\bm{0}_{m\times n}$ the zero vector and zero matrix, respectively. All vectors are column vectors by default. Given a vector space $V$ over $\mathbb{R}$, for any $S\subset V$, denote by $\sp(S)$ the linear span of $S$, i.e., 
$$\sp(S):=\left\{\sum_{i=1}^n k_i\bm{v}_i\Bigg|n\in\mathbb{N},k_i\in\mathbb{R},\bm{v}_i\in S\right\}.$$

For any two times differentiable function $f:\mathbb{R}^N\to\mathbb{R}$, 
let $\triangledown f$ and $\triangledown^2 f$ be the gradient and the Hessian matrix of $f$, respectively. 
A point $\bm{t}\in\mathbb{R}^N$ is said to be a critical point of $f$ if $\triangledown f(\bm{t})=\bm{0}$.
The index or type of a critical point $\bm{t}$ is defined to be the number of negative eigenvalues (counted with their multiplicities) of $\triangledown^2 f(\bm{t})$. For example, a local maximum is typically a critical point with index $N$. 

For any two non-negative real-valued functions $h(x)$ and $g(x)$, $x\in\mathbb
R$, we write
$$h(x)=\Theta(g(x))\text{ as }x\to\infty$$
if there exist positive constants $k_1$, $k_2$ and $C$ such that for any $x>C$,
$$k_1g(x)\le h(x)\le k_2g(x).$$

For any symmetric matrix $\bm{M}=(m_{ij})\in\mathbb{R}^{N\times N}$, a vector $\bm{a}\in\mathbb{R}^{N(N+1)/2}$  is said to be the usual vectorization of $\bm{M}$ if 
$$\bm{a}[i+j(j-1)/2]=m_{ij}\text{ for any $1\le i\le j\le N$.}$$

\begin{defn}(Matriculation) \label{Def:matri}
A matrix $\bm{M}\in\mathbb{R}^{N\times N}$ is said to be the $N$-th order matriculation of a vector $\bm{a}\in\mathbb{R}^{n}$ ($n\ge N(N+1)/2$), written as 
$\bm{M}=\Matri_N(\bm{a})$, if $\bm{a}[1:(N(N+1)/2)]$ is the usual vectorization of $\bm{M}$.
\end{defn}
Note that 
different vectors, even with different lengths, can share the same $N$-th order matriculation since only the first $N(N+1)/2$ coordinates of them are considered.

For completeness, we include the following result, which gives the relation between the covariance of the partial derivatives of a Gaussian random field and its covariance function.

\begin{lemma}(Section 5.5, \cite{adl})\label{Lem:MSD} 
Let $\{X(\bm{t})$, $\bm{t}\in\mathbb{R}^N\}$ be a Gaussian random field with covariance function $r(\bm{s},\bm{t}):=\Cov[X(\bm{s}),X(\bm{t})]$, where $\bm{s}=(s_1,\dots,s_N)^T$ and $\bm{t}=(t_1,\dots,t_N)^T$.

\begin{enumerate}[label=(\roman*)]
    \item For any positive integers $k$ and $i_1,\dots,i_k\in\{1,2,\dots, N\}$, the $k$ times mean square derivatives $X_{i_1\dots i_k}(\bm{t})$ exists if and only if the derivative $\partial^{2k}r(\bm{s},\bm{t})/\partial s_{i_1}\cdots \partial s_{i_k} \partial t_{i_1}\cdots \partial t_{i_k}$ exists and is finite at the point $(\bm{t},\bm{t})\in\mathbb{R}^{2N}$. 
    
    \item For some positive integer $k$, suppose that the derivative $\partial^{2k}r(\bm{s},\bm{t})/\partial s_{i_1}\cdots \partial s_{i_k} \partial t_{j_1}\cdots \partial t_{j_k}$ exists and is finite for any $\bm{s},\bm{t}\in\mathbb{R}^N$ and $i_1,\dots,i_k,j_1,\dots,j_k\in\{1,2,\dots, N\}$. Then we have for any $0\le k_1,k_2\le k$ and $\bm{s},\bm{t}\in\mathbb{R}^N$,
    \begin{equation}\label{eq:covmean}
    \Cov\left[X_{i_1\dots i_{k_1}}(\bm{s}),X_{j_1\dots j_{k_2}}(\bm{t})\right]=\frac{\partial^{k_1+k_2}r(\bm{s},\bm{t})}{\partial s_{i_1}\cdots \partial s_{i_{k_1}} \partial t_{j_1}\cdots \partial t_{j_{k_2}}}. 
    \end{equation}
    
\end{enumerate}
\end{lemma}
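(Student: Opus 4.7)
My plan is to prove both parts by induction on the order of differentiation, exploiting two facts: mean square derivatives are defined as $L^2$ limits of iterated difference quotients, and $\Cov[\cdot,\cdot]$ is a continuous bilinear form on $L^2(\Omega)$, so limits can be exchanged with covariance. Part (ii) is the workhorse, while part (i) will follow from a careful Cauchy sequence analysis of the defining difference quotients.

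\textbf{Covariance formula (ii).} I would start with the base case $k_1=k_2=0$, which is just the definition of $r$. For the inductive step, assume \eqref{eq:covmean} holds for all orders $(k_1',k_2')$ with $k_1'+k_2'<k_1+k_2$, and (WLOG) that $k_1\ge 1$. Writing
\[
X_{i_1\dots i_{k_1}}(\bm s)=L^2\text{-}\lim_{h\to 0}\frac{X_{i_1\dots i_{k_1-1}}(\bm s+h\bm e_{i_{k_1}})-X_{i_1\dots i_{k_1-1}}(\bm s)}{h},
\]
and using bilinearity and continuity of covariance in $L^2(\Omega)$, I pull the limit outside to get
\[
\Cov\bigl[X_{i_1\dots i_{k_1}}(\bm s),X_{j_1\dots j_{k_2}}(\bm t)\bigr]=\lim_{h\to 0}\frac{C(\bm s+h\bm e_{i_{k_1}},\bm t)-C(\bm s,\bm t)}{h},
\]
where $C(\bm s,\bm t)=\Cov[X_{i_1\dots i_{k_1-1}}(\bm s),X_{j_1\dots j_{k_2}}(\bm t)]$. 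By the inductive hypothesis, $C$ equals a specific mixed partial of $r$, and the right-hand side is precisely $\partial C/\partial s_{i_{k_1}}$, finishing the step.

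\textbf{Existence (i).} The \emph{only if} direction follows by iterating part (ii): if $X_{i_1\dots i_k}(\bm t)$ exists, then $\Cov[X_{i_1\dots i_k}(\bm s),X_{i_1\dots i_k}(\bm t)]$ is well-defined and equals $\partial^{2k}r(\bm s,\bm t)/\partial s_{i_1}\cdots\partial s_{i_k}\partial t_{i_1}\cdots\partial t_{i_k}$, finite at $(\bm t,\bm t)$. For the \emph{if} direction, it suffices to verify that the iterated difference quotients $Q_h$ defining $X_{i_1\dots i_k}(\bm t)$ form a Cauchy net in $L^2$. A direct expansion writes $\E\bigl[(Q_h-Q_{h'})^2\bigr]$ as a fixed linear combination of evaluations of $r$ on a $2k$-fold grid of points tending to $(\bm t,\bm t)$, and this expression converges to $0$ precisely when the relevant mixed partial exists and is finite at $(\bm t,\bm t)$. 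The limiting random variable is automatically Gaussian, as Gaussianity is preserved under $L^2$ limits.

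\textbf{Main obstacle.} The chief technical nuisance is the bookkeeping in the \emph{if} direction of part (i): $Q_h$ is a signed sum of $2^k$ values of $X$, so $\E[(Q_h-Q_{h'})^2]$ expands into roughly $4^k$ covariance terms. These must be organized into a symmetric iterated difference quotient of $r$, and one needs the elementary fact that the existence and finiteness of a $2k$-th order mixed partial at a single point $(\bm t,\bm t)$ is equivalent to convergence of this symmetric difference quotient as all $2k$ step sizes go to zero independently. Once that combinatorial identification is in place, Cauchy completeness of $L^2$ and standard stability of Gaussian laws close the argument.
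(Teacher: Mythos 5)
The paper does not prove this lemma; it is quoted directly from Section 5.5 of Adler and Taylor \cite{adl}, so there is no in-paper proof to compare against, and your proposal is judged as a reconstruction of the underlying argument. The route you take — Lo\`eve's mean-square differentiability criterion combined with induction on the total order and continuity of the $L^2$ inner product — is indeed the standard one, and part (ii) as you sketch it is essentially correct.

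There is, however, a genuine gap at the step you flag as the ``main obstacle.'' You assert as an ``elementary fact'' that existence and finiteness of the ordinary $2k$-th order mixed partial of $r$ at the single point $(\bm t,\bm t)$ is equivalent to convergence of the symmetric iterated difference quotient with all $2k$ step sizes going to zero independently. This is not true as stated: what is actually equivalent to $L^2$-Cauchyness of the defining quotients is the existence of the \emph{generalized} mixed derivative, i.e.\ the symmetric difference-quotient limit itself, and this is strictly stronger than pointwise existence of the ordinary mixed partial. That distinction is precisely why Lo\`eve's criterion, and the Adler--Taylor treatment you are reconstructing, are phrased in terms of the generalized second-order derivative. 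Without substituting the correct notion, the ``if'' half of (i) does not close. (It does close under the additional hypothesis that the partials of $r$ exist and are continuous in a neighbourhood of $(\bm t,\bm t)$, which is the regime in which the paper actually applies the lemma, but the lemma as stated does not assume that.) A smaller point: your ``only if'' direction of (i) appeals to (ii), whose hypothesis already posits that the relevant partial derivative of $r$ exists, so as written the argument is circular. The fix is to note that the inductive step in your proof of (ii) actually \emph{produces} existence of the partial as a conclusion — the difference quotient of the lower-order partial of $r$ converges because it equals a covariance of $L^2$ random variables — and to phrase the induction so that this existence is derived rather than assumed.
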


\begin{defn}\label{Def:qualified}
An isotropic Gaussian random field $X(\bm{t})$, $\bm{t}=(t_1,\dots,t_N)^T\in\mathbb{R}^N$ ($N\ge 2$) with covariance structure $$\rho(\|\bm{t}-\bm{s}\|^2)\equiv R(\bm{t}-\bm{s}):=\Cov[X(\bm{s}),X(\bm{t})]$$
is said to be \textbf{qualified} if the following conditions are satisfied:
\begin{enumerate}[label=(\arabic*)]
    \item $X$ is centered with unit variance, i.e., $\E[X(\bm{0})]=0$ and $\rho(0)=1$.
    \item $X$ has almost surely partial derivatives of up to second order.
    
    
    \item The sixth derivative of $\rho(x)$ at $x=0$ exists, which implies there exists a constant $\delta_{\rho}>0$ such that
   the fifth order derivative of $\rho(x)$ exists and is bounded on $x\in [0,\delta_{\rho}^2]$.
    
    \item 
    The distribution of 
    $$(\triangledown^2 X(\bm{t}),\triangledown X(\bm{t}),X(\bm{t}),\triangledown X(\bm{0}),X(\bm{0}))$$
    is non-degenerate for any $\bm{t}\in\mathbb{R}^{N}\setminus\{\bm{0}_{N}\}$, and the distribution of $(X_{111}(\bm{0}),\triangledown X(\bm{0}))$ is also non-degenerate, where $X_{111}(\bm{0})$ is the third mean square derivative of $\BX$ at $\bm{0}$ along direction $t_1$.
 
\end{enumerate}
\end{defn}

The existence of $X_{111}(\bm{0})$ in the above definition is guaranteed by Condition (3) and Lemma \ref{Lem:MSD}. Also, note that we restrict the definition to dimensions $N\geq 2$. The case $N=1$ is slightly different and will be discussed separately.

\begin{remark}\label{Rem:Condition} 
As we will see later, Condition (3) is needed for the asymptotic expansion of the covariance matrix of $(\triangledown^2 X(\bm{t}),X(\bm{t}),X(\bm{0}))$ conditional on $\triangledown X(\bm{t})=\triangledown X(\bm{0})=\bm{0}_{N}$ as $\|\bm{t}\|\to 0$. Conditions (3) and (4) are also associated with the convergence speeds of the ordered eigenvalues of this covariance matrix as $\|\bm{t}\|\to 0$. 
\end{remark}



Indeed, the assumption that $X$ is qualified imposes some constraints on the derivatives of $\rho$. Let $\rho^{(i)}(x)$, $x\ge 0$ and $i\ge 1$ be the $i$-th derivative (if exists) of $\rho$ at $x$.
Firstly, by Condition (4) in Definition \ref{Def:qualified},
$X_1$, $X_{11}$ and $X_{111}$ are all non-degenerate. For any $i_1,\dots,i_6\in\{1,2,\dots,N\}$ and $\bm{t}\in\mathbb{R}^N$, straightforward calculation leads to
\begin{equation}\label{R1}
R_{i_1}(\bm{t})=2t_{i_1}\rho^{(1)}(\|\bm{t}\|^2) ,   
\end{equation}
\begin{equation}\label{R2}
R_{i_1i_2}(\bm{t})=2\rho^{(1)}(\|\bm{t}\|^2)\delta_{i_1,i_2}+4t_{i_1}t_{i_2}\rho^{(2)}(\|\bm{t}\|^2),
\end{equation} 
\begin{equation}\label{R3}
R_{i_1i_2i_3}(\bm{t})=4(t_{i_3}\delta_{i_1,i_2}+t_{i_1}\delta_{i_2,i_3}+t_{i_2}\delta_{i_1,i_3})\rho^{(2)}(\|\bm{t}\|^2)+8t_{i_1}t_{i_2}t_{i_3}\rho^{(3)}(\|\bm{t}\|^2).    
\end{equation}

More calculation shows
\begin{equation}\label{R4}
    \begin{aligned}
    &~~~~R_{i_1i_2i_3i_4}(\bm{t})\\
    &=4\left(\delta_{i_1,i_2}\delta_{i_3,i_4}+\delta_{i_2,i_3}\delta_{i_1,i_4}+\delta_{i_1,i_3}\delta_{i_2,i_4}\right)\rho^{(2)}(\|\bm{t}\|^2)\\
    &~~~~+8\left(t_{i_3}t_{i_4}\delta_{i_1,i_2}+t_{i_1}t_{i_4}\delta_{i_2,i_3}+t_{i_2}t_{i_4}\delta_{i_1,i_3}+t_{i_2}t_{i_3}\delta_{i_1,i_4}+t_{i_1}t_{i_3}\delta_{i_2,i_4}+t_{i_1}t_{i_2}\delta_{i_3,i_4}\right)\rho^{(3)}(\|\bm{t}\|^2)\\
    &~~~~+16t_{i_1}t_{i_2}t_{i_3}t_{i_4}\rho^{(4)}(\|\bm{t}\|^2),
    \end{aligned}    
\end{equation}    
and
\begin{equation}\label{R6}
R_{i_1i_2i_3i_4i_5i_6}(\bm{t})
=\rho^{(3)}(\|\bm{t}\|^2)\sum_{(p_1\dots,p_5)\in\Pi_5}\delta_{i_{p_1},i_{p_2}}\delta_{i_{p_3},i_{p_4}}\delta_{i_{p_5},i_6}+f(\bm{t}),
\end{equation}
where $\Pi_n$ is the set of all the permutations on $\{1,\dots, n\}$, and $f(\bm{t})$ is a real-valued function of $\bm{t}\in\mathbb{R}^N$ such that $f(\bm{0})=0$. 

Combining them with Lemma \ref{Lem:MSD}, we have
$$-2\rho^{(1)}(0)=\Var[X_1(\bm{0})],$$
$$12\rho^{(2)}(0)\\=\Var[X_{11}(\bm{0})],$$
$$-120\rho^{(3)}(0)=\Var[X_{111}(\bm{0})],$$
hence 
\begin{equation}\label{ineq:rho1rho2}
    \rho^{(1)}(0)<0, \rho^{(2)}(0)>0\text{ and }\rho^{(3)}(0)<0. 
\end{equation}

Secondly, let $\alpha=\rho^{(1)}(0)^{-1}\rho^{(2)}(0)^2$ and $\beta=\rho^{(3)}(0)$. Then by Condition (4) in Definition \ref{Def:qualified} and the Cauchy–Schwarz inequality,
\begin{equation}\label{eq:covIne}
(\Cov[X_{111}(\bm{0}),X_{1}(\bm{0})])^2< \Var[X_{111}(\bm{0})] \Var[X_{1}(\bm{0})].
\end{equation}
As a result of Lemma \ref{Lem:MSD}, the above inequality (\ref{eq:covIne}) is equivalent to
$$\left(12\rho^{(2)}(0)\right)^2<\left(-120\rho^{(3)}(0)\right)\left(-2\rho^{(1)}(0)\right).$$
Then by $\rho^{(1)}(0)<0$, we have
\begin{equation}\label{Rel:1830}
    \alpha>\frac{5}{3}\beta.    
\end{equation}

Moreover, for any $r>0$ and $\bm{t}\in\mathbb{R}^N$, let $B(\bm{t},r)$ be the $N$-dimensional open ball centered at $\bm{t}$ with radius $r$. We have for any $1\le i\le N$ and $\bm{t}\in B(\bm{0}_N,\delta_{\rho})$ ($\delta_{\rho}$ as in Definition \ref{Def:qualified}),
$$
\begin{aligned}
\left|\Cov[X_i(\bm{t}),X_i(\bm{0})]\right|\le \sqrt{\Var[X_i(\bm{t})]\Var[X_i(\bm{0})]}.
\end{aligned}
$$
By Lemma \ref{Lem:MSD} and (\ref{R2}), this is equivalent to
$$
\left|-\rho^{(1)}(\|\bm{t}\|^2)-2t_i^2\rho^{(2)}(\|\bm{t}\|^2)\right|\le -\rho^{(1)}(0).
$$
Note that the equal sign in the above inequality holds if and only if $X_i(\bm{t})$ and $X_i(\bm{0})$ are linearly dependent, which is impossible for any $\bm{t}\in B(\bm{0}_N,\delta_{\rho})\setminus{\{\bm{0}_N\}}$ by Condition (4) in Definition \ref{Def:qualified}. Thus, for any $1\le i\le N$ and $\bm{t}\in B(\bm{0}_N,\delta_{\rho})\setminus{\{\bm{0}_N\}}$,
\begin{equation}\label{rho1ineq}
    \left|-\rho^{(1)}(\|\bm{t}\|^2)-2t_i^2\rho^{(2)}(\|\bm{t}\|^2)\right|< -\rho^{(1)}(0).    
\end{equation}
Then for any $x\in (0,\delta_{\rho}^2]$, by taking $\bm{t}:=(0,\dots,0,\sqrt{x})\in \mathbb{R}^N$, $N\geq 2$ and $i=1$ in (\ref{rho1ineq}), we have
\begin{equation}\label{absrho1}
    \left|\rho^{(1)}(x)\right|<-\rho^{(1)}(0).    
\end{equation}
 
Finally, by Proposition 3.3 in \cite{che}, we see
\begin{equation}\label{rho1rho2}
   1-\frac{\rho^{(1)}(0)^2}{3\rho^{(2)}(0)}>1-\frac{N+2}{3N}\ge 0. 
\end{equation}

Let $X$ be qualified and $L:=N(N+1)/2+2$. For any $\bm{t}\in\mathbb{R}^{N}\setminus\{\bm{0}_{N}\}$, by Lemma \ref{Lem:MSD} and the properties of Gaussian distributions, 
$(\triangledown^2 X(\bm{t}),X(\bm{t}),X(\bm{0}))$ conditional on $\triangledown X(\bm{t})=\triangledown X(\bm{0})=\bm{0}_{N}$ is a Gaussian $L$-vector. Let $\bm{\Sigma}(\bm{t})$ be its covariance matrix.
By Condition (4) in Definition \ref{Def:qualified}, $\bm{\Sigma}(\bm{t})$ is positive-definite for any $\bm{t}\in\mathbb{R}^{N}\setminus\{\bm{0}_{N}\}$.
Let
\begin{equation}\label{def:order}
\lambda_1(\bm{t})\ge\lambda_2(\bm{t})\ge\dots\ge\lambda_L(\bm{t})>0    
\end{equation}
be the ordered eigenvalues of $\bm{\Sigma}(\bm{t})$. 
Then an
eigen-decomposition of $\bm{\Sigma}(\bm{t})$ is given by
$$\bm{\Sigma}(\bm{t})=\bm{P}(\bm{t})\bm{\Lambda}(\bm{t})\bm{P}^T(\bm{t}),$$
where $\bm{P}(\bm{t})$ is an $L\times L$ orthogonal real matrix and $\bm{\Lambda}(\bm{t}):=\diag(\lambda_1(\bm{t}),\dots,\lambda_L(\bm{t}))$. 
One should note that $\bm{P}(\bm{t})$ in the above eigen-decomposition may be non-unique since the ordering in (\ref{def:order}) is not strict.

For $\bm{t}\in\mathbb{R}^{N}\setminus\{\bm{0}_{N}\}$, denote
$$\bm{A}(\bm{t}):=\bm{P}(\bm{t})\bm{\Lambda}^{1/2}(\bm{t})=\bm{P}(\bm{t})\diag\left(\lambda_1^{1/2}(\bm{t}),\dots,\lambda_L^{1/2}(\bm{t})\right).$$
Immediately, we have
$$\bm{\Sigma}(\bm{t})=\bm{A}(\bm{t})\bm{A}^T(\bm{t}),$$
and $\bm{A}(\bm{t})$ can be uniquely determined by a version of $\bm{P}(\bm{t})$.

Denote by $\mathbb{S}^{N-1}$ the unit $(N-1)$-sphere. Since $X$ is isotropic, we will focus on the behavior of $\bm{\Sigma}(r\bm{u})$ along a given direction $\bm{u}=(u_1,\dots,u_N)^T\in\mathbb{S}^{N-1}$ for $r> 0$. Thus, it would be more convenient to adopt the notations:
$$\bm{\Sigma}_{\bm{u}}(r):=\bm{\Sigma}(r\bm{u}),~~\bm{A}_{\bm{u}}(r):=\bm{A}(r\bm{u}),~~ \bm{P}_{\bm{u}}(r):=\bm{P}(r\bm{u}),\text{ and }\bm{\Lambda}_{\bm{u}}(r):=\bm{\Lambda}(r\bm{u}),$$
which emphasize on them being the matrix-valued functions of $r$.

Let $M_L(\mathbb{R})$ be the space of all $L\times L$ real symmetric matrices endowed with the Frobenius norm $\|\cdot\|_F$, i.e., for any $\bm{A}=(a_{ij})\in M_L(\mathbb{R})$, $\|\bm{A}\|_F=\sqrt{\sum_{i,j=1}^La_{ij}^2}.$
Recall that $\mathbb{R}^N$ is endowed with the
usual Euclidean norm. The following lemma describes the behavior of $\bm{\Sigma}_{\bm{u}}(r)$ as $r\to 0$.
\begin{lemma}\label{Lem:Sigma}
Let $X$ be qualified. 
Then for any direction $\bm{u}=(u_1,\dots,u_N)^T\in\mathbb{S}^{N-1}$, 
$\bm{\Sigma}_{\bm{u}}(0):=\lim_{r\downarrow 0}\bm{\Sigma}_{\bm{u}}(r)$ exists, and the function
$\bm{\Sigma}_{\bm{u}}(\cdot):[0,\infty)\to M_L(\mathbb{R})$ is continuous on $[0,\delta_{\rho}]$, where $\delta_{\rho}$ is as defined in Definition \ref{Def:qualified}. 
In addition, we have as $r\to 0$,
$$\bm{\Sigma}_{\bm{u}}(r)=\bm{\Sigma}_{\bm{u},0}+\bm{\Sigma}_{\bm{u},2}r^2+o(r^2),$$
where $\bm{\Sigma}_{\bm{u},0},\bm{\Sigma}_{\bm{u},2}\in M_L(\mathbb{R})$ satisfy
\begin{enumerate}
    \item $\bm{\Sigma}_{\bm{u},0}=\bm{\Sigma}_{\bm{u}}(0)$ is positive semi-definite;
    \item for any $1\le i_1\le j_1\le N$ and $1\le i_2\le j_2\le N$,
    \begin{equation}\label{eq:Sigma0_main}
    \begin{aligned}
    &~~~~\bm{\Sigma}_{\bm{u},0}[i_1+j_1(j_1-1)/2,i_2+j_2(j_2-1)/2]\\
    &=4\rho^{(2)}(0)(\delta_{i_2,j_1}\delta_{i_1,j_2}+\delta_{i_1,i_2}\delta_{j_1,j_2}-\delta_{j_1,j_2}u_{i_1}u_{i_2}-\delta_{i_1,j_2}u_{j_1}u_{i_2}\\
    &~~~~-\delta_{i_2,j_1}u_{i_1}u_{j_2}-\delta_{i_1,i_2}u_{j_1}u_{j_2}+2u_{i_1}u_{j_1}u_{i_2}u_{j_2})\\
    &~~~~+\frac{8}{3}\rho^{(2)}(0)(\delta_{i_1,j_1}-u_{i_1}u_{j_1})(\delta_{i_2,j_2}-u_{i_2}u_{j_2})\end{aligned}
    \end{equation}
    and
    \begin{equation}\label{eq:sigma2_main}
    \begin{aligned}
    &~~~~\bm{\Sigma}_{\bm{u},2}[i_1+j_1(j_1-1)/2,i_2+j_2(j_2-1)/2]\\
    &=\left(2\alpha-\frac{14}{9}\beta\right)\delta_{i_1,j_1}\delta_{i_2,j_2}
    +\left(4\alpha-\frac{52}{9}\beta\right)\delta_{i_2,j_2}u_{i_1}u_{j_1}
    +\left(4\alpha-\frac{52}{9}\beta\right)\delta_{i_1,j_1}u_{i_2}u_{j_2} \\   
    &~~~~
    + \left(2\alpha-6\beta\right)\delta_{j_1,j_2}u_{i_1}u_{i_2}
    + \left(2\alpha-6\beta\right)\delta_{i_1,j_2}u_{j_1}u_{i_2}
    + \left(2\alpha-6\beta\right)\delta_{i_2,j_1}u_{i_1}u_{j_2}\\
    &~~~~
    + \left(2\alpha-6\beta\right)\delta_{i_1,i_2}u_{j_1}u_{j_2}
    + \frac{64}{9}\beta u_{i_1}u_{j_1}u_{i_2}u_{j_2},
    \end{aligned}\end{equation}
    where $\alpha=\rho^{(1)}(0)^{-1}\rho^{(2)}(0)^2$ and $\beta=\rho^{(3)}(0)$;

    \item for any $1\le i_1\le j_1\le N$,
    \begin{equation}\label{eq:Sigma0_side}
    \begin{aligned}
    \bm{\Sigma}_{\bm{u},0}[i_1+j_1(j_1-1)/2,L]=\bm{\Sigma}_{\bm{u},0}[i_1+j_1(j_1-1)/2,L-1]=\frac{4}{3}\rho^{(1)}(0)(\delta_{i_1,j_1}-u_{i_1}u_{j_1})
    \end{aligned}
    \end{equation}
    and
    $$
    \begin{aligned}
    &~~~~\bm{\Sigma}_{\bm{u},2}[i_1+j_1(j_1-1)/2,L]=\bm{\Sigma}_{\bm{u},2}[i_1+j_1(j_1-1)/2,L-1]\\
    &=\left(\frac{1}{3}\alpha'-\frac{1}{9}\beta'\right)\delta_{i_1,j_1}+\left(\frac{2}{3}\alpha'-\frac{14}{9}\beta'\right) u_{i_1}u_{j_1},
    \end{aligned}
    $$
    where
    $\alpha'=\rho^{(2)}(0)$ and $\beta'=\rho^{(1)}(0)\rho^{(2)}(0)^{-1}\rho^{(3)}(0)$;

    \item 
    \begin{equation}\label{eq:Sigma0_corner}
    \begin{aligned}
    &~~~~\bm{\Sigma}_{\bm{u},0}[L-1,L-1]=\bm{\Sigma}_{\bm{u},0}[L,L-1]=\bm{\Sigma}_{\bm{u},0}[L-1,L]=\bm{\Sigma}_{\bm{u},0}[L,L]\\
    &=1-\frac{1}{3}\rho^{(1)}(0)^2\rho^{(2)}(0)^{-1}
    \end{aligned}
    \end{equation}
    and
    $$
    \begin{aligned}
    &~~~~\bm{\Sigma}_{\bm{u},2}[L-1,L-1]=\bm{\Sigma}_{\bm{u},2}[L,L-1]=\bm{\Sigma}_{\bm{u},2}[L-1,L]=\bm{\Sigma}_{\bm{u},2}[L,L]\\
    &=-\frac{1}{6}\rho^{(1)}(0)+\frac{5}{18}\rho^{(1)}(0)^2\rho^{(2)}(0)^{-2}\rho^{(3)}(0).
    \end{aligned}
    $$
\end{enumerate}
\end{lemma}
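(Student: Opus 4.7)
The plan is to apply the Gaussian conditioning formula, after removing the singularity that appears in the conditioning covariance at $r=0$. Let $\bm{Z}_1 := (\triangledown^2 X(r\bm{u}), X(r\bm{u}), X(\bm{0}))$ and $\bm{Z}_2 := (\triangledown X(r\bm{u}), \triangledown X(\bm{0}))$, so that $\bm{\Sigma}_{\bm{u}}(r) = \Cov(\bm{Z}_1) - \Cov(\bm{Z}_1, \bm{Z}_2)\Cov(\bm{Z}_2)^{-1}\Cov(\bm{Z}_2, \bm{Z}_1)$. The obstruction is that $\Cov(\bm{Z}_2)$ becomes singular as $r \downarrow 0$. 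I would replace $\bm{Z}_2$ by the linear bijection
\[\tilde{\bm{Z}}_2 := \bigl(r^{-1}(\triangledown X(r\bm{u}) - \triangledown X(\bm{0})),\, \triangledown X(\bm{0})\bigr),\]
which defines the same conditional law for $r>0$. A mean-square Taylor argument gives $\tilde{\bm{Z}}_2 \to (\triangledown^2 X(\bm{0})\bm{u},\, \triangledown X(\bm{0}))$ as $r \downarrow 0$, whose covariance is block-diagonal (since $R_{ijk}(\bm{0})=0$ by (\ref{R3})) and explicitly positive-definite via $\rho^{(1)}(0)<0$ and $\rho^{(2)}(0)>0$ from (\ref{ineq:rho1rho2}). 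Hence $\Cov(\tilde{\bm{Z}}_2)(r)$ is continuous and invertible on some interval $[0,\delta]$ with $\delta\le\delta_{\rho}$, which already yields the existence of $\bm{\Sigma}_{\bm{u}}(0)$ and the claimed continuity.

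Next, using Lemma \ref{Lem:MSD} together with the explicit formulas (\ref{R1})--(\ref{R6}), every entry of $\bm{A}(r):=\Cov(\bm{Z}_1)(r)$, $\tilde{\bm{B}}(r):=\Cov(\bm{Z}_1, \tilde{\bm{Z}}_2)(r)$, and $\tilde{\bm{C}}(r):=\Cov(\tilde{\bm{Z}}_2)(r)$ is a polynomial in $r$ whose coefficients involve $\{\rho^{(k)}(r^2)\}_{k=1,2,3}$; Condition (3) of Definition \ref{Def:qualified} supplies the $C^2$-regularity in $r$ on $[0,\delta_{\rho}]$ needed for an $o(r^2)$ remainder. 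A parity observation simplifies the bookkeeping: setting $D := \diag(\bm{I}_N, -\bm{I}_N)$, a block-by-block inspection of the covariance entries shows $\tilde{\bm{C}}(-r) = D\tilde{\bm{C}}(r)D$, $\tilde{\bm{B}}(-r) = \tilde{\bm{B}}(r)D$ and $\bm{A}(-r) = \bm{A}(r)$, whence $\bm{\Sigma}_{\bm{u}}(r) = \bm{A}(r) - \tilde{\bm{B}}(r)\tilde{\bm{C}}(r)^{-1}\tilde{\bm{B}}(r)^T$ is even in $r$ and its Taylor expansion contains only even powers: $\bm{\Sigma}_{\bm{u}}(r) = \bm{\Sigma}_{\bm{u},0} + \bm{\Sigma}_{\bm{u},2} r^2 + o(r^2)$, with $\bm{\Sigma}_{\bm{u},0} = \bm{A}_0 - \tilde{\bm{B}}_0\tilde{\bm{C}}_0^{-1}\tilde{\bm{B}}_0^T$ and $\bm{\Sigma}_{\bm{u},2}$ obtained by collecting the $r^2$-coefficient of the analogous product expansion, via the standard Neumann expansion of $\tilde{\bm{C}}^{-1}$.

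The individual assertions then follow by explicit computation. Item 1 is immediate: $\bm{\Sigma}_{\bm{u},0}$ is the pointwise limit of the positive-definite matrices $\bm{\Sigma}_{\bm{u}}(r)$, $r>0$, hence positive semi-definite. For items 2--4 I would reduce to $\bm{u}=\bm{e}_1$ by isotropy, compute the blocks, then re-express the answer in terms of rank-four symmetric $O(N)$-equivariant tensors in $\delta_{ij}$ and $u_iu_j$, which greatly constrains the possible shape; in particular the seven tensor patterns in (\ref{eq:sigma2_main}) exhaust the possibilities. The matrix $\tilde{\bm{C}}_0$ is block-diagonal with blocks of the form $a\bm{I}_N + b\bm{u}\bm{u}^T$, whose inverse is explicit via Sherman--Morrison, so the main obstacle is the combinatorial bookkeeping in matching the numerical coefficients of each pattern. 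The appearance of $\alpha=\rho^{(1)}(0)^{-1}\rho^{(2)}(0)^2$ and $\beta=\rho^{(3)}(0)$ tracks two distinct contributions: $\alpha$ comes from products between the $\tilde{\bm{B}}_0$-entries (scaled by $\rho^{(2)}(0)$) and the $\triangledown X(\bm{0})$-block of $\tilde{\bm{C}}_0^{-1}$ (proportional to $(-2\rho^{(1)}(0))^{-1}$), while $\beta$ enters through the $r^2$-expansion of $\rho^{(2)}(r^2)$ inside the Hessian covariances and through the contribution of (\ref{R6}) to $\tilde{\bm{C}}_2$.
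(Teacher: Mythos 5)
Your plan is correct and takes a genuinely different route to the expansion. The paper works directly with the conditioning vector $\bm{Z}_2=(\triangledown X(r\bm{u}),\triangledown X(\bm{0}))$, whose covariance $\bm{V}_{22}$ degenerates as $r\to 0$; it inverts $\bm{V}_{22}$ explicitly via Sherman--Morrison, producing coefficients $k_4(\bm{t}),k_5(\bm{t})$ with poles of order $\|\bm{t}\|^{-2}$, and then shows that these cancel against the $\|\bm{t}\|^2$ factors in $\bm{G}_{21}\bm{G}_{21}^T$, $\bm{G}_{21}\bm{G}_{01}^T$, etc.\ before expanding each of the main, side, and corner blocks. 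Your desingularization by the linear bijection $\tilde{\bm{Z}}_2=(r^{-1}(\triangledown X(r\bm{u})-\triangledown X(\bm{0})),\triangledown X(\bm{0}))$ removes the singularity at the source: $\tilde{\bm{C}}(0)$ is block-diagonal with blocks $4\rho^{(2)}(0)(\bm{I}_N+2\bm{u}\bm{u}^T)$ and $-2\rho^{(1)}(0)\bm{I}_N$, both positive-definite by (\ref{ineq:rho1rho2}), so continuity of $\bm{\Sigma}_{\bm{u}}(\cdot)$ at $0$ and the $o(r^2)$ expansion follow from a clean Neumann series rather than from cancellation-tracking. Your parity identity $\bm{\Sigma}_{\bm{u}}(-r)=\bm{\Sigma}_{\bm{u}}(r)$ via $D=\diag(\bm{I}_N,-\bm{I}_N)$ is also correct (the odd/even block structure follows from $R$ being even and $R_{ijk}$ odd) and spares you the separate verification that the $O(r)$ term vanishes, which in the paper comes out only after the dust settles. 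Where the proposal stops short of a proof --- and where the bulk of the paper's Appendix A actually lives --- is in extracting the explicit coefficients in items 2--4. Your plan to reduce to $\bm{u}=\bm{e}_1$ by isotropy and organize the answer by $O(N)$-equivariant tensor patterns is a reasonable way to structure the same bookkeeping, but the equivariance argument only constrains the shape: it cannot produce the numerical values, and it cannot rule out patterns that happen to vanish (for instance the coefficient of $\delta_{i_1 i_2}\delta_{j_1 j_2}+\delta_{i_1 j_2}\delta_{i_2 j_1}$ in $\bm{\Sigma}_{\bm{u},2}$ is $0$, which must be computed, not inferred). A small notational issue: you overload $\bm{A}(r)$ for $\Cov(\bm{Z}_1)$, whereas the paper already reserves $\bm{A}(r)$ for $\bm{P}(r)\bm{\Lambda}^{1/2}(r)$.
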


\begin{proof}
Since the proof is long and mainly consists of heavy computation, we include it in Appendix \ref{App:Analytic}.
\end{proof}

From Lemma \ref{Lem:Sigma}, we see that the elements of  $\bm{\Sigma}_{\bm{u}}(r)$ are at least two times continuously differentiable in $r$ at 0.
Intuitively, one may also expect that the diagonal matrix $\bm{\Lambda}_{\bm{u}}(r)$ of its ordered eigenvalues and the corresponding matrix $\bm{P}_{\bm{u}}(r)$ consisting of its eigenvectors to have the similar properties. In particular, when $\rho(x)$, $x\ge 0$ is real analytic on a neighborhood of 0, we can simply follow the proof of Lemma \ref{Lem:Sigma} to show that $\bm{\Sigma}_{\bm{u}}(r)$ is also real analytic on that neighborhood. Then by the main theorem in \cite{kri}, both the eigenvalues and the eigenvectors of $\bm{\Sigma}_{\bm{u}}(r)$ can be parameterized real analytically on a neighborhood of 0. As for the ordering of these eigenvalues, by noting the fact that zeros of a real analytic function indexed by $\mathbb{R}$ are isolated (see, for example, Corollary 1.2.5 in \cite{kra}), we can also show that $\bm{\Lambda}_{\bm{u}}(r)$ and $\bm{P}_{\bm{u}}(r)$ can be both real analytic on a neighborhood of 0. 

However, the above result relies on the assumption that $\rho$ is real analytic, which is not guaranteed even when $\rho$ is infinitely differentiable. Thus, the following condition can be regarded as a generalization of real analyticity of $\rho$ on a neighborhood of 0, such that both the ordered eigenvalues and the corresponding eigenvectors of  $\bm{\Sigma}_{\bm{u}}(r)$ can change smoothly enough under perturbation.

\begin{defn}\label{Def:PC}(Perturbation Condition)
Let $X$ be qualified, and let $\bm{\Sigma}(\bm{t})$, $\bm{P}(\bm{t})$ and $\bm{\Lambda}(\bm{t})$, $\bm{t}\in\mathbb{R}^N\setminus\{\bm{0}_N\}$ be the matrices as defined above. Then $X$ is said to be \textbf{qualified under perturbation} if there exists a version of $\bm{P}(\bm{t})$ and $\bm{\Lambda}(\bm{t})$, such that 
for any direction $\bm{u}=(u_1,\dots,u_N)^T\in\mathbb{S}^{N-1}$, 
\begin{enumerate}
    \item both $\bm{P}_{\bm{u}}(0):=\lim_{r\downarrow 0}\bm{P}_{\bm{u}}(r)$ and $\bm{\Lambda}_{\bm{u}}(0):=\lim_{r\downarrow 0}\bm{\Lambda}_{\bm{u}}(r)$ exist;
    \item there exists a constant $\delta_{pc}\in (0, \delta_{\rho}]$ such that $\bm{P}_{\bm{u}}(r)$ and $\bm{\Lambda}_{\bm{u}}(r)$ are both continuous on $r\in[0,\delta_{pc}]$;
    \item $\bm{P}_{\bm{u}}(r)=\bm{P}_{\bm{u},0}+\bm{P}_{\bm{u},1}r+o(r)$ as $r\to 0$ for some $\bm{P}_{\bm{u},0}$, $\bm{P}_{\bm{u},1}\in\mathbb{R}^{L\times L}$;
    \item $\bm{\Lambda}_{\bm{u}}(r)=\bm{\Lambda}_{\bm{u},0}+\bm{\Lambda}_{\bm{u},1}r+\bm{\Lambda}_{\bm{u},2}r^2+o(r^2)$ as $r\to 0$,
    where $\bm{\Lambda}_{\bm{u},j}$, $j=0,1,2$ are all $L\times L$ real-valued diagonal matrices. More specifically, this is equivalent to
    $$\lambda_{\bm{u},i}(r)=\lambda_{\bm{u},i,0}+\lambda_{\bm{u},i,1}r+\lambda_{\bm{u},i,2}r^2+o(r^2),$$
    where $\lambda_{\bm{u},i,j}\in\mathbb{R}$ for $i=1,\dots,L$ and $j=0,1,2$, such that
    $\bm{\Lambda}_{\bm{u},0}:=\diag(\lambda_{\bm{u},i,0},i=1,\dots,L)$, $\bm{\Lambda}_{\bm{u},1}:=\diag(\lambda_{\bm{u},i,1},i=1,\dots,L)$, and $\bm{\Lambda}_{\bm{u},2}:=\diag(\lambda_{\bm{u},i,2},i=1,\dots,L)$.
\end{enumerate}
\end{defn}
In the following sections,
if $X$ is qualified under perturbation, then $\bm{P}(\bm{t})$ and $\bm{\Lambda}(\bm{t})$ are selected to be a version satisfying all the four conditions in Definition \ref{Def:PC} by default.

\begin{remark}\label{Rem:PC}
Let $X$ be qualified under perturbation and $\bm{u}\in\mathbb{S}^{N-1}$.
Then by Definition \ref{Def:PC}, 
$\bm{A}_{\bm{u}}(r)=\bm{P}_{\bm{u}}(r)\bm{\Lambda}_{\bm{u}}^{1/2}(r)$, $r\ge 0$ is also continuous on $r\in [0,\delta_{pc}]$ and there exist $\bm{A}_{\bm{u},0},\bm{A}_{\bm{u},1/2},\bm{A}_{\bm{u},1}\in\mathbb{R}^{L\times L}$ such that
$$\bm{A}_{\bm{u}}(r)=\bm{A}_{\bm{u},0}+\bm{A}_{\bm{u},1/2}r^{1/2}+\bm{A}_{\bm{u},1}r+o(r)$$
as $r\to 0$. In particular, we have $\bm{A}_{\bm{u},0}=\bm{A}_{\bm{u}}(0)$.

In addition, by the continuities of $\bm{\Sigma}_{\bm{u}}(r)$, $\bm{A}_{\bm{u}}(r)$, $\bm{P}_{\bm{u}}(r)$ and $\bm{\Lambda}_{\bm{u}}(r)$ on $r\in [0,\delta_{pc}]$,
it is easy to see
\begin{enumerate}[label=(\roman*)]
    \item $\bm{P}_{\bm{u},0}$ is orthogonal;
    
    \item $\bm{\Sigma}_{\bm{u},0}$ has the eigen-decomposition
    $$\bm{\Sigma}_{\bm{u},0}=\bm{P}_{\bm{u},0}\bm{\Lambda}_{\bm{u},0}\bm{P}_{\bm{u},0}^T;$$
    \item 
    $\bm{A}_{\bm{u},0}=\bm{P}_{\bm{u},0}\bm{\Lambda}_{\bm{u},0}^{1/2}$.
\end{enumerate}
\end{remark}

\section{Covariance Structure}\label{proj2:sec:cs}
In this section, we have a detailed analysis on $\bm{\Sigma}_{\bm{u}}(r), \bm{A}_{\bm{u}}(r), \bm{P}_{\bm{u}}(r) \text{ and }\bm{\Lambda}_{\bm{u}}(r)$, especially when $r\to 0$.
\subsection{General Covariance Structure}\label{SubSec:GCS}

The following lemma collects some useful results from Lemma \ref{Lem:Sigma} and Definition \ref{Def:PC}.
\begin{lemma}\label{Lem:EigValDecSpd}
Let $X$ be qualified under perturbation and $\bm{u}\in\mathbb{S}^{N-1}$.
Then for any $1\le i\le L$, we have
\begin{enumerate}[label=(\roman*)]
    \item $(\bm{P}_{\bm{u},0}^{(i)})^T\bm{P}_{\bm{u}, 1}^{(i)}=0$;
    \item $\lambda_{\bm{u},i,1}=0$, i.e., $\bm{\Lambda}_{\bm{u},1}=\bm{0}_{L\times L}$; 
    \item if $\lambda_{\bm{u},i,0}=0$, then $\lambda_{\bm{u},i,2}=(\bm{P}_{\bm{u},0}^{(i)})^T\bm{\Sigma}_{\bm{u},2}\bm{P}_{\bm{u},0}^{(i)}$;
    \item $\bm{\Sigma}_{\bm{u},0}\bm{P}_{\bm{u},1}^{(i)}=\lambda_{\bm{u},i,0}\bm{P}_{\bm{u},1}^{(i)}$.
\end{enumerate}
\end{lemma}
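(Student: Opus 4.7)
The plan is to exploit two clean identities that hold for every $r$: the orthonormality relation $\bm{P}_{\bm{u}}(r)^T\bm{P}_{\bm{u}}(r)=\bm{I}_L$, and the column-wise eigenvalue equation $\bm{\Sigma}_{\bm{u}}(r)\bm{P}_{\bm{u}}^{(i)}(r)=\lambda_{\bm{u},i}(r)\bm{P}_{\bm{u}}^{(i)}(r)$. Substituting the expansions $\bm{\Sigma}_{\bm{u}}(r)=\bm{\Sigma}_{\bm{u},0}+r^2\bm{\Sigma}_{\bm{u},2}+o(r^2)$ from Lemma \ref{Lem:Sigma} (note the absence of a linear term in $r$), together with $\bm{P}_{\bm{u}}(r)=\bm{P}_{\bm{u},0}+r\bm{P}_{\bm{u},1}+o(r)$ and $\lambda_{\bm{u},i}(r)=\lambda_{\bm{u},i,0}+r\lambda_{\bm{u},i,1}+r^2\lambda_{\bm{u},i,2}+o(r^2)$ from Definition \ref{Def:PC}, and then matching coefficients of $r$ will deliver all four claims.

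First I would read off (i) from the coefficient of $r$ in the $(i,i)$-entry of $\bm{P}_{\bm{u}}(r)^T\bm{P}_{\bm{u}}(r)=\bm{I}_L$: the scalar identity $\|\bm{P}_{\bm{u}}^{(i)}(r)\|^2=1$ expands to $1+2r(\bm{P}_{\bm{u},0}^{(i)})^T\bm{P}_{\bm{u},1}^{(i)}+o(r)=1$. Next I substitute the expansions into the eigenvalue equation. The constant term reproduces $\bm{\Sigma}_{\bm{u},0}\bm{P}_{\bm{u},0}^{(i)}=\lambda_{\bm{u},i,0}\bm{P}_{\bm{u},0}^{(i)}$, already recorded in Remark \ref{Rem:PC}; the order-$r$ terms (the piece $r^2\bm{\Sigma}_{\bm{u},2}$ does not yet enter) give
$$\bm{\Sigma}_{\bm{u},0}\bm{P}_{\bm{u},1}^{(i)}=\lambda_{\bm{u},i,1}\bm{P}_{\bm{u},0}^{(i)}+\lambda_{\bm{u},i,0}\bm{P}_{\bm{u},1}^{(i)}.$$
Left-multiplying by $(\bm{P}_{\bm{u},0}^{(i)})^T$ and using self-adjointness of $\bm{\Sigma}_{\bm{u},0}$ rewrites the left side as $\lambda_{\bm{u},i,0}(\bm{P}_{\bm{u},0}^{(i)})^T\bm{P}_{\bm{u},1}^{(i)}$; together with (i) and $\|\bm{P}_{\bm{u},0}^{(i)}\|=1$, both cross-terms vanish, forcing $\lambda_{\bm{u},i,1}=0$, which is (ii). Plugging $\lambda_{\bm{u},i,1}=0$ back into the displayed equation then gives (iv).

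For (iii) I would switch to the Rayleigh quotient $\lambda_{\bm{u},i}(r)=\bm{P}_{\bm{u}}^{(i)}(r)^T\bm{\Sigma}_{\bm{u}}(r)\bm{P}_{\bm{u}}^{(i)}(r)$ (the unit-norm constraint kills the denominator). Writing $\bm{P}_{\bm{u}}^{(i)}(r)=\bm{P}_{\bm{u},0}^{(i)}+r\bm{P}_{\bm{u},1}^{(i)}+\bm{\epsilon}(r)$ with $\bm{\epsilon}(r)=o(r)$, the $r^2\bm{\Sigma}_{\bm{u},2}$ piece contributes $r^2(\bm{P}_{\bm{u},0}^{(i)})^T\bm{\Sigma}_{\bm{u},2}\bm{P}_{\bm{u},0}^{(i)}+o(r^2)$. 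For the $\bm{\Sigma}_{\bm{u},0}$ piece I invoke the hypothesis $\lambda_{\bm{u},i,0}=0$: combined with (iv), it places both $\bm{P}_{\bm{u},0}^{(i)}$ and $\bm{P}_{\bm{u},1}^{(i)}$ inside $\ker\bm{\Sigma}_{\bm{u},0}$, so every term of $\bm{P}_{\bm{u}}^{(i)}(r)^T\bm{\Sigma}_{\bm{u},0}\bm{P}_{\bm{u}}^{(i)}(r)$ except $\bm{\epsilon}(r)^T\bm{\Sigma}_{\bm{u},0}\bm{\epsilon}(r)$ vanishes identically, and the latter is $O(\|\bm{\epsilon}(r)\|^2)=o(r^2)$. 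Combining the two contributions yields $\lambda_{\bm{u},i}(r)=r^2(\bm{P}_{\bm{u},0}^{(i)})^T\bm{\Sigma}_{\bm{u},2}\bm{P}_{\bm{u},0}^{(i)}+o(r^2)$, so the $r^2$-coefficients on the two sides must agree, proving (iii). The only subtlety worth flagging is that $\bm{P}_{\bm{u}}^{(i)}(r)$ is expanded only to order $r$ while both $\bm{\Sigma}_{\bm{u}}(r)$ and $\lambda_{\bm{u},i}(r)$ carry order-$r^2$ information; this is why matching coefficients directly in the eigenvalue equation past order $r^1$ is illegal, and the Rayleigh-quotient detour — with (iv) absorbing the problematic $o(r)$ tail into the kernel of $\bm{\Sigma}_{\bm{u},0}$ — is the crux of the argument.
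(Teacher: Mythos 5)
Your proposal is correct and follows essentially the same route as the paper: expand the orthonormality relation and the eigenvalue equation in powers of $r$, match coefficients, and use self-adjointness of $\bm{\Sigma}_{\bm{u},0}$ together with (i) to kill the cross terms. The only local variation is in (iii): you left-multiply the eigenvalue equation by $\bm{P}_{\bm{u}}^{(i)}(r)^T$ (the Rayleigh quotient) and control the $\bm{\Sigma}_{\bm{u},0}$ contribution by placing both $\bm{P}_{\bm{u},0}^{(i)}$ and $\bm{P}_{\bm{u},1}^{(i)}$ in $\ker\bm{\Sigma}_{\bm{u},0}$ via (iv), whereas the paper left-multiplies by the fixed vector $(\bm{P}_{\bm{u},0}^{(i)})^T$, which annihilates $\bm{\Sigma}_{\bm{u},0}$ in one stroke by symmetry (so (iv) is not needed there); both are correct, and the subtlety you flag about the missing $r^2$-term in the expansion of $\bm{P}_{\bm{u}}^{(i)}(r)$ is precisely the reason either of these detours is necessary.
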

\begin{proof}
By Definition \ref{Def:PC} and the orthogonality of $\bm{P}_{\bm{u}}(r)$ for any $r\ge 0$, we have
$$
\begin{aligned}
\bm{I}_{L}
&=\bm{P}_{\bm{u}}(r)^T\bm{P}_{\bm{u}}(r)\\
&=\left(\bm{P}_{\bm{u},0}+\bm{P}_{\bm{u},1}r+o(r)\right)^T\left(\bm{P}_{\bm{u},0}+\bm{P}_{\bm{u},1}r+o(r)\right)\\
&=\bm{I}_{L}+\left(\bm{P}_{\bm{u},0}^T\bm{P}_{\bm{u},1}+\bm{P}_{\bm{u},1}^T\bm{P}_{\bm{u},0}\right)r+o(r),
\end{aligned}
$$
which implies 
$$\bm{P}_{\bm{u},0}^T\bm{P}_{\bm{u},1}+\bm{P}_{\bm{u},1}^T\bm{P}_{\bm{u},0}=\bm{0}_{L\times L}.$$
Thus, for any $1\le i\le L$,
$$(\bm{P}_{\bm{u},0}^{(i)})^T\bm{P}_{\bm{u}, 1}^{(i)}=\frac{1}{2}\left(\bm{P}_{\bm{u},0}^T\bm{P}_{\bm{u},1}+\bm{P}_{\bm{u},1}^T\bm{P}_{\bm{u},0}\right)[i,i]=0.$$

For (ii), note that
$$
\begin{aligned}
\bm{\Sigma}_{\bm{u}}(r)\bm{P}_{\bm{u}}(r)^{(i)}
&=\left(\bm{\Sigma}_{\bm{u},0}+\bm{\Sigma}_{\bm{u},2}r^2+o(r^2)\right)\left(\bm{P}_{\bm{u},0}^{(i)}+\bm{P}_{\bm{u},1}^{(i)}r+o(r)\right)\\
&=\bm{\Sigma}_{\bm{u},0}\bm{P}_{\bm{u},0}^{(i)}+\left(\bm{\Sigma}_{\bm{u},0}\bm{P}_{\bm{u},1}^{(i)}\right)r+o(r)
\end{aligned}
$$
and
\begin{equation}\label{Eq:lambdaP}
    \begin{aligned}
        \lambda_{\bm{u},i}(r)\bm{P}_{\bm{u}}(r)^{(i)}
        &=\left(\lambda_{\bm{u},i,0}+\lambda_{\bm{u},i,1}r+\lambda_{\bm{u},i,2}r^2+o(r^2)\right)\left(\bm{P}_{\bm{u},0}^{(i)}+\bm{P}_{\bm{u},1}^{(i)}r+o(r)\right)\\
        &=\lambda_{\bm{u},i,0}\bm{P}_{\bm{u},0}^{(i)}+\left(\lambda_{\bm{u},i,0}\bm{P}_{\bm{u},1}^{(i)}+\lambda_{\bm{u},i,1}\bm{P}_{\bm{u},0}^{(i)}\right)r+o(r).
    \end{aligned}    
\end{equation}
Then by $\bm{\Sigma}_{\bm{u}}(r)\bm{P}_{\bm{u}}(r)^{(i)}=\lambda_{\bm{u},i}(r)\bm{P}_{\bm{u}}(r)^{(i)}$, we get
\begin{equation}\label{sp}
    \lambda_{\bm{u},i,0}\bm{P}_{\bm{u},1}^{(i)}+\lambda_{\bm{u},i,1}\bm{P}_{\bm{u},0}^{(i)}=\bm{\Sigma}_{\bm{u},0}\bm{P}_{\bm{u},1}^{(i)}.    
\end{equation}
By left-multiplying $(\bm{P}_{\bm{u},0}^{(i)})^T$ on the both sides of (\ref{sp}), (i) of this lemma, (i) and (ii) in Remark \ref{Rem:PC}, and the symmetry of $\bm{\Sigma}_{\bm{u},0}$, we have for any $1\le i\le L$,
$$\lambda_{\bm{u},i,1}=\left(\bm{P}_{\bm{u},0}^{(i)}\right)^T\bm{\Sigma}_{\bm{u},0}\bm{P}_{\bm{u},1}^{(i)}=\left(\bm{P}_{\bm{u},1}^{(i)}\right)^T\bm{\Sigma}_{\bm{u},0}\bm{P}_{\bm{u},0}^{(i)}=\lambda_{\bm{u},i,0}\left(\bm{P}_{\bm{u},1}^{(i)}\right)^T\bm{P}_{\bm{u},0}^{(i)}=0.$$

For (iii), by (ii) and $\lambda_{\bm{u},i,0}=0$, (\ref{Eq:lambdaP}) becomes
$$
\begin{aligned}
\lambda_{\bm{u},i}(r)\bm{P}_{\bm{u}}(r)^{(i)}
&=\left(\lambda_{\bm{u},i,2}r^2+o(r^2)\right)\left(\bm{P}_{\bm{u},0}^{(i)}+\bm{P}_{\bm{u},1}^{(i)}r+o(r)\right)\\
&=\lambda_{\bm{u},i,2}\bm{P}_{\bm{u},0}^{(i)}r^2+o(r^2).
\end{aligned}
$$
Similarly as in (ii), by left-multiplying both sides of $\bm{\Sigma}_{\bm{u}}(r)\bm{P}_{\bm{u}}(r)^{(i)}=\lambda_{\bm{u},i}(r)\bm{P}_{\bm{u}}(r)^{(i)}$ by $\left(\bm{P}_{\bm{u},0}^{(i)}\right)^T$ and comparing the coefficients for $r^2$, we have
$$\lambda_{\bm{u},i,2}=\left(\bm{P}_{\bm{u},0}^{(i)}\right)^T\bm{\Sigma}_{\bm{u},2}\bm{P}_{\bm{u},0}^{(i)}.$$

Finally, taking (ii) into  (\ref{sp}) yields (iv).
\end{proof}

\begin{remark}\label{Rem:PC2}
Let $X$ be qualified under perturbation and $\bm{u}\in\mathbb{S}^{N-1}$.
By (ii) of Lemma \ref{Lem:EigValDecSpd}, it is easy to see the matrix $\bm{A}_{\bm{u},1/2}$ in Remark \ref{Rem:PC} is $\bm{0}_{L\times L}$, and then we have
$$\bm{A}_{\bm{u}}(r)=\bm{A}_{\bm{u},0}+\bm{A}_{\bm{u},1}r+o(r)$$
as $r\to 0$.  
In addition, by Definition \ref{Def:PC} and (ii) of Lemma \ref{Lem:EigValDecSpd},
\begin{equation}\notag
\lambda_{\bm{u},i}^{1/2}(r)=\left\{
\begin{aligned}
    \lambda_{\bm{u},i,0}^{1/2}+\frac{\lambda_{\bm{u},i,2}}{2(\lambda_{\bm{u},i,0})^{1/2}}r^2+o\left(r^2\right), &&  \text{ for any $1\le i\le \Rank(\bm{\Sigma}_{\bm{u},0})$;}\\
    \lambda_{\bm{u},i,2}^{1/2}r+o(r), &&  \text{otherwise}.
\end{aligned}\right.
\end{equation}
Then by Definition \ref{Def:PC} and since $\bm{A}_{\bm{u}}(r)=\bm{P}_{\bm{u}}(r)\bm{\Lambda}_{\bm{u}}^{1/2}(r)$, we have
\begin{equation}\label{Eq:A1}
\bm{A}_{\bm{u},1}^{(i)}=\left\{
\begin{aligned}
    \lambda_{\bm{u},i,0}^{1/2}\bm{P}_{\bm{u},1}^{(i)}, &&  \text{ for any $1\le i\le \Rank(\bm{\Sigma}_{\bm{u},0})$;}\\
    \lambda_{\bm{u},i,2}^{1/2}\bm{P}_{\bm{u},0}^{(i)}, &&  \text{otherwise}.
\end{aligned}\right.
\end{equation}

\end{remark}

For any $1\le i\le j\le N$, $1\le k\le N$
and $\bm{u}=(u_1,\dots,u_N)^T\in\mathbb{S}^{N-1}$,
define $\bm{H}(\bm{u})=(h_{k,\ell}(\bm{u}))\in\mathbb{R}^{N\times L}$ by
\begin{equation}\label{Hdefn}
h_{k,i+j(j-1)/2}(\bm{u})=\delta_{j,k}u_i+(1-\delta_{j,k})\delta_{i,k}u_j\text{  and  }h_{k,L-1}(\bm{u})=h_{k,L}(\bm{u})=0.    
\end{equation}
For example, when $N=3$, we have
$$\bm{H}(\bm{u})=
\begin{pmatrix}
u_1 & u_2 & 0   &u_3 &0   &0  &0  &0\\
0   & u_1 & u_2 &0   &u_3 &0  &0  &0\\
0   & 0   & 0   &u_1 &u_2 &u_3&0  &0
\end{pmatrix}.
$$
The following lemma collects some useful results about $\bm{H}(\bm{u})$.
\begin{lemma}\label{Lem:H}
Let $X$ be qualified under perturbation and $\bm{u}\in\mathbb{S}^{N-1}$. Then
\begin{enumerate}[label=(\roman*)]
    \item if $\bm{M}\in\mathbb{R}^{N\times N}$ is the $N$-th order matriculation of a vector $\bm{a}\in\mathbb{R}^{n}$ ($n\ge N(N+1)/2$), then $$\bm{H}(\bm{u})\bm{a}=\bm{M}\bm{u};$$
    \item $\bm{\Sigma}_{\bm{u},0}\bm{H}^T(\bm{u})=\bm{0}_{L\times N}$;
    \item $\bm{H}(\bm{u})\bm{A}_{\bm{u}}(r)^{(i)}=\bm{0}_{N}+o(r)$ as $r\to 0$ for any $1\le i\le \Rank(\bm{\Sigma}_{\bm{u},0})$. 
\end{enumerate}
\end{lemma}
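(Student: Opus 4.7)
The three parts can be attacked sequentially, with (i) purely definitional, (ii) the key covariance identity, and (iii) a routine combination of (ii) with Lemma \ref{Lem:EigValDecSpd}. For (i), I would unfold $(\bm{H}(\bm{u})\bm{a})[k]=\sum_{\ell=1}^{L} h_{k,\ell}(\bm{u})\,\bm{a}[\ell]$ using (\ref{Hdefn}). Because $h_{k,L-1}=h_{k,L}=0$, only indices $\ell=i+j(j-1)/2$ with $1\le i\le j\le N$ contribute, and the Kronecker factors in (\ref{Hdefn}) split the nonzero contributions into two cases: $j=k$ (giving $u_i\,\bm{a}[i+k(k-1)/2]=u_i m_{ik}$ for $i=1,\ldots,k$) and $j>k$ with $i=k$ (giving $u_j\,\bm{a}[k+j(j-1)/2]=u_j m_{kj}$ for $j=k+1,\ldots,N$). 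Symmetry $m_{ik}=m_{ki}$ merges the two sums into $\sum_{i=1}^{N} m_{ki}u_i=(\bm{M}\bm{u})[k]$.

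For (ii), the cleanest route uses (i) probabilistically. By Lemma \ref{Lem:MSD} combined with (i), the matrix $\bm{H}(\bm{u})\bm{\Sigma}_{\bm{u}}(r)\bm{H}^{T}(\bm{u})$ is the conditional covariance of $\triangledown^2 X(r\bm{u})\bm{u}$ given $\triangledown X(r\bm{u})=\triangledown X(\bm{0})=\bm{0}_N$. A second-order Taylor expansion of $\triangledown X$ about $r\bm{u}$ (valid in $L^2$ thanks to Condition (3) of Definition \ref{Def:qualified}) gives
\[
\triangledown X(\bm{0})-\triangledown X(r\bm{u})=-r\,\triangledown^2 X(r\bm{u})\bm{u}+O(r^2),
\]
with the remainder measured in $L^2$. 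Conditioning both gradients to vanish therefore pins down $\triangledown^2 X(r\bm{u})\bm{u}$ to within conditional $L^2$-error of order $r$, so $\bm{H}(\bm{u})\bm{\Sigma}_{\bm{u}}(r)\bm{H}^{T}(\bm{u})=O(r^2)$. Letting $r\downarrow 0$ and invoking continuity of $\bm{\Sigma}_{\bm{u}}(\cdot)$ at $0$ from Lemma \ref{Lem:Sigma} gives $\bm{H}(\bm{u})\bm{\Sigma}_{\bm{u},0}\bm{H}^{T}(\bm{u})=\bm{0}$, and positive semi-definiteness of $\bm{\Sigma}_{\bm{u},0}$ then forces $\bm{\Sigma}_{\bm{u},0}\bm{H}^{T}(\bm{u})=\bm{0}$. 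I expect the main subtlety to be the $L^2$ control of the Taylor remainder; a purely algebraic alternative plugs (\ref{eq:Sigma0_main}) and (\ref{eq:Sigma0_side}) directly into the product $\bm{\Sigma}_{\bm{u},0}\bm{H}^{T}(\bm{u})$, where cancellations telescope via $\sum_{i=1}^{N} u_i^2=1$.

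For (iii), I would expand $\bm{A}_{\bm{u}}(r)^{(i)}=\bm{A}_{\bm{u},0}^{(i)}+r\,\bm{A}_{\bm{u},1}^{(i)}+o(r)$ via Remark \ref{Rem:PC2}. For $i\le\Rank(\bm{\Sigma}_{\bm{u},0})$, formula (\ref{Eq:A1}) identifies $\bm{A}_{\bm{u},0}^{(i)}=\lambda_{\bm{u},i,0}^{1/2}\bm{P}_{\bm{u},0}^{(i)}$ and $\bm{A}_{\bm{u},1}^{(i)}=\lambda_{\bm{u},i,0}^{1/2}\bm{P}_{\bm{u},1}^{(i)}$ with $\lambda_{\bm{u},i,0}>0$. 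The vector $\bm{P}_{\bm{u},0}^{(i)}$ is an eigenvector of $\bm{\Sigma}_{\bm{u},0}$ for the nonzero eigenvalue $\lambda_{\bm{u},i,0}$ and hence lies in the range of $\bm{\Sigma}_{\bm{u},0}$; by (ii) the columns of $\bm{H}^T(\bm{u})$ lie in $\ker(\bm{\Sigma}_{\bm{u},0})$, and the standard range--kernel orthogonality for symmetric matrices yields $\bm{H}(\bm{u})\bm{P}_{\bm{u},0}^{(i)}=\bm{0}_N$. Lemma \ref{Lem:EigValDecSpd}(iv) shows that $\bm{P}_{\bm{u},1}^{(i)}$ satisfies the same eigenvector relation for the same positive eigenvalue, so the identical argument gives $\bm{H}(\bm{u})\bm{P}_{\bm{u},1}^{(i)}=\bm{0}_N$. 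Both the constant and $r$-terms of $\bm{H}(\bm{u})\bm{A}_{\bm{u}}(r)^{(i)}$ therefore vanish, giving $\bm{H}(\bm{u})\bm{A}_{\bm{u}}(r)^{(i)}=\bm{0}_N+o(r)$ as claimed.
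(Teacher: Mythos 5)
Your proof of part (i) and part (iii) matches the paper's argument in substance. For (i) you explicitly unfold $(\bm H(\bm u)\bm a)[k]$ from (\ref{Hdefn}) and match against $(\bm M\bm u)[k]$; the paper simply says ``one can directly check (i) by (\ref{Hdefn}),'' so your version supplies the details. For (iii), both arguments reduce the question to $\bm H(\bm u)\bm P_{\bm u,0}^{(i)}=\bm 0_N$ and $\bm H(\bm u)\bm P_{\bm u,1}^{(i)}=\bm 0_N$ for $i\le\Rank(\bm\Sigma_{\bm u,0})$, via (\ref{Eq:A1}) and Lemma \ref{Lem:EigValDecSpd}(iv). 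You phrase the conclusion through range--kernel orthogonality of the symmetric matrix $\bm\Sigma_{\bm u,0}$; the paper instead first derives $\bm H(\bm u)\bm A_{\bm u,0}=\bm 0_{N\times L}$ from (ii) and then writes $\bm P_{\bm u,0}^{(i)}=\lambda_{\bm u,i,0}^{-1/2}\bm A_{\bm u,0}^{(i)}$. These are the same fact expressed in two equivalent ways.

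Part (ii) is where your route is genuinely different. The paper's proof is a direct algebraic verification: it observes that the right-hand sides of (\ref{eq:Sigma0_main}) and (\ref{eq:Sigma0_side}) are symmetric in $(i_1,j_1)$, matriculates each column of $\bm\Sigma_{\bm u,0}$, and checks by substitution (using $\sum_i u_i^2=1$) that $\Matri\bigl((\bm\Sigma_{\bm u,0})^{(i)}\bigr)\bm u=\bm 0_N$, whence (i) and symmetry give $\bm\Sigma_{\bm u,0}\bm H^T(\bm u)=\bm 0$. Your argument is probabilistic: you identify $\bm H(\bm u)\bm\Sigma_{\bm u}(r)\bm H^T(\bm u)$ as the conditional covariance of $\triangledown^2X(r\bm u)\bm u$ given the two vanishing gradients, invoke a second-order mean-square Taylor expansion $\triangledown X(\bm 0)-\triangledown X(r\bm u)=-r\,\triangledown^2X(r\bm u)\bm u+R(r)$ with $\|R(r)\|_{L^2}=O(r^2)$, use the Gaussian conditioning inequality $\mathrm{Cov}[R(r)\mid\cdot]\preceq\mathrm{Cov}[R(r)]$ to get $\bm H(\bm u)\bm\Sigma_{\bm u}(r)\bm H^T(\bm u)=O(r^2)$, pass to the limit $r\to 0$ by continuity of $\bm\Sigma_{\bm u}(\cdot)$ from Lemma \ref{Lem:Sigma}, and finish with positive semi-definiteness ($v^T\bm\Sigma_{\bm u,0}v=0$ forces $\bm\Sigma_{\bm u,0}v=0$). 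This is a nice conceptual explanation of \emph{why} $\bm u$ lies in the null direction of the conditioned Hessian: once both gradients vanish, the Hessian applied to $\bm u$ is pinned to a Taylor remainder of order $r$. What you lose is that the rigorous $L^2$ control of $R(r)$ requires either a mean-square Taylor theorem (which needs the third mean-square derivative to be $L^2$-integrable along the segment, i.e.\ some regularity of the sixth-order mixed derivatives of the covariance beyond what Condition (3) literally grants at a single point) or a direct expansion of $E[R(r)[j]^2]$ using (\ref{R2})--(\ref{R4}), which is then comparable in effort to the paper's substitution. You flag this subtlety yourself and also note the algebraic alternative, so the proposal is sound; just be aware that making the $L^2$-remainder bound fully rigorous is not automatic and is where I would expect an examiner to push.
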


\begin{proof}
One can directly check (i) by (\ref{Hdefn}). As for (ii), note that the expressions in (\ref{eq:Sigma0_main}) and (\ref{eq:Sigma0_side}) are both symmetric in $i_1$ and $j_1$.
As a result, for any $1\le i\le L$, the elements in $\Matri\left((\bm{\Sigma}_{\bm{u},0})^{(i)}\right)$ for which the row number $i_1$ is greater than the column number $j_1$ will also be described by these expressions.
Consequently, one can varify that for any $1\le i\le L$,
$$\Matri\left((\bm{\Sigma}_{\bm{u},0})^{(i)}\right)\bm{u}=\bm{0}_N.$$ 
Then by (i) of this lemma and $\bm{\Sigma}_{\bm{u},0}=\bm{\Sigma}_{\bm{u},0}^T$,
$$(\bm{\Sigma}_{\bm{u},0})_{(i)}\bm{H}^T(\bm{u})=\left(\bm{H}(\bm{u})(\bm{\Sigma}_{\bm{u},0})^{(i)}\right)^T=\left(\Matri\left((\bm{\Sigma}_{\bm{u},0})^{(i)}\right)\bm{u}\right)^T=\bm{0}_N^T.$$
As for (iii), by (ii) of this lemma, we have 
\begin{equation}\label{Eq:HA0}
    \bm{H}(\bm{u})\bm{A}_{\bm{u},0}=\bm{0}_{N\times L}.    
\end{equation}
Then
by Remark \ref{Rem:PC2}, it suffices to show that for any $1\le i\le \Rank(\bm{\Sigma}_{\bm{u},0})$,
\begin{equation}\label{HA1}
    \bm{H}(\bm{u})\bm{A}_{\bm{u},1}^{(i)}=\bm{0}_{N}.
\end{equation}
Note that
$\lambda_{\bm{u},i,0}\neq 0$ for any $1\le i\le \Rank(\bm{\Sigma}_{\bm{u},0})$.
By (iii) in Remark \ref{Rem:PC} and (\ref{Eq:HA0}), we have for any $1\le i\le \Rank(\bm{\Sigma}_{\bm{u},0})$,
\begin{equation}\label{HP}
\bm{H}(\bm{u})\bm{P}_{\bm{u},0}^{(i)}=\lambda_{\bm{u},i,0}^{-1/2}\bm{H}(\bm{u})\bm{A}_{\bm{u},0}^{(i)}=\bm{0}_{N}. 
\end{equation}
By (\ref{Eq:A1}),
we have
$$
\bm{A}_{\bm{u},1}^{(i)}=\lambda_{\bm{u},i,0}^{1/2}\bm{P}_{\bm{u},1}^{(i)},
$$
which, together with (iv) of Lemma \ref{Lem:EigValDecSpd}, implies for any $1\le i\le \Rank(\bm{\Sigma}_{\bm{u},0})$,
$$\bm{A}_{\bm{u},1}^{(i)}\in \sp\left\{\bm{P}_{\bm{u}, 0}^{(1)},\dots,\bm{P}_{\bm{u},0}^{(\Rank(\bm{\Sigma}_{\bm{u},0}))}\right\}.$$
Then by (\ref{HP}), (\ref{HA1}) is immediate, and hence completes the proof.
\end{proof}

\subsection{Properties of the Covariance Matrix along a Coordinate Axis}\setcounter{MaxMatrixCols}{20}
In the last subsection, we have explored some properties of $\bm{\Sigma}_{\bm{u}}(r)$ for any $\bm{u}\in\mathbb{S}^{N-1}$.
Let $\bm{u}_0:=(0,\dots,0,1)^T\in\mathbb{R}^L$ which is the direction of the last coordinate axis. By Lemma \ref{Lem:Sigma}, $\bm{\Sigma}_{\bm{u}_0}(0)$ has a simple form. This would be helpful in solving problems that depend on $\bm{\Sigma}_{\bm{u}}(0)$ but are independent of the choice of $\bm{u}\in\mathbb{S}^{N-1}$.
In the following, we focus on the properties of the covariance matrix $\bm{\Sigma}_{\bm{u}_0}(r)$, $r\ge 0$. For conciseness, we will drop $\bm{u}_0$ from subscripts.

By Lemma \ref{Lem:Sigma}, it is easy to see the form of $\bm{\Sigma}_{0}$ can be very simple after swapping some of its rows and the corresponding columns. For example, for $N=4$ and $\rho(x)=\exp(-x)$, $x\ge 0$, i.e., the covariance function $R(\bm{t})=\rho(\|\bm{t}\|^2)=\exp(-\|\bm{t}\|^2)$, $\bm{t}\in\mathbb{R}^4$, we have
$$\bm{\Sigma}_{0}=
\begin{pmatrix}
\frac{32}{3}   &0  &\frac{8}{3}   & 0   & 0 &\frac{8}{3}    &0    &0    &0     &0  &-\frac{4}{3} &-\frac{4}{3}\\
0    &4  &0   & 0   & 0 &0    &0    &0    &0     &0  &0  &0\\
\frac{8}{3}    &0  &\frac{32}{3}  &  0  & 0 & \frac{8}{3} &  0    &0    &0     &0 &-\frac{4}{3} &-\frac{4}{3}\\
0    &0  &0   & 4   & 0 &0  &  0    &0    &0     &0  &0  &0\\
0    &0  &0   & 0   & 4 &0  &  0    &0    &0     &0  &0  &0\\
\frac{8}{3}    &0  &\frac{8}{3}   & 0   & 0 &\frac{32}{3} &   0   & 0   &0     &0 &-\frac{4}{3} &-\frac{4}{3}\\
0    &0  &0   & 0   & 0 & 0 &   0   & 0   &0     &0  &0  &0\\
0    &0  &0   & 0   & 0 & 0 &   0   & 0   &0     &0  &0  &0\\
0    &0  &0   & 0   & 0 & 0 &   0   & 0    &0    & 0  &0  &0\\
0    &0  &0   & 0   & 0 & 0 &   0   & 0    &0    & 0  &0  &0\\
-\frac{4}{3}   &0  &-\frac{4}{3}  &  0  & 0 &-\frac{4}{3} &   0   & 0    &0    & 0  &\frac{2}{3}  &\frac{2}{3}\\
-\frac{4}{3}   &0  &-\frac{4}{3}  &  0  & 0 &-\frac{4}{3} &   0   & 0    &0    & 0  &\frac{2}{3}  &\frac{2}{3}
\end{pmatrix}.
$$
After swapping some rows and the corresponding columns, it is turned into
$$\bm{\Sigma}'_0=
\begin{pmatrix}
\frac{32}{3}    &\frac{8}{3}   &\frac{8}{3} &-\frac{4}{3} &-\frac{4}{3} & 0 & 0 & 0 & 0 & 0 & 0 & 0 \\
\frac{8}{3}     &\frac{32}{3}  &\frac{8}{3} &-\frac{4}{3} &-\frac{4}{3} & 0 & 0 & 0 & 0 & 0 & 0 & 0 \\
\frac{8}{3}     &\frac{8}{3}   &\frac{32}{3}&-\frac{4}{3} &-\frac{4}{3} & 0 & 0 & 0 & 0 & 0 & 0 & 0 \\
-\frac{4}{3}    &-\frac{4}{3}  &-\frac{4}{3}&\frac{2}{3}  &\frac{2}{3}  & 0 & 0 & 0 & 0 & 0 & 0 & 0 \\
-\frac{4}{3}    &-\frac{4}{3}  &-\frac{4}{3}&\frac{2}{3}  &\frac{2}{3}  & 0 & 0 & 0 & 0 & 0 & 0 & 0 \\
0               &0             &0           &0            &0            & 4 & 0 & 0 & 0 & 0 & 0 & 0 \\
0               &0             &0           &0            &0            & 0 & 4 & 0 & 0 & 0 & 0 & 0 \\
0               &0             &0           &0            &0            & 0 & 0 & 4 & 0 & 0 & 0 & 0 \\
0               &0             &0           &0            &0            & 0 & 0 & 0 & 0 & 0 & 0 & 0 \\
0               &0             &0           &0            &0            & 0 & 0 & 0 & 0 & 0 & 0 & 0 \\
0               &0             &0           &0            &0            & 0 & 0 & 0 & 0 & 0 & 0 & 0 \\
0               &0             &0           &0            &0            & 0 & 0 & 0 & 0 & 0 & 0 & 0 \\
\end{pmatrix}.
$$
Indeed, one can easily check that $\bm{\Sigma}'_0$ is the limit of the covariance matrix of 
$$
\begin{aligned}
&(X_{11}(\bm{u}_0r),X_{22}(\bm{u}_0r),X_{33}(\bm{u}_0r),X(\bm{u}_0r),X(\bm{0}),X_{12}(\bm{u}_0r),X_{13}(\bm{u}_0r),X_{23}(\bm{u}_0r),\\
&~~~~X_{14}(\bm{u}_0r),X_{24}(\bm{u}_0r),X_{34}(\bm{u}_0r),X_{44}(\bm{u}_0r)|\triangledown X(\bm{u}_0r)=\triangledown X(\bm{0})=\bm{0}_N)
\end{aligned}
$$
as $r\to 0$.
In general, we can rearrange the elements of the random vector 
$$(\triangledown^2 X(\bm{t}),X(\bm{t}),X(\bm{0})|\triangledown X(\bm{t})=\triangledown X(\bm{0})=\bm{0}_N)$$
such that the limiting covariance matrix $\bm{\Sigma}'_0$ of the random vector after the rearrangement has the form
\begin{equation}\label{sigmap}
\bm{\Sigma}'_0=
\begin{pmatrix}
\bm{B}_0 & \bm{B}_2 &\bm{0} & \bm{0}\\
\bm{B}_2^T & \bm{B}_3 & \bm{0} & \bm{0}\\
\bm{0} & \bm{0} & \bm{B}_1 & \bm{0}\\
\bm{0} & \bm{0} & \bm{0} & \bm{0}_{N\times N} \\
\end{pmatrix},
\end{equation}
where $\bm{B}_0\in\mathbb{R}^{(N-1)\times (N-1)}$, $\bm{B}_1\in\mathbb{R}^{(N-1)(N-2)/2\times (N-1)(N-2)/2}$, $\bm{B}_2\in\mathbb{R}^{(N-1)\times 2}$, and $\bm{B}_3\in\mathbb{R}^{2\times 2}$ satisfy that
$$\bm{B}_0[i_0,j_0]=4\left(\frac{2}{3}+2\delta_{i_0,j_0}\right)\rho^{(2)}(0)\text{ for any $1\le i_0,j_0\le N-1$},$$
$$\bm{B}_1[i_1,j_1]=4\rho^{(2)}(0)\delta_{i_1,j_1}\text{ for any $1\le i_1,j_1\le (N-1)(N-2)/2$},$$
$$\bm{B}_2[i_2,j_2]=\frac{4}{3}\rho^{(1)}(0)\text{ for any $1\le i_2\le N-1$ and $1\le j_2\le 2$},$$
and
$$\bm{B}_3[i_3,j_3]=1-\frac{\rho^{(1)}(0)^2}{3\rho^{(2)}(0)}\text{ for any $1\le i_3,j_3\le 2$}.$$
Indeed, $\bm{B}_0$ corresponds to the elements in (\ref{eq:Sigma0_main}) with $1\le i_1,j_1,i_2,j_2\le N-1$, $i_1=j_1$ and $i_2=j_2$; $\bm{B}_1$ corresponds to the elements in (\ref{eq:Sigma0_main}) with $1\le i_1,j_1,i_2,j_2\le N-1$, $i_1< j_1$ and $i_2< j_2$; $\bm{B}_2$ corresponds to the elements in (\ref{eq:Sigma0_side}) with $1\le i_1=j_1\le N-1$; $\bm{B}_3$ corresponds to the elements in (\ref{eq:Sigma0_corner}).
It is noticeable that $\bm{\Sigma}_{0}$ and $\bm{\Sigma}'_0$ share the same eigenvalues (but with different eigenspaces).
The following lemma introduces some properties of the eigenvalues and eigenvectors of $\bm{\Sigma}_{0}$.

\begin{lemma}\label{Lem:EigenSigma}
Let
$$\bm{W}:=\begin{pmatrix}
a & b\\
c & d
\end{pmatrix}\in\mathbb{R}^{2\times 2},$$
where 
    $$a=\frac{1}{3}(32+8(N-2))\rho^{(2)}(0),~~b=\frac{8}{3}\rho^{(1)}(0),$$ 
    $$c=\frac{4}{3}(N-1)\rho^{(1)}(0)\text{ and } d=2\left(1-\frac{\rho^{(1)}(0)^2}{3\rho^{(2)}(0)}\right).$$
Then we have
\begin{enumerate}[label=(\roman*)]
    \item The eigenvalues
    $$\lambda_{+}=\frac{a+d+\sqrt{(a-d)^2+4bc}}{2}\text{ and }\lambda_{-}=\frac{a+d-\sqrt{(a-d)^2+4bc}}{2}$$ of $\bm{W}$ are also two different eigenvalues of $\bm{\Sigma}_{0}$, i.e., there exist integers $1\le l<s\le L$ such that
    $$\lambda_{l,0}=\lambda_{+} \text{ and }\lambda_{s,0}=\lambda_{-}.$$
    Moreover, we have 
    $$\lambda_{l,0}>8\rho^{(2)}(0)\text{ and }\lambda_{s,0}\neq 0.$$
    
    \item 0 is an eigenvalue of $\bm{\Sigma}_{0}$ with multiplicity  $N+1$ and its eigenvector, $\bm{p}_0$, must satisfy 
    $$\bm{p}_0[i+j(j-1)/2]=0\text{ for any }1\le i\le j\le N-1$$
    and
    $$\bm{p}_0[L-1]+\bm{p}_0[L]=0.$$

    \item $4\rho^{(2)}(0)$ is an eigenvalue of $\bm{\Sigma}_{0}$, and 
    if $\lambda_{s,0}\neq 4\rho^{(2)}(0)$, then the multiplicity of $4\rho^{(2)}(0)$ as an eigenvalue of $\bm{\Sigma}_{0}$ is
    $(N-1)(N-2)/2$.
    
    \item 
    $8\rho^{(2)}(0)$ is an eigenvalue of $\bm{\Sigma}_{0}$, and
    if $\lambda_{s,0}\neq 8\rho^{(2)}(0)$, then the multiplicity of $8\rho^{(2)}(0)$ as an eigenvalue of $\bm{\Sigma}_{0}$ is
    $N-2$.
    
    \item For any nonzero eigenvalue of $\bm{\Sigma}_{0}$, its eigenvector, $\bm{p}_{nz}$, must satisfy
    \begin{equation}\label{p1}
    \bm{p}_{nz}[L-1]=\bm{p}_{nz}[L]    
    \end{equation}
    and
    \begin{equation}\label{p2}
    \bm{p}_{nz}[i+N(N-1)/2]=0\text{ for any }1\le i\le N.   
    \end{equation}

    \item If $\lambda_{s,0}\notin\{4\rho^{(2)}(0), 8\rho^{(2)}(0)\}$, then any eigenvector, $\bm{p}_{*}$, of $4\rho^{(2)}(0)$ or $8\rho^{(2)}(0)$ must satisfy
    \begin{equation}\notag
        \bm{p}_{*}[L-1]=\bm{p}_{*}[L]=0.
    \end{equation}
    
    \item If $\lambda_{s,0}\notin\{4\rho^{(2)}(0), 8\rho^{(2)}(0)\}$, then for $\lambda_{l,0}$ and $\lambda_{s,0}$ as eigenvalues of $\bm{\Sigma}_{0}$, any eigenvector, $\bm{p}$, of them must have the form:
    $$\bm{p}[i+j(j-1)/2]=\delta_{i,j}x, \text{ for any }1\le i\le j\le N-1,$$
    $$\bm{p}[i+N(N-1)/2]=0\text{ for any }1\le i\le N,  \text{ and  }\bm{p}[L-1]=\bm{p}[L]=y,$$
    where $x$ and $y$ are both non-zero.

    \item There exists a constant $C>0$ such that 
    $$\tilde{\lambda}_{s,0}<4\tilde{\rho}^{(2)}(0),$$
    where $\tilde{\lambda}_{s,0}$ is the analog of $\lambda_{s,0}$ defined using the \textbf{rescaled} covariance function $$\tilde{\rho}(\|\bm{t}\|^2):=\rho(C\|\bm{t}\|^2)\text{ for any }\bm{t}\in\mathbb{R}^N\setminus\{\bm{0}_N\}.$$
\end{enumerate}
\end{lemma}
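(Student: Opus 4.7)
My approach exploits the block structure of the rearranged matrix $\bm{\Sigma}'_0$ in (\ref{sigmap}), which has the same spectrum as $\bm{\Sigma}_0$. The plan is first to read off the spectrum block by block, then translate eigenspace information through the matriculation indexing into the coordinate-level claims (ii)--(vii), and finally handle (viii) by a rescaling argument.

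Writing $\bm{B}_0 = 8\rho^{(2)}(0)\bm{I}_{N-1} + \tfrac{8}{3}\rho^{(2)}(0)\bm{J}_{N-1}$ and $\bm{B}_3 = \tfrac{d}{2}\bm{J}_2$, where $\bm{J}_n$ denotes the $n\times n$ all-ones matrix, I observe that the subspace $V := \sp\{\bm{e}_1, \bm{e}_2\}$ with $\bm{e}_1 = (1,\ldots,1,0,0)^T$ and $\bm{e}_2 = (0,\ldots,0,1,1)^T$ is invariant under the coupled upper block $\bigl(\begin{smallmatrix} \bm{B}_0 & \bm{B}_2 \\ \bm{B}_2^T & \bm{B}_3\end{smallmatrix}\bigr)$, and a direct computation shows its restriction in the basis $\{\bm{e}_1, \bm{e}_2\}$ equals exactly $\bm{W}$; hence $\lambda_\pm$ are eigenvalues of $\bm{\Sigma}_0$. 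The $(N-2)$-dimensional subspace of vectors in the first $N-1$ coordinates orthogonal to $(1,\ldots,1)^T$ (padded with zeros in the last two) is annihilated by $\bm{B}_2^T$ and contributes $8\rho^{(2)}(0)$ with multiplicity $N-2$; the vector $(0,\ldots,0,1,-1)^T$ contributes a zero eigenvalue of the upper block. Combined with $\bm{B}_1 = 4\rho^{(2)}(0)\bm{I}_{(N-1)(N-2)/2}$ and $\bm{0}_{N\times N}$, the complete spectrum of $\bm{\Sigma}_0$ is $\{\lambda_+,\lambda_-\}$, $8\rho^{(2)}(0)$ (multiplicity $N-2$), $4\rho^{(2)}(0)$ (multiplicity $(N-1)(N-2)/2$), and $0$ (multiplicity $N+1$). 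Claim (i) then follows from $bc>0$, which forces $\lambda_+ > a = \tfrac{8}{3}(N+2)\rho^{(2)}(0) > 8\rho^{(2)}(0)$, together with $\det\bm{W}=ad-bc>0$, equivalent to $(\rho^{(1)}(0))^2/\rho^{(2)}(0) < (N+2)/N$ and hence to (\ref{rho1rho2}); this simultaneously yields $\lambda_->0$. Claims (iii) and (iv) are the recorded multiplicities, using $\lambda_+\notin\{4\rho^{(2)}(0),8\rho^{(2)}(0)\}$ together with the hypotheses on $\lambda_{s,0}$.

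For the coordinate-level claims, I would describe the nullspace explicitly as the direct sum of the standard basis of the $\bm{0}_{N\times N}$ block and the span of $(0,\ldots,0,1,-1)^T$. Translating through the matriculation indexing, the zero-block coordinates are exactly $\{i+N(N-1)/2:1\le i\le N\}$, while the $\bm{B}_0$ and $\bm{B}_1$ blocks together cover $\{i+j(j-1)/2:1\le i\le j\le N-1\}$; this gives (ii) directly. Claim (v) then follows from orthogonality of eigenvectors of the symmetric matrix $\bm{\Sigma}_0$ against this null-space basis. Under the separation assumption in (vi), the $4\rho^{(2)}(0)$- and $8\rho^{(2)}(0)$-eigenspaces coincide with the $\bm{B}_1$ coordinates and the $\bm{B}_0$-orthogonal-complement direction, respectively, both living strictly before $L-1$. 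Claim (vii) is the parameterization of $V$ in the basis $\{\bm{e}_1,\bm{e}_2\}$; the non-vanishing of $x$ and $y$ follows since $b,c\neq 0$ prevent either $\bm{e}_1$ or $\bm{e}_2$ alone from being an eigenvector of $\bm{W}$.

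For (viii), I would employ the rescaling $\tilde\rho(x)=\rho(Cx)$, so $\tilde\rho^{(k)}(0)=C^k\rho^{(k)}(0)$, yielding $\tilde a=C^2 a$, $\tilde d=d$, $\tilde b=Cb$, $\tilde c=Cc$. An asymptotic expansion of $\tilde\lambda_- = \tfrac{1}{2}\bigl(C^2 a + d - \sqrt{(C^2 a - d)^2 + 4C^2 bc}\bigr)$ gives $\tilde\lambda_- \to (ad-bc)/a$ as $C\to\infty$, a finite limit, while $4\tilde\rho^{(2)}(0)=4C^2\rho^{(2)}(0)\to\infty$; hence $\tilde\lambda_-<4\tilde\rho^{(2)}(0)$ for $C$ sufficiently large. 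The main obstacle I anticipate is the careful index bookkeeping needed to convert the block-level spectrum into matriculation-indexed coordinate constraints, in particular verifying that the permutation defining $\bm{\Sigma}'_0$ maps the described coordinates of $\bm{\Sigma}_0$ to the correct block positions; the remaining computations are routine manipulations with rank-one-plus-scalar matrices.
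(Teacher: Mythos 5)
Your proof is correct, and it takes a genuinely different (and arguably cleaner) route from the paper's. The paper proceeds piecemeal: for (i) it exhibits a single vector $\tilde{\bm{q}}$ with equal leading entries and equal trailing pair satisfying $(\widetilde{\bm{B}}-\lambda\bm{I})\tilde{\bm{q}}=\bm{0}$, shows $\lambda_{s,0}\neq 0$ by contradiction via $ad=bc$, and then determines each multiplicity separately through rank computations on $\widetilde{\bm{B}}[1:N,1:N]$, $\widetilde{\bm{B}}[1:N,1:N]-4\rho^{(2)}(0)\bm{I}_N$, and $\widetilde{\bm{B}}[1:N,1:N]-8\rho^{(2)}(0)\bm{I}_N$; for (vii) it runs an orthogonality-plus-contradiction argument on $x_l y_l x_s y_s=0$; and for (viii) it shows the degree-four polynomial $f(C)$ has negative leading coefficient. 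You instead decompose $\mathbb{R}^{N+1}$ into the $\widetilde{\bm{B}}$-invariant subspaces $V=\sp\{\bm{e}_1,\bm{e}_2\}$, the $(N-2)$-dimensional complement of $\bm{1}_{N-1}$ in the leading block, and $\sp\{(0,\ldots,0,1,-1)\}$, reading off the entire spectrum and all eigenspaces in one shot from $\bm{B}_0=8\rho^{(2)}(0)\bm{I}+\tfrac{8}{3}\rho^{(2)}(0)\bm{J}$ and $\bm{B}_3=\tfrac{d}{2}\bm{J}_2$; this makes (ii)--(vii) essentially bookkeeping. Your treatment of (i) is slightly stronger (you get $\lambda_->0$ from $\det\bm{W}>0$ directly rather than $\lambda_{s,0}\neq 0$ by contradiction), your argument for (vii) (that $c\neq 0$ and $b\neq 0$ prevent $\bm{e}_1$ or $\bm{e}_2$ alone from being $\bm{W}$-eigenvectors) is more transparent than the paper's orthogonality chase, and your asymptotic computation $\tilde\lambda_-\to(ad-bc)/a$ for (viii) is equivalent in content to the paper's sign analysis of $f(C)$ but bypasses the explicit polynomial. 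The price is the index bookkeeping you flag at the end; I checked that the permutation indeed sends the $\bm{B}_0\oplus\bm{B}_1$ block to $\{i+j(j-1)/2:1\le i\le j\le N-1\}$ and the zero block to $\{i+N(N-1)/2:1\le i\le N\}$, and the dimension count $2+(N-2)+1+(N-1)(N-2)/2+N=L$ confirms you have the full spectrum, so the argument closes.
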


\begin{proof}
In (\ref{sigmap}), 
denote 
$$\widetilde{\bm{B}}:=
\begin{pmatrix}
\bm{B}_0 & \bm{B}_2\\
\bm{B}_2^T & \bm{B}_3
\end{pmatrix}\in\mathbb{R}^{(N+1)\times(N+1)}.
$$
Since $\det(\widetilde{\bm{B}}-\lambda\bm{I}_{N+1})=0$ for some $\lambda\in\mathbb{R}$ implies $\det(\bm{\Sigma}'_0-\lambda\bm{I}_L)=0$, for (i), it suffices to show that an eigenvalue of $\bm{W}$ must also be an eigenvalue of $\widetilde{\bm{B}}$. Indeed, assume $\Tilde{q}_1$ and $\Tilde{q}_N$ satisfy
$$(\bm{W}-\lambda\bm{I}_2)(\Tilde{q}_1,\Tilde{q}_N)^T=\bm{0}_2,$$
then it is easy to see that $\Tilde{\bm{q}}:=(\Tilde{q}_1,\dots,\Tilde{q}_{N+1})^T\in\mathbb{R}^{N+1}$ with $\Tilde{q}_1=\cdots=\Tilde{q}_{N-1}$ and $\Tilde{q}_{N}=\Tilde{q}_{N+1}$ satisfies
\begin{equation}\label{Eq:Bq}
(\widetilde{\bm{B}}-\lambda\bm{I}_{N+1})\Tilde{\bm{q}}=0. 
\end{equation}

Thus, $\lambda_{+}$ and $\lambda_{-}$ are both eigenvalues of $\bm{\Sigma}_{0}$.
Since $bc>0$ and $\rho^{(2)}(0)>0$ (see (\ref{ineq:rho1rho2})), we have
$$\lambda_{l,0}-\lambda_{s,0}\ge\sqrt{4bc}>0$$
and
$$\lambda_{l,0}=\frac{a+d+\sqrt{(a-d)^2+4bc}}{2}\ge \frac{a+d+|a-d|}{2}\ge \max(a,d)>8\rho^{(2)}(0).$$
Moreover, if $\lambda_{s,0}=0$, then $ad=bc$. After some calculation, this implies
$$\frac{\rho^{(2)}(0)}{\rho^{(1)}(0)^2 }\le \frac{N}{N+2}.$$
However, by Proposition 3.3 in \cite{che} and Condition (4) in Definition \ref{Def:qualified}, we can get
$$\frac{\rho^{(2)}(0)}{\rho^{(1)}(0)^2 }>\frac{N}{N+2},$$
which leads to a contradiction. Thus, $\lambda_{s,0}\neq 0$.

As for (ii),  we can observe from (\ref{sigmap}) that 
$$L-N-2\le\Rank(\bm{\Sigma}_{0})=\Rank(\bm{\Sigma}'_0)\le L-N-1,$$
and the only uncertainty of $\Rank(\bm{\Sigma}_{0})$ comes from 
$\widetilde{\bm{B}}$, where the last two rows (and columns) are the same, and hence, one of them can be dropped in the subsequent discussion. Thus,
\begin{center}
    $\Rank(\bm{\Sigma}_{0})= L-N-1$ if and only if $\widetilde{\bm{B}}[1:N,1:N]$ is non-degenerate.
\end{center}
By solving the equation $\widetilde{\bm{B}}[1:N,1:N]\bm{q}=\bm{0}_{N}$
for $\bm{q}=(q_1,\dots,q_N)^T\in\mathbb{R}^{N}$, we see that the non-degeneracy of $\widetilde{\bm{B}}[1:N,1:N]$ is equivalent to the equation
$\bm{W}(q_1,q_N)^T=\bm{0}_2$ not having any non-trivial solutions, i.e.,
$$\det(\bm{W})=\lambda_{l,0}\lambda_{s,0}\neq 0,$$
which is obvious by (i). Thus, $\Rank(\bm{\Sigma}_{0})= L-N-1$, and then the multiplicity of 0 is $N+1$.
By (\ref{sigmap}) and solving the equation $\bm{\Sigma}_{0}\bm{p}_0=\bm{0}_L$ for $\bm{p}_0\in\mathbb{R}^L$, we have
\begin{equation}\label{p0}
    \bm{p}_0[i+j(j-1)/2]=0\text{ for any }1\le i\le j\le N-1    
\end{equation}
and
\begin{equation}\label{p0'}
    \bm{p}_0[L-1]+\bm{p}_0[L]=0.    
\end{equation}

As for (iii), similarly, we can observe from (\ref{sigmap}) that 
$$\frac{1}{2}(N-1)(N-2)-1\le \Rank(\bm{\Sigma}_{0}-4\rho^{(2)}(0)\bm{I}_L)\le \frac{1}{2}(N-1)(N-2),$$
and the only uncertainty of $\Rank(\bm{\Sigma}_{0}-4\rho^{(2)}(0)\bm{I}_L)$ comes from 
$\Rank(\widetilde{\bm{B}}[1:N,1:N]-4\rho^{(2)}(0)\bm{I}_N)$, i.e.,
\begin{center}
   $\Rank(\bm{\Sigma}_{0}-4\rho^{(2)}(0)\bm{I}_L)=\frac{1}{2}(N-1)(N-2)$ if and only if $\widetilde{\bm{B}}[1:N,1:N]-4\rho^{(2)}(0)\bm{I}_N$ is non-degenerate. 
\end{center}
By solving the equation $(\widetilde{\bm{B}}[1:N,1:N]-4\rho^{(2)}(0)\bm{I}_N)\bm{q}'=\bm{0}_{N}$
for $\bm{q}'=(q'_1,\dots,q'_N)^T\in\mathbb{R}^{N}$, we see that the
non-degeneracy of $\widetilde{\bm{B}}[1:N,1:N]-4\rho^{(2)}(0)\bm{I}_N$ is equivalent to
$$
\det(\bm{W}-4\rho^{(2)}(0) \bm{I}_2)\neq 0.
$$
Then by (i), this is equivalent to $\lambda_{s,0}\neq 4\rho^{(2)}(0)$ as we desired.
Note that the proof for (iv) is only an analog of (iii) by replacing $4\rho^{(2)}(0)$ with $8\rho^{(2)}(0)$.

As for (v), note that $\bm{j}=(0,\dots,0,1,-1)^T\in\mathbb{R}^L$ is in the eigenspace of 0. Thus, by the orthogonality of eigenspaces, any eigenvector, $\bm{p}_{nz}$, of a nonzero eigenvalue satisfies
$\bm{p}_{nz}[L-1]=\bm{p}_{nz}[L]$  
as stated in (\ref{p1}). 
Then by (\ref{p1}), (\ref{p0}), (\ref{p0'}) and the orthogonality of eigenspaces, (\ref{p2}) is immediate.

As for (vi),
let $\bm{p}_{k,\ell}$, $1\le k<\ell\le N-1$ be $(N-1)(N-2)/2$  vectors in $\mathbb{R}^{L}$ such that
\begin{equation}\label{basis1part1}
\bm{p}_{k,\ell}[i+j(j-1)/2]=\delta_{i,k}\delta_{j,\ell}  \text{ for any }1\le i<j\le N-1, 
\end{equation}
$$\bm{p}_{k,\ell}[i+N(N-1)/2]=0\text{ for any }1\le i\le N,$$
and
\begin{equation}\label{basis1}
\bm{p}_{k,\ell}[L-1]=\bm{p}_{k,\ell}[L]=0.    
\end{equation}
Then by (iii) and (\ref{sigmap}), it is easy to check that $\bm{p}_{k,\ell}$, $1\le k<\ell\le N-1$ are linearly independent and form a basis of the eigenspace of $4\rho^{(2)}(0)$.

Similarly, let $\bm{p}'_{k}$, $1\le k\le N-2$ be $N-2$ linearly independent vectors in $\mathbb{R}^{L}$ such that for any $1\le k\le N-2$,
\begin{equation}\label{primeprime}
    \sum_{i=1}^{N-1}\bm{p}'_{k}[i+i(i-1)/2]=0,~~\bm{p}'_{k}[N+N(N-1)/2]=0,
\end{equation}
$$\bm{p}'_{k}[i+j(j-1)/2]=0\text{ for any }1\le i< j\le N,$$
and
\begin{equation}\label{basis2}
\bm{p}'_{k}[L-1]=\bm{p}''_{k}[L]=0.     
\end{equation}
Then by (iv) and (\ref{sigmap}), it is easy to check that $\bm{p}'_{k}$, $1\le k\le N-2$ form a basis of the eigenspace of $8\rho^{(2)}(0)$.
Then combining (\ref{basis1}), (\ref{basis2}) 
yields (vi). 


As for (vii),
by the orthogonality of eigenspaces 
and (\ref{basis1part1}),
we have for any nonzero eigenvalue of $\bm{\Sigma}_0$ not equal to $4\rho^{(2)}(0)$ or $8\rho^{(2)}(0)$, its eigenvector, $\bm{p}$ must have
\begin{equation}\label{p4}
    \bm{p}[i+j(j-1)/2]=\delta_{i,j}x,
\end{equation}
for a constant $x\in \mathbb{R}$ and any $1\le i\le j\le N-1$. By (\ref{p1}), (\ref{p2}) and (\ref{p4}), 
the only thing left is to show $xy\neq 0$. Let $\bm{p}_l,\bm{p}_s\in\mathbb{R}^L$ be eigenvectors of $\lambda_{l,0}$ and $\lambda_{s,0}$ respectively, such that
$$\bm{p}_l[i+j(j-1)/2]=\delta_{i,j}x_l, \text{ for any }1\le i\le j\le N-1, $$
$$\bm{p}_s[i+j(j-1)/2]=\delta_{i,j}x_s, \text{ for any }1\le i\le j\le N-1, $$
$$\bm{p}_l[L-1]=\bm{p}_l[L]=y_l\text{ and }\bm{p}_s[L-1]=\bm{p}_s[L]=y_s.$$
Then it suffices to show $x_ly_lx_sy_s\neq 0$.
Suppose $x_ly_lx_sy_s=0$. By the orthogonality of eigenspaces, 
\begin{equation}\notag
(N-1)x_lx_s+2y_ly_s=0.    
\end{equation}
Then we must have
$$x_lx_s=y_ly_s=0.$$
Without loss of generality, suppose $x_l=0$. Then $y_l\neq 0$ since $\bm{p}_l$ is nonzero. By checking the first rows of both sides of the equation
$(\bm{\Sigma}_{0}-\lambda_{l,0}\bm{I}_L)\bm{p}_l=\bm{0}_L$, (\ref{eq:Sigma0_side}) and (i), we have
$$\frac{8}{3}\rho^{(1)}(0)y_l=(\bm{\Sigma}_{0}[1,L-1]+\bm{\Sigma}_{0}[1,L])y_l=0.$$
This implies $\rho^{(1)}(0)=0$, which leads to a contradiction with (\ref{ineq:rho1rho2}). Therefore, $x_ly_lx_sy_s\neq 0$ as we desired.

As for (viii), note that for any $C>0$ 
$$\tilde{\lambda}_{s,0}=\frac{\tilde{a}+\tilde{d}-\sqrt{(\tilde{a}-\tilde{d})^2+4\tilde{b}\tilde{c}}}{2},$$
where 
$$\tilde{a}=\frac{1}{3}(32+8(N-2))C^2\rho^{(2)}(0),~~\tilde{b}=\frac{8}{3}C\rho^{(1)}(0),$$ 
    $$\tilde{c}=\frac{4}{3}(N-1)C\rho^{(1)}(0)\text{ and } \tilde{d}=2\left(1-\frac{\rho^{(1)}(0)^2}{3\rho^{(2)}(0)}\right).$$
Then we can define
$$k_1:=\tilde{a}C^{-2},~~k_2:=\tilde{b}C^{-1},~~k_3:=\tilde{c}C^{-1}, \text{ and }k_4:=\tilde{d},$$
and by (\ref{ineq:rho1rho2}) and (\ref{rho1rho2}), we have $k_i\neq 0$ for $i=1,2,3,4$.
In particular, we have
$$k_1=\frac{1}{3}(32+8(N-2))\rho^{(2)}(0)>8\rho^{(2)}(0).$$
Then by (i), the inequality
$\tilde{\lambda}_{s,0}<4\rho^{(2)}(0)C^2$ holds if and only if
$$k_1C^2+k_4-\sqrt{(k_1C^2-k_4)^2+4k_2k_3C^2}<8\rho^{(2)}(0) C^2.$$
Thus, it suffices to have
$$f(C):=((k_1-8\rho^{(2)}(0))C^2+k_4)^2-(k_1C^2-k_4)^2-4k_2k_3C^2<0.$$
Note that $f$ is a polynomial of $C$ with degree four and its coefficient of $C^4$ is 
$$(k_1-8\rho^{(2)}(0))^2-k_1^2=-16\rho^{(2)}(0) k_1+64\rho^{(2)}(0)^2=16\rho^{(2)}(0)(4\rho^{(2)}(0)-k_1)<0.$$
Thus, there exists a constant $C_0>0$ such that 
$f(C)<0$ for any $C>C_0$. This implies $\tilde{\lambda}_{s,0}<4\rho^{(2)}(0)C^2=4\tilde{\rho}^{(2)}(0)$ for any $C>C_0$.
\end{proof}

\begin{remark}\label{Rem:scale}
Recall that $L=N(N+1)/2+2$.
By Lemma \ref{Lem:EigenSigma}, the sum of multiplicities of $0$, $4\rho^{(2)}(0)$, $8\rho^{(2)}(0)$, $\lambda_{s,0}$, and $\lambda_{t,0}$ is equal to $L$, which implies they are the only eigenvalues of $\bm{\Sigma}_0$. 
The condition in (vii) of Lemma \ref{Lem:EigenSigma}, i.e.,
$$\lambda_{s,0}\notin\{4\rho^{(2)}(0), 8\rho^{(2)}(0)\},$$
ensures that these eigenvalues are distinct.

However, 
if the problem of interest is independent of the choice of $C$ in the covariance function $R(\bm{t})=\rho(C\|\bm{t}\|^2)$, $\bm{t}\in\mathbb{R}^N$, then by (viii) of Lemma \ref{Lem:EigenSigma}, we can assume that this condition always holds,
since it can be achieved by a suitable rescaling.
\end{remark}

\begin{lemma}\label{Lem:speed}
Let $X$ be qualified under perturbation. Then
\begin{enumerate}[label=(\roman*)]
    \item there exists an integer $L-N\le i\le L$ such that $\lambda_{i,2}> 0$;
    \item there exists an integer $L-N\le i\le L$ such that $\lambda_{i,2}= 0$.
\end{enumerate}
\end{lemma}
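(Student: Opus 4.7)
The strategy is to pin down the multiset $\{\lambda_{i,2}:L-N\le i\le L\}$ of expansion coefficients attached to the $N+1$ eigenvalues of $\bm{\Sigma}_0$ that vanish at $r=0$. First I would tally the multiplicities in Lemma \ref{Lem:EigenSigma} parts (i)--(iv) to get $(N+1)+(N-1)(N-2)/2+(N-2)+1+1=L$, so the indices $i$ with $\lambda_{i,0}=0$ are exactly $i=L-N,\dots,L$. For each such $i$, Lemma \ref{Lem:EigValDecSpd}(iii) yields $\lambda_{i,2}=(\bm{P}_0^{(i)})^T\bm{\Sigma}_2\bm{P}_0^{(i)}$. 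By essentially the same argument used to derive that identity, comparing the $r^2$-coefficients of the eigenvalue equation against a test vector $\bm{P}_0^{(j)}$ with $\lambda_{j,0}=0$, one also obtains $(\bm{P}_0^{(j)})^T\bm{\Sigma}_2\bm{P}_0^{(i)}=\lambda_{i,2}\delta_{i,j}$. Hence $\{\lambda_{i,2}\}_{i=L-N}^L$ is precisely the spectrum of the restriction $\tilde{\bm{\Sigma}}:=P_{V_0}\bm{\Sigma}_2P_{V_0}$ to the zero eigenspace $V_0:=\ker(\bm{\Sigma}_0)$.

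Next I would pick a convenient orthonormal basis of $V_0$ and compute $\tilde{\bm{\Sigma}}$ explicitly. By Lemma \ref{Lem:EigenSigma}(ii), $V_0$ consists of vectors $\bm{p}$ with $\bm{p}[i+j(j-1)/2]=0$ for $1\le i\le j\le N-1$ and $\bm{p}[L-1]+\bm{p}[L]=0$. I would take $\bm{v}_k$, for $k=1,\dots,N$, to be the standard basis vector at position $k+N(N-1)/2$ (corresponding to the Hessian entry $m_{k,N}$), and let $\bm{v}_{N+1}$ have entries $1/\sqrt{2}$ at position $L-1$ and $-1/\sqrt{2}$ at position $L$. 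Plugging $\bm{u}=(0,\dots,0,1)^T$ into (\ref{eq:sigma2_main}), so that $u_iu_j=\delta_{i,N}\delta_{j,N}$, and using Lemma \ref{Lem:Sigma}(3)--(4), a careful index-by-index evaluation will show that $\tilde{\bm{\Sigma}}$ is diagonal in this basis, with diagonal entries $2\alpha-6\beta$ for $k=1,\dots,N-1$, $18\alpha-30\beta$ for $k=N$, and $0$ for $k=N+1$. Off-diagonal entries vanish by index mismatches in (\ref{eq:sigma2_main}), while the whole row and column attached to $\bm{v}_{N+1}$ vanish because the four entries of the bottom-right $2\times 2$ block of $\bm{\Sigma}_2$ are equal.

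Finally, I would use the sign constraints to finish: by (\ref{ineq:rho1rho2}), $\beta=\rho^{(3)}(0)<0$, and by (\ref{Rel:1830}), $\alpha>5\beta/3$. Since $\beta<0$ gives $5\beta/3>3\beta$, one gets $\alpha>3\beta$, hence $2\alpha-6\beta>0$; meanwhile $\alpha>5\beta/3$ directly yields $18\alpha-30\beta>0$. Thus $\tilde{\bm{\Sigma}}$ has $N$ strictly positive eigenvalues and exactly one zero eigenvalue, which delivers (i) and (ii), respectively. The main obstacle is the bookkeeping in verifying the diagonality of $\tilde{\bm{\Sigma}}$ in the chosen basis; the strategic placement of $\bm{v}_{N+1}$ along the antisymmetric direction of the $\{L-1,L\}$-block is what makes the computation go through cleanly, since this vector actually lies in $\ker(\bm{\Sigma}_2)$ itself, not merely in $\ker(\tilde{\bm{\Sigma}})$.
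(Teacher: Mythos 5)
Your proof is correct but proceeds along a genuinely different route from the paper's. The paper handles the two parts with separate techniques: for (i), it tests $\bm{\Sigma}(r)$ against the single coordinate vector at position $N+N(N-1)/2$, notes $\bm{p}^T\bm{\Sigma}_2\bm{p}=18\alpha-30\beta\neq 0$ so that $\bm{p}^T\bm{\Sigma}(r)\bm{p}=\Theta(r^2)$, and then derives a contradiction from the $\bm{A}(r)=\bm{P}(r)\bm{\Lambda}^{1/2}(r)$ factorization if all $\lambda_{i,2}$ vanished; for (ii), it estimates $\det\bm{\Sigma}(r)=\prod_i\lambda_i(r)$ by combining the $N$ zero columns of $\bm{\Sigma}_0$ with the coincidences $\bm{\Sigma}_0^{(L-1)}=\bm{\Sigma}_0^{(L)}$, $\bm{\Sigma}_2^{(L-1)}=\bm{\Sigma}_2^{(L)}$ to obtain $\det\bm{\Sigma}(r)=o(r^{2N+2})$, forcing some $\lambda_{i,2}$ to vanish. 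You instead diagonalize the restriction of $\bm{\Sigma}_2$ to $V_0=\ker\bm{\Sigma}_0$ outright, identifying the multiset $\{\lambda_{i,2}\}_{i=L-N}^L$ as $\{2\alpha-6\beta \text{ ($N-1$ times)}, 18\alpha-30\beta, 0\}$ — this gives both parts in one stroke and in fact subsumes most of the paper's Lemma \ref{Lem:PLJ}, which the paper proves separately afterward. Your supporting claim that $(\bm{P}_0^{(j)})^T\bm{\Sigma}_2\bm{P}_0^{(i)}=\lambda_{i,2}\delta_{i,j}$ for $i\ne j$ in the kernel indices is not stated in Lemma \ref{Lem:EigValDecSpd} but is a correct and routine extension of its proof: after left-multiplying the eigenvalue equation by $(\bm{P}_0^{(j)})^T$, the $\bm{\Sigma}_0$ contribution dies because $\lambda_{j,0}=0$, and the right side is $O(r^3)$ by orthonormality and Lemma \ref{Lem:EigValDecSpd}(i). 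One small wording slip: the off-diagonal entries coupling $\bm{v}_k$ ($k\le N$) to $\bm{v}_{N+1}$ vanish because of the side-block equalities in Lemma \ref{Lem:Sigma}(3), not the corner-block equalities of part (4), which only annihilate the $\bm{v}_{N+1}$ diagonal entry; your follow-up remark that $\bm{v}_{N+1}\in\ker\bm{\Sigma}_2$ — because columns $L-1$ and $L$ of $\bm{\Sigma}_2$ agree in their entirety — is what actually kills the whole row and column, and is correct. Net effect: your argument trades the paper's indirect determinant estimate for an explicit spectral computation, at the cost of more bookkeeping but yielding strictly more information.
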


\begin{proof}
By (ii) of Lemma \ref{Lem:EigValDecSpd} and (ii) of Lemma \ref{Lem:EigenSigma}, $\lambda_{i,0}=\lambda_{i,1}=0$ for any $L-N\le i\le L$. Then by Definition \ref{Def:PC},
$$\lambda_{i,2}=\lim_{r\to 0}\frac{\lambda(r)-\lambda_{i,0}-\lambda_{i,1}r}{r^2}=\lim_{r\to 0}\frac{\lambda(r)}{r^2}\ge 0.$$
Thus, for (i), it suffices to show $\lambda_{i,2}\neq 0$ for some $L-N\le i\le L$.
Let $\bm{p}\in\mathbb{R}^{L}$ satisfy $$\bm{p}[k]=\delta_{k,N+N(N-1)/2} \text{ for }k=1,\dots,L.$$
Then $\bm{p}^T=\bm{H}(\bm{u}_0)_{(N)}$, and by (ii) of Lemma \ref{Lem:H},
$$\bm{p}^T\bm{\Sigma}_{0}\bm{p}=0.$$
Recall that in Lemma \ref{Lem:Sigma} and (\ref{Rel:1830}),
$\alpha=\rho^{(1)}(0)^{-1}\rho^{(2)}(0)^2$ and $\beta=\rho^{(3)}(0).$ 
Then
$$
\bm{p}^T\bm{\Sigma}_{2}\bm{p}=\bm{\Sigma}_{2}[N+N(N-1)/2,N+N(N-1)/2]
=18\alpha-30\beta\neq 0,
$$
which implies 
\begin{equation}\label{sim}
\bm{p}^T\bm{\Sigma}(r)\bm{p}=\Theta(r^2) \text{ as $r\to 0$}.    
\end{equation}
By $\bm{p}^T=\bm{H}(\bm{u}_0)_{(N)}$, (iii) of Lemma \ref{Lem:H}, and (ii) of Lemma \ref{Lem:EigenSigma}, we can also get 
$$
\begin{aligned}
\bm{p}^T\bm{\Sigma}(r)\bm{p}
&=\left(\bm{p}^T\left(\bm{A}(r)[1:L,1:(L-N-1)];\bm{A}(r)[1:L,(L-N):L]\right)\right)\\
&~~~~\left(\bm{p}^T\left(\bm{A}(r)[1:L,1:(L-N-1)];\bm{A}(r)[1:L,(L-N):L]\right)\right)^T\\
&=\left(\bm{p}^T(\bm{A}(r)[1:L,(L-N):L])\right)\left(\bm{p}^T(\bm{A}(r)[1:L,(L-N):L])\right)^T+o(r^2)\\
&=\left(\bm{p}^T\bm{P}(r)\bm{\Lambda}^{1/2}(r)[1:L,(L-N):L]\right)\left(\bm{p}^T\bm{P}(r)\bm{\Lambda}^{1/2}(r)[1:L,(L-N):L]\right)^T+o(r^2)\\
&=\left(\bm{p}^T\bm{P}(r)\right)\diag\left(0,\dots,0,\lambda_{L-N}(r),\dots,\lambda_{L}(r)\right)\left(\bm{p}^T\bm{P}(r)\right)^T+o(r^2)\\
&=\left(\bm{p}^T\bm{P}(r)\right)\diag\left(0,\dots,0,\lambda_{L-N,2},\dots,\lambda_{L,2}\right)\left(\bm{p}^T\bm{P}(r)\right)^Tr^2+o(r^2),
\end{aligned}
$$
where the semicolons represent juxtaposition operations on matrices.
If $\lambda_{i,2}=0$ for all integer $L-N\le i\le L$,
then from the above equation, we see 
$\bm{p}^T\bm{\Sigma}(r)\bm{p}=o(r^2)$,  which contradicts with (\ref{sim}).

As for (ii), 
by Lemma \ref{Lem:Sigma}, we see
$$\bm{\Sigma}_{0}^{(i+N(N-1)/2)}=\bm{0}_{L}\text{ for any }1\le i\le N,$$ 
which contributes $O(r^{2N})$ in $\det(\bm{\Sigma}(r))$, and
$$\bm{\Sigma}_{0}^{(L-1)}=\bm{\Sigma}_{0}^{(L)}\text{ and }\bm{\Sigma}_{2}^{(L-1)}=\bm{\Sigma}_{2}^{(L)},$$
which contribute $o(r^2)$ in $\det(\bm{\Sigma}(r))$.
Hence we have
$$\det(\bm{\Sigma}(r))=o\left(r^{2N+2}\right)\text{ as $r\to 0$}.$$
Thus, there must be an integer $L-N\le i\le L$ such that $\lambda_i(r)=o(r^2)$, which implies $\lambda_{i,2}=0$ as desired.
\end{proof}

\begin{lemma}\label{Lem:PLJ}
Let $X$ be qualified under perturbation. Denote $\bm{j}:=(0,\dots,0,1,-1)^T\in\mathbb{R}^{L}$.
Then
$$\lambda_{i,2}>0\text{ for any }L-N\le i\le L-1,\text{ and }\lambda_{L,2}=0.$$
Moreover, $\bm{P}_0^{(L)}$ and $\bm{j}$ are linearly dependent.
\end{lemma}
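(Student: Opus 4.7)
The plan is to reduce the lemma to a quadratic-form computation on the $(N+1)$-dimensional kernel of $\bm{\Sigma}_0$. By Lemma \ref{Lem:EigenSigma}(ii), that kernel consists exactly of vectors $\bm{p}$ with $\bm{p}[i+j(j-1)/2]=0$ for all $1\le i\le j\le N-1$ and $\bm{p}[L-1]+\bm{p}[L]=0$; in particular $\bm{j}$ lies in it, and $\bm{P}_0^{(L-N)},\dots,\bm{P}_0^{(L)}$ form an orthonormal basis of it. Lemma \ref{Lem:EigValDecSpd}(iii) gives $\lambda_{i,2}=(\bm{P}_0^{(i)})^T\bm{\Sigma}_2\bm{P}_0^{(i)}$ for every $L-N\le i\le L$, and I would upgrade this to a full diagonalisation of $\bm{\Sigma}_2$ on the kernel by equating the $r^2$ coefficients in $(\bm{P}_0^{(j)})^T\bm{\Sigma}(r)\bm{P}(r)^{(i)}=\lambda_i(r)(\bm{P}_0^{(j)})^T\bm{P}(r)^{(i)}$ (for $L-N\le i,j\le L$) and using $\bm{\Sigma}_0\bm{P}_0^{(i)}=0$ together with $\bm{\Sigma}_0\bm{P}_1^{(i)}=0$ (Lemma \ref{Lem:EigValDecSpd}(iv)) to obtain $(\bm{P}_0^{(j)})^T\bm{\Sigma}_2\bm{P}_0^{(i)}=\lambda_{i,2}\delta_{i,j}$. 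Thus the numbers $\lambda_{L-N,2},\dots,\lambda_{L,2}$ are precisely the eigenvalues of $\bm{\Sigma}_2$ restricted to the kernel.

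The heart of the proof is then the direct computation of that restriction. Parameterize $\bm{p}$ in the kernel by $q_i:=\bm{p}[i+N(N-1)/2]$ for $1\le i\le N$ and $z:=\bm{p}[L-1]=-\bm{p}[L]$. Lemma \ref{Lem:Sigma}(4) yields $\bm{\Sigma}_2[L-1,L-1]=\bm{\Sigma}_2[L,L]=\bm{\Sigma}_2[L-1,L]$, so the pure-$z$ contribution $z^2(\bm{\Sigma}_2[L-1,L-1]-2\bm{\Sigma}_2[L-1,L]+\bm{\Sigma}_2[L,L])$ vanishes; Lemma \ref{Lem:Sigma}(3) gives $\bm{\Sigma}_2[\cdot,L-1]=\bm{\Sigma}_2[\cdot,L]$, which together with $\bm{p}[L-1]+\bm{p}[L]=0$ kills the $q$-$z$ cross terms. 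Only the pure $q$-block remains. Specialising (\ref{eq:sigma2_main}) to $\bm{u}=\bm{u}_0$ and index pairs $((i,N),(k,N))$, I expect the off-diagonal ($i\neq k$) entries to vanish, the $i=k\neq N$ diagonal entries to collapse to $2\alpha-6\beta$, and the $(N,N)$ entry to $18\alpha-30\beta$. Inequality (\ref{Rel:1830}), i.e.\ $\alpha>\tfrac{5}{3}\beta$, together with $\beta<0$ from (\ref{ineq:rho1rho2}), shows that both values are strictly positive, so on the kernel $\bm{p}^T\bm{\Sigma}_2\bm{p}=(2\alpha-6\beta)(q_1^2+\cdots+q_{N-1}^2)+(18\alpha-30\beta)q_N^2$.

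Consequently $\bm{\Sigma}_2$ on the kernel of $\bm{\Sigma}_0$ is positive semi-definite with a one-dimensional null space spanned by $\bm{j}$. Combined with the ordering $\lambda_{L-N,2}\ge\cdots\ge\lambda_{L,2}$ and Lemma \ref{Lem:speed}(ii) (which guarantees at least one zero among them), the unique vanishing eigenvalue must be $\lambda_{L,2}=0$ while $\lambda_{i,2}>0$ for $L-N\le i\le L-1$; the eigenvector attached to the zero is forced to be $\pm\bm{j}/\|\bm{j}\|$, giving the claimed linear dependence between $\bm{P}_0^{(L)}$ and $\bm{j}$. The main obstacle is purely bookkeeping: the case analysis of (\ref{eq:sigma2_main}) at $\bm{u}_0$ requires careful tracking of the eight tensorial terms so that the diagonal $q$-block emerges cleanly, after which every remaining step is a direct consequence of earlier lemmas.
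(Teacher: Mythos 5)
Your proof is correct and follows essentially the same strategy as the paper. Both reduce the problem to computing the quadratic form $\bm{p}^T\bm{\Sigma}_2\bm{p}$ for $\bm{p}$ in $\ker\bm{\Sigma}_0$, use Lemma~\ref{Lem:Sigma}(3)--(4) to kill the cross terms and the pure $L-1,L$ block, obtain $(2\alpha-6\beta)\sum_{i=1}^{N-1}q_i^2+(18\alpha-30\beta)q_N^2$ with $q_i=\bm{p}[i+N(N-1)/2]$, and invoke $\alpha>\tfrac{5}{3}\beta$ together with $\beta<0$ to conclude that the form is nonnegative and vanishes exactly on $\mathrm{span}\{\bm{j}\}$. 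The one small divergence is how uniqueness of the vanishing eigenvalue is obtained: you prove the full diagonalization $(\bm{P}_0^{(j)})^T\bm{\Sigma}_2\bm{P}_0^{(i)}=\lambda_{i,2}\delta_{i,j}$ so that $\lambda_{L-N,2},\dots,\lambda_{L,2}$ are literally the eigenvalues of the restriction (this extension of Lemma~\ref{Lem:EigValDecSpd}(iii) is valid, since $(\bm{P}_{0}^{(j)})^T\bm{\Sigma}_0=\bm{0}$ already kills the lower-order terms). The paper gets by with just the diagonal case and the observation that two distinct orthonormal columns of $\bm{P}_0$ cannot both be proportional to $\bm{j}$, which is marginally leaner; your spectral phrasing makes the eigenvalue count more transparent. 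Either way, combining with Lemma~\ref{Lem:speed}(ii) for existence of a zero and the ordering of $\lambda_{i}(r)$ gives $\lambda_{L,2}=0$ and $\bm{P}_0^{(L)}\propto\bm{j}$.
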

\begin{proof}
Note that by (ii) of Lemma \ref{Lem:speed}, there exists an integer $L-N\le i_*\le L$ such that $\lambda_{i_*,2}= 0$, 
and by (\ref{eq:sigma2_main}),
$$
\begin{aligned}
&~~~~\bm{\Sigma}_2[(N(N-1)/2+1):N(N+1)/2,(N(N-1)/2+1):N(N+1)/2]\\
&=\diag(2\alpha-6\beta,\dots,2\alpha-6\beta,18\alpha-30\beta)_{N\times N}.
\end{aligned}
$$
Then by (iii) of Lemma \ref{Lem:EigValDecSpd} and (ii) of Lemma \ref{Lem:EigenSigma}, we have
\begin{equation}\label{ps2}
\begin{aligned}
0&=\left(\bm{P}_0^{(i_*)}\right)^T\bm{\Sigma}_{2}\bm{P}_0^{(i_*)}\\
&=\left(2\alpha-6\beta\right)\sum_{i=1}^{N-1}\bm{P}_0[i+N(N-1)/2,i_*]^2+(18\alpha-30\beta)\bm{P}_0[N+N(N-1)/2,i_*]^2.
\end{aligned}   
\end{equation}
By (\ref{Rel:1830}), we have $18\alpha-30\beta>0$ and $\beta<0$, which implies $2\alpha-6\beta>0$. Then
by (\ref{ps2}) and (ii) of Lemma \ref{Lem:EigenSigma}, we have
$$\bm{P}_0[i,i_*]=0\text{ for any $1\le i\le N(N+1)/2$},$$
and
$$\bm{P}_0[L-1,i_*]+\bm{P}_0[L,i_*]=0.$$
Since $\bm{P}_0$ is non-degenerate, we have
\begin{center}
$\bm{P}_0^{(i_*)}$ and $\bm{j}$ are linearly dependent.    
\end{center}
Note that this property holds for any column $L-N\le i\le L$ satisfying $\lambda_{i,2}= 0$. However,
by the non-degeneracy of $\bm{P}_0$, there can only be one column of $\bm{P}_0$ satisfying this property. This means that there exists a unique integer $L-N\le i_*\le L$ such that $\lambda_{i_*,2}= 0$. By the continuity of $\bm{P}(r)$ at $r=0$ and (\ref{def:order}), we have $i^*=L$.
\end{proof}

\section{Asymptotic Behavior as \texorpdfstring{$r\to 0$}{the Distance Tends to 0}}\label{proj2:sec:mr1}

\subsection{Main result 1}\label{Sec:M1}
We start from the following Rice's formula as in \cite{adl}. Recall that $\lambda^n$ refers to the $n$-dimensional Lebesgue measure.

\begin{lemma}(Corollary 11.2.2, \cite{adl})\label{Lem:RiceFormula}
Let $T$ be a compact subset of $\mathbb R^N$ with $\lambda^{N-1}(\partial T)<\infty$, $X$ be a centered Gaussian random field defined on $T$. Let $\bm{v}\in \mathbb R^N$, $B$ be an open set in $\mathbb{R}^k$, $k=N(N+1)/2+1$ such that $\partial B$ has Hausdorff dimension $k-1$. Denote by $N_{\bm{v},B}(T)$ the number of points $\bm{t}\in T$ which satisfy $\triangledown X(\bm{t})=\bm{v}$ and $(\triangledown^2X(\bm{t}), X(\bm{t}))\in B$.

Suppose that the distribution of $(\triangledown^2X(\bm{t}),\triangledown X(\bm{t}),X(\bm{t}))$ is non-degenerate for any $\bm{t}\in T$. For any $i,j\in\{1,\dots, N\}$ and $\bm{s},\bm{t}\in T$, denote
$$C_{ij}(\bm{s},\bm{t}):=\Cov\left[X_{ij}(\bm{s}),X_{ij}(\bm{t})\right].$$
Suppose 
\begin{equation}\label{cfi2}
\max_{1\le i,j\le N}\left|C_{ij}(\bm{t},\bm{t})+C_{ij}(\bm{s},\bm{s})-2C_{ij}(\bm{s},\bm{t})\right|\le K|\log(\|\bm{t}-\bm{s}\|)|^{-(1+\alpha)}
\end{equation}
for some finite $K>0$, some $\alpha>0$, and all $\bm{s},\bm{t}\in T$ such that $\|\bm{t}-\bm{s}\|$ is small enough. Then we have $$\E\left[N_{\bm{v},B}(T)\right]=\int_{\bm{t}\in T}\int_{(\bm{x}'',x)\in B}\left|\det(\bm{x}'')\right|p_{\bm{t}}(\bm{x}'',\bm{v},x)d\bm{x}''dxd\bm{t},$$
where $p_{\bm{t}}(\bm{x}'',\bm{x}',x)$ is the density of the Gaussian vector $(\triangledown^2X(\bm{t}),\triangledown X(\bm{t}),X(\bm{t})).$
\end{lemma}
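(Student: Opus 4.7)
The statement is the classical Kac-Rice formula of \cite{adl}; my plan is to outline the standard smoothing proof that underlies such formulas. First, I would approximate the point-condition $\triangledown X(\bm{t})=\bm{v}$ by a mollified count. Let $\{\psi_\vep\}_{\vep>0}$ be a smooth mollifier family on $\reals^N$ with $\int\psi_\vep=1$ and support shrinking to $\bm{0}_N$, and set
$$N^\vep_{\bm{v},B}(T):=\int_T \psi_\vep(\triangledown X(\bm{t})-\bm{v})\,\bigl|\det(\triangledown^2 X(\bm{t}))\bigr|\,\one_B(\triangledown^2 X(\bm{t}),X(\bm{t}))\,d\bm{t}.$$
Under the non-degeneracy hypothesis, the set $\{\bm{t}\in T:\triangledown X(\bm{t})=\bm{v}\}$ is almost surely finite with non-singular Hessians, so near each such critical point $\bm{t}_0$ the map $\triangledown X$ is a local $C^1$-diffeomorphism by the inverse function theorem, and the change-of-variables formula yields $N^\vep_{\bm{v},B}(T)\to N_{\bm{v},B}(T)$ almost surely as $\vep\downarrow 0$.

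Next, I would compute $\E[N^\vep_{\bm{v},B}(T)]$ directly by Fubini. Since $(\triangledown^2 X(\bm{t}),\triangledown X(\bm{t}),X(\bm{t}))$ admits the joint Gaussian density $p_{\bm{t}}(\bm{x}'',\bm{x}',x)$,
$$\E[N^\vep_{\bm{v},B}(T)]=\int_T\int_{(\bm{x}'',x)\in B}|\det(\bm{x}'')|\left[\int_{\reals^N}\psi_\vep(\bm{x}'-\bm{v})p_{\bm{t}}(\bm{x}'',\bm{x}',x)d\bm{x}'\right]d\bm{x}''dxd\bm{t},$$
and as $\vep\downarrow 0$ the bracketed mollified average converges pointwise to $p_{\bm{t}}(\bm{x}'',\bm{v},x)$ by continuity of the joint Gaussian density in the gradient coordinate, which formally produces the claimed right-hand side.

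The main obstacle is upgrading the almost-sure convergence $N^\vep_{\bm{v},B}(T)\to N_{\bm{v},B}(T)$ to convergence in $L^1$, which is what actually lets us pass the expectation through the limit. This reduces to a uniform second-moment bound $\sup_\vep\E[(N^\vep_{\bm{v},B}(T))^2]<\infty$, obtained from a Belyaev-Cram\'er-type expansion of the second factorial moment. Here condition (\ref{cfi2}) enters as a Geman-type modulus: the sharp logarithmic rate $K|\log\|\bm{t}-\bm{s}\||^{-(1+\alpha)}$ controls how fast the joint covariance of $(\triangledown X(\bm{s}),\triangledown X(\bm{t}))$ degenerates along the diagonal, ensuring that $[\det \Cov(\triangledown X(\bm{s}),\triangledown X(\bm{t}))]^{-1/2}$ remains integrable near $\bm{s}=\bm{t}$. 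The resulting uniform integrability, combined with dominated convergence on the right-hand side, then yields the stated Rice formula.
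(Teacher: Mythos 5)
The paper does not prove this lemma at all: it is stated verbatim as a citation of Corollary 11.2.2 of Adler and Taylor \cite{adl}, and the authors supply no argument for it, so there is no in-paper proof for your proposal to be compared against. Your sketch of the standard Kac--Rice proof (mollification, a.s.\ convergence of the mollified count, Fubini, and an integrability step to pass the limit through $\E$) is a reasonable high-level outline of how such results are obtained, but it diverges from the actual proof route in \cite{adl} in one substantive way, and misidentifies what condition (\ref{cfi2}) is doing.

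The quantity in (\ref{cfi2}) is
\[
C_{ij}(\bm{t},\bm{t})+C_{ij}(\bm{s},\bm{s})-2C_{ij}(\bm{s},\bm{t})=\E\bigl[(X_{ij}(\bm{t})-X_{ij}(\bm{s}))^2\bigr],
\]
i.e.\ a modulus-of-continuity bound on the \emph{second} derivatives $X_{ij}$, not on the gradient $\triangledown X$. In Adler--Taylor's development this is used, via their Lemma 11.2.11, to verify the modulus-of-continuity hypothesis (condition (f) of their Theorem 11.2.1), which controls the local Taylor error $|f_i(\bm{t})-f_i(\bm{s})-f_{ij}(\bm{t})(t_j-s_j)|$ and hence makes the mollified change-of-variables argument go through; the paper itself says as much at the end of Section~\ref{Sec:M1}. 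Your explanation instead attributes (\ref{cfi2}) to controlling the degeneracy of $\Cov(\triangledown X(\bm{s}),\triangledown X(\bm{t}))$ along the diagonal so that $[\det\Cov(\triangledown X(\bm{s}),\triangledown X(\bm{t}))]^{-1/2}$ is integrable near $\bm{s}=\bm{t}$; that is the key estimate in the \emph{second-factorial-moment} approach (Aza\"is--Wschebor style), and it concerns first derivatives, not $X_{ij}$, so it is not what (\ref{cfi2}) says. Relatedly, the $L^1$ upgrade in Adler--Taylor's proof does not go through a uniform bound on $\E[(N^\vep_{\bm v,B}(T))^2]$: they handle it through the moduli-of-continuity estimate and a localization argument. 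So while your overall strategy captures the spirit of a Kac--Rice proof, the explanation of the hypothesis (\ref{cfi2}) is incorrect, and the uniform-integrability step is attributed to a mechanism that is not the one used in the cited source.
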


There is a second condition similar to (\ref{cfi2}) on the continuity of $\bm{\Sigma}(\bm{t})$, but it is automatically satisfied in our setting by Lemma \ref{Lem:Sigma}, hence omitted here. 

Let $X$ be qualified under perturbation. Then for any $z\in\mathbb{R}$ and compact set $T\subset \mathbb{R}^N\setminus\{\bm{0}_N\}$ with $\lambda^{N-1}(\partial T)<\infty$, $X(\bm{t})$ conditional on $\triangledown X(\bm{0})=\bm{0}_{N}$ and $X(\bm{0})=z$ is still a Gaussian random field on $\bm{t}\in T$. Our goal is to apply Lemma \ref{Lem:RiceFormula} to it. The following result guarantees that (\ref{cfi2}) holds for this conditional random field.
\begin{lemma}\label{Lem:sixorder}
Let $\{X(\bm{t}),\bm{t}\in \mathbb{R}^N\}$ be a stationary Gaussian random field possessing up to second-order almost sure derivatives. 
Let $r(\bm{t})$, $\bm{t}\in\mathbb{R}^N$ be the covariance function of $X$. Suppose that all of the sixth-order partial derivatives of $r(\bm{t})$ exist at $\bm{t}=\bm{0}_N$.
Then Condition (\ref{cfi2}) holds for all $\bm{s},\bm{t}\in \mathbb{R}^N$ such that $\|\bm{t}-\bm{s}\|$ is small enough.
\end{lemma}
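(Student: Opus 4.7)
The plan is to reduce the estimate in (\ref{cfi2}) to bounding the variance of a small increment of $X_{ij}$, and then to control this variance via the mean-square differentiability of $X_{ij}$, which is itself provided by the sixth-order derivative assumption on $r$.

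By the stationarity of $X$, the covariance $C_{ij}(\bm{s},\bm{t})$ depends only on the separation $\bm{h}:=\bm{t}-\bm{s}$; write $g_{ij}(\bm{h}):=\Cov[X_{ij}(\bm{0}),X_{ij}(\bm{h})]$. Then $C_{ij}(\bm{t},\bm{t})=C_{ij}(\bm{s},\bm{s})=g_{ij}(\bm{0})$, and
\begin{equation*}
C_{ij}(\bm{t},\bm{t})+C_{ij}(\bm{s},\bm{s})-2C_{ij}(\bm{s},\bm{t})=2\bigl(g_{ij}(\bm{0})-g_{ij}(\bm{h})\bigr)=\Var\bigl[X_{ij}(\bm{h})-X_{ij}(\bm{0})\bigr]\ge 0,
\end{equation*}
so it suffices to bound this variance as $\|\bm{h}\|\to 0$.

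Next, I invoke Lemma \ref{Lem:MSD}. By stationarity the existence of all sixth-order partial derivatives of $r$ at $\bm{0}$ is equivalent to the existence of the sixth-order derivatives of $r(\bm{s},\bm{t})$ at every diagonal point $(\bm{t},\bm{t})$, so part (i) of the lemma (with $k=3$) yields third-order mean-square derivatives of $X$ throughout $\mathbb{R}^N$. In particular, for every $k=1,\dots,N$ the first mean-square partial $X_{ijk}(\bm{t})$ of $X_{ij}$ exists at every $\bm{t}$. Along the coordinate direction $\bm{e}_k$ this gives $\Var[X_{ij}(h_k\bm{e}_k)-X_{ij}(\bm{0})]=O(h_k^2)$, since $(X_{ij}(h_k\bm{e}_k)-X_{ij}(\bm{0}))/h_k\to X_{ijk}(\bm{0})$ in $L^2$. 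Decomposing $X_{ij}(\bm{h})-X_{ij}(\bm{0})$ as a telescoping sum of $N$ axial increments and applying the triangle inequality in $L^2$ together with stationarity gives
\begin{equation*}
\sqrt{\Var\bigl[X_{ij}(\bm{h})-X_{ij}(\bm{0})\bigr]}\le\sum_{k=1}^N\sqrt{\Var\bigl[X_{ij}(h_k\bm{e}_k)-X_{ij}(\bm{0})\bigr]}=O(\|\bm{h}\|),
\end{equation*}
so $\Var[X_{ij}(\bm{h})-X_{ij}(\bm{0})]=O(\|\bm{h}\|^2)$ uniformly in $i,j\in\{1,\dots,N\}$.

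To conclude, fix any $\alpha>0$. Since $\|\bm{h}\|^2$ vanishes polynomially while $|\log\|\bm{h}\||^{-(1+\alpha)}$ vanishes only as a negative power of a logarithm, $\|\bm{h}\|^2\le|\log\|\bm{h}\||^{-(1+\alpha)}$ for all sufficiently small $\|\bm{h}\|$, which combined with the uniform $O(\|\bm{h}\|^2)$ bound from the previous paragraph establishes (\ref{cfi2}) with a suitable constant $K$. The only nontrivial step is the use of Lemma \ref{Lem:MSD} to upgrade the pointwise sixth-order derivative assumption on $r$ at $\bm{0}$ to the existence of first mean-square partials of $X_{ij}$ at \emph{every} point (needed to apply the one-dimensional Taylor-like bound after an arbitrary stationary shift); once this is in hand, the quadratic bound on the increment variance is routine and easily dominates any logarithmic rate.
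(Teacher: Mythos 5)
Your proof is correct, and it takes a route that is dual to the paper's rather than identical. The paper works analytically on the covariance function: by stationarity the quantity in (\ref{cfi2}) equals $2\bigl(r_{ijij}(\bm{0})-r_{ijij}(\bm{h})\bigr)$, and the paper asserts that the sixth-order assumption gives continuity of the fourth and boundedness of the fifth partials of $r$ near $\bm{0}$, whence a Lipschitz (hence $O(\|\bm{h}\|)$) bound on $r_{i_1i_2i_3i_4}(\bm{h})-r_{i_1i_2i_3i_4}(\bm{0})$, which then dominates the required logarithmic rate via $\|\bm{h}\|\le(-\log\|\bm{h}\|)^{-2}$. You instead recognize the left-hand side as $\Var[X_{ij}(\bm{h})-X_{ij}(\bm{0})]$ and control it probabilistically: Lemma \ref{Lem:MSD}(i) with $k=3$ turns the sixth-order derivative assumption (plus stationarity) into existence of the third mean-square partials $X_{ijk}$, the definition of mean-square differentiability gives an $O(h_k)$ bound in $L^2$ for each axial increment, and the telescoping plus triangle inequality in $L^2$ (using stationarity to recentre each increment at the origin) yields $O(\|\bm{h}\|^2)$. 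The conclusions are the same, and both polynomial rates trivially beat the logarithmic rate. Your version has a modest advantage: it needs mean-square differentiability of $X_{ij}$ only at the origin, which is exactly what Lemma \ref{Lem:MSD}(i) with the sixth-order assumption at $\bm{0}$ delivers, whereas the paper's phrasing implicitly requires fifth partials of $r$ to be bounded on a neighborhood of $\bm{0}$ — a step that is standard but deserves a word, since the hypothesis is stated only pointwise at $\bm{0}$. On the other hand the paper's argument is shorter and doesn't need the telescoping decomposition or the $L^2$ triangle inequality.
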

\begin{proof}
See Appendix \ref{subsec:sixorder}.
\end{proof}

There is still one last condition which prevents us from applying Lemma \ref{Lem:RiceFormula} to the conditional random field: after conditioning, the random field may no longer be centered. However, one can easily check that the mean function of this new random field is continuously twice differentiable. As discussed in page 268 of \cite{adl}, (\ref{cfi2}) is only used to ensure that Condition (1) in Theorem 11.2.1 in \cite{adl} holds, which is a moduli of continuity condition. One can easily show that this condition still holds if the random field is shifted by a continuously twice differentiable function. Consequently, we can indeed apply Lemma \ref{Lem:RiceFormula} to $X(\bm{t})$ conditional on $\triangledown X(\bm{0})=\bm{0}_{N}$ and $X(\bm{0})=z$.

Finally, note that by a simple limiting argument, we can replace the condition in Lemma \ref{Lem:RiceFormula} that $T$ be compact by $T\subset\mathbb{R}^N\setminus\{\bm{0}_N\}$ be such that $T\cup \{\bm{0}_N\}$ is compact.

To study the number of critical points of different types using Lemma \ref{Lem:RiceFormula}, we need to set up some notations.

For any $0\le k\le N$, define 
\begin{equation}\notag
    D_k:=\left\{\bm{x}\in\mathbb{R}^{N(N+1)/2}:\Matri_N(\bm{x})\text{ is non-degenerate and has exactly $k$ negative eigenvalues}\right\}.
\end{equation}
Let $p(z)$, $z\in\mathbb{R}$ be the density of $X(\bm{0})$,
let $p(\bm{z})$, $\bm{z}\in\mathbb{R}^N$ be the density of $\triangledown X(\bm{0})$, and let $p(\bm{z},z)$, $(\bm{z},z)\in\mathbb{R}^N\times\mathbb{R}$ be the density of $(\triangledown X(\bm{0}),X(\bm{0}))$. Then by Lemma \ref{Lem:MSD}, $(\triangledown X(\bm{0}),X(\bm{0}))$ are Gaussian with
$$\Cov[X_i(\bm{0}),X(\bm{0})]=0$$
for any $1\le i\le N$, and thus, for any $(\bm{z},z)\in\mathbb{R}^N\times\mathbb{R}$,
$$p(\bm{z},z)=p(\bm{z})p(z).$$
For any $\bm{t}\in\mathbb{R}^N\setminus\{\bm{0}_N\}$,
let $p_{\bm{t}}(\bm{x},\bm{z})$, $\bm{x},\bm{z}\in\mathbb{R}^N$ be the density of $(\triangledown X(\bm{t}),\triangledown X(\bm{0}))$, and let $p_{\bm{t}}(\bm{x}'',\bm{x}',x,\bm{z},z)$, $(\bm{x}'',\bm{x}',x,\bm{z},z)\in\mathbb{R}^{N(N+1)/2}\times\mathbb{R}^{N}\times\mathbb{R}\times\mathbb{R}^{N}\times\mathbb{R}$ be the density of $(\triangledown^2 X(\bm{t}),\triangledown X(\bm{t}),X(\bm{t}),\triangledown X(\bm{0}),X(\bm{0}))$.
For any $\bm{z}\in\mathbb{R}^N$ and $z\in\mathbb{R}$, let $p_{\bm{t}}(\bm{x}'',\bm{x}',x|\bm{z},z)$, $(\bm{x}'',\bm{x}',x)\in\mathbb{R}^{N(N+1)/2}\times\mathbb{R}^{N}\times\mathbb{R}$ be the density of $(\triangledown^2X(\bm{t}),\triangledown X(\bm{t}),X(\bm{t}))$ conditional on $\triangledown X(\bm{0})=\bm{z}$ and $X(\bm{0})=z$. 

Similarly, for any $\bm{t}\in\mathbb{R}^N\setminus\{\bm{0}_N\}$, $\bm{x}',\bm{z}'\in\mathbb{R}^N$, let $p_{\bm{t}}(\bm{x}'',x,z|\bm{x}',\bm{z}')$, $(\bm{x}'',x,z)\in\mathbb{R}^{N(N+1)/2}\times\mathbb{R}\times\mathbb{R}$ be the density of  $(\triangledown^2X(\bm{t}), X(\bm{t}), X(\bm{0}))$ conditional on $\triangledown X(\bm{t})=\bm{x}'$ and $\triangledown X(\bm{0})=\bm{z}'$. In particular, the covariance matrix of the Gaussian density $p_{\bm{t}}(\bm{x}'',x,z|\bm{0},\bm{0})$ is
$\bm{\Sigma}(\bm{t})$ which we carefully studied in the previous sections. In addition, it is clear by symmetry that the mean vector of $p_{\bm{t}}(\bm{x}'',x,z|\bm{0},\bm{0})$ is $\bm{0}_{L}$. Thus, we have
\begin{equation}\label{eq:pt}
p_{\bm{t}}(\bm{x}'',x,z|\bm{0},\bm{0})=\frac{1}{\sqrt{(2\pi)^{L}\det(\bm{\Sigma}(\bm{t}))}}\exp\left(-\frac{1}{2}(\bm{x}'',x,z)\bm{\Sigma}(\bm{t})^{-1}(\bm{x}'',x,z)^T\right). 
\end{equation}

By Lemma \ref{Lem:RiceFormula}, we have for any $0\le k\le N$ and $u\in\mathbb{R}$,
the density, $f_{u,k}(\bm{t})$ of the mean measure of the non-degenerate critical points of $X$ above $u$ with index $k$, conditional on $\triangledown X(\bm{0})=\bm{0}_{N}$ and $X(\bm{0})>u$, is given by
\begin{equation}\label{expression}
    \begin{aligned}
        f_{u,k}(\bm{t})&=P(X(\bm{0})>u)^{-1}\int_{x,z>u}\int_{D_k}\left|\det\left(\Matri_N(\bm{x}'')\right)\right|p_{\bm{t}}(\bm{x}'',\bm{0},x|\bm{0},z)p(z)d\bm{x}''dxdz\\
        &=P(X(\bm{0})>u)^{-1}\int_{x,z>u}\int_{D_k}\left|\det\left(\Matri_N(\bm{x}'')\right)\right|p_{\bm{t}}(\bm{x}'',\bm{0},x,\bm{0},z)p(\bm{0},z)^{-1}p(z)d\bm{x}''dxdz\\
        &=P(X(\bm{0})>u)^{-1}p(\bm{0})^{-1}p_{\bm{t}}(\bm{0},\bm{0})\int_{x,z>u}\int_{D_k}\left|\det\left(\Matri_N(\bm{x}'')\right)\right|p_{\bm{t}}(\bm{x}'',x,z|\bm{0},\bm{0})d\bm{x}''dxdz.
    \end{aligned}
\end{equation}

We can further define 
$$f_{u,+}(\bm{t}):=\sum_{\text{$k$ even}}f_{u,k}(\bm{t})$$
and
$$f_{u,-}(\bm{t}):=\sum_{\text{$k$ odd}}f_{u,k}(\bm{t}),$$
i.e., to replace $D_k$ in (\ref{expression}) with $\{\bm{x}''\in\mathbb{R}^{N(N+1)/2}:\det(\Matri_N(\bm{x}''))>0\}$ and $\{\bm{x}''\in\mathbb{R}^{N(N+1)/2}:\det(\Matri_N(\bm{x}''))<0\}$, respectively. $f_{u,\pm}(\bm{t})$ are clearly continuous functions of $\bm{t}\in\mathbb{R}^N\setminus\{\bm{0}_N\}$.

Moreover, by Lemma 11.2.12 in \cite{adl} with $\bm{f}(\bm{t})=\triangledown X(\bm{t})$ and $\bm{g}(\bm{t})=(\triangledown^2 X(\bm{t}),X(\bm{t}))$ for $\bm{t}\in T$, we have 
\begin{equation}\notag
P\left[\#\left\{\bm{t}\in T: X(\bm{t})>u,\triangledown X(\bm{t})=\bm{0},\det(\triangledown^2X(\bm{t}))=0\right\}=0\right]=1 .
\end{equation}
In other words, with probability one, all the critical points in the excursion set $A_u(X,T)$ are non-degenerate. Hence (\ref{expression}) also holds for the (not necessarily non-degenerate) critical points in general.

The following is our first main result.
\begin{theorem}\label{Theo:MR1}
Let $X$ be qualified under perturbation. Then for any $u\in\mathbb{R}$,
$$\lim_{\|\bm{t}\|\to 0}\frac{f_{u,+}(\bm{t})}{f_{u,-}(\bm{t})}=1.$$
\end{theorem}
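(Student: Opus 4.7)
The plan is to reduce the ratio to a standard-Gaussian integral via the substitution $(\bm{x}'',x,z)^T=\bm{A}_{\bm{u}_0}(r)\bm{y}$, exploit the eigen-decomposition of $\bm{\Sigma}_{\bm{u}_0}(r)$ to identify the asymptotically relevant Gaussian blocks, and then read off the required symmetry from the fact that the single ``signed'' direction in the determinant decouples at first order. By isotropy it is enough to take $\bm{t}=r\bm{u}_0$ with $r\to 0$, and in the ratio $f_{u,+}(\bm{t})/f_{u,-}(\bm{t})$ the common prefactor $P(X(\bm{0})>u)^{-1}p(\bm{0})^{-1}p_{r\bm{u}_0}(\bm{0},\bm{0})$ appearing in (\ref{expression}) cancels. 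After the substitution, the Jacobian $|\det(\bm{A}_{\bm{u}_0}(r))|$ cancels with the normalising factor $\sqrt{\det(\bm{\Sigma}(r\bm{u}_0))}$ in (\ref{eq:pt}), and the joint density becomes the standard Gaussian $\phi(\bm{y})=(2\pi)^{-L/2}e^{-\|\bm{y}\|^2/2}$ on $\mathbb{R}^L$.

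Next I would analyse the asymptotic form of the map $\bm{y}\mapsto(\bm{x}''(\bm{y},r),x(\bm{y},r),z(\bm{y},r))$. Split $\bm{y}=(\bm{y}^{(1)},\bm{y}^{(2)},y_L)$ with $\bm{y}^{(1)}=(y_1,\ldots,y_{L-N-1})$ and $\bm{y}^{(2)}=(y_{L-N},\ldots,y_{L-1})$, corresponding respectively to the eigenvalue regimes $\lambda_i(r)\to\lambda_{i,0}>0$, $\lambda_i(r)\sim\lambda_{i,2}r^2$ with $\lambda_{i,2}>0$ (Lemma \ref{Lem:PLJ}), and $\lambda_L(r)=o(r^2)$. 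By Lemma \ref{Lem:EigenSigma}(ii), eigenvectors of the zero eigenvalue of $\bm{\Sigma}_{\bm{u}_0,0}$ vanish on indices corresponding to the upper-left $(N-1)\times(N-1)$ Hessian block, while by (v) of the same lemma, eigenvectors of any nonzero eigenvalue vanish on the indices $k=i+N(N-1)/2$, $1\le i\le N$, corresponding to the $N$-th row/column of the Hessian. Combined with Lemma \ref{Lem:EigValDecSpd}(iv), which forces $\bm{P}_{\bm{u}_0,1}^{(i)}$ for $i\le L-N-1$ to itself be an eigenvector of $\bm{\Sigma}_{\bm{u}_0,0}$ at the nonzero eigenvalue $\lambda_{i,0}$, this yields $\bm{P}_{\bm{u}_0,1}^{(i)}[N(N+1)/2]=0$ for all such $i$. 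Consequently, in the limit $r\to 0$ the upper-left block $\bm{M}_0$ of $\Matri_N(\bm{x}'')$, together with $x$ and $z$, converges to a function of $\bm{y}^{(1)}$ alone, with $x$ and $z$ sharing a common limit $X_0(\bm{y}^{(1)})$; while $\bm{x}''[N(N+1)/2]=r\,\tilde{w}_0(\bm{y}^{(2)})+o(r)$ with $\tilde{w}_0(\bm{y}^{(2)})=\sum_{i=L-N}^{L-1}\sqrt{\lambda_{i,2}}\,\bm{P}_{\bm{u}_0,0}^{(i)}[N(N+1)/2]\,y_i$ a nontrivial centered Gaussian functional of $\bm{y}^{(2)}$ only, and the remaining entries of the $N$-th row/column of $\Matri_N(\bm{x}'')$ are $O(r)$. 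A block-determinant expansion then produces
$$
\det\bigl(\Matri_N(\bm{x}'')\bigr)=r\,\det(\bm{M}_0(\bm{y}^{(1)}))\,\tilde{w}_0(\bm{y}^{(2)})+o(r),
$$
so on the full-measure set $\{\det(\bm{M}_0)\tilde{w}_0\ne 0\}$, $\sign\det\bigl(\Matri_N(\bm{x}''(\bm{y},r))\bigr)=\sign\det(\bm{M}_0(\bm{y}^{(1)}))\cdot\sign\tilde{w}_0(\bm{y}^{(2)})$ for $r$ sufficiently small.

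Dominated convergence---the integrand $|\det(\Matri_N(\bm{x}''))|/r$ is bounded uniformly for $r$ in a small neighbourhood of $0$ by a polynomial of degree $N$ in $\bm{y}$, integrable against $\phi$---allows me to pass to the limit inside the two integrals defining $f_{u,\pm}$ after cancelling a factor of $r$ from numerator and denominator, giving
$$
\lim_{r\to 0}\frac{f_{u,+}(r\bm{u}_0)}{f_{u,-}(r\bm{u}_0)}=\frac{\int \mathds{1}\{X_0>u,\,\det(\bm{M}_0)\tilde{w}_0>0\}|\det(\bm{M}_0)\tilde{w}_0|\phi(\bm{y})\,d\bm{y}}{\int \mathds{1}\{X_0>u,\,\det(\bm{M}_0)\tilde{w}_0<0\}|\det(\bm{M}_0)\tilde{w}_0|\phi(\bm{y})\,d\bm{y}}.
$$
Since $\bm{y}^{(1)}$ and $\bm{y}^{(2)}$ are independent standard Gaussian blocks and $\tilde{w}_0$ is a centered, hence symmetric, Gaussian functional of $\bm{y}^{(2)}$ alone, conditioning on $\bm{y}^{(1)}$ yields $\int \mathds{1}\{\pm\det(\bm{M}_0)\tilde{w}_0>0\}|\tilde{w}_0|\phi(\bm{y}^{(2)})\,d\bm{y}^{(2)}=\frac{1}{2}E[|\tilde{w}_0|]$ for both choices of sign and for almost every $\bm{y}^{(1)}$ with $\det(\bm{M}_0)\ne 0$; integrating over $\bm{y}^{(1)}$ shows that numerator and denominator coincide, so the ratio equals $1$. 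The main obstacle is the decoupling claim that $\lim_{r\to 0}\bm{x}''[N(N+1)/2]/r$ depends only on $\bm{y}^{(2)}$, which combines Lemma \ref{Lem:EigValDecSpd}(iv) with Lemma \ref{Lem:EigenSigma}(v) in an essential way; without it, one could only assert asymptotic uncorrelatedness of the relevant Gaussian functionals and the symmetry argument would be considerably more delicate.
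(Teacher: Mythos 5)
Your proposal follows the same broad route as the paper --- change of variables $\bm{y}=\bm{A}^{-1}(r)(\bm{x}'',x,z)^T$, dominated convergence, then a sign-symmetry of the limiting functional --- but it establishes the key structural fact about $h_0(\bm{y}):=\lim_{r\to 0} r^{-1}\det(\Matri_N(\bm{A}(r)\bm{y}))$ by a genuinely different argument. The paper proceeds via the combinatorial Lemmas \ref{Lem:max1}--\ref{Lem:max2} and the set-theoretic Lemma \ref{Lem:AS1}, concluding only that $h_0(\bm{y})=\sum_{i=L-N}^L y_iK_i(y_1,\dots,y_{L-N-1})$, linear in the block $(y_{L-N},\dots,y_L)$, which suffices to derive the reflection symmetry of $D_{u,\pm}(0)$. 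You instead perform a cofactor expansion along the $N$-th row of $\Matri_N(\bm{x}'')$, using Lemma \ref{Lem:EigenSigma}(v) to place the $N$-th row and column at order $O(r)$ and Lemma \ref{Lem:EigValDecSpd}(iv) together with Lemma \ref{Lem:EigenSigma}(v) to force the $\bm{y}^{(1)}$-contribution to the $(N,N)$-entry to be $o(r)$, arriving at the stronger factored form $h_0(\bm{y})=\tilde{w}_0(\bm{y}^{(2)})\det(\bm{M}_0(\bm{y}^{(1)}))$. This factorization makes the subsequent conditioning-on-$\bm{y}^{(1)}$ argument very clean and dispenses with Lemmas \ref{Lem:max1}, \ref{Lem:max2} and most of Lemma \ref{Lem:AS1}.

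Two points are asserted without proof and deserve to be spelled out. First, that $\tilde{w}_0\not\equiv 0$, i.e.\ that $\bm{P}_0[N(N+1)/2,\,i]\neq 0$ for some $L-N\le i\le L-1$: this follows because $e_{N(N+1)/2}=\bm{H}(\bm{u}_0)_{(N)}^T$ lies in the null space of $\bm{\Sigma}_0$ (Lemma \ref{Lem:H}(ii)) but is orthogonal to $\bm{P}_0^{(L)}\parallel(0,\dots,0,1,-1)^T$ (Lemma \ref{Lem:PLJ}), hence has a nonzero component along some $\bm{P}_0^{(i)}$, $L-N\le i\le L-1$. Second, that $\det(\bm{M}_0(\bm{y}^{(1)}))\not\equiv 0$ as a polynomial in $\bm{y}^{(1)}$: this is exactly what the special column $v^*$ from Lemma \ref{Lem:EigenSigma}(vii) delivers, since $\bm{M}_0(e_{v^*})=x_{v^*}\lambda_{v^*,0}^{1/2}\bm{I}_{N-1}$ is nonsingular. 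Both facts are precisely what the paper's Lemma \ref{Lem:max2} establishes under the hood; with them supplied, your argument is complete and correct.
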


Intuitively, when a critical point is very close to another critical point with unknown type, the determinant of its Hessian is equally likely to be positive or negative.


\subsection{Preparatory results for the proof of Theorem \ref{Theo:MR1}}\label{Sec:S}

Let $X$ be qualified under perturbation.
In this subsection, we only consider $\bm{t}=\bm{u}_0r$, $r> 0$.
Based on the knowledge of the covariance $\bm{\Sigma}(r)$ in the previous sections, we can establish two lemmas suggesting the asymptotic symmetry between the domains
$$D_{u,+}(r):=\left\{\bm{y}\in\mathbb{R}^{L}:\bm{A}(r)_{(L-1)}\bm{y}>u,\bm{A}(r)_{(L)}\bm{y}>u,\frac{1}{r}\det\left(\Matri_N(\bm{A}(r)\bm{y}) \right)>0\right\}$$
and
$$D_{u,-}(r):=\left\{\bm{y}\in\mathbb{R}^{L}:\bm{A}(r)_{(L-1)}\bm{y}>u,\bm{A}(r)_{(L)}\bm{y}>u,\frac{1}{r}\det\left(\Matri_N(\bm{A}(r)\bm{y}) \right)<0\right\}$$
as $r\to 0$.
This asymptotic symmetry plays an essential role in the proof of Theorem \ref{Theo:MR1}.

We first introduce an important matrix.
For any $r> 0$, let $a_{kl}(r)$ be the element of $\bm{A}(r)$ on the $k$-th row and $l$-th column for any $1\le k,l\le L$.
Define the map $\tau:\{1,\dots,N\}^2\to\{1,\dots,N(N+1)/2\}$, where for any $1\le i\le j\le N$,
$$\tau(j,i)\equiv\tau(i,j):=i+j(j-1)/2.$$
Then for any $\bm{v}=(v_1,\dots,v_N)^T\in\{1,\dots,L\}^N$ and $r>0$, we can define $\bm{B}^{\bm{v}}(r):=(b^{\bm{v}}_{i,j}(r))_{1\le i,j\le N}$, where for any $1\le i,j\le N$,
$$b^{\bm{v}}_{i,j}(r)=a_{\tau(i,j), v_i}(r).$$
For example, when $N=3$, 
$$
\bm{B}^{\bm{v}}(r):=
\begin{pmatrix}
a_{1,v_1}(r) & a_{2,v_1}(r) & a_{4,v_1}(r) \\
a_{2,v_2}(r) & a_{3,v_2}(r) & a_{5,v_2}(r) \\
a_{4,v_3}(r) & a_{5,v_3}(r) & a_{6,v_3}(r) 
\end{pmatrix}.
$$
In particular, taking $v_1=\cdots=v_N=k$ for some $1\le k \le L$, we have $$\bm{B}^{\bm{v}}(r)=\Matri_N\left(\bm{A}(r)^{(k)}\right).$$
In general, for any $r>0$, $\bm{B}^{\bm{v}}(r)$ can be written in the form:
\begin{equation}\label{rowexpression}
\bm{B}^{\bm{v}}(r)=
\begin{pmatrix}
\Matri_N(\bm{A}(r)^{(v_1)})_{(1)}\\\vdots\\ \Matri_N(\bm{A}(r)^{(v_N)})_{(N)}
\end{pmatrix}.    
\end{equation}

Recall that $\Pi_N$ is the set of permutations on $\{1,\dots,N\}$. For any $\sigma\in\Pi_N$, define map $\hat{\sigma}:\{1,\dots,L\}^N\to\{1,\dots,L\}^N$, where for any $\bm{v}\in\{1,\dots,L\}^N$,
$$\hat{\sigma}(\bm{v})=(v_{\sigma(1)},\dots,v_{\sigma(N)}).$$
With slight abuse of notation, we still write $\hat{\sigma}$ as $\sigma$, but one can easily distinguish them by the object it works on. 

Now we can explain why $\bm{B}^{\bm{v}}(r)$ is important in the asymptotic symmetry between $D_{u,+}(r)$ and $D_{u,-}(r)$.
Note that for any $\bm{y}\in\mathbb{R}^{L}$ and integers $1\le i,j\le N$,
$$\Matri_N(\bm{A}(r)\bm{y})[i,j]=\sum_{k=1}^La_{\tau(i,j),k}(r)y_k.$$
Then by the definition of determinant, we have
\begin{equation}\label{whyB}
\begin{aligned}
\det\left(\Matri_N(\bm{A}(r)\bm{y})\right)
&=\sum_{\sigma\in\Pi_N}(-1)^{\sign(\sigma)}\prod_{1\le n\le N}\left(\sum_{k=1}^La_{\tau(n,\sigma(n)),k}(r)y_k\right)\\
&=\sum_{\bm{v}\in\{1,\dots,L\}^N}\sum_{\sigma\in\Pi_N}(-1)^{\sign(\sigma)}a_{\tau(1,\sigma(1)),v_1}(r)\cdots a_{\tau(N,\sigma(N)),v_N}(r)y_{v_1}\cdots y_{v_N}\\
&=\sum_{\bm{v}\in\{1,\dots,L\}^N}\sum_{\sigma\in\Pi_N}(-1)^{\sign(\sigma)}b^{\bm{v}}_{1,\sigma(1)}(r)\cdots b^{\bm{v}}_{N,\sigma(N)}(r)y_{v_1}\cdots y_{v_N}\\
&=\sum_{\bm{v}\in\{1,\dots,L\}^N}\det(\bm{B}^{\bm{v}}(r)) y_{v_1}\cdots y_{v_N}\\
&=\frac{1}{N!}\sum_{\bm{v}\in\{1,\dots,L\}^N}\left(\sum_{\sigma\in\Pi_N}\det(\bm{B}^{\sigma(\bm{v})}(r))\right) y_{v_1}\cdots y_{v_N}.
\end{aligned}   
\end{equation}
This implies the asymptotic symmetry between $D_{u,+}(r)$ and $D_{u,-}(r)$ is determined by the asymptotic behavior of the coefficients, $\sum_{\sigma\in\Pi_N}\det(\bm{B}^{\sigma(\bm{v})}(r))$, of $y_{v_1}\cdots y_{v_N}$, $\bm{v}\in \{1,\dots,L\}^N$ as $r\to 0$. To study asymptotic properties of 
$\sum_{\sigma\in\Pi_N}\det(\bm{B}^{\sigma(\bm{v})}(r))$, the perturbation condition in Definition \ref{Def:PC} will be intensively used, and
we also need the following notations:
$$
\begin{aligned}
V_N
&:=\{(v_1,\dots,v_N)\in \{1,\dots,L\}^N:L-N\le v_1\le L,\\
&~~~~1\le v_i\le L-N-1\text{ for any } 2\le i\le N\}
\end{aligned}
$$
and
$$\widetilde{V}_N:=\{\bm{v}\in \{1,\dots,L\}^N:\sigma(\bm{v})\in V_N\text{ for some }\sigma\in\Pi_N\}.$$

\begin{lemma}\label{Lem:max1}
Let $X$ be qualified under perturbation. Then
$$\max_{\bm{v}\in \{1,\dots,L\}^N\setminus\widetilde{V}_N}\left|\sum_{\sigma\in\Pi_N}\det(\bm{B}^{\sigma(\bm{v})}(r))\right|=o(r)\text{ as $r\to0$.}$$
\end{lemma}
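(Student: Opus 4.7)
The plan is to classify each $\bm{v} \in \{1,\dots,L\}^N \setminus \widetilde{V}_N$ by how many of its coordinates fall in the index set $S := \{L-N, L-N+1, \dots, L\}$, which picks out the columns of $\bm{A}(r)$ corresponding to the zero eigenvalues of $\bm{\Sigma}_0$. Since $\widetilde{V}_N$ collects precisely those $\bm{v}$ with exactly one coordinate in $S$, the complement consists of two disjoint families: $\bm{v}$ with \emph{no} coordinate in $S$, and $\bm{v}$ with \emph{two or more} coordinates in $S$. I will handle these cases separately and show that each term $\det(\bm{B}^{\sigma(\bm{v})}(r))$ is already $o(r)$ for every $\sigma\in\Pi_N$; since $\Pi_N$ and $\{1,\dots,L\}^N\setminus\widetilde{V}_N$ are both finite, the stated bound then follows.

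The necessary column-wise asymptotics are already available. By Remark \ref{Rem:PC2}, columns of $\bm{A}(r)$ with index $1\le v\le L-N-1$ satisfy $\bm{A}(r)^{(v)}=O(1)$, columns with $L-N\le v\le L-1$ satisfy $\bm{A}(r)^{(v)}=O(r)$, and by Lemma \ref{Lem:PLJ} (together with $\lambda_{L,2}=0$) column $L$ satisfies $\bm{A}(r)^{(L)}=o(r)$. In addition, by Lemma \ref{Lem:H}(iii) applied to $\bm{u}=\bm{u}_0$, for every $1\le v\le L-N-1$ we have $\bm{H}(\bm{u}_0)\bm{A}(r)^{(v)}=o(r)$; unwinding the definition of $\bm{H}(\bm{u}_0)$ shows that this vector consists of the entries $a_{\tau(k,N),v}(r)$, $1\le k\le N$, which form precisely the $N$-th row of $\Matri_N(\bm{A}(r)^{(v)})$. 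Thus for every fast column $v$, the last row of $\Matri_N(\bm{A}(r)^{(v)})$ is $o(r)$, while all other rows are $O(1)$.

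In the first case (all $v_j$ fast), the $N$-th row of $\bm{B}^{\sigma(\bm{v})}(r)$ equals the $N$-th row of $\Matri_N(\bm{A}(r)^{(v_{\sigma(N)})})$, which is $o(r)$ by the previous observation; the remaining $N-1$ rows are $O(1)$. Multilinearity of the determinant in the rows yields $\det(\bm{B}^{\sigma(\bm{v})}(r))=o(r)$ uniformly in $\sigma$. In the second case (at least two $v_j$ in $S$), at least two rows of $\bm{B}^{\sigma(\bm{v})}(r)$ are $O(r)$ or smaller regardless of $\sigma$, while the remaining rows are at worst $O(1)$, so $\det(\bm{B}^{\sigma(\bm{v})}(r))=O(r^2)=o(r)$. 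Summing the finitely many permutations in $\Pi_N$ preserves the $o(r)$ bound, and taking the maximum over the finite set $\{1,\dots,L\}^N\setminus\widetilde{V}_N$ completes the proof.

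The only mildly delicate point is the first case, where the crude column-wise bound $\bm{A}(r)^{(v)}=O(1)$ is not enough to produce any decay; one really needs the refined fact that the \emph{last} row of $\Matri_N(\bm{A}(r)^{(v)})$ is $o(r)$ for fast $v$, which is what Lemma \ref{Lem:H}(iii) provides through the specific structure of $\bm{H}(\bm{u}_0)$. Beyond this observation, the argument is essentially bookkeeping with asymptotic orders.
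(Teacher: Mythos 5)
Your proof is correct and follows essentially the same approach as the paper: the same two-case split on the number of coordinates in $\{L-N,\dots,L\}$, the same use of Lemma \ref{Lem:H}(iii) for the all-fast case, and the same $O(r)$-per-slow-row bound for the other case. The only cosmetic difference is that in the all-fast case you extract a small $N$-th \emph{row} of $\bm{B}^{\sigma(\bm{v})}(r)$ from the $N$-th row of a single $\Matri_N(\bm{A}(r)^{(v_{\sigma(N)})})$, whereas the paper extracts a small $N$-th \emph{column} of $\bm{B}^{\bm{v}}(r)$ by stacking the $i$-th entry of $\bm{H}(\bm{u}_0)\bm{A}(r)^{(v_i)}$ over all $i$; the two are interchangeable by the symmetry of $\Matri_N$.
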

\begin{proof}
For any $\bm{v}=(v_1,\dots,v_N)^T\in \{1,\dots,L\}^N\setminus\widetilde{V}_N$, there are only two possible situations:
\begin{enumerate}[label={(\arabic*)}]
    \item $1\le v_i\le L-N-1$ for $i=1,\dots,N$;
    \item there exist distinct integers $1\le i,j\le N$ such that $L-N\le v_i,v_j\le L$.
\end{enumerate}
It suffices to show that as $r\to 0$,
$$\det(\bm{B}^{\bm{v}}(r))=o(r)$$
for any $\bm{v}$ in these two situations.

For Situation (1),
note that by (ii) of Lemma \ref{Lem:EigenSigma}, 
$$\Rank(\bm{\Sigma}_{0})=L-N-1.$$
By (i) and (iii) of Lemma \ref{Lem:H}, for any $1\le i\le N$,
$$\Matri\left(\bm{A}(r)^{(v_i)}\right)_{(i)}\bm{u}_0=\left(H(\bm{u}_0)\bm{A}(r)^{(v_i)}\right)_{(i)}=\bm{0}_{N}^T+o(r)\text{ as $r\to 0$}.$$
Then by (\ref{rowexpression}), 
\begin{equation}\notag
\bm{B}^{\bm{v}}(r)^{(N)}=\bm{B}^{\bm{v}}(r)\bm{u}_0=\bm{0}_{N}+o(r)\text{ as $r\to 0$.}
\end{equation}
This implies
$\det(\bm{B}^{\bm{v}}(r))=o(r)$ as $r\to 0$.

As for Situation (2), by (ii) of Lemma \ref{Lem:EigValDecSpd}, (\ref{rowexpression}) and that $\bm{A}(r)=\bm{P}(r)\bm{\Lambda}^{1/2}(r)$, there are two rows of $\bm{B}^{\bm{v}}(r)$ converging to $\bm{0}_N$ with the speed $O(r)$ as $r\to 0$. This also implies $\det(\bm{B}^{\bm{v}}(r))=o(r)$ as $r\to 0$.
\end{proof}

\begin{lemma}\label{Lem:max2}
Let $X$ be qualified under perturbation.
Then
\begin{equation}\label{interest}
\max_{\bm{v}\in \widetilde{V}_N}\left|\sum_{\sigma\in\Pi_N}\det\left(\bm{B}^{\sigma(\bm{v})}(r)\right)\right|=\Theta(r)\text{ as }r\to 0.
\end{equation}
\end{lemma}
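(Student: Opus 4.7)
The plan is to establish matching upper and lower bounds of order $r$. The upper bound follows from a direct row-size count on $\bm{B}^{\bm{v}}(r)$; for the lower bound, I would evaluate identity (\ref{whyB}) at a carefully chosen test vector $\bm{y}^*\in\mathbb{R}^L$ and combine with Lemma \ref{Lem:max1}.

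For the upper bound, for any $\bm{v}\in\widetilde{V}_N$ there is exactly one coordinate $v_{i^*}\in\{L-N,\ldots,L\}$ and the rest lie in $\{1,\ldots,L-N-1\}$. Remark \ref{Rem:PC2} combined with Lemma \ref{Lem:PLJ} yields $\bm{A}(r)^{(v)}=O(1)$ for $v\le L-N-1$ and $\bm{A}(r)^{(v)}=O(r)$ for $v\in\{L-N,\ldots,L\}$. By (\ref{rowexpression}), row $i^*$ of $\bm{B}^{\bm{v}}(r)$ is then $O(r)$ while the remaining rows are $O(1)$, so $\det(\bm{B}^{\bm{v}}(r))=O(r)$, and this bound survives summation over $\Pi_N$.

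For the lower bound, I would exhibit $\bm{y}^*\in\mathbb{R}^L$ with $\det(\Matri_N(\bm{A}(r)\bm{y}^*))=\Theta(r)$ as $r\to 0$. By Lemma \ref{Lem:max1} the $\bm{v}\notin\widetilde{V}_N$ contributions in (\ref{whyB}) are $o(r)$, so together with the upper bound this forces $\sum_{\sigma\in\Pi_N}\det(\bm{B}^{\sigma(\bm{v})}(r))=\Theta(r)$ for at least one $\bm{v}\in\widetilde{V}_N$. The construction is $\bm{y}^*=\bm{y}^{(1)}+c\bm{e}_{v^*}$ (with $\bm{e}_{v^*}\in\mathbb{R}^L$ the $v^*$-th standard basis vector), where (i) $\bm{y}^{(1)}$ is supported on $\{1,\ldots,L-N-1\}$ and chosen so that $\Matri_N(\bm{A}_0\bm{y}^{(1)})[1:N-1,1:N-1]=\bm{I}_{N-1}$ --- possible because Lemma \ref{Lem:EigenSigma}(v) forces $\bm{A}_0\bm{y}^{(1)}$ to vanish at every position $\tau(k,N)$, and a rank count using $\Rank(\bm{\Sigma}_0)=L-N-1$ shows the induced map onto symmetric $(N-1)\times(N-1)$ matrices is surjective; and (ii) $v^*\in\{L-N,\ldots,L-1\}$ is picked with $\bm{P}_0^{(v^*)}[\tau(N,N)]\ne 0$ --- possible because Lemma \ref{Lem:EigenSigma}(ii), Lemma \ref{Lem:PLJ}, and the orthogonality of $\bm{P}_0$ together force the vectors $\bm{P}_0^{(L-N)},\ldots,\bm{P}_0^{(L-1)}$ to form an orthonormal basis of $\sp\{\bm{e}_{\tau(k,N)}:1\le k\le N\}$, so the corresponding $N\times N$ restriction matrix is orthogonal and its $N$-th column cannot vanish identically.

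Using $\bm{A}(r)=\bm{A}_0+r\bm{A}_1+o(r)$ together with the vanishing of $\bm{P}_0^{(v^*)}$ at the upper-Hessian positions, a direct leading-order calculation shows that $\Matri_N(\bm{A}(r)\bm{y}^*)$ has top-left $(N-1)\times(N-1)$ block $\bm{I}_{N-1}+O(r)$, a last row and column of order $O(r)$, and $(N,N)$-entry $r\bigl(c\lambda_{v^*,2}^{1/2}\bm{P}_0^{(v^*)}[\tau(N,N)]+\bm{A}_1\bm{y}^{(1)}[\tau(N,N)]\bigr)+o(r)$, which becomes $\Theta(r)$ after choosing $c$ so as to avoid the single possible linear cancellation. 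Laplace expansion along the last row then yields $\det(\Matri_N(\bm{A}(r)\bm{y}^*))=\Theta(r)$. The main obstacle is coordinating (i) and (ii) simultaneously; this is resolved by the structural observation (from Lemma \ref{Lem:EigenSigma}) that the ``upper Hessian'' indices $\tau(i,j)$ with $j\le N-1$ and the ``last column'' indices $\tau(k,N)$ are carried by orthogonal blocks of $\bm{P}_0$, so the two summands of $\bm{y}^*$ control independent parts of the resulting matrix.
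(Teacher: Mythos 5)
Your proposal is correct, and it takes a genuinely different route to the lower bound than the paper. The upper bound (one $O(r)$ row in each $\bm{B}^{\sigma(\bm{v})}(r)$) agrees with what is implicit in the paper's expansion. For the lower bound, the paper selects a specific $\bm{v}=(v^*,\dots,v^*,v_0)$ where $v^*$ is taken from (vii) of Lemma \ref{Lem:EigenSigma}, which forces the authors to invoke the rescaling of Remark \ref{Rem:scale} to ensure the distinctness hypothesis of (vii) holds; they then compute $f(\bm{M}^{v^*}(0),\ldots,\bm{M}^{v^*}(0),\widetilde{\bm{M}}^{v_0}(0))$ explicitly and argue by contradiction via Lemma \ref{Lem:PLJ}. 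You instead build a test vector $\bm{y}^*=\bm{y}^{(1)}+c\bm{e}_{v^*}$ so that $\det(\Matri_N(\bm{A}(r)\bm{y}^*))=\Theta(r)$, and then transfer this back through the polynomial identity (\ref{whyB}) and Lemma \ref{Lem:max1}. Your construction uses only the eigenspace structure from Lemma \ref{Lem:EigenSigma}(ii),(v) and Lemma \ref{Lem:PLJ} — in particular, the surjectivity of the map $\bm{y}^{(1)}\mapsto\Matri_N(\bm{A}_0\bm{y}^{(1)})[1{:}(N-1),1{:}(N-1)]$ (a rank count on the nonzero-eigenvalue subspace) and the fact that $\bm{P}_0^{(L-N)},\ldots,\bm{P}_0^{(L-1)}$ span the $\tau(k,N)$-block — and thereby avoids (vii) and the rescaling argument entirely, which is a genuine simplification. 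The paper's route buys an explicit witness $\bm{v}$; yours buys independence from the spectral-gap assumption. One small imprecision to fix: the claim that the identity ``forces $\sum_{\sigma}\det(\bm{B}^{\sigma(\bm{v})}(r))=\Theta(r)$ for at least one $\bm{v}$'' is slightly stronger than what the finite-sum argument yields (the maximizing $\bm{v}$ could in principle change with $r$), but what you actually need and do get is $\max_{\bm{v}\in\widetilde{V}_N}|\cdot|\ge c'r$, which suffices for the lemma.
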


\begin{proof}
The basic idea of this proof is to select a suitable $\bm{v}\in\widetilde{V}_N$ such that 
$$\sum_{\sigma\in\Pi_N}\det(\bm{B}^{\sigma(\bm{v})}(r))=\Theta(r)\text{ as }r\to 0.$$
For any $1\le l\le L$, define
$$\bm{M}^{l}(0):=\Matri_N\left(\bm{A}_0^{(l)}\right),~~ \bm{M}^{l}(r):=\Matri_N\left(\bm{A}(r)^{(l)}\right),$$
$$\widetilde{\bm{M}}^{l}(0):=\Matri_N\left(\bm{P}_0^{(l)}\right), \text{  and }\widetilde{\bm{M}}^{l}(r):=\Matri_N\left(\bm{P}(r)^{(l)}\right).$$
Indeed, $\bm{M}^{l}(0)$ and $\widetilde{\bm{M}}^{l}(0)$ are the left limits of  $\bm{M}^{l}(r)$ and $\widetilde{\bm{M}}^{l}(r)$ as $r\downarrow 0$, respectively. 
According to Remark \ref{Rem:scale}, since the problem is independent of the rescaling of the covariance function, it is safe to use (vii) of Lemma \ref{Lem:EigenSigma}.
Let $1\le v^*\le L-N-1$ correspond to an eigenvalue in (vii) of Lemma \ref{Lem:EigenSigma}, then we have
$$\bm{P}_0[i+j(j-1)/2,v^*]=\delta_{i,j}x_{v^*} \text{ for any }1\le i\le j\le N-1,$$
$$\bm{P}_0[i+N(N-1)/2,v^*]=0\text{ for any }1\le i\le N,\text{  and  }\bm{P}_0[L-1,v^*]=\bm{P}_0[L,v^*]=y_{v^*},$$
where $x_{v^*}$ and $y_{v^*}$ are both non-zero. 
Hence 
\begin{equation}\label{Mstar}
\bm{M}^{v^*}(0)=\Matri_N\left(\bm{A}_0^{(v^*)}\right)=
\diag\left(x_{v^*}\lambda^{1/2}_{v^*,0},\dots,x_{v^*}\lambda^{1/2}_{v^*,0},0\right).   
\end{equation}

Our next step is to calculate  $\sum_{\sigma\in\Pi_N}\det(\bm{B}^{\sigma(\bm{v})}(r))$ for a general $\bm{v}\in\widetilde{V}_N$ using the above notations. 
By (\ref{rowexpression}),
for any $\bm{v}\in\{1,\dots,L\}^N$,
$$
\begin{aligned}
\sum_{\sigma\in\Pi_N}\det\left(\bm{B}^{\sigma(\bm{v})}(r)\right)
&=\sum_{\sigma\in\Pi_N}\det\begin{pmatrix}
\bm{M}^{v_{\sigma(1)}}(r)_{(1)}\\\vdots\\ \bm{M}^{v_{\sigma(N)}}(r)_{(N)}
\end{pmatrix}\\
&=\sum_{\sigma\in\Pi_N}(-1)^{\sign(\sigma^{-1})}\det\begin{pmatrix}
\bm{M}^{v_1}(r)_{(\sigma^{-1}(1))}\\\vdots\\ \bm{M}^{v_N}(r)_{(\sigma^{-1}(N))}
\end{pmatrix}\\
&=\sum_{\sigma\in\Pi_N}(-1)^{\sign(\sigma)}\det\begin{pmatrix}
\bm{M}^{v_1}(r)_{(\sigma(1))}\\\vdots\\ \bm{M}^{v_N}(r)_{(\sigma(N))}
\end{pmatrix}.
\end{aligned}
$$
Note that for any matrix $\bm{A}=(a_{i,j})\in\mathbb{R}^{N\times N}$,
$$\det(\bm{A}):=\sum_{\sigma\in\Pi_N}(-1)^{\sign(\sigma)}a_{1,\sigma(1)}\cdots a_{N,\sigma(N)}.$$
Similarly, we can define an operation $f$ on any symmetric matrices
$\bm{A}_1,\dots,\bm{A}_N\in\mathbb{R}^{N\times N}$ by
\begin{equation}\label{foperation}
    \begin{aligned}
    f(\bm{A}_1,\cdots,\bm{A}_N)
    :&=\sum_{\sigma\in\Pi_N}(-1)^{\sign(\sigma)}\det
    \begin{pmatrix}
    (\bm{A}_1)_{(\sigma(1))}\\
    \vdots\\
    (\bm{A}_N)_{(\sigma(N))}\\
    \end{pmatrix}\\
    &\equiv\sum_{\sigma\in\Pi_N}(-1)^{\sign(\sigma)}\det\left(
    (\bm{A}_1)^{(\sigma(1))},
    \cdots,
    (\bm{A}_N)^{(\sigma(N))}
    \right).
    \end{aligned}
\end{equation}
Then in our context, we have for any $r>0$,
$$\sum_{\sigma\in\Pi_N}\det\left(\bm{B}^{\sigma(\bm{v})}(r)\right)=f\left(\bm{M}^{v_{1}}(r),\cdots,\bm{M}^{v_{N}}(r)\right).$$
Note that by Remark \ref{Rem:PC2}, we have
\begin{equation}\notag
\bm{M}^{i}(r)=\left\{
\begin{aligned}
\bm{M}^{i}(0)+O(r), && \text{for any $1\le i\le L-N-1$};\\
\lambda_{i,2}^{1/2}r\widetilde{\bm{M}}^{i}(r)+o(r), && \text{otherwise}.
\end{aligned}\right.
\end{equation}
For any $\bm{v}\in \widetilde{V}_N$, there exists an integer $1\le i\le N$ such that $L-N\le v_i\le L$. Then
$$
\begin{aligned}
&~~~~\sum_{\sigma\in\Pi_N}\det\left(\bm{B}^{\sigma(\bm{v})}(r)\right)\\
&=\lambda_{v_i,2}^{1/2}r f\left(\bm{M}^{v_1}(r),\cdots,\bm{M}^{v_{i-1}}(r),\widetilde{\bm{M}}^{v_i}(r),\bm{M}^{v_{i+1}}(r),\cdots,\bm{M}^{v_{N}}(r)\right)+o(r)\\
&=(-1)^{N-i}\lambda_{v_i,2}^{1/2}r f\left(\bm{M}^{v_1}(r),\cdots,\widehat{\bm{M}^{v_i}(r)},\cdots,\bm{M}^{v_{N}}(r),\widetilde{\bm{M}}^{v_i}(r)\right)+o(r)\\
&=(-1)^{N-i}\lambda_{v_i,2}^{1/2}r f\left(\bm{M}^{v_1}(0),\cdots,\widehat{\bm{M}^{v_i}(0)},\cdots,\bm{M}^{v_{N}}(0),\widetilde{\bm{M}}^{v_i}(0)\right)+o(r),
\end{aligned}
$$
where the hat over a component of a vector means that component is discarded.

From the above calculation, we see that it suffices to show $$\lambda_{v_i,2}^{1/2}f\left(\bm{M}^{v_1}(0),\cdots,\widehat{\bm{M}^{v_i}(0)},\cdots,\bm{M}^{v_{N}}(0),\widetilde{\bm{M}}^{v_i}(0)\right)\neq 0$$
for some $\bm{v}\in \widetilde{V}_N$ with $L-N\le v_i\le L$.
To take advantage of (\ref{Mstar}) and by noting that $\lambda_{k,2}> 0$ for any $L-N\le k\le L-1$ and $\lambda_{L,2}=0$ (see Lemma \ref{Lem:PLJ}), we select $\bm{v}$ having the form: $\bm{v}=(v^*,\dots,v^*,v_0)$ for some $L-N\le v_0\le L-1$. Then it suffices to show that there exists an integer $L-N\le v_0\le L-1$ such that
$$f\left(\bm{M}^{v^*}(0),\cdots,\bm{M}^{v^*}(0),\widetilde{\bm{M}}^{v_0}(0)\right)\neq 0.$$
To this end, we first observe that
$$
\begin{aligned}
&~~~~f\left(\bm{M}^{v^*}(0),\cdots,\bm{M}^{v^*}(0),\widetilde{\bm{M}}^{v_0}(0)\right)\\
&=\sum_{\sigma\in\Pi_N}(-1)^{\sign(\sigma)}\det\left(\bm{M}^{v^*}(0)^{(\sigma(1))},\cdots,\bm{M}^{v^*}(0)^{(\sigma(N-1))},\widetilde{\bm{M}}^{v_0}(0)^{(\sigma(N))}\right)\\
&=\sum_{\sigma\in\Pi_N}(-1)^{\sign(\sigma)}\widetilde{\bm{M}}^{v_0}(0)[N,\sigma(N)]\\
&~~~~\det\left(\left(\bm{M}^{v^*}(0)^{(\sigma(1))},\cdots,\bm{M}^{v^*}(0)^{(\sigma(N-1))}\right)[1:(N-1),1:(N-1)]\right)\\
&=(N-1)!\sum_{j=1}^N(-1)^{N-j}\widetilde{\bm{M}}^{v_0}(0)[N,j]\\
&~~~~\det
\left(\left(\bm{M}^{v^*}(0)^{(1)},\cdots,\widehat{\bm{M}^{v^*}(0)}^{(j)},\cdots,\bm{M}^{v^*}(0)^{(N)}\right)[1:(N-1),1:(N-1)]\right)\\
&=(N-1)!\det\begin{pmatrix}
\bm{M}^{v^*}(0)_{(1)} \\
\vdots\\
\bm{M}^{v^*}(0)_{(N-1)} \\
\widetilde{\bm{M}}^{v_0}(0)_{(N)} \\
\end{pmatrix},
\end{aligned}
$$
where the second equality follows from the fact that $\bm{M}^{v^*}(0)_{(N)}=\bm{0}_N$ (see (\ref{Mstar})),
and the last equality is given by Laplace's expansion.

Now suppose
$f(\bm{M}^{v^*}(0),\cdots,\bm{M}^{v^*}(0),\widetilde{\bm{M}}^{v_0}(0))=0$
for any integer $L-N\le v_0\le L-1$,
then 
$$\det\begin{pmatrix}
\bm{M}^{v^*}(0)_{(1)} \\
\vdots\\
\bm{M}^{v^*}(0)_{(N-1)} \\
\widetilde{\bm{M}}^{v_0}(0)_{(N)} \\
\end{pmatrix}=0,$$
which, together with (\ref{Mstar}), implies
$$\widetilde{\bm{M}}^{v_0}(0)[N,N]=\bm{u}_0^T\widetilde{\bm{M}}^{v_0}(0)\bm{u}_0= 0.$$
Then by (i) of Lemma \ref{Lem:H}, we have for any $L-N\le v_0\le L-1$,
$$\bm{u}_0^T\bm{H}(\bm{u}_0)\bm{P}_0^{(v_0)}=\bm{u}_0^T\widetilde{\bm{M}}^{v_0}(0)\bm{u}_0=0.$$
Note that $(\bm{u}_0^T\bm{H}(\bm{u}_0))^T=(\bm{H}(\bm{u}_0)_{(N)})^T$ is non-zero, and by (ii) of Lemma \ref{Lem:H}, it is in the zero space of $\bm{\Sigma}_{0}$, which is expanded by $\bm{P}_0^{(L-N)},\dots, \bm{P}_0^{(L)}$. Thus, the above equality implies $(\bm{H}(\bm{u}_0)_{(N)})^T$ and $\bm{P}_0^{(L)}$ must be linearly dependent. Then
by Lemma \ref{Lem:PLJ}, we get $\bm{H}(\bm{u}_0)_{(N)}$ and $(0,\dots,0,1,-1)\in\mathbb{R}^{L}$ are linearly dependent,
which contradicts  (\ref{Hdefn}). This completes the proof.
\end{proof}

Let $X$ be qualified under perturbation. Observe that by (\ref{whyB}), Lemmas \ref{Lem:max1} and \ref{Lem:max2},
for any $\bm{y}=(y_1,\dots,y_L)^T\in\mathbb{R}^{L}$,
\begin{equation}\label{eq:LimPoly}
    \lim_{r\to 0}r^{-1}\det\left(\Matri_N(\bm{A}(r)\bm{y})\right)=\sum_{i=L-N}^L y_{i}K_{i}(y_1,\dots,y_{L-N-1}),
\end{equation}
where each of $K_{i}(y_1,\dots,y_{L-N-1})$, $i=L-N,\dots,L$ is either a homogeneous polynomial of $y_1,\dots,y_{L-N-1}$ with degree $N-1$ or a zero function, and at least one of $K_i$, $L-N\le i\le L$ is not a zero function.
Thus, $\sum_{i=L-N}^L y_{i}K_{i}(y_1,\dots,y_{L-N-1})$ is a homogeneous polynomial of $y_1,\dots,y_{L}$ with degree $N$. For any $r>0$ and $\bm{y}\in\mathbb{R}^L$, define
\begin{equation}\label{def:hr}
    h_r(\bm{y}):=r^{-1}\det\left(\Matri_N(\bm{A}(r)\bm{y})\right)
\end{equation}
and
\begin{equation}\label{def:h0}
h_0(\bm{y}):= \sum_{i=L-N}^L y_{i}K_{i}(y_1,\dots,y_{L-N-1}).
\end{equation}
Then $h_r(\bm{y})$, $r\ge 0$ are all polynomials of $y_1,\dots,y_{L}$, and (\ref{eq:LimPoly}) can be rewritten as
\begin{equation}\label{eq:LimPolyh}
    h_0(\bm{y})=\lim_{r\to 0}h_r(\bm{y}).
\end{equation}
Since $\bm{A}(r)$ is continuous on $r\in[0,\delta_{pc}]$ (Definition \ref{Def:PC}), we have for any $\bm{y}\in\mathbb{R}^L$, $h_r(\bm{y})$ is a continuous function of $r\in [0,\delta_{pc}]$. Moreover, since $h_r$ are polynomials, the convergence in (\ref{eq:LimPolyh}) is uniform on any compact set of $\bm{y}$.

For $u\in\mathbb{R}$, $r\ge 0$ and $k=L-1,L$, define
\begin{equation}\label{def:Hr}
    H_{k,u}(r):=\left\{\bm{y}\in\mathbb{R}^L:\bm{A}(r)_{(k)}\bm{y}>u\right\}.
\end{equation}
For any $r\ge 0$, we also define
\begin{equation}\notag
    G_{+}(r):=\left\{\bm{y}\in\mathbb{R}^L:h_r(\bm{y})>0\right\}\text{ and }
    G_{-}(r):=\left\{\bm{y}\in\mathbb{R}^L:h_r(\bm{y})<0\right\}.
\end{equation}
With these notations, the domains $D_{u,\pm}(r)$ mentioned at the beginning of this section can be rewritten as
\begin{equation}\label{def:Dur}
    D_{u,\pm}(r):=H_{L-1,u}(r)\cap H_{L,u}(r)\cap G_{\pm}(r).    
\end{equation}
In particular, by $\bm{A}(0)=\bm{P}(0)\bm{\Lambda}^{1/2}(0)$, (ii) and (v) of Lemma \ref{Lem:EigenSigma},
\begin{equation}\label{eq:last2rows}
    \bm{A}_{(L-1)}(0)=\bm{A}_{(L)}(0).
\end{equation}
Then we have
\begin{equation}\notag
    H_{L-1,u}(0)=H_{L,u}(0),
\end{equation}
which implies
\begin{equation}\label{eq:Du0}
    D_{u,\pm}(0)=H_{L,u}(0)\cap G_{\pm}(0).    
\end{equation}

The following lemma reveals the key idea of this section:
the asymptotic symmetry between $D_{u,\pm}(r)$ as $r\to 0$. 

\begin{lemma}\label{Lem:AS1}
Let $X$ be qualified under perturbation.
Then for any $u\in\mathbb{R}$, we have
\begin{enumerate}[label=(\roman*)]
    \item if $(y_1,\dots,y_L)\in D_{u,\pm}(0)$, then $(y_1,\dots,y_{L-N-1},-y_{L-N},\dots,-y_L)\in D_{u,\mp}(0)$;
    \item $D_{u,\pm}(0)$ are both non-empty open sets; 
    \item for any $r_0\in[0,\delta_{pc}]$, we have
    \begin{equation}\notag
        \lim_{r\to r_0}I_{D_{u,\pm}(r)}(\bm{y})=I_{D_{u,\pm}(r_0)}(\bm{y})~~\text{for almost all $\bm{y}\in\mathbb{R}^L$},
    \end{equation}
    where $I$ stands for the indicator function of a set.
    
\end{enumerate}

\end{lemma}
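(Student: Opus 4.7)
The plan is to first extract from Lemma \ref{Lem:EigenSigma} the structural features of $\bm{A}(0)=\bm{P}_0\bm{\Lambda}_0^{1/2}$ that make the symmetry in (i) transparent. Because $\lambda_{i,0}=0$ for $L-N\le i\le L$, the columns $\bm{A}(0)^{(L-N)},\dots,\bm{A}(0)^{(L)}$ vanish. For $1\le i\le L-N-1$, the column $\bm{A}(0)^{(i)}$ is a scalar multiple of an eigenvector of a nonzero eigenvalue, so by (v) of Lemma \ref{Lem:EigenSigma} its entries at the positions $i+N(N-1)/2$, $1\le i\le N$ (i.e.\ rows $L-N-1,\dots,L-2$) vanish; row $L$ is not touched by this constraint. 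Consequently both $\bm{A}(0)_{(L-1)}\bm{y}$ and $\bm{A}(0)_{(L)}\bm{y}$ depend only on $(y_1,\dots,y_{L-N-1})$, and by (\ref{eq:last2rows}) they coincide, so $H_{L,u}(0)$ is invariant under the sign flip $(y_{L-N},\dots,y_L)\mapsto -(y_{L-N},\dots,y_L)$. By the formula (\ref{def:h0}), $h_0$ is linear in $(y_{L-N},\dots,y_L)$ with coefficients depending only on $(y_1,\dots,y_{L-N-1})$, so the same flip negates $h_0$ and thus exchanges $G_+(0)$ and $G_-(0)$. Combining with (\ref{eq:Du0}) gives (i).

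For (ii), openness of $D_{u,\pm}(0)$ is immediate from the definitions since $\bm{y}\mapsto \bm{A}(0)_{(L)}\bm{y}$ is linear continuous and $h_0$ is a polynomial. For non-emptiness, note that $\|\bm{A}(0)_{(L)}\|^2=\bm{\Sigma}_0[L,L]=1-\rho^{(1)}(0)^2/(3\rho^{(2)}(0))>0$ by (\ref{rho1rho2}), so $H_{L,u}(0)$ is a non-empty open half-space in the $(y_1,\dots,y_{L-N-1})$-variables. Since at least one $K_j$ is a non-zero polynomial in $(y_1,\dots,y_{L-N-1})$, the set $\{(y_1,\dots,y_{L-N-1})\colon K_i(y_1,\dots,y_{L-N-1})\ne 0\text{ for some }i\}$ is open and dense, so there exists a point in this set that also lies in the half-space. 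Fixing such a point and varying the corresponding $y_j$ with $K_j\ne 0$ in $(y_{L-N},\dots,y_L)$ makes $h_0(\bm{y})$ a non-constant linear function, which can therefore be made either strictly positive or strictly negative, producing points in both $D_{u,+}(0)$ and $D_{u,-}(0)$.

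For (iii), fix $r_0\in[0,\delta_{pc}]$ and, for each $\bm{y}\in\mathbb{R}^L$, consider the real-valued functions $\phi_1(r):=\bm{A}(r)_{(L-1)}\bm{y}$, $\phi_2(r):=\bm{A}(r)_{(L)}\bm{y}$, and $\phi_3(r):=h_r(\bm{y})$. Continuity of $\bm{A}(r)$ on $[0,\delta_{pc}]$ (Remark \ref{Rem:PC2}) together with (\ref{eq:LimPolyh}) shows each $\phi_i$ is continuous at $r_0$. Since $I_{D_{u,\pm}(r)}(\bm{y})=I(\phi_1(r)>u)I(\phi_2(r)>u)I(\pm\phi_3(r)>0)$, the convergence fails only when $\phi_1(r_0)=u$, $\phi_2(r_0)=u$, or $\phi_3(r_0)=0$. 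The exceptional set is therefore contained in a union of two hyperplanes and the zero variety of the polynomial $h_{r_0}$; it suffices to check each constraint defines a proper subset of $\mathbb{R}^L$. For the hyperplanes, $\|\bm{A}(r_0)_{(k)}\|^2=\bm{\Sigma}(r_0)[k,k]>0$ for $k=L-1,L$ (from positive-definiteness of $\bm{\Sigma}(r_0)$ when $r_0>0$, and from (\ref{eq:Sigma0_corner}) with (\ref{rho1rho2}) when $r_0=0$), so the linear functionals are nonzero. For the polynomial: when $r_0>0$, $\bm{A}(r_0)$ is invertible (since $\bm{\Sigma}(r_0)$ has full rank), so $\bm{y}\mapsto \Matri_N(\bm{A}(r_0)\bm{y})$ surjects onto the symmetric $N\times N$ matrices and $\det$ is not identically zero; when $r_0=0$, $h_0\not\equiv 0$ is precisely the non-triviality of the $K_i$'s established through Lemmas \ref{Lem:max1} and \ref{Lem:max2}. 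Thus the exceptional set has Lebesgue measure zero.

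The only real subtlety is the $r_0=0$ case of (iii), where $h_r$ is defined via an $r^{-1}$ factor: both the continuity at zero and the non-vanishing of $h_0$ rely essentially on (\ref{eq:LimPolyh}) and on the fact, supplied by Lemma \ref{Lem:max2}, that some $K_i$ is not identically zero; once these are in hand the remainder is bookkeeping.
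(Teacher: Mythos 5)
Your proof is correct and follows the same essential ideas as the paper; the presentation differs in two places worth noting.

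For part (ii), you argue non-emptiness directly --- verifying $H_{L,u}(0)\neq\emptyset$ via $\bm{\Sigma}_0[L,L]>0$, then using the nontriviality of the polynomials $K_i$ to produce a point in each of $D_{u,\pm}(0)$ --- whereas the paper invokes part (i) to see that $D_{u,+}(0)$ and $D_{u,-}(0)$ are both empty or both non-empty, and then notes that $D_{u,+}(0)\cup D_{u,-}(0)=H_{L,u}(0)\setminus\{h_0=0\}$ is non-empty. Your route is somewhat longer but more self-contained; the paper leaves the non-emptiness of $H_{L,u}(0)$ and the nontriviality of $h_0$ implicit, precisely the points you make explicit. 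For part (iii), you argue pointwise continuity of $I_{D_{u,\pm}(r)}(\bm y)$ in $r$ off the exceptional set $\{\phi_1=u\}\cup\{\phi_2=u\}\cup\{\phi_3=0\}$, while the paper phrases the same fact through $\liminf$/$\limsup$ of the sets $D_{u,\pm}(r)$; the two formulations are equivalent, and you correctly identify and bound the null exceptional set in both the $r_0>0$ and $r_0=0$ cases.

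One small note on part (i): the observation you make about the rows $L-N-1,\dots,L-2$ of columns $\bm{A}(0)^{(i)}$, $1\le i\le L-N-1$, vanishing (drawn from (v) of Lemma \ref{Lem:EigenSigma}) is not used. All that is needed is the vanishing of columns $L-N,\dots,L$ (since $\lambda_{i,0}=0$ there), which makes $\bm{A}(0)_{(L-1)}\bm y$, $\bm{A}(0)_{(L)}\bm y$, and hence membership in $H_{L,u}(0)$, invariant under the sign flip, while $h_0$ changes sign by (\ref{def:h0}); that is exactly the paper's argument, and your extra remark, though correct, is superfluous.
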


\begin{proof}
For (i), let $\bm{y}=(y_1,\dots,y_L)\in D_{u,+}(0)$ and $\bm{y}'=(y_1,\dots,y_{L-N-1},-y_{L-N},\dots,-y_L)$. Then by (\ref{eq:Du0}),
$$\bm{A}_{(L)}(0)\bm{y}>u\text{ and }h_0(\bm{y})>0.$$
Note that by $\bm{A}(0)=\bm{P}(0)\bm{\Lambda}^{1/2}(0)$ and (ii) of Lemma \ref{Lem:EigenSigma}, $\bm{A}(0)^{(i)}=0$ for any $L-N\le i\le L$,
which implies $\bm{A}_{(L)}(0)\bm{y}'>u$.  
In addition, by (\ref{def:h0}), 
$$h_0(\bm{y}')=\sum_{L-N}^L(-y_i)K(y_{1},\dots,y_{L-N-1})=-h_0(\bm{y}).$$
Thus, $\bm{y}'\in D_{u,-}(0)$. The proof of the other part is similar.

As for (ii),
it is easy to see $D_{u,\pm}(0)$ are both open.  By (i), $D_{u,\pm}(0)$ are either both empty or both non-empty.
Since 
$$D_{u,+}(0)\cup D_{u,-}(0)=H_{L,u}(0)\setminus\{\bm{y}\in\mathbb{R}^L:h_0(\bm{y})= 0\},$$
their union is non-empty. Thus, $D_{u,\pm}(0)$ are both non-empty open sets, as required.

As for (iii), fix $r_0\in[0,\delta_{pc}]$.
By the continuity of $\bm{A}(r)$ on $r\in[0,\delta_{pc}]$,
it is easy to see 
\begin{equation}\notag
    H_{i,u}(r_0)\subset\liminf_{r\to r_0}H_{i,u}(r)\subset\limsup_{r\to r_0}H_{i,u}(r)\subset H_{i,u}(r_0)\cup \partial H_{i,u}(r_0)
\end{equation}
for $i=L-1,L$, and
\begin{equation}\notag
    G_{\pm}(r_0)\subset\liminf_{r\to r_0}G_{\pm}(r)\subset\limsup_{r\to r_0}G_{\pm}(r)\subset G_{\pm}(r_0)\cup \partial G_{\pm}(r_0),
\end{equation}
where $\partial$ stands for the boundary of a set, and the $\liminf$ and $\limsup$ of the sets are defined by $$\liminf_{r\to r_0}A_r:=\bigcup_{\delta>0}\bigcap_{|r-r_0|<\delta} A_r\text{ and }\limsup_{r\to r_0}A_r:=\bigcap_{\delta>0}\bigcup_{|r-r_0|<\delta} A_r.$$

This implies
\begin{equation}\label{eq:liminfDr}
    D_{u,\pm}(r_0)\subset \liminf_{r\to r_0}D_{u,\pm}(r)\subset \limsup_{r\to r_0}D_{u,\pm}(r)\subset D_{u,\pm}(r_0)\cup Q_0,
\end{equation}
where $$Q_0:=\partial H_{L-1,u}(r_0)\cup\partial H_{L,u}(r_0)\cup \partial G_{\pm}(r_0),$$
and it is easy to see $\lambda_L(Q_0)=0$.
Note that for any $\bm{y}\in (\liminf_{r\to r_0} D_{u,\pm}(r))\cap D_{u,\pm}(r_0)$,  $$\lim_{r\to r_0}I_{D_{u,\pm}(r)}(\bm{y})=I_{D_{u,\pm}(r_0)}(\bm{y})=1,$$
while for any $\bm{y}\in (\liminf_{r\to r_0} D_{u,\pm}^c(r))\cap D_{u,\pm}^c(r_0)=(\limsup_{r\to r_0} D_{u,\pm}(r))^c\cap D_{u,\pm}^c(r_0)$, 
$$\lim_{r\to r_0}I_{D_{u,\pm}(r)}(\bm{y})=I_{D_{u,\pm}(r_0)}(\bm{y})=0,$$
where by (\ref{eq:liminfDr}),
$$\left(\liminf_{r\to r_0} D_{u,\pm}(r)\right)\cap D_{u,\pm}(r_0)=D_{u,\pm}(r_0)$$
and
$$\left(\limsup_{r\to r_0} D_{u,\pm}(r)\right)^c\cap D_{u,\pm}^c(r_0)=\left(\limsup_{r\to r_0} D_{u,\pm}(r)\right)^c\supset D_{u,\pm}^c(r_0)\cap Q_0^c.$$
Combining all of the above, we have for any $\bm{y}\in Q_0^c$, $\lim_{r\to r_0}I_{D_{u,\pm}(r)}(\bm{y})=I_{D_{u,\pm}(r_0)}(\bm{y})$.
This completes the proof of the lemma.

\end{proof}

\begin{defn}
For any $n\ge 1$, a function $g:\mathbb{R}^{n}\to\mathbb{R}$ is said to be \textbf{regular} if it can be written in the form:
$$g(\bm{y})=|\alpha(\bm{y})|\exp(-\beta(\bm{y})),$$
where $\alpha(\bm{y})$ is a non-zero polynomial of $y_1,\dots,y_n$ and $\beta(\bm{y})=\bm{y}^T\bm{\Sigma}\bm{y}$ for some positive-definite matrix $\bm{\Sigma}\in\mathbb{R}^{n\times n}$.
\end{defn}
\begin{remark}\label{Rem:regular}
It is easy to check that a regular function on $\mathbb{R}^n$ must be non-negative, bounded and integrable. Moreover, the integral of a regular function on a non-empty open set must be positive.
\end{remark}

Let $X$ be qualified under perturbation. 
For any $r\ge 0$ and $\bm{y}\in\mathbb{R}^{L}$,
define 
\begin{equation}\label{def:gr}
g_r(\bm{y}):=\left|h_r(\bm{y})\right|p_L(\bm{y}),    
\end{equation}
where $h_r(\bm{y})$ is as defined in (\ref{eq:LimPoly})-(\ref{eq:LimPolyh}) and
\begin{equation}\label{def:pL}
    p_L(\bm{y}):=\frac{1}{(2\pi)^{L/2}}\exp\left(-\frac{1}{2}\bm{y}^T\bm y\right).    
\end{equation}
For any $r\ge 0$, since $h_r(\bm{y})$ is a nonzero polynomial of $y_1,\dots,y_L$, $g_r(\bm{y})$ is a regular function of $\bm{y}\in\mathbb{R}^L$. In addition, since $\bm{A}(r)$ is continuous on $r\in[0,\delta_{pc}]$ (where $\delta_{pc}$ is as defined in Definition \ref{Def:PC}), all the coefficients of the polynomial $h_r$ are continuous, and then uniformly bounded on $r\in[0,\delta_{pc}]$.
Then by the triangle inequality, there exists a constant $C>0$ such that for any $r\in[0,\delta_{pc}]$,
\begin{equation}\label{Rel:constdom}
    h_r(\bm{y})\le C\sum_{v_1,\dots,v_N\in\{1,\dots,L\}}\left|y_{v_1}\cdots y_{v_N}\right|.
\end{equation}
Thus, for any $r\in[0,\delta_{pc}]$ and $\bm{y}\in\mathbb{R}^L$,
\begin{equation}\label{Rel:dominated}
    g_r(\bm{y})\le C\sum_{v_1,\dots,v_N\in\{1,\dots,L\}}\left|y_{v_1}\cdots y_{v_N}\right|p_L(\bm{y}),   
\end{equation}
where the right-hand side is a finite sum of regular functions.


\subsection{Proof of Theorem \ref{Theo:MR1}}
\begin{proof} Fix $u\in\mathbb{R}$.
Since $X$ is isotropic, we only need to show
$$\lim_{r\to 0}\frac{f_{u,+}(\bm{u}_0r)}{f_{u,-}(\bm{u}_0r)}=1.$$
For any $r>0$, since $\bm{\Sigma(r)}$ is positive-definite and $\bm{\Sigma(r)}=\bm{A}(r)\bm{A}^T(r)$, $\bm{A}(r)$ is invertible.
By the change of variable $\bm{y}=\bm{A}^{-1}(r)(\bm{x}'',x,z)^T$, (\ref{eq:pt}) and (\ref{def:pL}), we have
\begin{equation}\notag
\begin{aligned}
p_{\bm{t}}(\bm{x}'',x,z|\bm{0},\bm{0})
&=\frac{1}{\sqrt{(2\pi)^{L}\det(\bm{\Sigma}(\bm{t}))}}\exp\left(-\frac{1}{2}(\bm{x}'',x,z)\bm{\Sigma}(\bm{t})^{-1}(\bm{x}'',x,z)^T\right)\\
&=\frac{1}{\sqrt{(2\pi)^{L}\det(\bm{\Sigma}(\bm{t}))}}\exp\left(-\frac{1}{2}\bm{y}^T\bm{y}\right)\\
&=\frac{1}{\sqrt{\det(\bm{\Sigma}(\bm{t}))}}p_L(\bm{y}).
\end{aligned}
\end{equation} 
Then the ratio becomes
\begin{equation}\label{newintegrand}
\begin{aligned}
\frac{f_{u,+}(\bm{u}_0r)}{f_{u,-}(\bm{u}_0r)}
&=\frac{\int_{x,z>u}\int_{\det\left(\Matri_N(\bm{x}'')\right)>0}\left|\det\left(\Matri_N(\bm{x}'')\right)\right|p_{\bm{t}}(\bm{x}'',x,z|\bm{0},\bm{0})d\bm{x}''dxdz}{\int_{x,z>u}\int_{\det\left(\Matri_N(\bm{x}'')\right)<0}\left|\det\left(\Matri_N(\bm{x}'')\right)\right|p_{\bm{t}}(\bm{x}'',x,z|\bm{0},\bm{0})d\bm{x}''dxdz}\\
&=\frac
{\int_{D_{u,+}(r)}\left|\det\left(\Matri_N(\bm{A}(r)\bm{y}) \right)\right|p_L(\bm{y})d\bm{y}}
{\int_{D_{u,-}(r)}\left|\det\left(\Matri_N(\bm{A}(r)\bm{y}) \right)\right|p_L(\bm{y})d\bm{y}}\\
&=\frac
{\int_{D_{u,+}(r)}\left|r^{-1}\det\left(\Matri_N(\bm{A}(r)\bm{y}) \right)\right|p_L(\bm{y})d\bm{y}}
{\int_{D_{u,-}(r)}\left|r^{-1}\det\left(\Matri_N(\bm{A}(r)\bm{y}) \right)\right|p_L(\bm{y})d\bm{y}}\\
&=\frac
{\int_{D_{u,+}(r)}|h_r(\bm{y})|p_L(\bm{y})d\bm{y}}
{\int_{D_{u,-}(r)}|h_r(\bm{y})|p_L(\bm{y})d\bm{y}}\\
&=\frac
{\int_{D_{u,+}(r)}g_r(\bm{y})d\bm{y}}
{\int_{D_{u,-}(r)}g_r(\bm{y})d\bm{y}}\\
&=\frac
{\int_{\mathbb{R}^{L}}g_r(\bm{y})I_{D_{u,+}(r)}(\bm{y})d\bm{y}}
{\int_{\mathbb{R}^{L}}g_r(\bm{y})I_{D_{u,-}(r)}(\bm{y})d\bm{y}},
\end{aligned}\end{equation}
where $D_{u,\pm}(r)$, $h_r$, $g_r$ and $p_L$ are as defined in (\ref{def:Dur}), (\ref{def:hr}), (\ref{def:gr}), and (\ref{def:pL}), respectively.

Note that by (iii) of Lemma \ref{Lem:AS1}, for any $r_0\in [0,\delta_{pc}]$,
$g_{r}(\bm{y})I_{D_{u,\pm}(r)}(\bm{y})$ converges almost everywhere on $\mathbb{R}^L$ to $g_{r_0}(\bm{y})I_{D_{u,\pm}(r_0)}(\bm{y})$ as $r\to r_0$, and $g_{r}(\bm{y})I_{D_{u,\pm}(r)}(\bm{y})$ is dominated by the right-hand side of (\ref{Rel:dominated}) which does not depend on $r$ and is integrable over $\mathbb{R}^L$. Then by the dominated convergence theorem, we have
$$\lim_{r\to r_0}\int_{\mathbb{R}^{L}}g_{r}(\bm{y})I_{D_{u,\pm}(r)}(\bm{y})d\bm{y}=\int_{\mathbb{R}^{L}}g_{r_0}(\bm{y})I_{D_{u,\pm}(r_0)}(\bm{y})d\bm{y}=\int_{D_{u,\pm}(r_0)}g_{r_0}(\bm{y})d\bm{y}.$$

Moreover,
by (i) and (ii) of Lemma \ref{Lem:AS1} and Remark \ref{Rem:regular}, we have
$$
\int_{D_{u,+}(0)}g_{0}(\bm{y})d\bm{y}=\int_{D_{u,-}(0)}g_{0}(\bm{y})d\bm{y}>0.$$
Then
$$
\lim_{r\to 0}\frac{f_{u,+}(\bm{u}_0r)}{f_{u,-}(\bm{u}_0r)}
=\frac{\lim_{r\to 0}\int_{D_{u,+}(r)}g_{r}(\bm{y})d\bm{y}}{\lim_{r\to 0}\int_{D_{u,-}(r)}g_{r}(\bm{y})d\bm{y}}\\
=\frac{\int_{D_{u,+}(0)}g_{0}(\bm{y})d\bm{y}}{\int_{D_{u,-}(0)}g_{0}(\bm{y})d\bm{y}}=1.
$$
\end{proof}

\begin{remark}
In fact, the result in Theorem \ref{Theo:MR1} also holds for $N=1$.
Following the proof of Lemma \ref{Lem:Sigma}, one can easily check that the expression of $\bm{\Sigma}_0$ and $\bm{\Sigma}_2$ in Lemma \ref{Lem:Sigma} also holds for $N=1$. Hence
$$\bm{\Sigma}_0=\begin{pmatrix}
0 & 0 & 0\\
0 & a & a\\
0 & a & a\\
\end{pmatrix},$$
where $a:=1-\frac{1}{3}\rho^{(1)}(0)^2\rho^{(2)}(0)^{-1}$, and by Proposition 3.3 in \cite{che}, we have $a>0$.
By solving the equation $\det(\bm{\Sigma}_{0}-\lambda\bm{I}_3)=0$ for any $\lambda\in\mathbb{R}$, we have
$$\bm{\Lambda}_{0}=\diag(2a,0,0),$$
which implies $\bm{A}_0^{(2)}=\bm{A}_0^{(3)}=\bm{0}_3$. Then by $\bm{\Sigma}_{0}=\bm{A}_0\bm{A}_0^T$, we have
$$\bm{A}_{0}=\begin{pmatrix}
0 & 0 & 0\\
\sqrt{a} & 0 & 0\\
\sqrt{a} & 0 & 0
\end{pmatrix}.$$
Thus,
$$\bm{A}(r)_{(1)}=\bm{A}_1r+o(r).$$
Moreover, based on Lemma \ref{Lem:Sigma}, one can verify that for $N=1$, all the results in Section \ref{SubSec:GCS} still hold. As $N=1$, $\Matri_N(\bm{A}(r)\bm{y})=\bm{A}(r)_{(1)}\bm{y}$. 
Then by Remark \ref{Rem:PC2},
\begin{equation}\notag
    \lim_{r\to 0}r^{-1}\det\left(\Matri_N(\bm{A}(r)\bm{y})\right)=\lim_{r\to 0}\frac{1}{r}\bm{A}(r)_{(1)}\bm{y}=(\bm{A}_{1})_{(1)}\bm{y}.
\end{equation}
Note that by Remark \ref{Rem:PC2},
\begin{equation}\notag
\bm{A}_{1}^{(i)}=\left\{
\begin{aligned}
    \lambda_{i,0}^{1/2}\bm{P}_{1}^{(i)}, &&  \text{ $i=1$;}\\
    \lambda_{i,2}^{1/2}\bm{P}_{0}^{(i)}, &&  \text{$i=2,3$}.
\end{aligned}\right.
\end{equation}
Thus,
\begin{equation}\label{Eq:rAyN1}
    \lim_{r\to 0}r^{-1}\det\left(\Matri_N(\bm{A}(r)\bm{y})\right)=\lambda_{1,0}^{1/2}\bm{P}_{1}[1,1]y_1+\lambda_{2,2}^{1/2}\bm{P}_{0}[1,2]y_2+\lambda_{3,2}^{1/2}\bm{P}_{0}[1,3]y_3.
\end{equation}
Note that the dimension of the eigenspace of $2a$, as an eigenvalue of $\bm{\Sigma}_0$, is one. Thus, an eigenvector of $2a$ must have the form $k\bm{P}_{0}^{(1)}$ for some $k\neq 0$. 
By (iv) of Lemma \ref{Lem:EigValDecSpd}, $\bm{P}_{1}^{(1)}$ is either $\bm{0}_3$ or an eigenvector of $2a$. Then by (i) of Lemma \ref{Lem:EigValDecSpd},
we have $$\bm{P}_{1}^{(1)}=\bm{0}_3.$$
Note that 0 is an eigenvalue of $\bm{\Lambda}_{0}$ with multiplicity $N+1$ for $N=1$. In addition, since $\bm{\Sigma}_{0}^{(1)}=\bm{0}_{3}$, $\bm{\Sigma}_{0}^{(2)}=\bm{\Sigma}_{0}^{(3)}$ and 
$\bm{\Sigma}_{2}^{(2)}=\bm{\Sigma}_{2}^{(3)}$,
we can also get 
$$\det(\bm{\Sigma}(r))=o\left(r^{2N+2}\right)$$
for $N=1$. Then 
we can follow the proof of Lemma \ref{Lem:speed} to show that it also holds for $N=1$. This implies $\lambda_{2,2}>0$ and $\lambda_{3,2}=0$.

Therefore, (\ref{Eq:rAyN1}) becomes
\begin{equation}\notag
    \lim_{r\to 0}r^{-1}\det\left(\Matri_N(\bm{A}(r)\bm{y})\right)=\lambda_{2,2}^{1/2}\bm{P}_{0}[1,2]y_2.
\end{equation}
Note that by Lemma \ref{Lem:Sigma},
$$\bm{\Sigma}_2=\begin{pmatrix}
18\alpha-30\beta &\alpha'-\frac{5}{3}\beta' &\alpha'-\frac{5}{3}\beta'\\
\alpha'-\frac{5}{3}\beta' &-b &-b\\
\alpha'-\frac{5}{3}\beta' &-b &-b\\
\end{pmatrix},$$
where $b:=\frac{1}{6}\rho^{(1)}(0)-\frac{5}{18}\rho^{(1)}(0)^2\rho^{(2)}(0)^{-2}\rho^{(3)}(0)$, and by (\ref{Rel:1830}), we have $b>0$. Since $\bm{\Sigma}_0\bm{P}_0^{(3)}=0$, we have $$\bm{P}_0[2,3]+\bm{P}_0[3,3]=0.$$
Then by (iii) of Lemma \ref{Lem:EigValDecSpd},
$$
\begin{aligned}
0=\lambda_{3,2}
&=\left(\bm{P}_0^{(3)}\right)^T\bm{\Sigma}_2\bm{P}_0^{(3)}\\
&=(18\alpha-30\beta)\bm{P}_0[1,3]^2+2\left(\alpha'-\frac{5}{3}\beta'\right)\bm{P}_0[1,3](\bm{P}_0[2,3]+\bm{P}_0[3,3])\\
&~~~~-b(\bm{P}_0[2,3]+\bm{P}_0[3,3])^2\\
&=(18\alpha-30\beta)\bm{P}_0[1,3]^2.
\end{aligned}
$$
Since $18\alpha-30\beta>0$, we have $\bm{P}_0[1,3]=0$ (and thus, Lemma \ref{Lem:PLJ} also  holds for $N=1$). By $(\bm{\Sigma}_0-2a\bm{I})\bm{P}_0^{(1)}=0$ and $a\neq 0$, we have $\bm{P}_0[1,1]=0$. Thus, if $\bm{P}_0[1,2]=0$, then from the above, we can get $(\bm{P}_0)_{(1)}=(0,0,0)$. This implies $\bm{P}_0$ is degenerate, resulting in a contradiction. Thus, we have $$\bm{P}_0[1,2]\neq 0.$$

Therefore, for $N=1$, we can define $$h_0(\bm{y}):=\lambda_{2,2}^{1/2}\bm{P}_{0}[1,2]y_2$$
which is a non-zero polynomial of $\bm{y}$ with degree one.
Then we can also define $D_{u,\pm}(r)$ as in (\ref{def:Dur}).
Since $(y_1,y_2,y_3)\in D_{u,\pm}(0)$ is equivalent to $(y_1,-y_2,-y_3)\in D_{u,\mp}(0)$, one can follow the proof of Lemma \ref{Lem:AS1} and show that it also holds for $N=1$.
The remaining proof is the same as that of Theorem \ref{Theo:MR1}.
\end{remark}

\section{Asymptotic Behavior as \texorpdfstring{$u\to \infty$}{the Threshold Tends to Infinity}}\label{proj2:sec:mr2}

Theorem \ref{Theo:MR1} states that if two critical points are very close, then the determinant of their Hessian should have opposite signs. However, note this result holds for any threshold $u$. Consequently, it is not surprising that Theorem \ref{Theo:MR1} cannot further specify what exact types do the two critical points belong to: is one of them a local maxima or a critical point with index $N-2$? Is the other a saddle point with index $N-1$ or a critical point with a lower index? In this section, we investigate these questions for the asymptotic case where $u\to\infty$. It is shown that in this case, only two types of critical points remain: local maxima and the saddle points with index $N-1$.

\subsection{Main Result 2}\label{Sec:M2}
Let $X$ be qualified under perturbation. Recall that Condition (3) in Definition \ref{Def:qualified} implies $\rho(x)$ is four times continuously differentiable on $[0,\delta_{\rho}^2]$, which makes $\bm{\Sigma}(r)$ continuous on $r\in[0,\delta_{\rho}]$ (see Lemma \ref{Lem:Sigma}).
Indeed, for any $\tilde{\delta}_{\rho}>0$ such that 
\begin{equation}\label{Con:GC1}
    \text{$\rho(x)$ is four times continuously differentiable on $\left[0,\tilde{\delta}_{\rho}^2\right]$},
\end{equation}
we can show that $\bm{\Sigma}(r)$ is continuous on $r\in[0,\tilde{\delta}_{\rho}]$ by a similar proof of Lemma \ref{Lem:Sigma}. 

Assume that (\ref{Con:GC1}) holds for some $\tilde{\delta}_{\rho}>0$.
For any $r\ge 0$, let $\widetilde{\bm{A}}(r)$ be the non-negative square root of $\bm{\Sigma}(r)$. Recall that this means $\widetilde{\bm{A}}(r)$ is the unique positive semi-definite matrix such that 
\begin{equation}\notag
    \bm{\Sigma}(r)=\widetilde{\bm{A}}(r)\widetilde{\bm{A}}^T(r).
\end{equation}
Since the non-negative square root is continuous (see, for example, pages 405 and 411 in \cite{hor}),  $\widetilde{\bm{A}}(r)$ is continuous on $r\in(0,\tilde{\delta}_{\rho}]$. 
In addition, for any $r\ge 0$, we can get
\begin{equation}\label{eq:TA0A0}
   \widetilde{\bm{A}}(r)=\bm{A}(r)\bm{P}^T(r).
\end{equation}
Then by Definition \ref{Def:PC}, 
$$\lim_{r\to 0}\widetilde{\bm{A}}(r)=\widetilde{\bm{A}}(0).$$
Thus, $\widetilde{\bm{A}}(r)$ is continuous on $r\in[0,\tilde{\delta}_{\rho}]$. 

To establish our second main result, we also need
$$\bm{\Sigma}(r)[k+k(k-1)/2,i]<0$$ 
for any $1\le k\le N-1$, $r\ge 0$ and $i=L-1,L$.
Note that by Lemma \ref{Lem:Sigma}, $\bm{\Sigma}(0)[k+k(k-1)/2,i]=\frac{4}{3}\rho^{(1)}(0)<0$ for any $1\le k\le N-1$ and $i=L-1,L$. Thus, it is equivalent to requiring that the above inequality to hold for any $r>0$.
By the proof of Lemma \ref{Lem:Sigma} (see (\ref{k1lessthan1}), (\ref{kstarlessthan1}), (\ref{SPL1}) and (\ref{SPL})), this is equivalent to 
$$k_1(\bm{u}_0r)>0\text{ and }1-k_1(\bm{u}_0r)k_*(\bm{u}_0r)-k_2^2(\bm{u}_0r)r^4>0$$
for any $r\in (0,\tilde{\delta}_{\rho}]$,
where for any $\bm{t}\in\mathbb{R}^N$,
$$k_1(\bm{t})=\frac{\rho^{(1)}(\|\bm{t}\|^2)} {\rho^{(1)}(0)},~~k_2(\bm{t})=\frac{2\rho^{(2)}(\|\bm{t}\|^2)}{\rho^{(1)}(0)},\text{ and }k_*(\bm{t})=k_1(\bm{t})+k_2(\bm{t})\|\bm{t}\|^2,$$
i.e., for any $x\in (0,\tilde{\delta}_{\rho}^2]$, 
\begin{equation}\label{Con:GC2}
\rho^{(1)}(x)<0\text{ and }(\rho^{(1)}(x))^2+2\rho^{(1)}(x)\rho^{(2)}(x)x+4(\rho^{(2)}(x))^2x^2<(\rho^{(1)}(0))^2.
\end{equation}

\begin{example}
For any $a>0$, we can show that condition (\ref{Con:GC2}) is satisfied when $\rho(x)=e^{-ax}$, $x\ge 0$.
Since condition (\ref{Con:GC2}) is invariant under rescaling, it is equivalent to check this condition when $a=1$.
Note that $\rho^{(1)}(x)=-e^{-x}<0$ and $\rho^{(2)}(x)=e^{-x}$, $x\ge 0$. Thus, it suffices to check for any $x>0$,
$$e^{2x}>1-2x+4x^2,$$
which is obvious since $e^{t}>1-t+t^2$ for any $t>0$.
\end{example}

Recall that by (\ref{expression}),
$$
f_{u,k}(\bm{t}):=P(X(\bm{0})>u)^{-1}p(\bm{0})^{-1}\int_{x,z>u}\int_{D_k}|\det(\bm{x}'')|p_{\bm{t}}(\bm{x}'',x,z|\bm{0},\bm{0})p_{\bm{t}}(\bm{0},\bm{0})d\bm{x}''dxdz,
$$
where 
$$
\begin{aligned}
D_k:=\Big\{\bm{x}\in\mathbb{R}^{N(N+1)/2}:&\Matri_N(\bm{x})\text{ is non-degenerate} \\ &\text{and has exactly $k$ negative eigenvalues}\Big\}.
\end{aligned}
$$
For any $0\le k\le N$, $f_{u,k}(\bm{t})$ is positive and  continuous (by the dominated convergence theorem) on $\bm{t}\in\mathbb{R}^N\setminus\{\bm{0}_N\}$.

For any $u>0$ and $\bm{t}\in\mathbb{R}^N\setminus{\{\bm{0}_N\}}$, define
$$\Psi_u(\bm{t}):=\frac{\sum_{k=0}^{N-2} f_{u,k}(\bm{t})}{f_{u,N-1}(\bm{t})+f_{u,N}(\bm{t})}.$$
Then $\Psi_u(\bm{t})$ is also continuous on $\bm{t}\in\mathbb{R}^N\setminus\{\bm{0}_N\}$.
The following theorem describes the limiting behavior of $\Psi_u(\bm{t})$ as $\|\bm{t}\|\to 0$ or $u\to \infty$. In particular, it shows that as $u$ tends to infinity, the other types of critical points become negligible compared to the critical points with indices $N$ or $N-1$ uniformly in $\bm{t}$ in a neighborhood of $\bm{0}$.

\begin{theorem}\label{Theo:MR2}
Let $X$ be qualified under perturbation. Assume that $X$ also satisfies (\ref{Con:GC1}) and (\ref{Con:GC2}) for some $\tilde{\delta}_{\rho}>0$. 
Then we have
\begin{enumerate}[label=(\roman*)]
    \item For any $u>0$, the limit $\Psi_u(\bm{0}):=\lim_{\|\bm{t}\|\to 0}\Psi_u(\bm{t})$ exists (and thus, $\Psi_u(\bm{t})$ is well-defined and continuous on $\bm{t}\in\mathbb{R}^N$).
    \item As $u\to\infty$,
    $$\max_{\bm{t}\in \overline{ B(\bm{0}_N,\tilde{\delta}_{\rho})}}\Psi_u(\bm{t})\to 0.$$
    
\end{enumerate}

\end{theorem}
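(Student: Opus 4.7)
The plan is to combine the change-of-variables machinery from the proof of Theorem \ref{Theo:MR1} with an additional rescaling $\bm y=u\bm\eta$ that turns the threshold $u$ into a fixed geometric constraint and the density $p_L(\bm y)$ into $(2\pi)^{-L/2}\exp(-u^2\|\bm\eta\|^2/2)$, so that Laplace-type tail asymptotics handle part (ii) while part (i) follows from an a.e.\ convergence plus dominated convergence argument closely parallel to Section \ref{Sec:S}. By isotropy of $X$, $f_{u,k}(\bm t)$ depends on $\bm t$ only through $\|\bm t\|$, so it suffices to analyze $\Psi_u(r\bm u_0)$ for $r\in[0,\tilde{\delta}_{\rho}]$.

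For part (i), apply the substitution $\bm y=\bm A^{-1}(r)(\bm x'',x,z)^T$ (as in the proof of Theorem \ref{Theo:MR1}) to rewrite
$$\Psi_u(r\bm u_0)=\frac{\sum_{k=0}^{N-2}\int|h_r(\bm y)|p_L(\bm y)I_{D_{u,k}(r)}(\bm y)\,d\bm y}{\sum_{k=N-1}^{N}\int|h_r(\bm y)|p_L(\bm y)I_{D_{u,k}(r)}(\bm y)\,d\bm y},$$
with $D_{u,k}(r)=H_{L-1,u}(r)\cap H_{L,u}(r)\cap G_k(r)$ and $G_k(r):=\{\bm y:\Matri_N(\bm A(r)\bm y)\text{ has exactly }k\text{ negative eigenvalues}\}$. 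By Lemmas \ref{Lem:H} and \ref{Lem:PLJ}, $\Matri_N(\bm A_0\bm y)$ has vanishing $N$-th row and column, so a first-order eigenvalue perturbation argument gives $I_{G_k(r)}\to I_{G_k(0)}$ almost everywhere, where $G_k(0)$ is determined by the negative-eigenvalue count of the upper-left $(N-1)\times(N-1)$ block of $\Matri_N(\bm A_0\bm y)$ together with the sign of $[\bm A_1\bm y]_{\tau(N,N)}$. Combined with the domination bound \eqref{Rel:dominated}, the dominated convergence theorem delivers the limit; strict positivity of the denominator limit follows by producing $\bm y$ with the $(N-1)\times(N-1)$ block negative definite (possible by Lemma \ref{Lem:EigenSigma}(vii)) and with $h_0(\bm y)\ne 0$.

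For part (ii), after the additional rescaling $\bm y=u\bm\eta$ using homogeneity of $h_r$ of degree $N$ and scale-invariance of $G_k(r)$,
$$\Psi_u(r\bm u_0)=\frac{\sum_{k=0}^{N-2}J_k(r,u)}{J_{N-1}(r,u)+J_N(r,u)},\qquad J_k(r,u):=\int|h_r(\bm\eta)|e^{-u^2\|\bm\eta\|^2/2}I_{\widetilde D_k(r)}(\bm\eta)\,d\bm\eta,$$
with $u$-free sets $\widetilde D_k(r)=\widetilde H_{L-1}(r)\cap\widetilde H_L(r)\cap G_k(r)$ and $\widetilde H_i(r):=\{\bm\eta:\bm A_{(i)}(r)\bm\eta>1\}$. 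Laplace asymptotics then reduce everything to proving the uniform strict gap $\inf_{r\in[0,\tilde{\delta}_{\rho}]}\bigl(\min_{k\le N-2}m_k(r)-m_*(r)\bigr)>0$, where $m_k(r):=\inf_{\widetilde D_k(r)}\|\bm\eta\|^2/2$ and $m_*(r):=\inf_{\widetilde H_{L-1}(r)\cap\widetilde H_L(r)}\|\bm\eta\|^2/2$. The core sign computation runs as follows: let $\bm\eta_*(r)$ be the unique minimizer of $\|\bm\eta\|^2$ on $\widetilde H_{L-1}(r)\cap\widetilde H_L(r)$, so by KKT $\bm\eta_*(r)=\tfrac{1}{2}\bigl(\lambda_1\bm A_{(L-1)}(r)^T+\lambda_2\bm A_{(L)}(r)^T\bigr)$ with $\lambda_1,\lambda_2\ge 0$ not both zero; hence $\bm A(r)\bm\eta_*(r)=\tfrac{1}{2}\bigl(\lambda_1\bm\Sigma(r)^{(L-1)}+\lambda_2\bm\Sigma(r)^{(L)}\bigr)$. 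At $r=0$ this equals $\bm\Sigma_0^{(L)}/\bm\Sigma_0[L,L]$, whose matriculation is $(4\rho^{(1)}(0)/(3a))\,\mathrm{diag}(1,\ldots,1,0)$, a negative scalar times $\mathrm{diag}(1,\ldots,1,0)$ by \eqref{ineq:rho1rho2}; for $r>0$ assumption \eqref{Con:GC2} gives $\bm\Sigma(r)[\tau(k,k),i]<0$ for $1\le k\le N-1$ and $i=L-1,L$, so the $\tau(k,k)$-entries ($k<N$) of the matriculation of $\bm A(r)\bm\eta_*(r)$ remain bounded away from zero and negative, while by Lemma \ref{Lem:Sigma} and \eqref{Con:GC1} the $\tau(N,N)$ and off-diagonal entries stay small. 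Consequently $\Matri_N(\bm A(r)\bm\eta_*(r))$ has $N-1$ strictly negative eigenvalues plus one near-zero eigenvalue, so $\bm\eta_*(r)\in\overline{\widetilde D_{N-1}(r)\cup\widetilde D_N(r)}$ but $\bm\eta_*(r)\notin\overline{\widetilde D_k(r)}$ for $k\le N-2$; this yields the pointwise strict gap $m_k(r)>m_*(r)$, which upgrades to a uniform gap on $[0,\tilde{\delta}_{\rho}]$ by continuity of $m_*(\cdot)$ and the (semi-)continuity of $m_k(\cdot)$ on this compact interval.

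The main obstacle I foresee is the sign computation combined with the uniformity step: verifying rigorously that $\bm\eta_*(r)$ consistently sits in the ``$N-1$ large negatives plus one near-zero eigenvalue'' regime for every $r\in(0,\tilde{\delta}_{\rho}]$ (which is exactly what assumption \eqref{Con:GC2} is designed to guarantee), and then extracting a uniform positive lower bound on $m_k(r)-m_*(r)$, where the semi-continuity of $m_k$ at $r=0$ requires delicate handling of the degeneracy of $\bm\Sigma(0)$ and of the coincidence of the two half-space constraints there.
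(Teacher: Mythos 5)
Your skeleton for part (ii) correctly identifies the central geometric mechanism of the paper's proof: the KKT characterization of the origin's projection $\hat{\bm y}_u(r)$ onto the intersection of the two half-spaces, the fact that $\Matri_N(\widetilde{\bm A}(r)\hat{\bm y}_u(r))$ is diagonal with at least $N-1$ strictly negative entries coming from Condition (\ref{Con:GC2}) (this is exactly Lemma \ref{Lem:projectionneg}), and the consequent separation of the index-$\leq N-2$ region from the projection point. Part (i) is also close to the paper's argument (change of variables, a.e.\ convergence of indicators, dominated convergence via (\ref{Rel:dominated})). However, two points need attention. First, you should carry out the change of variable with the \emph{symmetric} nonnegative square root $\widetilde{\bm A}(r)$ rather than $\bm A(r)=\bm P(r)\bm\Lambda^{1/2}(r)$: continuity of $\bm A(r)$ is only guaranteed on $[0,\delta_{pc}]$, whereas the uniformity claim in (ii) lives on $[0,\tilde\delta_\rho]$ which may be strictly larger; $\widetilde{\bm A}(r)$ is continuous wherever $\bm\Sigma(r)$ is, i.e., on the full interval. (Many of your quantities, such as $m_*(r)$ and $m_k(r)$, are ultimately invariant under this choice, but the compactness step quietly needs the continuous version.)

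The more substantive gap is in the step ``Laplace asymptotics then reduce everything to proving the uniform strict gap.'' This reduction is not automatic: you also need a \emph{uniform lower bound} on the denominator $J_{N-1}+J_N$ of the form $c\,u^{L+N}e^{-\gamma_{1,1}^2(r)u^2/2}$ with $c>0$ independent of $r$, and this is not supplied merely by knowing that $\bm\eta_*(r)$ lies in the closure of $\widetilde D_{N-1}(r)\cup\widetilde D_N(r)$. The polynomial $\tilde h_r$ could be small on most of the near-optimal region; the region itself could be thin; and one needs uniformity over $r$. The paper addresses precisely this by a two-stage compactness argument: a Heine--Borel covering of $\{\hat{\bm y}_1(r)\}$ yields a uniform radius $\gamma'$ so that every ball $B(\hat{\bm y}_u(r),\gamma' u)$ is entirely in the at-least-$(N-1)$-negative region, and then Dini's theorem produces a threshold $\eta>0$ such that the set $T_u(r)\subset\widetilde D_{u,N-1}(r)$ where $|\tilde h_r|\geq\eta u^N$ has volume at least $\tfrac12 C_{1,\tilde\delta_\rho}u^L$ uniformly in $r$, giving the required denominator lower bound; the numerator is then dominated using Lemma \ref{Lem:Compare}. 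Your ``uniform strict gap'' $\min_k m_k(r)-m_*(r)>0$ is a necessary ingredient but not a sufficient one, and the semi-continuity argument you sketch to establish it also requires care (it is upper, not lower, semi-continuity of $m_k$ that comes for free from openness of the region, whereas the infimum of $m_k-m_*$ over the compact interval wants lower semi-continuity). So while your proposal captures the correct geometric picture and the role of Condition (\ref{Con:GC2}), the quantitative Laplace-type estimate that closes part (ii) is missing, and it is exactly the hard technical content of the paper's proof.
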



\subsection{Preparatory results for the proof of Theorem \ref{Theo:MR2}}\label{Sec:PMR2}

By replacing $\bm{A}(r)$ in (\ref{def:Hr}) with $\widetilde{\bm{A}}(r)$, we can similarly define 
\begin{equation}\label{def:Hr_tilde}
    \widetilde{H}_{i,u}(r):=\left\{\bm{y}\in\mathbb{R}^L:\widetilde{\bm{A}}(r)_{(i)}\bm{y}>u\right\},
\end{equation}
for any $r\ge 0$, $u>0$ and $i=L-1,L$.
It can be noted that 
\begin{itemize}
    \item by (\ref{eq:last2rows}) and (\ref{eq:TA0A0}), 
    $\widetilde{H}_{L-1,u}(0)$ and $\widetilde{H}_{L,u}(0)$ coincide;
    \item by the continuity of $\bm{A}(r)$ on $r\in[0,\tilde{\delta}_{\rho}]$, for $i=L-1,L$,
    \begin{equation}\label{Rel:Hur_tilde}
    \widetilde{H}_{i,u}(0)\subset\liminf_{r\to 0}\widetilde{H}_{i,u}(r)\subset\limsup_{r\to 0}\widetilde{H}_{i,u}(r)\subset \widetilde{H}_{i,u}(0)\cup \partial \widetilde{H}_{i,u}(0),
    \end{equation}
    \item for any $r>0$, since $\bm{\Sigma}(r)$ is non-degenerate, the two $(L-1)$-dimensional hyper-planes $\partial\widetilde{H}_{L-1,u}(r)$ and $\partial\widetilde{H}_{L,u}(r)$ cannot be parallel, and thus, $\partial\widetilde{H}_{L-1,u}(r)\cap \partial\widetilde{H}_{L,u}(r)\neq\emptyset$.
\end{itemize}
For any $r\ge 0$ and $u>0$, let $V_u(r):=\widetilde{H}_{L-1,u}(r)\cap \widetilde{H}_{L,u}(r)$.
Since $\overline{V}_u(r)$ is a convex set, the point in $\overline{V}_u(r)$ which minimizes the distance between the origin and a point in $\overline{V}_u(r)$ is unique. Thus, we can define
\begin{equation}\label{yurhat}
    \hat{\bm{y}}_{u}(r):=\argmin_{\bm{y}\in \overline{V}_u(r)}\|\bm{y}\|.
\end{equation}
Let $\hat{\bm{y}}_{i,u}(r)$, $i=L-1,L$ be the projection of the origin on the $(L-1)$-dimensional hyper-plane $\partial\widetilde{H}_{i,u}(r)$, and let
$\hat{\bm{y}}_{L-1,L,u}(r)$ be the projection of the origin on the hyper-plane $\partial\widetilde{H}_{L-1,u}(r)\cap \partial\widetilde{H}_{L,u}(r)$ (when $r>0$, this hyper-plane is $(L-2)$-dimensional, where $L=N(N+1)/2+2>2$). 
Obviously, for any $r\ge 0$ and $u>0$, we have
\begin{equation}\label{eq:Projection1}
    \hat{\bm{y}}_{u}(r)\in\left\{\hat{\bm{y}}_{L-1,u}(r),\hat{\bm{y}}_{L,u}(r),\hat{\bm{y}}_{L-1,L,u}(r)\right\},
\end{equation}
and in particular, 
\begin{equation}\label{eq:Projection2}
    \hat{\bm{y}}_{u}(0)=\hat{\bm{y}}_{L-1,u}(0)=\hat{\bm{y}}_{L,u}(0).
\end{equation}

Recall that the index of a critical point is defined to be the number of negative eigenvalues of its Hessian matrix.
\begin{lemma}\label{Lem:projectionneg}
Let $X$ be qualified. Assume that $X$ also satisfies (\ref{Con:GC1}) and (\ref{Con:GC2}) for some $\tilde{\delta}_{\rho}>0$. Let $\hat{\bm{y}}_{u}(r)$, $\hat{\bm{y}}_{L-1,u}(r)$, $\hat{\bm{y}}_{L,u}(r)$ and $\hat{\bm{y}}_{L-1,L,u}(r)$ be as defined above.
Then for any $u>0$,
\begin{enumerate}[label=(\roman*)]
    \item $\Matri_N(\widetilde{\bm{A}}(r)\hat{\bm{y}}_{i,u}(r))$, $i=L-1,L$ has at least $N-1$ negative eigenvalues for any $r\in[0,\tilde{\delta}_{\rho}]$;
    \item $\Matri_N(\widetilde{\bm{A}}(r)\hat{\bm{y}}_{L-1,L,u}(r))$ has at least $N-1$ negative eigenvalues for any $r\in(0,\tilde{\delta}_{\rho}]$.
\end{enumerate}
These, together with (\ref{eq:Projection1}) and (\ref{eq:Projection2}), imply that
$\Matri_N(\widetilde{\bm{A}}(r)\hat{\bm{y}}_{u}(r))$ has at least $N-1$ negative eigenvalues for any $r\in[0,\tilde{\delta}_{\rho}]$.
\end{lemma}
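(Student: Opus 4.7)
The plan is to show that in all three cases---the projections $\hat{\bm{y}}_{L-1,u}(r)$, $\hat{\bm{y}}_{L,u}(r)$ for $r\in[0,\tilde{\delta}_{\rho}]$, and $\hat{\bm{y}}_{L-1,L,u}(r)$ for $r\in(0,\tilde{\delta}_{\rho}]$---the symmetric matrix $M:=\Matri_N(\widetilde{\bm{A}}(r)\hat{\bm{y}})$ takes the diagonal form $\diag(c,\ldots,c,\gamma)$ with $c$ appearing $N-1$ times and $c<0$, which immediately yields at least $N-1$ negative eigenvalues. The argument has three ingredients: an explicit formula for the projection, an isotropy-based reduction of $M$ to diagonal form, and the sign information supplied by Lemma~\ref{Lem:Sigma} at $r=0$ together with condition~(\ref{Con:GC2}) for $r>0$.

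First I would carry out the projection. The standard formula for the projection of the origin onto the affine hyperplane $\{\bm{y}\in\mathbb{R}^L:\widetilde{\bm{A}}(r)_{(i)}\bm{y}=u\}$ gives $\hat{\bm{y}}_{i,u}(r)=u\widetilde{\bm{A}}(r)_{(i)}^T/\|\widetilde{\bm{A}}(r)_{(i)}\|^2$, and using $\widetilde{\bm{A}}(r)\widetilde{\bm{A}}(r)^T=\bm{\Sigma}(r)$ this becomes
$$\widetilde{\bm{A}}(r)\hat{\bm{y}}_{i,u}(r)=\frac{u}{\bm{\Sigma}(r)[i,i]}\bm{\Sigma}(r)^{(i)}.$$
Consequently, the $(k,\ell)$-entry of $M$ equals $u\bm{\Sigma}(r)[\tau(k,\ell),i]/\bm{\Sigma}(r)[i,i]$, i.e.\ a positive scalar multiple of the conditional covariance of $X_{k\ell}(\bm{u}_0 r)$ with $X(\bm{u}_0 r)$ (for $i=L-1$) or $X(\bm{0})$ (for $i=L$), given $\triangledown X(\bm{u}_0 r)=\triangledown X(\bm{0})=\bm{0}_N$.

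Next I would use isotropy to pin down the shape of $M$. For any $U\in O(N)$ with $U\bm{u}_0=\bm{u}_0$, i.e.\ any $U$ acting as an arbitrary element of $O(N-1)$ on the first $N-1$ coordinates and fixing the $N$-th, isotropy yields that the joint law of the Gaussian vector $(\triangledown^2 X(\bm{u}_0 r),\triangledown X(\bm{u}_0 r),X(\bm{u}_0 r),\triangledown X(\bm{0}),X(\bm{0}))$ is invariant under the action $\triangledown^2 X\mapsto U^T\triangledown^2 X U$, $\triangledown X\mapsto U^T\triangledown X$, $X\mapsto X$. The conditioning event $\triangledown X(\bm{u}_0 r)=\triangledown X(\bm{0})=\bm{0}_N$ is preserved by this action, so $U^T M U=M$ for every such $U$. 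Decomposing $M=\begin{pmatrix}A&\bm{b}\\ \bm{b}^T&\gamma\end{pmatrix}$ in the $(N-1,1)$ block form, the invariance forces $A=cI_{N-1}$ (the only symmetric matrices commuting with all of $O(N-1)$) and $\bm{b}=\bm{0}_{N-1}$ (the only vector fixed by all of $O(N-1)$), so $M=\diag(c,\ldots,c,\gamma)$ as claimed.

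Finally I would verify $c<0$. At $r=0$, equation~(\ref{eq:Sigma0_side}) in Lemma~\ref{Lem:Sigma} with $\bm{u}=\bm{u}_0$ and $i_1=j_1=k<N$ gives $\bm{\Sigma}_{\bm{u}_0,0}[k+k(k-1)/2,i]=\tfrac{4}{3}\rho^{(1)}(0)<0$, while for $r\in(0,\tilde{\delta}_\rho]$ the equivalence noted immediately before the lemma (linking condition~(\ref{Con:GC2}) to strict negativity of $\bm{\Sigma}(r)[k+k(k-1)/2,i]$ for $1\le k\le N-1$ and $i\in\{L-1,L\}$) supplies the same sign; dividing by the positive quantity $\bm{\Sigma}(r)[i,i]$ and multiplying by $u>0$ gives $c<0$ and proves~(i). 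For~(ii), stationarity and isotropy (via the rigid motion swapping $\bm{u}_0 r$ and $\bm{0}$) give $\bm{\Sigma}(r)[L-1,L-1]=\bm{\Sigma}(r)[L,L]$; solving the two linear equations defining $\hat{\bm{y}}_{L-1,L,u}(r)$ in the span of $\widetilde{\bm{A}}(r)_{(L-1)}^T$ and $\widetilde{\bm{A}}(r)_{(L)}^T$ yields the symmetric combination $\hat{\bm{y}}_{L-1,L,u}(r)=a(\widetilde{\bm{A}}(r)_{(L-1)}^T+\widetilde{\bm{A}}(r)_{(L)}^T)$ with $a=u/(\bm{\Sigma}(r)[L-1,L-1]+\bm{\Sigma}(r)[L-1,L])$. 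Positive-definiteness of the $2\times2$ principal submatrix of $\bm{\Sigma}(r)$ at indices $L-1,L$ gives the strict bound $|\bm{\Sigma}(r)[L-1,L]|<\bm{\Sigma}(r)[L-1,L-1]$, so $a>0$, and then $\widetilde{\bm{A}}(r)\hat{\bm{y}}_{L-1,L,u}(r)=a(\bm{\Sigma}(r)^{(L-1)}+\bm{\Sigma}(r)^{(L)})$ reduces~(ii) to the previous sign and symmetry analysis applied to each of the two summands. I expect the main obstacle to be the isotropy step: one must check carefully that the $O(N-1)$-invariance really survives the conditioning on $\triangledown X=\bm{0}$ and that no off-diagonal component of $M$ can leak in. Once the diagonal form $\diag(c,\ldots,c,\gamma)$ is in hand, the sign inputs from~(\ref{eq:Sigma0_side}) and~(\ref{Con:GC2}) close the argument immediately.
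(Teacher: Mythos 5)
Your proof is correct and follows the same overall skeleton as the paper's: project the origin onto the relevant hyperplane(s), express the result as a positive scalar multiple of the corresponding column(s) of $\bm{\Sigma}(r)$, show that the matriculation is diagonal with the first $N-1$ entries equal, and read off the sign from Lemma~\ref{Lem:Sigma} at $r=0$ and from the equivalence between Condition~(\ref{Con:GC2}) and $\bm{\Sigma}(r)[k+k(k-1)/2,i]<0$ for $r>0$. Part~(ii) --- solving the $2\times2$ system for the coefficients of $\hat{\bm{y}}_{L-1,L,u}(r)$, using $\bm{\Sigma}(r)[L-1,L-1]=\bm{\Sigma}(r)[L,L]$ and positive-definiteness of the $2\times2$ block to show the common coefficient is positive --- is identical to the paper's.

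The genuine divergence is how you establish the structural claim that $\Matri_N(\bm{\Sigma}(r)^{(i)})=\diag(c,\ldots,c,\gamma)$. The paper deduces this by pointing to the explicit coordinate-wise formulas for the side parts computed in Appendix~\ref{FSP} (items (i)--(iii) there, together with (\ref{Side1})--(\ref{Side2})), which reduce the check to inspecting a handful of closed-form expressions along the axis $\bm{u}_0$. You instead invoke the $O(N-1)$-stabilizer of $\bm{u}_0$: for $U=\diag(V,1)$, $V\in O(N-1)$, isotropy and the $U$-equivariance of the gradient ($\triangledown X\mapsto U^T\triangledown X$) and Hessian ($\triangledown^2 X\mapsto U^T\triangledown^2 X\,U$) make the joint Gaussian law and the conditioning event $\triangledown X(\bm{u}_0 r)=\triangledown X(\bm{0})=\bm{0}_N$ simultaneously invariant, so the conditional covariance $M=\Matri_N(\bm{\Sigma}(r)^{(i)})$ satisfies $U^TMU=M$ for all such $U$, forcing $A=cI_{N-1}$ and $\bm{b}=\bm{0}$ in the block decomposition. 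This is a clean and valid route (it even covers $N=2$, where $O(1)=\{\pm 1\}$ kills $\bm{b}$), and it buys you the diagonal form without having to look inside the covariance calculus of the appendix; what it does not buy you is the sign of $c$, for which you correctly fall back on (\ref{eq:Sigma0_side}) and (\ref{Con:GC2}) exactly as the paper does. Your flagged worry about the invariance ``surviving the conditioning'' is unfounded: conditioning on $\{\triangledown X=\bm{0}\}$ at both sites is preserved because $U^T\bm{0}=\bm{0}$ and $U^T$ is invertible, so the conditional Gaussian law inherits the invariance of the unconditional one.
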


\begin{proof}

For (i), fix $u>0$ and $r\in[0,\tilde{\delta}_{\rho}]$.
Since $\hat{\bm{y}}_{i,u}(r)$ is the projection of the origin on the hyper-plane $\partial\widetilde{H}_{i,u}(r)$ for $i=L-1,L$, 
it is easy to see
$$\hat{\bm{y}}_{i,u}(r)=\beta_{i,u}(r)\left(\widetilde{\bm{A}}(r)_{(i)}\right)^T$$ for some real number $\beta_{i,u}(r)\neq 0$. Then 
$$0<u=\widetilde{\bm{A}}(r)_{(i)}\hat{\bm{y}}_{i,u}(r)=\beta_{i,u}(r)\left\|\widetilde{\bm{A}}(r)_{(i)}\right\|^2=\beta_{i,u}(r)\bm{\Sigma}(r)[i,i].$$
Since $\bm{\Sigma}(r)$ is positive semi-definite, we have $\beta_{i,u}(r)>0$ for $i=L-1,L$, and then
$$
\begin{aligned}
\Matri_N\left(\widetilde{\bm{A}}(r)\hat{\bm{y}}_{i,u}(r)\right)
&=\beta_{i,u}(r)\Matri_N\left(\widetilde{\bm{A}}(r)\left(\widetilde{\bm{A}}(r)_{(i)}\right)^T\right)\\
&=\beta_{i,u}(r)\Matri_N\left(\bm{\Sigma}(r)^{(i)}\right).
\end{aligned}
$$
Note that by (i) and (ii) in Appendix \ref{FSP}, (\ref{Side1}) and (\ref{Side2}), it is easy to check that  $\Matri_N(\bm{\Sigma}(r)^{(i)})$ is diagonal with the first $N-1$ diagonal elements equal to 
$\bm{\Sigma}(r)[1,i]$ for $i=L-1,L$. In addition, by Condition (\ref{Con:GC2}), we have
$\bm{\Sigma}(r)[1,i]<0$ for $i=L-1,L$. Thus,
$\Matri_N(\widetilde{\bm{A}}(r)\hat{\bm{y}}_{i,u}(r))$, $i=L-1,L$ has at least $N-1$ negative eigenvalues.

For (ii), fix $u>0$ and $r\in(0,\tilde{\delta}_{\rho}]$. It is easy to see
$$\widetilde{\bm{A}}(r)_{(L-1)}\hat{\bm{y}}_{L-1,L,u}(r)=\widetilde{\bm{A}}(r)_{(L)}\hat{\bm{y}}_{L-1,L,u}(r)=u,$$
and for any $\bm{x}\in\mathbb{R}^L$ such that $\widetilde{\bm{A}}(r)_{(L-1)}\bm{x}=\widetilde{\bm{A}}(r)_{(L)}\bm{x}=u$,
$$\hat{\bm{y}}^T_{L-1,L,u}(r)(\bm{x}-\hat{\bm{y}}_{L-1,L,u}(r))=0.$$
This implies 
$$\hat{\bm{y}}_{L-1,L,u}(r)=\beta_{L-1,u}'(r)\left(\widetilde{\bm{A}}(r)_{(L-1)}\right)^T+\beta_{L,u}'(r)\left(\widetilde{\bm{A}}(r)_{(L)}\right)^T,$$
for some constants $\beta_{L-1,u}'(r)$  and $\beta_{L,u}'(r)$.
Then for $i=L-1,L$,
$$
\begin{aligned}
u
&=\widetilde{\bm{A}}(r)_{(i)}\hat{\bm{y}}_{L-1,L,u}(r)\\
&=\widetilde{\bm{A}}(r)_{(i)}\left(\beta_{L-1,u}'(r)\left(\widetilde{\bm{A}}(r)_{(L-1)}\right)^T+\beta_{L,u}'(r)\left(\widetilde{\bm{A}}(r)_{(L)}\right)^T\right)\\
&=\beta_{L-1,u}'(r)\bm{\Sigma}_{}(r)[i,L-1]+\beta_{L,u}'(r)\bm{\Sigma}(r)[i,L],
\end{aligned}
$$
i.e.,
\begin{equation}\label{eq:beta}
    \begin{pmatrix}
    \beta_{L-1,u}'(r)\\ \beta_{L,u}'(r)
    \end{pmatrix}=
    \left(\bm{\Sigma}(r)[(L-1):L,(L-1):L]\right)^{-1}
    \begin{pmatrix}
    u\\ u
    \end{pmatrix}.
\end{equation}
Denote
$$a(r):=\bm{\Sigma}(r)[L-1,L-1]=\bm{\Sigma}(r)[L,L],$$
where the equality comes from the symmetry between $X(\bm{0})$ and $X(u_0r)$ in the definition of $\bm{\Sigma}(r)$,
and
$$b(r):=\bm{\Sigma}(r)[L-1,L]=\bm{\Sigma}(r)[L,L-1].$$
Since $\bm{\Sigma}(r)[(L-1):L,(L-1):L]$
is positive-definite, we have
$$a(r)>0\text{ and }a^2(r)-b^2(r)>0.$$
Then by (\ref{eq:beta}), we have
$$\beta_{L-1,u}'(r)=\beta_{L,u}'(r)=\frac{u}{a^2(r)-b^2(r)}(a(r)-b(r))=\frac{u}{a(r)+b(r)}>0.$$
Thus,  
$$
\begin{aligned}
&~~~~\Matri_N\left(\widetilde{\bm{A}}(r)\hat{\bm{y}}_{L-1,L,u}(r)\right)\\
&=\frac{u}{a(r)+b(r)}\Matri_N\left(\widetilde{\bm{A}}(r)\left(\widetilde{\bm{A}}(r)_{(L-1)}+\widetilde{\bm{A}}(r)_{(L)}\right)^T\right)\\
&=\frac{u}{a(r)+b(r)}\left(\Matri_N\left(\bm{\Sigma}(r)^{(L-1)}\right)+\Matri_N\left(\bm{\Sigma}(r)^{(L)}\right)\right),
\end{aligned}
$$
which is diagonal and has at least $N-1$ negative eigenvalues by the proof of (i).
\end{proof}

Recall that in Section \ref{Sec:M1}, we have defined for any $0\le k\le N$,
\begin{equation}\notag
    \begin{aligned}
    D_k:=\Big\{\bm{x}\in\mathbb{R}^{N(N+1)/2}:&\Matri_N(\bm{x})\text{ is non-degenerate} \\ &\text{and has exactly $k$ negative eigenvalues}\Big\}.
    \end{aligned}
\end{equation}
For any $0\le k\le N$ and $r>0$, define
$$\widetilde{G}_k(r):=\left\{\bm{y}\in\mathbb{R}^{L}:\widetilde{\bm{A}}(r)\bm{y}\in \bigcup_{i=k}^N D_i\right\},$$
and let
$$\widetilde{G}_k(0):=\liminf_{r\to 0}\widetilde{G}_k(r).$$
Then for any $u>0$, $0\le k\le N$ and $r\ge 0$, we can further define
\begin{equation}\label{def:Dur_tilde}
    \widetilde{D}_{u,k}(r):=\widetilde{H}_{L-1,u}(r)\cap \widetilde{H}_{L,u}(r)\cap\widetilde{G}_k(r).    
\end{equation}

The following lemma describes the behavior of $\widetilde{D}_{u,k}(r)$ as $r\to r_0$ for any $r_0\in[0,\tilde{\delta}_{\rho}]$, which is important for the proof of Theorem \ref{Theo:MR2}.
\begin{lemma}\label{Lem:AS2}
Let $X$ be qualified under perturbation and satisfy (\ref{Con:GC2}) for some $\tilde{\delta}_{\rho}>0$.
Then for any $u>0$, we have
\begin{enumerate}[label=(\roman*)]
    \item $\widetilde{D}_{u,k}(0)$ has a non-empty interior for any $0\le k\le N-1$;
    \item for any $r_0\in[0,\tilde{\delta}_{\rho}]$ and $0\le k\le N$,
    \begin{equation}\notag
        \lim_{r\to r_0}I_{\widetilde{D}_{u,k}(r)}(\bm{y})=I_{\widetilde{D}_{u,k}(r_0)}(\bm{y})\text{ for almost all $\bm{y}\in\mathbb{R}^L$}.
    \end{equation}
\end{enumerate}
\end{lemma}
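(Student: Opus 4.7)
The plan splits along the two conclusions.

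For part (ii), the argument follows the template of Lemma \ref{Lem:AS1}(iii). Fix $r_0\in[0,\tilde{\delta}_{\rho}]$ and $0\le k\le N$. By the continuity of $\widetilde{\bm{A}}(r)$ on $[0,\tilde{\delta}_{\rho}]$, we obtain the set inclusions
\[
\widetilde{H}_{i,u}(r_0)\subset\liminf_{r\to r_0}\widetilde{H}_{i,u}(r)\subset\limsup_{r\to r_0}\widetilde{H}_{i,u}(r)\subset \widetilde{H}_{i,u}(r_0)\cup\partial\widetilde{H}_{i,u}(r_0)
\]
for $i=L-1,L$, together with analogous inclusions for $\widetilde{G}_k(r)$. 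When $r_0>0$, $\widetilde{G}_k(r_0)$ is open and its boundary is contained in the zero locus of the polynomial $\det\Matri_N(\widetilde{\bm{A}}(r_0)\bm{y})$, which has Lebesgue measure zero because $\widetilde{\bm{A}}(r_0)$ is invertible. When $r_0=0$, the expansion $\widetilde{\bm{A}}(r)=\widetilde{\bm{A}}(0)+r\widetilde{\bm{A}}_1+o(r)$ derived from Remark \ref{Rem:PC2} via $\widetilde{\bm{A}}(r)=\bm{A}(r)\bm{P}^T(r)$ makes $\det\Matri_N(\widetilde{\bm{A}}(r)\bm{y})$ into an asymptotic expansion in $r$ whose leading coefficient is a nontrivial polynomial of $\bm{y}$, so $\limsup_{r\to 0}\widetilde{G}_k(r)\setminus\liminf_{r\to 0}\widetilde{G}_k(r)$ again sits inside a measure-zero algebraic variety. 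Intersecting the three boundary conditions and running the pointwise argument of Lemma \ref{Lem:AS1}(iii) verbatim yields (ii).

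For part (i), since $\widetilde{G}_{N-1}\subset\widetilde{G}_k$ for every $0\le k\le N-1$, it suffices to produce an interior point of $\widetilde{D}_{u,N-1}(0)$. The target is $\bm{y}^*$ close to $2\hat{\bm{y}}_u(0)$. By (\ref{eq:Projection2}), $\widetilde{\bm{A}}(0)_{(L-1)}\bm{y}^*=\widetilde{\bm{A}}(0)_{(L)}\bm{y}^*=2u>u$, so by continuity of $\widetilde{\bm{A}}(r)$ the half-space conditions hold in a neighborhood of $\bm{y}^*$ for all sufficiently small $r$. From the proof of Lemma \ref{Lem:projectionneg}, $\Matri_N(\widetilde{\bm{A}}(0)\cdot 2\hat{\bm{y}}_u(0))$ is diagonal with its first $N-1$ entries strictly negative and its $(N,N)$ entry equal to $0$. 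Moreover, applying Lemma \ref{Lem:H}(ii) to $\bm{u}_0=(0,\dots,0,1)^T$ gives $\bm{\Sigma}_0\bm{e}_{j+N(N-1)/2}=\bm{0}$ for $j=1,\dots,N$, which forces $(\widetilde{\bm{A}}(0))_{(j+N(N-1)/2)}=\bm{0}$ for these $j$; hence the Hessian $\Matri_N(\widetilde{\bm{A}}(0)\bm{y})$ has its entire $N$-th row and column equal to zero for \emph{every} $\bm{y}$ and is always structurally rank-deficient at $r=0$.

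The crucial step is to lift this zero eigenvalue for $r>0$. From $\bm{\Sigma}(r)=\widetilde{\bm{A}}(r)\widetilde{\bm{A}}(r)^T$ together with the expansion of $\widetilde{\bm{A}}(r)$ and the vanishing of $(\widetilde{\bm{A}}(0))_{(N+N(N-1)/2)}$, one deduces
\[
\bigl\|(\widetilde{\bm{A}}_1)_{(N+N(N-1)/2)}\bigr\|^2
=\lim_{r\to 0}\frac{\bm{\Sigma}(r)[N+N(N-1)/2,\,N+N(N-1)/2]}{r^2}
=18\alpha-30\beta>0
\]
by Lemma \ref{Lem:Sigma}(2) and (\ref{Rel:1830}). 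So the linear functional $\bm{y}\mapsto(\widetilde{\bm{A}}_1\bm{y})[N+N(N-1)/2]$ is nontrivial, and its kernel is a proper hyperplane in $\mathbb{R}^L$. Choose $\bm{y}^*$ in a small neighborhood of $2\hat{\bm{y}}_u(0)$ avoiding this hyperplane. Then $\Matri_N(\widetilde{\bm{A}}(r)\bm{y}^*)$ has the block form with upper-left block converging to an invertible diagonal matrix with $N-1$ negative eigenvalues, off-diagonal block of order $O(r)$, and $(N,N)$ entry equal to $r\,(\widetilde{\bm{A}}_1\bm{y}^*)[N+N(N-1)/2]+o(r)\neq 0$; the Schur-complement formula gives determinant of order $\Theta(r)$, and classical eigenvalue perturbation then implies that for all sufficiently small $r>0$ and all $\bm{y}$ in a small ball around $\bm{y}^*$, the Hessian is non-degenerate with at least $N-1$ negative eigenvalues. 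This ball is therefore contained in $\widetilde{D}_{u,N-1}(0)$, proving (i). The main obstacle is the perturbation analysis in (i), specifically translating the second-order expansion of $\bm{\Sigma}(r)$ given by Lemma \ref{Lem:Sigma} into enough first-order information on $\widetilde{\bm{A}}(r)$ to resolve a Hessian that is structurally rank-deficient at $r=0$.
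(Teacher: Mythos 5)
Your proof is correct, but for part (i) it takes a genuinely different route from the paper. For part (ii) you follow the paper's template (the set-limit / boundary argument of Lemma~\ref{Lem:AS1}(iii)); the one thing you assert without justification is that the leading $r$-coefficient of $\det\Matri_N(\widetilde{\bm{A}}(r)\bm{y})$ is a nontrivial polynomial of $\bm{y}$ --- this coefficient is $h_0(\bm{P}(0)^T\bm{y})$ and its nontriviality is exactly Lemma~\ref{Lem:max2}, so the claim is true but deserves a cross-reference since the measure-zero conclusion (this is the set $Q_h$ of the paper) rests on it.

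For part (i) the paper's proof and yours diverge. The paper uses Kato's eigenvalue continuity to get "at least $N-1$ negative eigenvalues" uniformly on a small $(r,\bm{y})$-neighborhood, and then handles the remaining degeneracy issue by reusing the $h_0$/$Q_h$ machinery: the uniform convergence $h_r(\bm{P}^T(r)\bm{y})\to h_0(\bm{P}(0)^T\bm{y})$ shows that persistent degeneracy forces $\bm{y}\in Q_h$, a measure-zero closed set, so $(\widetilde{H}_{L,u}(0)\cap B(\hat{\bm{y}}_{L,u}(0),\gamma))\setminus Q_h$ gives the desired non-empty open subset of $\widetilde{D}_{u,N-1}(0)$. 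You instead lift the structurally zero $N$-th eigenvalue by hand: using $(\widetilde{\bm{A}}(0))_{(j+N(N-1)/2)}=\bm{0}$ from Lemma~\ref{Lem:H}(ii), the identity $\bigl\|(\widetilde{\bm{A}}_1)_{(N+N(N-1)/2)}\bigr\|^2 = 18\alpha-30\beta > 0$ from Lemma~\ref{Lem:Sigma}(2) and (\ref{Rel:1830}), the choice of $\bm{y}^*$ off the kernel of $\bm{y}\mapsto(\widetilde{\bm{A}}_1\bm{y})[N+N(N-1)/2]$, and a Schur-complement computation to show the $N$-th eigenvalue is $\Theta(r)\ne 0$. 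This is a more explicit matrix-perturbation argument that avoids the global $Q_h$ construction at the price of a finer expansion of $\widetilde{\bm{A}}(r)$; both routes ultimately rest on the Perturbation Condition and Lemma~\ref{Lem:projectionneg}. The paper's proof is more economical since $Q_h$ is already needed elsewhere, but your version makes the mechanism by which the degeneracy is resolved for $r>0$ completely explicit, which is a legitimate and arguably more self-contained alternative.
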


\begin{proof}
Fix $u>0$. 
For (i),
since $\widetilde{D}_{u,j}(0)\subset \widetilde{D}_{u,i}(0)$ for any $0\le i\le j\le N-1$,
it suffices to show that $\widetilde{D}_{u,N-1}(0)$ contains a non-empty open set.
By (i) of Lemma \ref{Lem:projectionneg}, $\Matri_N(\widetilde{\bm{A}}(0)\hat{\bm{y}}_{L,u}(0))$ has at least $N-1$ negative eigenvalues. Note that by Theorem 5.1 of \cite{kat} and the continuity of $\widetilde{\bm{A}}(r)$ on $r\in [0,\tilde{\delta}_{\rho}]$, the eigenvalues of $\Matri_N(\widetilde{\bm{A}}(r)\bm{y})$ are continuous on $(r,\bm{y})\in [0,\tilde{\delta}_{\rho}]\times \mathbb{R}^L$. Thus,
there exist constants $\delta>0$ and $\gamma>0$, such that  $\Matri_N(\widetilde{\bm{A}}(r)\bm{y})$ has at least $N-1$ negative eigenvalues for any $r\in [0,\delta)$ and $\bm{y}\in B(\hat{\bm{y}}_{L,u}(0),\gamma)$, the $L$-dimensional open ball centered at $\hat{\bm{y}}_{L,u}(0)$ with radius $\gamma$.
Note that for any $\bm{y}'\in \mathbb{R}^L$,
if there exists $r_n'\downarrow 0$ such that
$$\det\left(\Matri_N\left(\widetilde{\bm{A}}\left(r'_n\right)\bm{y}'\right)\right)=0\text{ for any $n\ge 1$},$$
then by (\ref{eq:LimPoly})-(\ref{def:h0}), the uniform convergence on compact sets of $\bm{y}$ given in (\ref{eq:LimPolyh}), (\ref{eq:TA0A0}) and the continuity of $\bm{P}(r)$ on $r\in [0,\delta_{pc}]$,
$$h_{0}\left(\bm{P}^T(0)\bm{y}'\right)=\lim_{n\to\infty}h_{r_n'}\left(\bm{P}^T(r'_n)\bm{y}'\right)=\lim_{n\to\infty}\frac{1}{r_n'}\det\left(\Matri_N\left(\bm{A}\left(r'_n\right)\bm{P}^T(r'_n)\bm{y}'\right)\right)=0.$$
Then by the orthogonality of $\bm{P}(0)$, we have
\begin{equation}\notag
    \bm{y}'\in Q_{h}:=\left\{\bm{P}(0)\bm{y}:\bm{y}\in \mathbb{R}^L,h_0(\bm{y})=0\right\}.    
\end{equation}
Combining all of the above, we have
$$B(\hat{\bm{y}}_{L,u}(0),\gamma)\setminus Q_{h}\subset \widetilde{G}_{N-1}(0).$$
Then
$$
\begin{aligned}
\widetilde{D}_{u,N-1}(0)&=\widetilde{H}_{L,u}(0)\cap \widetilde{G}_{N-1}(0)\\
&\supset \left(\widetilde{H}_{L,u}(0)\cap B(\hat{\bm{y}}_{L,u}(0),\gamma)\right)\setminus Q_{h}.
\end{aligned}
$$
Since $\widetilde{H}_{L,u}(0)$ is open and $\hat{\bm{y}}_{L,u}(0)\in\partial\widetilde{H}_{L,u}(0)$, we have
$\widetilde{H}_{L,u}(0)\cap B(\hat{\bm{y}}_{L,u}(0),\gamma)$ is a non-empty open set. Since $h_0(\bm{y})$ is a non-degenerate homogeneous polynomial of $y_1,\dots,y_L$ with degree $N$, we have $Q_{h}$ is closed and $\lambda_L(Q_{h})=0$. Thus, $(\widetilde{H}_{L,u}(0)\cap B(\hat{\bm{y}}_{L,u}(0),\gamma))\setminus Q_{h}$ is also a non-empty open set,
and hence (i) is proved.

As for (ii), fix $0\le k\le N$.
Since the eigenvalues of $\Matri_N(\widetilde{\bm{A}}(r)\bm{y})$ are all continuous on $(r,\bm{y})\in [0,\tilde{\delta}_{\rho}]\times \mathbb{R}^L$,
we have
\begin{equation}\label{Rel:Dur_tilde}
    \widetilde{D}_{u,k}(0)\subset \liminf_{r\to 0}\widetilde{D}_{u,k}(r)\subset \limsup_{r\to 0}\widetilde{D}_{u,k}(r)\subset \widetilde{D}_{u,k}(0)\cup \partial \widetilde{H}_{L,u}(0)\cup Q_{h}.
\end{equation}
The rest of the proof is analogous to the proof of (iii) in Lemma \ref{Lem:AS1}.
\end{proof}

The following is the last preparation for Theorem \ref{Theo:MR2}.
\begin{lemma}\label{Lem:Compare}
For any $a,b>0$
\begin{equation}\notag
    \sup_{k_1>a,k_2-k_1>b}e^{\frac{1}{2}k_1^2u^2}\int_{\|\bm{y}'\|\ge k_2u}\exp\left(-\frac{1}{2}\bm{y}'^T\bm{y}'\right)d\bm{y}'
    \to 0\text{ as }u\to\infty.
\end{equation}
\end{lemma}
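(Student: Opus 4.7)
The plan is to reduce this uniform estimate to a one-variable analysis via a standard Gaussian tail bound followed by elementary calculus. I would first invoke the well-known polar-coordinate estimate: for $\bm{y}'\in\mathbb{R}^L$ (or in whatever dimension $\bm{y}'$ lives), there exist constants $C>0$ and $R_0>0$ depending only on the dimension such that for every $R\ge R_0$,
\[
\int_{\|\bm{y}'\|\ge R}\exp\left(-\frac{1}{2}\bm{y}'^T\bm{y}'\right)d\bm{y}'\le C R^{L-2}e^{-R^2/2}.
\]
This is obtained by switching to spherical coordinates and performing a single integration by parts on $\int_R^\infty r^{L-1}e^{-r^2/2}\,dr$, discarding the lower-order tail.

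Next, I would apply this bound with $R=k_2u$, which is legitimate once $u>R_0/a$ since $k_2>k_1>a$, yielding
\[
e^{k_1^2u^2/2}\int_{\|\bm{y}'\|\ge k_2 u}\exp\left(-\frac{1}{2}\bm{y}'^T\bm{y}'\right)d\bm{y}'\le C(k_2u)^{L-2}\exp\left(\tfrac{k_1^2-k_2^2}{2}u^2\right).
\]
The crucial algebraic observation that strips the $k_1$ dependence is that under the constraints $k_1>a>0$ and $k_2-k_1>b$,
\[
k_2^2-k_1^2=(k_2-k_1)(k_1+k_2)>b(k_1+k_2)>bk_2,
\]
so the quantity in the supremum is bounded above by $C(k_2u)^{L-2}\exp(-bk_2u^2/2)$, uniformly in $k_1$.

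Finally, setting $t:=k_2u\ge(a+b)u$ (since $k_2>k_1+b>a+b$), it remains to analyze $g(t):=t^{L-2}\exp(-but/2)$ on $[(a+b)u,\infty)$. A direct computation gives $g'(t)=t^{L-3}e^{-but/2}\bigl((L-2)-btu/2\bigr)$, so $g$ is unimodal with maximum at $t^*=2(L-2)/(bu)$ and strictly decreasing for $t>t^*$. Once $u^2>2(L-2)/(b(a+b))$, we have $(a+b)u>t^*$, and the supremum over $t\ge(a+b)u$ is therefore attained at the left endpoint and equals
\[
(a+b)^{L-2}u^{L-2}\exp\left(-\frac{b(a+b)}{2}u^2\right),
\]
which tends to $0$ as $u\to\infty$ because the Gaussian decay dominates the polynomial growth. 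The only real subtlety is spotting the inequality $k_2^2-k_1^2>bk_2$ that decouples $k_1$ from $k_2$; once the supremum is reduced to the single variable $t=k_2u$, the rest is routine single-variable calculus.
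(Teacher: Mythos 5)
Your proof is correct, but it proceeds by a genuinely different route than the paper. The paper keeps the radial integral $\int_{k_2u}^{\infty} e^{-r^2/2}r^{L-1}\,dr$ intact, absorbs $e^{k_1^2u^2/2}$ by writing $r^2-k_1^2u^2=(r+k_1u)(r-k_1u)$, uses $r-k_1u\ge bu$ to peel off a factor $e^{-k_1bu^2/2}$, and then uses $k_1>a$, $k_2>a+b$ to replace both parameters inside a $k$-free integral $Ce^{-abu^2/2}\int_{(a+b)u}^\infty e^{-bur/2}r^{L-1}\,dr$, which manifestly tends to $0$. You instead apply the Gaussian tail asymptotic $\int_{\|\bm{y}'\|\ge R}e^{-\bm{y}'^T\bm{y}'/2}\,d\bm{y}'\le CR^{L-2}e^{-R^2/2}$ up front, then decouple $k_1$ via the inequality $k_2^2-k_1^2=(k_2-k_1)(k_1+k_2)>bk_2$, and reduce the supremum to the one-variable function $g(t)=t^{L-2}e^{-but/2}$ on $t\ge(a+b)u$, which you optimize by elementary calculus. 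Both arguments hinge on the same trick of trading $k_1^2u^2$ against $k_2^2u^2$ using $k_2-k_1>b$ and then a lower bound on the remaining parameter; yours is perhaps a bit more transparent because the optimization is explicit, at the cost of invoking the tail estimate (your phrase ``discarding the lower-order tail'' is imprecise since the extra term from integration by parts is \emph{positive}, but the bound $\le CR^{L-2}e^{-R^2/2}$ for $R\ge R_0$ is standard and can be established by the usual bootstrap $\int_R^\infty r^{L-3}e^{-r^2/2}\,dr\le R^{-2}\int_R^\infty r^{L-1}e^{-r^2/2}\,dr$), while the paper's version avoids the tail lemma entirely.
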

\begin{proof}
By the change of variable for spherical coordinates, we get
\begin{equation}\notag
    \int_{\|\bm{y}'\|\ge k_2u}\exp\left(-\frac{1}{2}\bm{y}'^T\bm{y}'\right)d\bm{y}'
    =C\left(\int_{k_2u}^{\infty}e^{-\frac{1}{2}r^2}r^{L-1}dr\right),  
\end{equation}
where $C$ is positive and independent of $k_2$.
Thus, we have
$$
\begin{aligned}
e^{\frac{1}{2}k_1^2u^2}\int_{\|\bm{y}'\|\ge k_2u}\exp\left(-\frac{1}{2}\bm{y}'^T\bm{y}'\right)d\bm{y}' & = C\left(\int_{k_2u}^{\infty}e^{-\frac{1}{2}(r^2-k_1^2u^2)}r^{L-1}dr\right)\\
& = C\left(\int_{k_2u}^{\infty}e^{-\frac{1}{2}(r+k_1u)(r-k_1u)}r^{L-1}dr\right)\\
& \leq C\left(\int_{k_2u}^{\infty}e^{-\frac{1}{2}(r+k_1u)b u}r^{L-1}dr\right)\\
& = Ce^{-\frac{1}{2}k_1b u^2}\left(\int_{k_2u}^{\infty}e^{-\frac{1}{2}b ur}r^{L-1}dr\right)\\
& \leq Ce^{-\frac{1}{2}ab u^2}\left(\int_{(a+b)u}^{\infty}e^{-\frac{1}{2}b ur}r^{L-1}dr\right),
\end{aligned}
$$
which is independent of $k_1, k_2$, and converges to 0 as $u$ tends to infinity.

\end{proof}


\subsection{Proof of Theorem \ref{Theo:MR2}}

\begin{proof}
The proof of (i) is similar to that of Theorem \ref{Theo:MR1}.
Fix $u>0$. Since $X$ is isotropic, it suffices to show that
$\Psi_u(\bm{u}_0r)$ converges as $r\to 0$.
By the change of variable 
$$\bm{y}=\widetilde{\bm{A}}^{-1}(r)(\bm{x}'',x,z)^T,$$
(\ref{expression}) and (\ref{eq:TA0A0}), we have for any $r>0$,
\begin{equation}\label{Eq:Phi_g_tilde}
    \begin{aligned}
    \Psi_u(\bm{u}_0r)
    &=\frac{\sum_{k=0}^{N-2} f_{u,k}(\bm{u}_0r)}{f_{u,N-1}(\bm{u}_0r)+f_{u,N}(\bm{u}_0r)}\\
    &=\frac{\int_{x,z>u}\int_{\cup_{k=0}^{N-2}D_k}|\det(\bm{x}'')|p_{\bm{t}}(\bm{x}'',x,z|\bm{0},\bm{0})d\bm{x}''dxdz,}{\int_{x,z>u}\int_{D_{N-1}\cup D_{N}}|\det(\bm{x}'')|p_{\bm{t}}(\bm{x}'',x,z|\bm{0},\bm{0})d\bm{x}''dxdz}\\
    &=\frac
    {\int_{\widetilde{D}_{u,0}(r)\setminus \widetilde{D}_{u,N-1}(r)}\left|r^{-1}\det\left(\Matri_N\left(\widetilde{\bm{A}}(r)\bm{y}\right)\right)\right|p_L(\bm{y})d\bm{y}}
    {\int_{\widetilde{D}_{u,N-1}(r)}\left|r^{-1}\det\left(\Matri_N\left(\widetilde{\bm{A}}(r)\bm{y}\right)\right)\right|p_L(\bm{y})d\bm{y}}\\
    &=\frac
    {\int_{\widetilde{D}_{u,0}(r)\setminus \widetilde{D}_{u,N-1}(r)}\left|r^{-1}\det\left(\Matri_N\left(\bm{A}(r)\bm{P}^T(r)\bm{y}\right)\right)\right|p_L\left(\bm{P}^T(r)\bm{y}\right)d\bm{y}}
    {\int_{\widetilde{D}_{u,N-1}(r)}\left|r^{-1}\det\left(\Matri_N\left(\bm{A}(r)\bm{P}^T(r)\bm{y}\right)\right)\right|p_L\left(\bm{P}^T(r)\bm{y}\right)d\bm{y}}\\
    &=\frac
    {\int_{\widetilde{D}_{u,0}(r)\setminus \widetilde{D}_{u,N-1}(r)}g_r\left(\bm{P}^T(r)\bm{y}\right)d\bm{y}}
    {\int_{\widetilde{D}_{u,N-1}(r)}g_r\left(\bm{P}^T(r)\bm{y}\right)d\bm{y}}\\
    &=\frac
    {\int_{\widetilde{D}_{u,0}(r)\setminus \widetilde{D}_{u,N-1}(r)}\tilde{g}_r(\bm{y})d\bm{y}}
    {\int_{\widetilde{D}_{u,N-1}(r)}\tilde{g}_r(\bm{y})d\bm{y}},
    \end{aligned}
\end{equation}
where $\widetilde{D}_{u,k}(r)$, $0\le k\le N-1$ are defined in (\ref{def:Dur_tilde}) and discussed
in Lemma \ref{Lem:AS2},  $g_r$ and $p_L$ are as defined in (\ref{def:gr}) and (\ref{def:pL}), and for any $\bm{y}\in\mathbb{R}^L$ and $r\ge 0$, 
\begin{equation}\label{def:gr_tilde}
    \tilde{g}_r(\bm{y}):=g_r\left(\bm{P}^T(r)\bm{y}\right).   
\end{equation}
The fourth equality follows from the fact that $p_L(y)=p_L(\bm{P}^T(r)y)$.
Then we can use (ii) of Lemma \ref{Lem:AS2}, which plays the role of (iii) of Lemma \ref{Lem:AS1} in the proof of Theorem \ref{Theo:MR1}, to show that
$\int_{\widetilde{D}_{u,k}(r)}\tilde{g}_r(\bm{y})d\bm{y}$, $0\le k\le N-1$ are continuous functions of $r\in[0,\tilde{\delta}_{\rho}]$. 
In addition, by
Remark \ref{Rem:regular} and (i) of Lemma \ref{Lem:AS2}, we have for any $0\le k\le N-1$,
$$
\int_{\widetilde{D}_{u,k}(0)}\tilde{g}_0(\bm{y})d\bm{y}>0.$$
Then 
$$
\begin{aligned}
\lim_{r\to 0}\Psi_u(\bm{u}_0r)
&=\frac
{\lim_{r\to 0}\int_{\widetilde{D}_{u,0}(r)\setminus \widetilde{D}_{u,N-1}(r)}\tilde{g}_{r}(\bm{y})d\bm{y}}
{\lim_{r\to 0}\int_{\widetilde{D}_{u,N-1}(r)}\tilde{g}_{r}(\bm{y})d\bm{y}}
&=\frac
{\int_{\widetilde{D}_{u,0}(0)\setminus \widetilde{D}_{u,N-1}(0)}\tilde{g}_{0}(\bm{y})d\bm{y}}
{\int_{\widetilde{D}_{u,N-1}(0)}\tilde{g}_{0}(\bm{y})d\bm{y}}.
\end{aligned}
$$

As for (ii), it suffices to show that for any $\varepsilon>0$, there exists a constant $U>0$ such that for any $r\in[0,\tilde{\delta}_{\rho}]$ and $u>U$,
$$\Psi_u(\bm{u}_0r)<\varepsilon.$$
To this end, we need to introduce some notations and concepts.

We start from $\hat{\bm{y}}_{u}(r)$, $u>0$ and $r\ge 0$ as defined in (\ref{yurhat}).
It is easy to see $\hat{\bm{y}}_{u}(r)=u\hat{\bm{y}}_{1}(r)$ for any $u>0$ and $r\ge 0$.
By Lemma \ref{Lem:projectionneg}, there exists a positive function $\gamma(r)$ of $r\in[0,\tilde{\delta}_{\rho}]$ such that
for any $\bm{y}\in B(\hat{\bm{y}}_{1}(r),\gamma(r))$, $\Matri_N(\widetilde{\bm{A}}(r)\bm{y})$ has at least $N-1$ negative eigenvalues.
Since $\widetilde{\bm{A}}(r)$ is continuous on $r\in [0,\tilde{\delta}_{\rho}]$,  $\{\hat{\bm{y}}_{1}(r),r\in[0,\tilde{\delta}_{\rho}]\}$ is compact and covered by 
$\{B(\hat{\bm{y}}_{1}(r),\gamma(r)),r\in[0,\tilde{\delta}_{\rho}]\}$. Then by the Heine–Borel theorem, there exists a finite open subcover $\{B(\hat{\bm{y}}_{1}(r),\gamma(r)),r\in\{r_1,\dots,r_n\}\}$  of $\{\hat{\bm{y}}_{1}(r):r\in[0,\tilde{\delta}_{\rho}]\}$ for some positive integer $n$ and $r_1,\dots,r_n\in[0,\tilde{\delta}_{\rho}]$. Let $\gamma'$ be the distance between the two compact sets $\{\hat{\bm{y}}_{1}(r),r\in[0,\tilde{\delta}_{\rho}]\}$ and $\partial(\bigcup_{k=1}^n B(\hat{\bm{y}}_{1}(r_k),\gamma(r_k)))$, i.e.,
\begin{equation}\label{def:gammap}
    \gamma':=\min\left\{\|\hat{\bm{y}}_{1}(r)-\bm{y}\|:\bm{y}\in \partial\left(\bigcup_{k=1}^n B(\hat{\bm{y}}_{1}(r_k),\gamma(r_k))\right)\text{ and }r\in[0,\tilde{\delta}_{\rho}]\right\}.    
\end{equation}
It is not hard to see $\gamma'>0$.
Then for any $r\in[0,\tilde{\delta}_{\rho}]$ and $\bm{y}\in B(\hat{\bm{y}}_{1}(r),\gamma')$, 
$\Matri_N(\widetilde{\bm{A}}(r)\bm{y})$ has at least $N-1$ negative eigenvalues.
Therefore, for any $u>0$,  $\bm{y}\in B(\hat{\bm{y}}_{u}(r),\gamma'u)$ and $r\in[0,\tilde{\delta}_{\rho}]$,
$\Matri_N(\widetilde{\bm{A}}(r)\bm{y})$ also has at least $N-1$ negative eigenvalues.

The next step is to consider some distances.
For any $u>0$ and $r\in[0,\tilde{\delta}_{\rho}]$, let $\gamma_{2,u}(r)$ be the distance between the origin and the compact set $\partial B(\hat{\bm{y}}_{u}(r),\gamma'u)\cap \partial(\widetilde{H}_{L-1,u}(r)\cap \widetilde{H}_{L,u}(r))$, let
$$\gamma_{0,u}(r):=\|\hat{\bm{y}}_{u}(r)\|,$$
and let
$$\gamma_{1,u}(r):=\frac{1}{2}(\gamma_{0,u}(r)+\gamma_{2,u}(r)).$$
It is easy to see for any $u>0$, $\gamma_{i,u}(r)$, $i=0,1,2$ 
are all positive and continuous on $r\in[0,\tilde{\delta}_{\rho}]$ with $\gamma_{i,u}(r)=\gamma_{i,1}(r)u$. Thus, for any $u>0$ and $i=0,1,2$, we have
\begin{equation}\label{eq:PositiveMinGamma}
    \min_{r\in[0,\tilde{\delta}_{\rho}]}\gamma_{i,u}(r)>0.    
\end{equation}
Illustrations of these distances with $\widetilde{P}_{L-1,u}(r)$, $\widetilde{P}_{L,u}(r)$, $B(\hat{\bm{y}}_{u}(r),\gamma'u)$ and $B(\bm{0}_L,\gamma_{2,u}(r))$ are provided in Figure \ref{Illgamma2ur}.

\begin{figure}
\centering
    \includegraphics[width=0.45\textwidth]{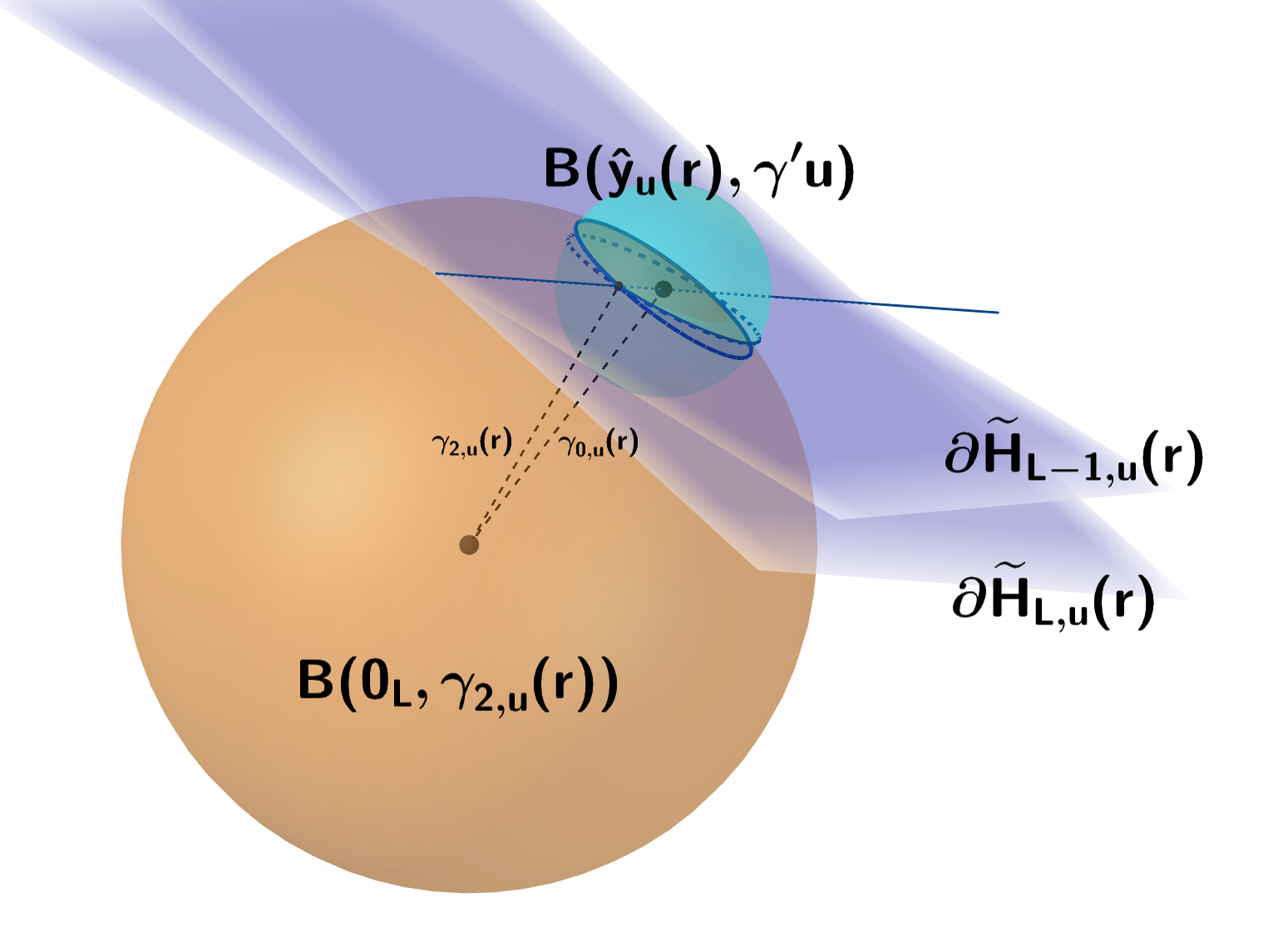}
    \hfill
    \includegraphics[width=0.45\textwidth]{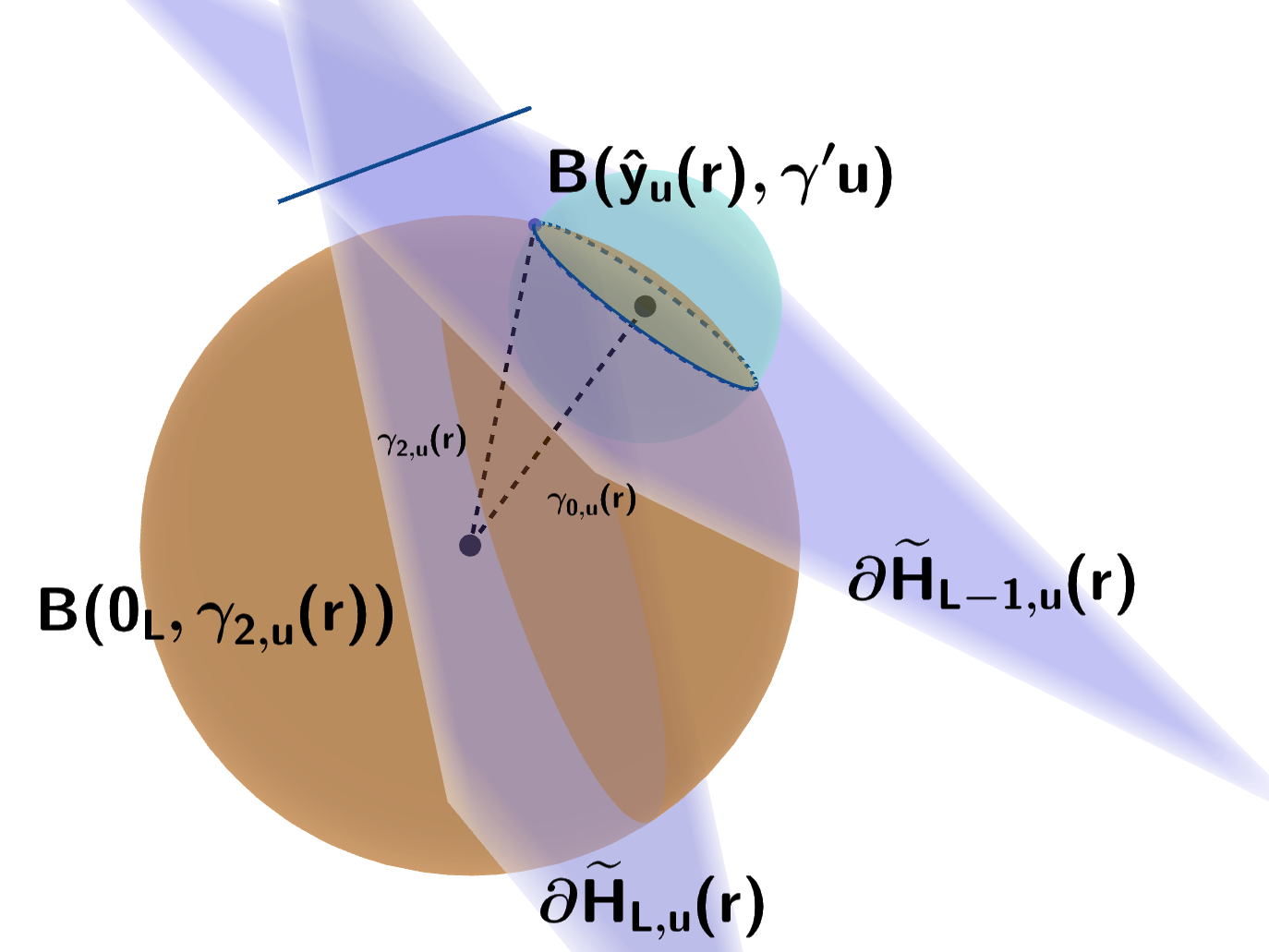}
    \caption[Illustrations of $\partial\widetilde{H}_{L-1,u}(r)$, $\partial\widetilde{H}_{L,u}(r)$, $B(\hat{\bm{y}}_{u}(r),\gamma'u)$ and $B(\bm{0}_L,\gamma_{2,u}(r))$]{Illustrations of $\partial\widetilde{H}_{L-1,u}(r)$, $\partial\widetilde{H}_{L,u}(r)$, $B(\hat{\bm{y}}_{u}(r),\gamma'u)$ and $B(\bm{0}_L,\gamma_{2,u}(r))$ when $\hat{\bm{y}}_{u}(r)=\hat{\bm{y}}_{L-1,L,u}(r)$ (left) and $\hat{\bm{y}}_{u}(r)=\hat{\bm{y}}_{L-1,u}(r)$ (right).} 
    \label{Illgamma2ur}
\end{figure}

Fix $r\in[0,\tilde{\delta}_{\rho}]$ and $u>0$. We have some results about these distances.
Firstly, note that
\begin{equation}\label{eq:gamma2gamma1}
\gamma_{2,u}(r)-\gamma_{1,u}(r)=\frac{1}{2}\left(\gamma_{2,u}(r)-\gamma_{0,u}(r)\right)= \frac{\gamma'^2u^2}{2(\gamma_{2,u}(r)+\gamma_{0,u}(r))}>\frac{\gamma'^2u^2}{4\max_{r\in[0,\tilde{\delta}_{\rho}]}\gamma_{2,1}(r)}>0,
\end{equation}
where the second equality comes from the observation
\begin{equation}\notag
    \gamma_{2,u}^2(r)= \gamma_{0,u}^2(r)+\gamma'^2u^2,    
\end{equation}
which can be easily seen geometrically. In addition, for any $\bm{y}\in\mathbb{R}^L$ such that $\widetilde{\bm{A}}(\bm{u}_0r)_{(i)}\bm{y}>u$ for $i=L-1,L$ and $\|\bm{y}\|<\gamma_{2,u}(r)$, suppose  $\|\bm{y}-\hat{\bm{y}}_{u}(r)\|\ge \gamma'u$, then 
\begin{equation}\notag
    \|\bm{y}\|^2<\gamma_{2,u}^2(r)=\gamma_{0,u}^2(r)+\gamma'^2u^2\le \|\hat{\bm{y}}_u(r)\|^2 + \|\bm{y}-\hat{\bm{y}}_{u}(r)\|^2.
\end{equation}
By discussing each case in (\ref{eq:Projection1}),
this means that the origin and $\bm{y}$ are on the same side of the $(L-1)$-dimensional hyper-plane $\partial\widetilde{H}_{L-1,u}(r)$ or $\partial\widetilde{H}_{L,u}(r)$. Then we get $\widetilde{\bm{A}}(r)_{(L-1)}\bm{y}<u$ or $\widetilde{\bm{A}}(r)_{(L)}\bm{y}<u$, resulting in a contradiction.
Thus, we must have $\|\bm{y}-\hat{\bm{y}}_{u}(r)\|< \gamma'u$, which implies
\begin{equation}\label{eq:HHB}
    \widetilde{H}_{L-1,u}(r)\cap \widetilde{H}_{L,u}(r)\cap B(\hat{\bm{y}}_{u}(r),\gamma'u)\supset \widetilde{H}_{L-1,u}(r)\cap \widetilde{H}_{L,u}(r)\cap B(\bm{0}_L,\gamma_{2,u}(r)),
\end{equation}
or equivalently,
\begin{equation}\notag
    \widetilde{H}_{L-1,u}(r)\cap \widetilde{H}_{L,u}(r)\cap B^c(\hat{\bm{y}}_{u}(r),\gamma'u)\subset \widetilde{H}_{L-1,u}(r)\cap \widetilde{H}_{L,u}(r)\cap B^c(\bm{0}_L,\gamma_{2,u}(r)).
\end{equation}
Then by (\ref{def:gammap}), 
\begin{equation}\label{num_dom_mag}
    \widetilde{D}_{u,0}(r)\setminus \widetilde{D}_{u,N-1}(r)
    \subset \widetilde{H}_{L-1,u}(r)\cap \widetilde{H}_{L,u}(r)\cap B^c(\hat{\bm{y}}_{u}(r),\gamma'u)
    \subset B^c(\bm{0}_L,\gamma_{2,u}(r)),
\end{equation}
where recall $\widetilde{D}_{u,k}(r)$, $0\le k\le N-1$ was defined in (\ref{def:Dur_tilde}).

Now we further define some useful subsets of $\mathbb{R}^L$.
For any $r\in[0,\tilde{\delta}_{\rho}]$ and $u>0$, define
\begin{equation}\label{def:Cur}
C_u(r):=\widetilde{H}_{L-1,u}(r)\cap \widetilde{H}_{L,u}(r)\cap B(\bm{0}_L,\gamma_{1,u}(r)).
\end{equation}
It is easy to see $C_u(r)=uC_1(r)$, $\lambda_L(C_1(r))>0$, 
and
$\lambda_L(C_1(r))$ is a continuous function of $r\in [0,\tilde{\delta}_{\rho}]$. Then we have
\begin{equation}\label{eq:C}
    C_{1,\tilde{\delta}_{\rho}}:=\min_{r\in[0,\tilde{\delta}_{\rho}]}\lambda_L(C_1(r))>0.    
\end{equation}
For any $r\ge 0$ and $\bm{y}\in\mathbb{R}^L$, let
$$\tilde{h}_r(\bm{y}):=h_r\left(\bm{P}^T(r)\bm{y}\right),$$
where $h_r$ is as defined in (\ref{def:hr}) and (\ref{def:h0}). Indeed, by (\ref{def:gr}) and (\ref{def:gr_tilde}), we have
\begin{equation}\label{eq:h_g_tilde}
    \tilde{g}_r(\bm{y})=\left|\tilde{h}_r(\bm{y})\right|p_L(\bm{y})=r^{-1}\left|\det\left(\Matri_N\left(\widetilde{\bm{A}}(r)\bm{y}\right)\right)\right|p_L(\bm{y}).    
\end{equation}
For any $r\ge 0$, define
$$Q_r:=\left\{\bm{y}\in\mathbb{R}^L:\tilde{h}_r(\bm{y})=0\right\}$$
Since $h_r(\bm{y})$ is a polynomial of $\bm{y}$, we have
\begin{equation}\label{eq:Qr}
    \lambda_L\left(Q_r\right)=0.
\end{equation}
By the continuity of $\widetilde{\bm{A}}(r)$ on $r\in[0,\tilde{\delta}_{\rho}]$, $\tilde{h}_r(\bm{y})$ is continuous on $r\in[0,\tilde{\delta}_{\rho}]$ for any $\bm{y}\in\mathbb{R}^L$ (not just continuous on $[0,\delta_{pc}]$ as $h_r(\bm{y})$).
By (\ref{eq:gamma2gamma1}) and (\ref{eq:HHB}),  we also have
\begin{equation}\label{eq:CuHHB}
    \begin{aligned}
        C_u(r)
        &\subset \widetilde{H}_{L-1,u}(r)\cap \widetilde{H}_{L,u}(r)\cap B(\bm{0}_L,\gamma_{2,u}(r))\\
        &\subset \widetilde{H}_{L-1,u}(r)\cap \widetilde{H}_{L,u}(r)\cap B(\hat{\bm{y}}_{u}(r),\gamma'u)\\
        &\subset \widetilde{D}_{u,N-1}(r)\cup Q_r.
    \end{aligned}
\end{equation}
For any $\varepsilon>0$, denote
\begin{equation}\label{def:Ieps}
    I_{\varepsilon}:=\left\{r\in[0,\tilde{\delta}_{\rho}] :\lambda_L\left(C_1(r)\cap \left\{\bm{y}\in\mathbb{R}^L:\left|\tilde{h}_r(\bm{y})\right|\ge \varepsilon\right\}\right)/\lambda_L(C_1(r))\le 0.5\right\}.
\end{equation}
It is easy to see that for any given $\varepsilon$, $\lambda_L(C_1(r)\cap \{\bm{y}\in\mathbb{R}^L:|\tilde{h}_r(\bm{y})|\ge \varepsilon\})/\lambda_L(C_1(r))$ is a continuous function of $r\in [0,\tilde{\delta}_{\rho}]$. Moreover, for each $r\in [0,\tilde{\delta}_{\rho}]$, $\lambda_L(C_1(r)\cap \{\bm{y}\in\mathbb{R}^L:|\tilde{h}_r(\bm{y})|\ge \varepsilon\})/\lambda_L(C_1(r))$ converges monotonically to 1 as $\varepsilon$ decreases to 0. Hence by Dini's theorem, this convergence is uniform. Consequently, 
there exists some $\eta>0$ independent of $u$ such that $I_{\eta}=\emptyset$. For any $u>0$ and $r\in[0,\tilde{\delta}_{\rho}]$, let
\begin{equation}\label{def_Tur}
    T_u(r):=C_u(r)\cap \left\{\bm{y}\in\mathbb{R}^L:\left|\tilde{h}_{r}(\bm{y})\right|\ge \eta u^N\right\}.
\end{equation}
By (\ref{def:hr}), (\ref{def:h0}) and (\ref{def:Cur}), it is easy to see
$$T_u(r)=T_1(r)u^L.$$
Then by (\ref{def:Ieps}),
\begin{equation}\label{Tur_low_bd}
    \lambda_L(T_u(r))=\lambda_L(T_1(r))u^L\ge \frac{1}{2}\lambda_L(C_1(r))u^L\ge\frac{1}{2}C_{1,\tilde{\delta}_{\rho}}u^L
\end{equation}
By (\ref{eq:CuHHB}) and $\eta>0$, we also have
\begin{equation}\label{den_dom_shr}
    T_u(r)\subset \widetilde{D}_{u,N-1}(r).
\end{equation}

Now we return to the proof of (ii).
Given $\varepsilon>0$, we can choose $U$ (as mentioned at the start of the proof of (ii)) as follows.
Firstly, choose $\theta\in(0,1)$ satisfying
\begin{equation}\label{eq:gapneeded}
\min_{r\in [0,\tilde{\delta}_{\rho}]}\left\{\sqrt{1-\theta}\gamma_{2,1}(r)-\gamma_{1,1}(r)\right\}>0.
\end{equation}
Such $\theta$ exists because of (\ref{eq:gamma2gamma1}) and the fact that $\gamma_{2,1}(r)$ is bounded from above for $r\in[0,\tilde{\delta}_{\rho}]$. Note that by a similar reasoning as in (\ref{Rel:constdom}),
there exists a constant $\widetilde{C}>0$ such that for any $r\in[0,\tilde{\delta}_{\rho}]$ and $\bm{y}\in\mathbb{R}^L$,
$$
    \tilde{h}_r(\bm{y})\le \widetilde{C}\sum_{v_1,\dots,v_N\in\{1,\dots,L\}}\left|y_{v_1}\cdots y_{v_N}\right|.
$$
Thus, there exists $U_1>0$ such that
\begin{equation}\label{1stAuxRes}
    \tilde{h}_r(\bm{y})\le \exp\left(\frac{1}{2}\theta\bm{y}^T\bm{y}\right),
\end{equation}
for any $r\in [0,\tilde{\delta}_{\rho}]$ and $\bm{y}\in\mathbb{R}^L$ satisfying $\|\bm{y}\|\ge U_1\min_{s\in[0,\tilde{\delta}_{\rho}]}\gamma_{2,1}(s)$ (see (\ref{eq:PositiveMinGamma})). Then
by (\ref{eq:PositiveMinGamma}), (\ref{eq:gapneeded}) and Lemma \ref{Lem:Compare} with $k_2=\sqrt{1-\theta}\gamma_{2,1}(r)$ and $k_1=\gamma_{1,1}(r)$,  there exists $U_2>0$ such that
\begin{equation}\label{2ndAuxRes}
    \frac{(1-\theta)^{-\frac{L}{2}}\int_{uB^c(\bm{0}_L,\sqrt{1-\theta}\gamma_{2,1}(r))}\exp\left(-\frac{1}{2}\bm{y}'^T\bm{y}'\right)d\bm{y}'}{\frac{1}{2}\eta C_{1,\tilde{\delta}_{\rho}} u^{L+N}\exp\left(-\frac{1}{2}\gamma_{1,1}^2(r)u^2\right)}<\varepsilon,   
\end{equation}
for any $u\ge U_2$. 
Then we can simply take $U:=\max(U_1,U_2)$. Indeed,
for any $r\in[0,\tilde{\delta}_{\rho}]$ and $u>\max(U_1,U_2)$,
$$
    \begin{aligned}
  \Psi_u(\bm{u}_0r)&=\frac{\int_{\widetilde{D}_{u,0}(r)\setminus \widetilde{D}_{u,N-1}(r)}\tilde{g}_r(\bm{y})d\bm{y}}{\int_{\widetilde{D}_{u,N-1}(r)}\tilde{g}_r(\bm{y})d\bm{y}}~~\text{(by (\ref{Eq:Phi_g_tilde}))}\\
    &=
    \frac{\int_{\widetilde{D}_{u,0}(r)\setminus \widetilde{D}_{u,N-1}(r)}\tilde{h}_r(\bm{y})\exp\left(-\frac{1}{2}\bm{y}^T\bm{y}\right)d\bm{y}}{\int_{\widetilde{D}_{u,N-1}(r)}\tilde{h}_r(\bm{y})\exp\left(-\frac{1}{2}\bm{y}^T\bm{y}\right)d\bm{y}}~~\text{(by (\ref{eq:h_g_tilde}))}\\
    &\le
    \frac{\int_{B^c(\bm{0}_L,\gamma_{2,u}(r)) }\tilde{h}_r(\bm{y})\exp\left(-\frac{1}{2}\bm{y}^T\bm{y}\right)d\bm{y}}{\int_{T_u(r)}\tilde{h}_r(\bm{y})\exp\left(-\frac{1}{2}\bm{y}^T\bm{y}\right)d\bm{y}} ~~\text{(by (\ref{num_dom_mag}) and (\ref{den_dom_shr}))}\\
    &\le
    \frac{\int_{B^c(\bm{0}_L,\gamma_{2,u}(r))}\exp\left(-\frac{1}{2}(1-\theta)\bm{y}^T\bm{y}\right)d\bm{y}}{\eta u^N\int_{T_u(r)}\exp\left(-\frac{1}{2}\bm{y}^T\bm{y}\right)d\bm{y}} ~~\text{(by (\ref{def_Tur}) and (\ref{1stAuxRes}))}\\
    &\le
    \frac{(1-\theta)^{-\frac{L}{2}}\int_{uB^c(\bm{0}_L,\sqrt{1-\theta}\gamma_{2,1}(r))}\exp\left(-\frac{1}{2}\bm{y}'^T\bm{y}'\right)d\bm{y}'}{\frac{1}{2}\eta C_{1,\tilde{\delta}_{\rho}} u^{L+N}\exp\left(-\frac{1}{2}\gamma_{1,1}^2(r)u^2\right)} ~~(\text{by $\bm{y}'=\sqrt{1-\theta}\bm{y}$, (\ref{def:Cur}), (\ref{def_Tur}), (\ref{Tur_low_bd})})\\
    &< \varepsilon, ~~(\text{by  (\ref{2ndAuxRes})})
    \end{aligned}
$$
as desired.

\end{proof}

\subsection{A Corollary of the Main Results}

Note that (ii) of Lemma \ref{Lem:AS2} holds for $k=N$.
Then by a similar proof to (i) of Theorem \ref{Theo:MR2}, the limit of $\frac{f_{u,N}(\bm{t})}{f_{u,N-1}(\bm{t})+f_{u,N}(\bm{t})}$ as $\|\bm{t}\|\to 0$ also exists for any $u>0$.
The following is an immediate corollary of Theorems \ref{Theo:MR1} and \ref{Theo:MR2}.
\begin{corollary}\label{Cor}
    Let $X$ be qualified under perturbation and satisfy Condition (\ref{Con:GC2}). Then
    $$\lim_{u\to\infty}\lim_{\|\bm{t}\|\to 0}\frac{f_{u,N}(\bm{t})}{f_{u,N-1}(\bm{t})+f_{u,N}(\bm{t})}=\frac{1}{2}.$$
\end{corollary}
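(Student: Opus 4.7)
The plan is to combine the two main results just established: Theorem~\ref{Theo:MR1}, which gives the asymptotic sign-balance $f_{u,+}/f_{u,-}\to 1$ as $\|\bm{t}\|\to 0$, and Theorem~\ref{Theo:MR2}, which says that for large $u$ the contribution of critical points with index at most $N-2$ is negligible compared to that of indices $N-1$ and $N$. Intuitively, Theorem~\ref{Theo:MR2}(ii) forces $f_{u,+}$ and $f_{u,-}$ to become, respectively, almost $f_{u,N}$ and $f_{u,N-1}$ (or vice versa, depending on the parity of $N$) at high thresholds, after which Theorem~\ref{Theo:MR1} forces the two remaining terms to equalize in the limit.

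To make this precise, fix $u>0$ and, without loss of generality, assume $N$ is even (the odd case is identical after interchanging the two sums below). Write
\[
F_+(\bm{t}):=\sum_{\substack{0\le k\le N-2\\ k\text{ even}}}f_{u,k}(\bm{t}),\qquad F_-'(\bm{t}):=\sum_{\substack{0\le k\le N-2\\ k\text{ odd}}}f_{u,k}(\bm{t}),
\]
so that $f_{u,+}=f_{u,N}+F_+$, $f_{u,-}=f_{u,N-1}+F_-'$, and $F_++F_-'=\sum_{k=0}^{N-2}f_{u,k}$. Rewriting Theorem~\ref{Theo:MR1} as $f_{u,+}-f_{u,-}=o(f_{u,-})$ as $\|\bm{t}\|\to 0$ and dividing by $f_{u,N-1}+f_{u,N}$, the bound $f_{u,-}/(f_{u,N-1}+f_{u,N})=1+F_-'/(f_{u,N-1}+f_{u,N})\le 1+\Psi_u(\bm{t})$ (which remains bounded near $\bm{t}=\bm{0}$ by Theorem~\ref{Theo:MR2}(i)) gives
\[
\lim_{\|\bm{t}\|\to 0}\left(\frac{f_{u,N}(\bm{t})-f_{u,N-1}(\bm{t})}{f_{u,N-1}(\bm{t})+f_{u,N}(\bm{t})}+\frac{F_+(\bm{t})-F_-'(\bm{t})}{f_{u,N-1}(\bm{t})+f_{u,N}(\bm{t})}\right)=0.
\]
The first ratio converges to $2r(u)-1$, where $r(u):=\lim_{\|\bm{t}\|\to 0}f_{u,N}(\bm{t})/(f_{u,N-1}(\bm{t})+f_{u,N}(\bm{t}))$ exists by the remark preceding the corollary. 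Hence the second ratio also converges, and since $|F_+-F_-'|\le F_++F_-'$ and $(F_++F_-')/(f_{u,N-1}+f_{u,N})=\Psi_u(\bm{t})\to\Psi_u(\bm{0})$, its limit is bounded in modulus by $\Psi_u(\bm{0})$. Combining the two, $|2r(u)-1|\le\Psi_u(\bm{0})$.

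Finally, let $u\to\infty$: Theorem~\ref{Theo:MR2}(ii) applied at $\bm{t}=\bm{0}$ (legitimate because (i) continuously extends $\Psi_u$ to the origin) gives $\Psi_u(\bm{0})\to 0$, hence $r(u)\to 1/2$, which is the claim. No substantive obstacle arises here; the only bookkeeping step worth flagging is the passage from the multiplicative statement $f_{u,+}/f_{u,-}\to 1$ to an additive estimate after dividing by $f_{u,N-1}+f_{u,N}$, and this is precisely what the uniform boundedness of $f_{u,-}/(f_{u,N-1}+f_{u,N})$ near $\bm{t}=\bm{0}$ afforded by Theorem~\ref{Theo:MR2}(i) guarantees.
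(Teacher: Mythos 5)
Your proof is correct and fills in the details the paper leaves implicit (the paper simply presents the statement as an "immediate corollary" of Theorems \ref{Theo:MR1} and \ref{Theo:MR2}, after noting that the inner limit exists by the same argument as Theorem \ref{Theo:MR2}(i)). Your decomposition $f_{u,\pm}$ into the dominant terms $f_{u,N}$, $f_{u,N-1}$ plus the remainders $F_{+}$, $F_-'$, the conversion of the multiplicative estimate of Theorem \ref{Theo:MR1} into an additive one over the denominator $f_{u,N-1}+f_{u,N}$ using the bound $f_{u,-}/(f_{u,N-1}+f_{u,N})\le 1+\Psi_u(\bm{t})$, and the final squeeze $|2r(u)-1|\le\Psi_u(\bm{0})\to 0$ are exactly the bookkeeping the paper intends.
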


Since we are conditioning on having a critical point at the origin with unknown index, the ratio in Corollary \ref{Cor} being very close to $\frac{1}{2}$ implies that a pair of very close critical points should consist of one local maximum and one critical point with index $N-1$.

Intuitively, a connected component of a high excursion set most likely contains exactly one critical point (one global maximum) or three critical points (two local maxima and one critical point with index $N-1$). Thus, Corollary \ref{Cor} predicts that if a connected component contains three critical points, then the critical point with index $N-1$ will be very close to one of the two local maxima.

\newpage
\begin{appendices}

\section{Proof of Lemma \ref{Lem:Sigma}}\label{App:Analytic}
Fix $N\ge 2$. For any $r>0$ and $\bm{t}\in\mathbb{R}^N$, let $B(\bm{t},r)$ be the $N$-dimensional open ball centered at $\bm{t}$ with radius $r$.
Since $\rho(x)$ is four times continuously differentiable on $x\in[0,\delta_{\rho}^2]$  (see $\delta_{\rho}$ in Definition \ref{Def:qualified}), all the partial derivatives of $R(\bm{t})$ up to fourth order are continuous on $\bm{t}\in B(\bm{0}_N,\delta_{\rho})$. 
Recall that $L=N(N+1)/2+2$. For convenience, we will still use $\triangledown^2 X$ for its usual vectorization, but one can easily distinguish between them using context. For any $\bm{t}\in B(\bm{0}_N,\delta_{\rho})\setminus\{\bm{0}_N\}$, denote
\begin{enumerate}
    \item $\bm{V}_{11}(\bm{t})\in\mathbb{R}^{L\times L}$: the covariance matrix of the random $L$-vector 
    $(\triangledown^2X(\bm{t}),X(\bm{t}),X(\bm{0}))$;
    
    \item $\bm{V}_{12}(\bm{t})\in\mathbb{R}^{L\times 2N}$: the covariance matrix between random vectors
    $$(\triangledown^2X(\bm{t}),X(\bm{t}),X(\bm{0}))~~\text{and}~~\left(\triangledown X(\bm{t}),\triangledown X(\bm{0})\right);$$
    
    \item $\bm{V}_{22}(\bm{t})\in\mathbb{R}^{2N\times 2N}$: the covariance matrix of the random $(2N)$-vector
    $$\left(\triangledown X(\bm{t}),\triangledown X(\bm{0})\right).$$
\end{enumerate}
Since $\bm{\Sigma}(\bm{t})$, $\bm{t}\in\mathbb{R}^N\setminus\{\bm{0}_N\}$ is the covariance matrix of the random $L$-vector 
$$\left(\triangledown^2X(\bm{t}),X(\bm{t}),X(\bm{0})|\triangledown X(\bm{t})=\triangledown X(\bm{0})=\bm{0}_{N}\right),$$
by the properties of multivariate Gaussian distribution, we have for any $\bm{t}\in B(\bm{0}_N,\delta_{\rho})\setminus\{\bm{0}_N\}$,
$$\bm{\Sigma}(\bm{t})=\bm{V}_{11}(\bm{t})-\bm{V}_{12}(\bm{t})\bm{V}_{22}^{-1}(\bm{t})\bm{V}_{12}^T(\bm{t}).$$

\subsection{The Blocked Covariance Matrix}
For convenience, we adopt the following notations for $\bm{t}\in B(\bm{0}_N,\delta_{\rho})\setminus\{\bm{0}_N\}$: 
\begin{enumerate}

\item $\bm{G}_{00}(\bm{t}):=\Cov[X(\bm{0}),X(\bm{t})]=R(\bm{t})$;

\item $\bm{G}_{01}(\bm{t}):=\Cov[X(\bm{0}),\triangledown X(\bm{t})]=-\Cov[\triangledown X(\bm{0}),X(\bm{t})]=(R_1(\bm{t}),\dots, R_N(\bm{t}))$;

\item $\bm{G}_{20}(\bm{t}):=\Cov[\triangledown^2X(\bm{0}),X(\bm{t})]=\Cov[X(\bm{0}),\triangledown^2X(\bm{t})]\in\mathbb{R}^{(L-2)\times 1}$; 

\item $\bm{G}_{21}(\bm{t}):=\Cov[\triangledown^2X(\bm{0}),\triangledown X(\bm{t})]=-\Cov[\triangledown X(\bm{0}),\triangledown^2X(\bm{t})]\in\mathbb{R}^{(L-2)\times N}$;

\item $\bm{G}_{22}(\bm{t}):=\Cov[\triangledown^2X(\bm{0}),\triangledown^2 X(\bm{t})]\in\mathbb{R}^{(L-2)\times (L-2)}$.
\end{enumerate}        
Their relationships with the covariance function $R$ are given by Lemma \ref{Lem:MSD}.
In particular, by (\ref{R1})-(\ref{R4}), we have
\begin{enumerate}
    \item $\bm{G}_{00}(\bm{0}):=\Cov[X(\bm{0}),X(\bm{0})]=R(\bm{0})$; 
    \item $\bm{G}_{01}(\bm{0}):=\Cov[X(\bm{0}),\triangledown X(\bm{0})]=(R_1(\bm{0}),\dots, R_N(\bm{0}))=\bm{0}_{1\times N}$;
    \item $\bm{G}_{20}(\bm{0}):=\Cov[\triangledown^2X(\bm{0}),X(\bm{0})]\in\mathbb{R}^{(L-2)\times 1}$;
    \item $\bm{G}_{21}(\bm{0}):=\Cov[\triangledown^2X(\bm{0}),\triangledown X(\bm{0})]=\bm{0}_{(L-2)\times N}$;
    \item $\bm{G}_{22}(\bm{0}):=\Cov[\triangledown^2X(\bm{0}),\triangledown^2 X(\bm{0})]\in\mathbb{R}^{(L-2)\times (L-2)}$.
\end{enumerate}
Immediately, we have
\begin{equation}\label{v11}
\bm{V}_{11}(\bm{t})=
\begin{pmatrix}
\bm{G}_{22}(\bm{0}) & \bm{G}_{20}(\bm{0}) & \bm{G}_{20}(\bm{t}) \\
\bm{G}_{20}^T(\bm{0}) & \bm{G}_{00}(\bm{0}) & \bm{G}_{00}(\bm{t}) \\
\bm{G}_{20}^T(\bm{t}) & \bm{G}_{00}^T(\bm{t}) & \bm{G}_{00}(\bm{0}) 
\end{pmatrix}
\end{equation}
and
\begin{equation}\label{v12}
\bm{V}_{12}(\bm{t})
=
\begin{pmatrix}
-\bm{G}_{21}(\bm{0}) & -\bm{G}_{21}(\bm{t}) \\
-\bm{G}_{01}(\bm{0}) & -\bm{G}_{01}(\bm{t}) \\
\bm{G}_{01}(\bm{t}) & -\bm{G}_{01}(\bm{0})
\end{pmatrix}\\
=
\begin{pmatrix}
\bm{0}_{(L-2)\times N} & -\bm{G}_{21}(\bm{t}) \\
\bm{0}_{1\times N} & -\bm{G}_{01}(\bm{t}) \\
\bm{G}_{01}(\bm{t}) & \bm{0}_{1\times N}
\end{pmatrix}.
\end{equation}
As for $\bm{V}_{22}^{-1}(\bm{t})$,
by (\ref{R2}), we have
$$\Cov[\triangledown X(\bm{0}),\triangledown X(\bm{0})]=-2\rho^{(1)}(0)\bm{I}_N$$
and
$$\Cov[\triangledown X(\bm{t}),\triangledown X(\bm{0})]=-2\rho^{(1)}(\|\bm{t}\|^2)\bm{I}_N-4\rho^{(2)}(\|\bm{t}\|^2)\bm{t}\bm{t}^T.$$
Recall that by (\ref{ineq:rho1rho2}), $\rho^{(1)}(0)<0$. We can define 
$$k_1(\bm{t}):=\frac{\rho^{(1)}(\|\bm{t}\|^2)}{\rho^{(1)}(0)}\text{ and }k_2(\bm{t}):=\frac{2\rho^{(2)}(\|\bm{t}\|^2)}{\rho^{(1)}(0)},$$
for any $\bm{t}\in B(\bm{0}_N,\delta_{\rho})\setminus\{\bm{0}_N\}$. Thus,
$$\bm{V}_{22}(\bm{t})=-2\rho^{(1)}(0)
\begin{pmatrix}
\bm{I}_N & k_1(\bm{t})\bm{I}_N+k_2(\bm{t})\bm{t}\bm{t}^T\\
k_1(\bm{t})\bm{I}_N+k_2(\bm{t})\bm{t}\bm{t}^T & \bm{I}_N
\end{pmatrix}$$
and then
$$\bm{V}_{22}^{-1}(\bm{t})=-\frac{1}{2\rho^{(1)}(0)}
\begin{pmatrix}
\bm{I}_N & k_1(\bm{t})\bm{I}_N+k_2(\bm{t})\bm{t}\bm{t}^T\\
k_1(\bm{t})\bm{I}_N+k_2(\bm{t})\bm{t}\bm{t}^T & \bm{I}_N
\end{pmatrix}^{-1}.$$
To further calculate $\bm{V}_{22}^{-1}(\bm{t})$, we need the following two facts which can be easily checked:
\begin{enumerate}
    \item for any symmetric matrix $\bm{B}\in\mathbb{R}^{N\times N}$ such that $\bm{I}_N-\bm{B}^2$ is invertible,  
    $$
    \begin{pmatrix}
    \bm{I}_N & \bm{B} \\
    \bm{B} & \bm{I}_N
    \end{pmatrix}^{-1}
    =
    \begin{pmatrix}
    (\bm{I}_N-\bm{B}^2)^{-1} & -\bm{B}(\bm{I}_N-\bm{B}^2)^{-1}\\
    -(\bm{I}_N-\bm{B}^2)^{-1}\bm{B} & (\bm{I}_N-\bm{B}^2)^{-1}
    \end{pmatrix};
    $$
    
    \item (The Sherman–Morrison Formula) for any $\bm{w},\bm{v}\in\mathbb{R}^{N}$ such that $1+\bm{v}^T\bm{w}\neq 0$,  
    $$(\bm{I}_N+\bm{w}\bm{v}^T)^{-1}=\bm{I}_N-\frac{\bm{w}\bm{v}^T}{1+\bm{v}^T\bm{w}}.$$
\end{enumerate}
By taking 
$\bm{B}=k_1(\bm{t})\bm{I}_N+k_2(\bm{t})\bm{t}\bm{t}^T$ in Fact 1,
we get
\begin{equation}\label{eq:IBB}
    \begin{aligned}
    \bm{I}_N-\bm{B}^2
    &=\left(1-k_1^2(\bm{t})\right)\left(\bm{I}_N-k_3(\bm{t})\bm{t}\bm{t}^T\right),
    \end{aligned}
\end{equation}
where by letting $k_*(\bm{t}):=k_1(\bm{t})+k_2(\bm{t})\|\bm{t}\|^2$,
$$k_3(\bm{t}):=\frac{2k_1(\bm{t})k_2(\bm{t})+k_2^2(\bm{t})\|\bm{t}\|^2}{1-k_1^2(\bm{t})}=\frac{k_2(\bm{t})(k_1(\bm{t})+k_{*}(\bm{t}))}{1-k_1^2(\bm{t})}.$$
By (\ref{absrho1}), 
\begin{equation}\label{k1lessthan1}
    1-k_1^2(\bm{t})=\frac{(\rho^{(1)}(0))^2-(\rho^{(1)}(\|\bm{t}\|^2))^2}{(\rho^{(1)}(0))^2}>0.  
\end{equation}
Thus, $k_3(\bm{t})$ is well-defined for any $\bm{t}\in B(\bm{0}_N,\delta_{\rho})\setminus\{\bm{0}_N\}$. To apply Fact 2 to the right-hand side of (\ref{eq:IBB}), we still need to show
$1-k_3(\bm{t})\|\bm{t}\|^2\neq 0$ for any $\bm{t}\in B(\bm{0}_N,\delta_{\rho})\setminus\{\bm{0}_N\}$. One can easily check that
$$\left(1-k_3(\bm{t})\|\bm{t}\|^2\right)\left(1-k_1^2(\bm{t})\right)=1-k_*^2(\bm{t}).$$
Taking $\bm{t}':=(t_1',t_2',\dots,t_N')=(0,\dots,0,\|\bm{t}\|)$ and by (\ref{rho1ineq}), we have
$$|k_*(\bm{t})|=|k_*(\bm{t}')|=\left|k_1(\bm{t}')+\|\bm{t}\|^2k_2(\bm{t}')\right|=\left|\frac{\rho^{(1)}(\|\bm{t}'\|^2)+2t_N'^2\rho^{(2)}(\|\bm{t}'\|^2)}{\rho^{(1)}(0)}\right|<1.$$
Thus, for any $\bm{t}\in B(\bm{0}_N,\delta_{\rho})\setminus\{\bm{0}_N\}$, 
\begin{equation}\label{kstarlessthan1}
    1-k_3(\bm{t})\|\bm{t}\|^2=\frac{1-k_*^2(\bm{t})}{1-k_1^2(\bm{t})}>0.
\end{equation}
Then by Fact 2,
we have
$$
\begin{aligned}
(\bm{I}_N-\bm{B}^2)^{-1}
&=\frac{1}{1-k_1^2(\bm{t})}\left(\bm{I}_N+\frac{k_3(\bm{t})\bm{t}\bm{t}^T}{1-k_3(\bm{t})\|\bm{t}\|^2}\right)\\
&=\frac{1}{1-k_1^2(\bm{t})}\left(\bm{I}_N+k_4(\bm{t})\bm{t}\bm{t}^T\right),
\end{aligned}
$$
where 
$$k_4(\bm{t}):=\frac{k_3(\bm{t})}{1-k_3(\bm{t})\|\bm{t}\|^2}=\frac{\left(1-k_1^2(\bm{t})\right)k_3(\bm{t})}{1-k_*^2(\bm{t})}=\frac{k_2(\bm{t})(k_1(\bm{t})+k_*(\bm{t}))}{1-k_*^2(\bm{t})}.$$
Then
$$
\begin{aligned}
&~~~~(\bm{I}_N-\bm{B}^2)^{-1}\bm{B}=\bm{B}(\bm{I}_N-\bm{B}^2)^{-1}\\
&=\frac{1}{1-k_1^2(\bm{t})}\left(\bm{I}_N+k_4(\bm{t})\bm{t}\bm{t}^T\right)\left(k_1(\bm{t})\bm{I}_N+k_2(\bm{t})\bm{t}\bm{t}^T\right)\\
&=\frac{1}{1-k_1^2(\bm{t})}\left(k_1(\bm{t})\bm{I}_N+\left(k_2(\bm{t})+k_1(\bm{t})k_4(\bm{t})+k_2(\bm{t})k_4(\bm{t})\|\bm{t}\|^2\right)\bm{t}\bm{t}^T\right)\\
&=\frac{1}{1-k_1^2(\bm{t})}\left(k_1(\bm{t})\bm{I}_N+k_5(\bm{t})\bm{t}\bm{t}^T\right),
\end{aligned}
$$
where 
$$
\begin{aligned}
k_5(\bm{t}):
&=k_2(\bm{t})+k_1(\bm{t})k_4(\bm{t})+k_2(\bm{t})k_4(\bm{t})\|\bm{t}\|^2\\
&=k_2(\bm{t})+k_{*}(\bm{t})k_4(\bm{t})\\
&=\frac{k_2(\bm{t})(1+ k_1(\bm{t})k_*(\bm{t}))}{1-k_*^2(\bm{t})}.
\end{aligned}
$$
It is easy to see $k_i(\bm{t})$, $i=1,2,\dots,5$ are all well-defined for any $\bm{t}\in B(\bm{0}_N,\delta_{\rho})\setminus\{\bm{0}_N\}$.

In summary, for any $\bm{t}\in B(\bm{0}_N,\delta_{\rho})\setminus\{\bm{0}_N\}$, we have
\begin{equation}\label{v22inverse}
\begin{aligned}
&~~~~-2\rho^{(1)}(0)\bm{V}_{22}^{-1}(\bm{t})\\
&=
\begin{pmatrix}
\frac{1}{1-k_1^2(\bm{t})}\bm{I}_N & -\frac{k_1(\bm{t})}{1-k_1^2(\bm{t})}\bm{I}_N\\
-\frac{k_1(\bm{t})}{1-k_1^2(\bm{t})}\bm{I}_N & \frac{1}{1-k_1^2(\bm{t})}\bm{I}_N\\
\end{pmatrix}+
\begin{pmatrix}
\frac{k_4(\bm{t})}{1-k_1^2(\bm{t})}\bm{t}\bm{t}^T & -\frac{k_5(\bm{t})}{1-k_1^2(\bm{t})}\bm{t}\bm{t}^T\\
-\frac{k_5(\bm{t})}{1-k_1^2(\bm{t})}\bm{t}\bm{t}^T & \frac{k_4(\bm{t})}{1-k_1^2(\bm{t})}\bm{t}\bm{t}^T\\
\end{pmatrix}\\
&= 
\frac{1}{1-k_1^2(\bm{t})}
\begin{pmatrix}
\bm{I}_N & -k_1(\bm{t})\bm{I}_N\\
-k_1(\bm{t})\bm{I}_N & \bm{I}_N\\
\end{pmatrix}+\frac{1}{1-k_1^2(\bm{t})}
\begin{pmatrix}
k_4(\bm{t})\bm{t}\bm{t}^T & -k_5(\bm{t})\bm{t}\bm{t}^T\\
-k_5(\bm{t})\bm{t}\bm{t}^T & k_4(\bm{t})\bm{t}\bm{t}^T\\
\end{pmatrix},
\end{aligned}
\end{equation}
where 
$$k_1(\bm{t})=\frac{\rho^{(1)}(\|\bm{t}\|^2)} {\rho^{(1)}(0)},~~k_2(\bm{t})=\frac{2\rho^{(2)}(\|\bm{t}\|^2)}{\rho^{(1)}(0)},~~k_*(\bm{t})=k_1(\bm{t})+k_2(\bm{t})\|\bm{t}\|^2,$$
$$k_4(\bm{t})
=\frac{k_2(\bm{t})(k_1(\bm{t})+k_*(\bm{t}))}{1-k_*^2(\bm{t})}\text{ and }
k_5(\bm{t})
=\frac{k_2(\bm{t})(1+ k_1(\bm{t})k_*(\bm{t}))}{1-k_*^2(\bm{t})}.
$$
Here one should note that 
\begin{equation}\label{Eq:k4k5}
\begin{aligned}
k_5(\bm{t})-k_4(\bm{t})
&=\frac{k_2(\bm{t})(1-k_1(\bm{t}))(1-k_{*}(\bm{t}))}{1-k_{*}^2(\bm{t})}\\
&=\frac{k_2(\bm{t})(1-k_1(\bm{t}))}{1+k_{*}(\bm{t})}\\
&\to 0\text{ as $\|\bm{t}\|\to 0$.}
\end{aligned}
\end{equation}
In consequence, by (\ref{v11}), (\ref{v12}) and (\ref{v22inverse}),
$$
\begin{aligned}
&~~~~-2\rho^{(1)}(0)(1-k_1^2(\bm{t}))\bm{V}_{12}(\bm{t})\bm{V}^{-1}_{22}(\bm{t})\bm{V}_{12}^T(\bm{t})\\
&=\begin{pmatrix}
\bm{0}_{(L-2)\times N} & -\bm{G}_{21}(\bm{t}) \\
\bm{0}_{1\times N} & -\bm{G}_{01}(\bm{t}) \\
\bm{G}_{01}(\bm{t}) & \bm{0}_{1\times N}
\end{pmatrix}
\begin{pmatrix}
\bm{I}_N & -k_1(\bm{t})\bm{I}_N\\
-k_1(\bm{t})\bm{I}_N & \bm{I}_N\\
\end{pmatrix}
\begin{pmatrix}
\bm{0}_{N\times (L-2)} & \bm{0}_{N\times 1} & \bm{G}_{01}^T(\bm{t}) \\
-\bm{G}_{21}^T(\bm{t}) & -\bm{G}_{01}^T(\bm{t}) & \bm{0}_{N\times 1}
\end{pmatrix}\\
&~~~~
+\begin{pmatrix}
\bm{0}_{(L-2)\times N} & -\bm{G}_{21}(\bm{t}) \\
\bm{0}_{1\times N} & -\bm{G}_{01}(\bm{t}) \\
\bm{G}_{01}(\bm{t}) & \bm{0}_{1\times N}
\end{pmatrix}
\begin{pmatrix}
k_4(\bm{t})\bm{t}\bm{t}^T & -k_5(\bm{t})\bm{t}\bm{t}^T\\
-k_5(\bm{t})\bm{t}\bm{t}^T & k_4(\bm{t})\bm{t}\bm{t}^T\\
\end{pmatrix}
\begin{pmatrix}
\bm{0}_{N\times (L-2)} & \bm{0}_{N\times 1} & \bm{G}_{01}^T(\bm{t}) \\
-\bm{G}_{21}^T(\bm{t}) & -\bm{G}_{01}^T(\bm{t}) & \bm{0}_{N\times 1}
\end{pmatrix}\\
&=
\begin{pmatrix}
\bm{G}_{21}(\bm{t})\bm{G}_{21}^T(\bm{t}) & \bm{G}_{21}(\bm{t})\bm{G}_{01}^T(\bm{t}) & k_1(\bm{t})\bm{G}_{21}(\bm{t})\bm{G}_{01}^T(\bm{t})\\
\bm{G}_{01}(\bm{t})\bm{G}_{21}^T(\bm{t}) & \bm{G}_{01}(\bm{t})\bm{G}_{01}^T(\bm{t}) &k_1(\bm{t})\bm{G}_{01}(\bm{t})\bm{G}_{01}^T(\bm{t}) \\
k_1(\bm{t})\bm{G}_{01}(\bm{t})\bm{G}_{21}^T(\bm{t}) & k_1(\bm{t})\bm{G}_{01}(\bm{t})\bm{G}_{01}^T(\bm{t}) & \bm{G}_{01}(\bm{t})\bm{G}_{01}^T(\bm{t})
\end{pmatrix}\\
&~~~~
+
\begin{pmatrix}
k_4(\bm{t})\bm{G}_{21}(\bm{t})\bm{t}\bm{t}^T\bm{G}_{21}^T(\bm{t}) & k_4(\bm{t})\bm{G}_{21}(\bm{t})\bm{t}\bm{t}^T\bm{G}_{01}^T(\bm{t}) & k_5(\bm{t})\bm{G}_{21}(\bm{t})\bm{t}\bm{t}^T\bm{G}_{01}^T(\bm{t}) \\
k_4(\bm{t})\bm{G}_{01}(\bm{t})\bm{t}\bm{t}^T\bm{G}_{21}^T(\bm{t}) & k_4(\bm{t})\bm{G}_{01}(\bm{t})\bm{t}\bm{t}^T\bm{G}_{01}^T(\bm{t}) & k_5(\bm{t})\bm{G}_{01}(\bm{t})\bm{t}\bm{t}^T\bm{G}_{01}^T(\bm{t})\\
k_5(\bm{t})\bm{G}_{01}(\bm{t})\bm{t}\bm{t}^T\bm{G}_{21}^T(\bm{t}) & k_5(\bm{t})\bm{G}_{01}(\bm{t})\bm{t}\bm{t}^T\bm{G}_{01}^T(\bm{t}) & k_4(\bm{t})\bm{G}_{01}(\bm{t})\bm{t}\bm{t}^T\bm{G}_{01}^T(\bm{t})
\end{pmatrix},
\end{aligned}
$$
and then the blocked version of $\bm{\Sigma}(\bm{t})$ is given by
\begin{equation}\label{block}
\begin{aligned}
&~~~~\bm{\Sigma}(\bm{t})=\bm{V}_{11}(\bm{t})-\bm{V}_{12}(\bm{t})\bm{V}^{-1}_{22}(\bm{t})\bm{V}_{12}^T(\bm{t})\\
&=
\begin{pmatrix}
\bm{G}_{22}(\bm{0}) & \bm{G}_{20}(\bm{0}) & \bm{G}_{20}(\bm{t}) \\
\bm{G}_{20}^T(\bm{0}) & \bm{G}_{00}(\bm{0}) & \bm{G}_{00}(\bm{t}) \\
\bm{G}_{20}^T(\bm{t}) & \bm{G}_{00}^T(\bm{t}) & \bm{G}_{00}(\bm{0}) 
\end{pmatrix}\\
&~~~~+
\frac{1}{2\rho^{(1)}(0)}\frac{1}{1-k_1^2(\bm{t})}
\begin{pmatrix}
\bm{G}_{21}(\bm{t})\bm{G}_{21}^T(\bm{t}) & \bm{G}_{21}(\bm{t})\bm{G}_{01}^T(\bm{t}) & k_1(\bm{t})\bm{G}_{21}(\bm{t})\bm{G}_{01}^T(\bm{t})\\
\bm{G}_{01}(\bm{t})\bm{G}_{21}^T(\bm{t}) & \bm{G}_{01}(\bm{t})\bm{G}_{01}^T(\bm{t}) &k_1(\bm{t})\bm{G}_{01}(\bm{t})\bm{G}_{01}^T(\bm{t}) \\
k_1(\bm{t})\bm{G}_{01}(\bm{t})\bm{G}_{21}^T(\bm{t}) & k_1(\bm{t})\bm{G}_{01}(\bm{t})\bm{G}_{01}^T(\bm{t}) & \bm{G}_{01}(\bm{t})\bm{G}_{01}^T(\bm{t})
\end{pmatrix}\\
&~~~~
+\frac{1}{2\rho^{(1)}(0)}\frac{1}{1-k_1^2(\bm{t})}
\begin{pmatrix}
k_4(\bm{t})\bm{G}_{21}(\bm{t})\bm{t}\bm{t}^T\bm{G}_{21}^T(\bm{t}) & k_4(\bm{t})\bm{G}_{21}(\bm{t})\bm{t}\bm{t}^T\bm{G}_{01}^T(\bm{t}) & k_5(\bm{t})\bm{G}_{21}(\bm{t})\bm{t}\bm{t}^T\bm{G}_{01}^T(\bm{t}) \\
k_4(\bm{t})\bm{G}_{01}(\bm{t})\bm{t}\bm{t}^T\bm{G}_{21}^T(\bm{t}) & k_4(\bm{t})\bm{G}_{01}(\bm{t})\bm{t}\bm{t}^T\bm{G}_{01}^T(\bm{t}) & k_5(\bm{t})\bm{G}_{01}(\bm{t})\bm{t}\bm{t}^T\bm{G}_{01}^T(\bm{t})\\
k_5(\bm{t})\bm{G}_{01}(\bm{t})\bm{t}\bm{t}^T\bm{G}_{21}^T(\bm{t}) & k_5(\bm{t})\bm{G}_{01}(\bm{t})\bm{t}\bm{t}^T\bm{G}_{01}^T(\bm{t}) & k_4(\bm{t})\bm{G}_{01}(\bm{t})\bm{t}\bm{t}^T\bm{G}_{01}^T(\bm{t})
\end{pmatrix}.
\end{aligned}
\end{equation}
Fix a direction $\bm{u}\in\mathbb{S}^{N-1}$, then by (\ref{block}), each element in the covariance matrix $\bm{\Sigma}(\bm{u}r)$ is a continuous function of $r\in(0,\delta_{\rho}]$. 

In the following sections, we will calculate the
asymptotic expansion of $\bm{\Sigma}(\bm{t})$ as $\|\bm{t}\|\to 0$. 
To make the following proofs better organized, we will first calculate the asymptotic expansions associated with coefficients $k_i(\bm{t})$, $i=1,\dots,5$ as $\|\bm{t}\|\to 0$. Then the calculation will be performed separately for each of the three parts in (\ref{block}):
\begin{enumerate}
    \item the main part
    \begin{equation}\notag
    \begin{aligned}
    &~~~~\bm{\Sigma}(\bm{t})[1:(L-2),1:(L-2)]\\
    &=\bm{G}_{22}(\bm{0})+\frac{1}{2\rho^{(1)}(0)(1-k_1^2(\bm{t}))}\bm{G}_{21}(\bm{t})\bm{G}_{21}^T(\bm{t})+\frac{k_4(\bm{t})}{2\rho^{(1)}(0)(1-k_1^2(\bm{t}))}\bm{G}_{21}(\bm{t})\bm{t}\bm{t}^T\bm{G}_{21}^T(\bm{t});
    \end{aligned}
    \end{equation}

    \item the side part
    \begin{equation}\notag
    \begin{aligned}
    &~~~~\bm{\Sigma}(\bm{t})[1:(L-2),L-1]\\
    &=\bm{G}_{20}(\bm{0})+\frac{1}{2\rho^{(1)}(0)(1-k_1^2(\bm{t}))}\bm{G}_{21}(\bm{t})\bm{G}_{01}^T(\bm{t})
    +\frac{k_4(\bm{t})}{2\rho^{(1)}(0)(1-k_1^2(\bm{t}))}\bm{G}_{21}(\bm{t})\bm{t}\bm{t}^T\bm{G}_{01}^T(\bm{t})
    \end{aligned}
    \end{equation}
    and
    \begin{equation}\notag
    \begin{aligned}
    &~~~~\bm{\Sigma}(\bm{t})[1:(L-2),L]\\
    &=\bm{G}_{20}(\bm{t})+\frac{k_1(\bm{t})}{2\rho^{(1)}(0)(1-k_1^2(\bm{t}))}\bm{G}_{21}(\bm{t})\bm{G}_{01}^T(\bm{t})
    +\frac{k_5(\bm{t})}{2\rho^{(1)}(0)(1-k_1^2(\bm{t}))}\bm{G}_{21}(\bm{t})\bm{t}\bm{t}^T\bm{G}_{01}^T(\bm{t});
    \end{aligned}
    \end{equation}
    
    \item the corner part
    \begin{equation}\notag
    \begin{aligned}
    &~~~~\bm{\Sigma}(\bm{t})[L-1,L-1]=\bm{\Sigma}(\bm{t})[L,L]\\
    &=\bm{G}_{00}(\bm{0})+\frac{1}{2\rho^{(1)}(0)(1-k_1^2(\bm{t}))}\bm{G}_{01}(\bm{t})\bm{G}_{01}^T(\bm{t})
    +\frac{k_4(\bm{t})}{2\rho^{(1)}(0)(1-k_1^2(\bm{t}))}\bm{G}_{01}(\bm{t})\bm{t}\bm{t}^T\bm{G}_{01}^T(\bm{t}) 
    \end{aligned}
    \end{equation}
    and
    \begin{equation}\notag
    \begin{aligned}
    &~~~~\bm{\Sigma}(\bm{t})[L-1,L]=\bm{\Sigma}(\bm{t})[L,L-1]\\
    &=\bm{G}_{00}(\bm{t})+\frac{k_1(\bm{t})}{2\rho^{(1)}(0)(1-k_1^2(\bm{t}))}\bm{G}_{01}(\bm{t})\bm{G}_{01}^T(\bm{t})
    +\frac{k_5(\bm{t})}{2\rho^{(1)}(0)(1-k_1^2(\bm{t}))}\bm{G}_{01}(\bm{t})\bm{t}\bm{t}^T\bm{G}_{01}^T(\bm{t}).  
    \end{aligned}
    \end{equation}
\end{enumerate}

\subsection{Asymptotic Expansions of the Coefficients}
Note that by Condition (3) in Definition \ref{Def:qualified}, we have
$$\rho(x)=\rho(0)+\rho^{(1)}(0)x+\frac{1}{2}\rho^{(2)}(0)x^2+\frac{1}{6}\rho^{(3)}(0)x^3+o(x^3)\text{ as $x\downarrow 0$.}$$
In this part, we would like to use the above expansion to expand 
\begin{enumerate}[label=(\roman*)]
    \item $\frac{\|\bm{t}\|^2}{2\rho^{(1)}(0)(1-k_1^2(\bm{t}))}$,
    \item $\frac{k_4(\bm{t})\|\bm{t}\|^2}{2\rho^{(1)}(0)(1-k_1^2(\bm{t}))}$,
    \item $\frac{k_1(\bm{t})\|\bm{t}\|^2}{2\rho^{(1)}(0)(1-k_1^2(\bm{t}))}$,
    \item $\frac{k_5(\bm{t})\|\bm{t}\|^2}{2\rho^{(1)}(0)(1-k_1^2(\bm{t}))}$.
\end{enumerate}

For (i),
note that
$$
\begin{aligned}
\left(\rho^{(1)}(0)+\rho^{(1)}(\|\bm{t}\|^2)\right)^{-1}
&=\left(\rho^{(1)}(0)+\rho^{(1)}(0)\right)^{-1}-\left(\rho^{(1)}(0)+\rho^{(1)}(0)\right)^{-2}\rho^{(2)}(0)\|\bm{t}\|^2+o(\|\bm{t}\|^2)\\
&=\left(2\rho^{(1)}(0)\right)^{-1}-\left(2\rho^{(1)}(0)\right)^{-2}\rho^{(2)}(0)\|\bm{t}\|^2+o(\|\bm{t}\|^2)\\
\end{aligned}
$$
and
$$
\begin{aligned}
\|\bm{t}\|^2\left(\rho^{(1)}(0)-\rho^{(1)}(\|\bm{t}\|^2)\right)^{-1}
&=-\|\bm{t}\|^2\left(\rho^{(2)}(0)\|\bm{t}\|^2+\frac{1}{2}\rho^{(3)}(0)\|\bm{t}\|^4+o(\|\bm{t}\|^4)\right)^{-1}\\
&=-\left(\rho^{(2)}(0)+\frac{1}{2}\rho^{(3)}(0)\|\bm{t}\|^2+o(\|\bm{t}\|^2)\right)^{-1}\\
&=-\rho^{(2)}(0)^{-1}+\frac{1}{2}\rho^{(2)}(0)^{-2}\rho^{(3)}(0)\|\bm{t}\|^2+o(\|\bm{t}\|^2).
\end{aligned}
$$
Then we have
\begin{equation}\label{1exp}
\begin{aligned}
&~~~~\frac{\|\bm{t}\|^2}{2\rho^{(1)}(0)(1-k_1^2(\bm{t}))}\\
&=\frac{1}{2}\rho^{(1)}(0)\|\bm{t}\|^2\left(\rho^{(1)}(0)^{2}-\rho^{(1)}(\|\bm{t}\|^2)^{2}\right)^{-1}\\
&=\frac{1}{2}\rho^{(1)}(0)\left(\rho^{(1)}(0)+\rho^{(1)}(\|\bm{t}\|^2)\right)^{-1}\|\bm{t}\|^2\left(\rho^{(1)}(0)-\rho^{(1)}(\|\bm{t}\|^2)\right)^{-1}\\
&=\frac{1}{2}\rho^{(1)}(0)\left(\left(2\rho^{(1)}(0)\right)^{-1}-\left(2\rho^{(1)}(0)\right)^{-2}\rho^{(2)}(0)\|\bm{t}\|^2+o(\|\bm{t}\|^2)\right)\\
&~~~~\left(-\rho^{(2)}(0)^{-1}+\frac{1}{2}\rho^{(2)}(0)^{-2}\rho^{(3)}(0)\|\bm{t}\|^2+o(\|\bm{t}\|^2)\right)\\
&=-\frac{1}{4}\rho^{(2)}(0)^{-1}+\frac{1}{8}\left(\rho^{(1)}(0)^{-1}+\rho^{(2)}(0)^{-2}\rho^{(3)}(0)\right)\|\bm{t}\|^2+o(\|\bm{t}\|^2)\\
&=:a_0+b_0\|\bm{t}\|^2+o(\|\bm{t}\|^2),
\end{aligned}
\end{equation}
where 
$$a_0=-\frac{1}{4}\rho^{(2)}(0)^{-1}\text{ and }
b_0
=\frac{1}{8}\left(\rho^{(1)}(0)^{-1}+\rho^{(2)}(0)^{-2}\rho^{(3)}(0)\right).
$$

For (ii), we first note that
\begin{equation}\label{kstarexp}
\begin{aligned}
k_*(\bm{t})
&=k_1(\bm{t})+k_2(\bm{t})\|\bm{t}\|^2\\
&=\rho^{(1)}(0)^{-1}\left(\rho^{(1)}(\|\bm{t}\|^2)+2\rho^{(2)}(\|\bm{t}\|^2)\|\bm{t}\|^2\right)\\
&=\rho^{(1)}(0)^{-1}\left(\rho^{(1)}(0)+\rho^{(2)}(0)\|\bm{t}\|^2+\frac{1}{2}\rho^{(3)}(0)\|\bm{t}\|^4+2\rho^{(2)}(0)\|\bm{t}\|^2+2\rho^{(3)}(0)\|\bm{t}\|^4+o(\|\bm{t}\|^4)\right)\\
&=\rho^{(1)}(0)^{-1}\left(\rho^{(1)}(0)+3\rho^{(2)}(0)\|\bm{t}\|^2+\frac{5}{2}\rho^{(3)}(0)\|\bm{t}\|^4+o(\|\bm{t}\|^4)\right)\\
&=1+3\rho^{(1)}(0)^{-1}\rho^{(2)}(0)\|\bm{t}\|^2+\frac{5}{2}\rho^{(1)}(0)^{-1}\rho^{(3)}(0)\|\bm{t}\|^4+o(\|\bm{t}\|^4),
\end{aligned}
\end{equation}
which is followed by
\begin{equation}\notag
k_1(\bm{t})+k_*(\bm{t})
=2+4\rho^{(1)}(0)^{-1}\rho^{(2)}(0)\|\bm{t}\|^2+3\rho^{(1)}(0)^{-1}\rho^{(3)}(0)\|\bm{t}\|^4+o(\|\bm{t}\|^4)    
\end{equation}
and
\begin{equation}\notag
\begin{aligned}
&~~~~1-k_*^2(\bm{t})\\
&=1-\left(1+3\rho^{(1)}(0)^{-1}\rho^{(2)}(0)\|\bm{t}\|^2+\frac{5}{2}\rho^{(1)}(0)^{-1}\rho^{(3)}(0)\|\bm{t}\|^4+o(\|\bm{t}\|^4)\right)^2\\
&=-6\rho^{(1)}(0)^{-1}\rho^{(2)}(0)\|\bm{t}\|^2-\left(5\rho^{(1)}(0)^{-1}\rho^{(3)}(0)+9\rho^{(1)}(0)^{-2}\rho^{(2)}(0)^2\right)\|\bm{t}\|^4+o(\|\bm{t}\|^4),
\end{aligned}
\end{equation}
and thus
\begin{equation}\label{kstarinv}
\begin{aligned}
&~~~~\|\bm{t}\|^2(1-k_*^2(\bm{t}))^{-1}\\
&=-\left(6\rho^{(1)}(0)^{-1}\rho^{(2)}(0)+\left(5\rho^{(1)}(0)^{-1}\rho^{(3)}(0)+9\rho^{(1)}(0)^{-2}\rho^{(2)}(0)^2\right)\|\bm{t}\|^2+o(\|\bm{t}\|^2)\right)^{-1}\\
&=-\left(6\rho^{(1)}(0)^{-1}\rho^{(2)}(0)\right)^{-1}\\
&~~~~+\left(6\rho^{(1)}(0)^{-1}\rho^{(2)}(0)\right)^{-2}\left(5\rho^{(1)}(0)^{-1}\rho^{(3)}(0)+9\rho^{(1)}(0)^{-2}\rho^{(2)}(0)^2\right)\|\bm{t}\|^2+o(\|\bm{t}\|^2)\\
&=-\frac{1}{6}\rho^{(1)}(0)\rho^{(2)}(0)^{-1}+\left(\frac{5}{36}\rho^{(1)}(0)\rho^{(2)}(0)^{-2}\rho^{(3)}(0)+\frac{1}{4}\right)\|\bm{t}\|^2+o(\|\bm{t}\|^2).
\end{aligned}    
\end{equation}
Then we have
\begin{equation}\label{k4t}
\begin{aligned}
&~~~~\|\bm{t}\|^2k_4(\bm{t})\\
&=\|\bm{t}\|^2k_2(\bm{t})\frac{k_1(\bm{t})+k_*(\bm{t})}{1-k_*^2(\bm{t})}\\
&=k_2(\bm{t})(k_1(\bm{t})+k_*(\bm{t}))\left(-\frac{1}{6}\rho^{(1)}(0)\rho^{(2)}(0)^{-1}+\left(\frac{5}{36}\rho^{(1)}(0)\rho^{(2)}(0)^{-2}\rho^{(3)}(0)+\frac{1}{4}\right)\|\bm{t}\|^2+o(\|\bm{t}\|^2)\right)\\
&=2\rho^{(1)}(0)^{-1}\left(\rho^{(2)}(0)+\rho^{(3)}(0)\|\bm{t}\|^2+o(\|\bm{t}\|^2)\right)\\
&~~~~ \left(2+4\rho^{(1)}(0)^{-1}\rho^{(2)}(0)\|\bm{t}\|^2+3\rho^{(1)}(0)^{-1}\rho^{(3)}(0)\|\bm{t}\|^4+o(\|\bm{t}\|^4)\right)\\
&~~~~\left(-\frac{1}{6}\rho^{(1)}(0)\rho^{(2)}(0)^{-1}+\left(\frac{5}{36}\rho^{(1)}(0)\rho^{(2)}(0)^{-2}\rho^{(3)}(0)+\frac{1}{4}\right)\|\bm{t}\|^2+o(\|\bm{t}\|^2)\right)\\
&=-\frac{2}{3}+\left(-\frac{1}{3}\rho^{(1)}(0)^{-1}\rho^{(2)}(0)-\frac{1}{9}\rho^{(2)}(0)^{-1}\rho^{(3)}(0)\right)\|\bm{t}\|^2+o(\|\bm{t}\|^2).
\end{aligned}
\end{equation}
Therefore, by (\ref{1exp}) and (\ref{k4t}),
\begin{equation}\label{k4exp}
\begin{aligned}
&~~~~\frac{\|\bm{t}\|^4k_4(\bm{t})}{2\rho^{(1)}(0)(1-k_1^2(\bm{t}))}\\
&=\left(a_0+b_0\|\bm{t}\|^2+o(\|\bm{t}\|^2)\right)\\
&~~~~\left(-\frac{2}{3}+\left(-\frac{1}{3}\rho^{(1)}(0)^{-1}\rho^{(2)}(0)-\frac{1}{9}\rho^{(2)}(0)^{-1}\rho^{(3)}(0)\right)\|\bm{t}\|^2+o(\|\bm{t}\|^2)\right)\\
&=:a_0'+b_0'\|\bm{t}\|^{2}+o(\|\bm{t}\|^2),
\end{aligned}
\end{equation}
where
$$
\begin{aligned}
a_0'=-\frac{2}{3}a_0=\frac{1}{6}\rho^{(2)}(0)^{-1}
\end{aligned}
$$
and
$$
\begin{aligned}
b_0'
=a_0\left(-\frac{1}{3}\rho^{(1)}(0)^{-1}\rho^{(2)}(0)-\frac{1}{9}\rho^{(2)}(0)^{-1}\rho^{(3)}(0)\right)-\frac{2}{3}b_0
=-\frac{1}{18}\rho^{(2)}(0)^{-2}\rho^{(3)}(0).
\end{aligned}
$$

For (iii), by (\ref{1exp}),
\begin{equation}\label{k1exp}
\begin{aligned}
&~~~~\frac{\|\bm{t}\|^2k_1(\bm{t})}{2\rho^{(1)}(0)(1-k_1^2(\bm{t}))}\\
&=\left(1+\rho^{(1)}(0)^{-1}\rho^{(2)}(0)\|\bm{t}\|^2+o(\|\bm{t}\|^2)\right)\left(a_0+b_0\|\bm{t}\|^2+o(\|\bm{t}\|^2)\right)\\
&=a_0+\left(a_0\rho^{(1)}(0)^{-1}\rho^{(2)}(0)+b_0\right)\|\bm{t}\|^2+o(\|\bm{t}\|^2)\\
&=:a_0''+b_0''\|\bm{t}\|^2+o(\|\bm{t}\|^2),
\end{aligned}    
\end{equation}
where $$a_0''=a_0=-\frac{1}{4}\rho^{(2)}(0)^{-1}$$
and
$$
\begin{aligned}
b_0''
&=-\frac{1}{4}\rho^{(2)}(0)^{-1}\rho^{(1)}(0)^{-1}\rho^{(2)}(0)+\frac{1}{8}\left(\rho^{(1)}(0)^{-1}+\rho^{(2)}(0)^{-2}\rho^{(3)}(0)\right)\\
&=\frac{1}{8}\left(-\rho^{(1)}(0)^{-1}+\rho^{(2)}(0)^{-2}\rho^{(3)}(0)\right).
\end{aligned}
$$

For (iv), by (\ref{Eq:k4k5}), (\ref{1exp}) and (\ref{k4exp}),
\begin{equation}\label{k5exp}
\begin{aligned}
\frac{\|\bm{t}\|^4k_5(\bm{t})}{2\rho^{(1)}(0)(1-k_1^2(\bm{t}))}
&=\frac{\|\bm{t}\|^4k_4(\bm{t})}{2\rho^{(1)}(0)(1-k_1^2(\bm{t}))}+\frac{\|\bm{t}\|^4(k_5(\bm{t})-k_4(\bm{t}))}{2\rho^{(1)}(0)(1-k_1^2(\bm{t}))}\\
&=\frac{\|\bm{t}\|^4k_4(\bm{t})}{2\rho^{(1)}(0)(1-k_1^2(\bm{t}))}+o(\|\bm{t}\|^2)\\
&=a_0'+b_0'\|\bm{t}\|^2+o(\|\bm{t}\|^2).
\end{aligned}
\end{equation}

\subsection{Asymptotic Expansions of the Main Part}
In this part, we would like to get the asymptotic expansion of 
\begin{equation}\label{mainpart}
\begin{aligned}
&~~~~\bm{\Sigma}(\bm{t})[1:(L-2),1:(L-2)]\\
&=\bm{G}_{22}(\bm{0})+\frac{1}{2\rho^{(1)}(0)(1-k_1^2(\bm{t}))}\bm{G}_{21}(\bm{t})\bm{G}_{21}^T(\bm{t})+\frac{k_4(\bm{t})}{2\rho^{(1)}(0)(1-k_1^2(\bm{t}))}\bm{G}_{21}(\bm{t})\bm{t}\bm{t}^T\bm{G}_{21}^T(\bm{t})
\end{aligned}
\end{equation} 
as $\|\bm{t}\|\to 0$. To this end, we still need to expand
\begin{enumerate}[label=(\roman*)]
    \item $\bm{G}_{21}(\bm{t})_{\left(i_1+j_1(j_1-1)/2\right)}\left(\bm{G}_{21}(\bm{t})_{\left(i_2+j_2(j_2-1)/2\right)}\right)^T$
    
    \item $\bm{G}_{21}(\bm{t})\bm{t}\bm{t}^T\bm{G}_{21}^T(\bm{t})\left[i_1+j_1(j_1-1)/2,i_2+j_2(j_2-1)/2\right]$

\end{enumerate}
for any integers $1\le i_1\le j_1\le N$ and $1\le i_2\le j_2\le N$.

Note that $\bm{G}_{21}(\bm{t})=-\Cov[\triangledown^2X(\bm{t}),\triangledown X(\bm{0})]$ can be written in the form:
$$
\bm{G}_{21}(\bm{t})=
\begin{pmatrix}
\bm{B}_{11}(\bm{t}) & \bm{B}_{12}(\bm{t}) & \cdots &  \bm{B}_{1N}(\bm{t})\\ 
\bm{B}_{21}(\bm{t}) & \bm{B}_{22}(\bm{t}) & \cdots &  \bm{B}_{2N}(\bm{t})\\ 
\vdots         & \vdots         & \cdots &  \vdots\\
\bm{B}_{N1}(\bm{t}) & \bm{B}_{N2}(\bm{t}) & \cdots &  \bm{B}_{NN}(\bm{t})
\end{pmatrix},
$$
where by Lemma \ref{Lem:MSD} and (\ref{R3}),
$$
\begin{aligned}
\bm{B}_{jk}(\bm{t}):=
\begin{pmatrix}
R_{1jk}(\bm{t}) \\ R_{2jk}(\bm{t}) \\ \vdots \\ R_{jjk}(\bm{t}) \\
\end{pmatrix}
&=4\rho^{(2)}(\|\bm{t}\|^2)\left(t_k
\begin{pmatrix}
\delta_{1,j} \\ \delta_{2,j} \\ \vdots \\ \delta_{j,j} \\
\end{pmatrix}+
\begin{pmatrix}
t_1 \\ t_2 \\ \vdots \\ t_j \\
\end{pmatrix}\delta_{j,k}+t_j
\begin{pmatrix}
\delta_{1,k} \\ \delta_{2,k} \\ \vdots \\ \delta_{j,k} \\
\end{pmatrix}
\right)+8\rho^{(3)}(\|\bm{t}\|^2)t_jt_k
\begin{pmatrix}
t_1 \\ t_2 \\ \vdots \\ t_j \\
\end{pmatrix}
\end{aligned}
$$
for any integers $1\le j,k\le N$.
For any integers $1\le i\le j\le N$ and $1\le k\le N$,
\begin{equation}\label{GBR}
\bm{G}_{21}(\bm{t})\left[i+\frac{j(j-1)}{2},k\right]=\bm{B}_{jk}(\bm{t})[i]
=R_{ijk}(\bm{t}).
\end{equation}
Let $\bm{v}_k=(\delta_{1,k},\dots,\delta_{N,k})^T$, $k=1,2,\dots,N$. Then  by (\ref{R3}) and (\ref{GBR}),
\begin{equation}\label{G21}
\begin{aligned}
\left(\bm{G}_{21}(\bm{t})_{\left(i+\frac{j(j-1)}{2}\right)}\right)^T
=
\begin{pmatrix}
R_{ij1}(\bm{t}) \\ R_{ij2}(\bm{t}) \\ \vdots \\ R_{ijN}(\bm{t}) \\
\end{pmatrix}
&=4\rho^{(2)}(\|\bm{t}\|^2)(\delta_{i,j}\bm{t}+t_i\bm{v}_j+t_j\bm{v}_i)+8\rho^{(3)}(\|\bm{t}\|^2)t_it_j\bm{t}\\
&=4\left(\rho^{(2)}(0)+\rho^{(3)}(0)\|\bm{t}\|^2\right)(\delta_{i,j}\bm{t}+t_i\bm{v}_j+t_j\bm{v}_i)\\
&~~~~+8\rho^{(3)}(0)t_it_j\bm{t}+o(\|\bm{t}\|^3).
\end{aligned}
\end{equation}
Note that for any integers $1\le i,j\le N$, since
$\bm{v}_j^T\bm{t}=t_j\text{ and }\bm{v}_i^T\bm{v}_j=\delta_{i,j},$
we have
$$(\delta_{i,j}\bm{t}^T+t_i\bm{v}_j^T+t_j\bm{v}_i^T)\bm{t}=\delta_{i,j}\|\bm{t}\|^2+2t_it_j.$$
Then for any integers $1\le i_1\le j_1\le N$ and $1\le i_2\le j_2\le N$, we have
\begin{equation}\label{G21G21}
\begin{aligned}
&~~~~\bm{G}_{21}(\bm{t})_{\left(i_1+j_1(j_1-1)/2\right)}\left(\bm{G}_{21}(\bm{t})_{\left(i_2+j_2(j_2-1)/2\right)}\right)^T\\
&=16\left(\rho^{(2)}(0)+\rho^{(3)}(0)\|\bm{t}\|^2\right)^2(\delta_{i_1,j_1}\bm{t}^T+t_{i_1}\bm{v}_{j_1}^T+t_{j_1}\bm{v}_{i_1}^T)(\delta_{i_2,j_2}\bm{t}+t_{i_2}\bm{v}_{j_2}+t_{j_2}\bm{v}_{i_2})\\
&~~~~+32\rho^{(3)}(0)t_{i_2}t_{j_2}\left(\rho^{(2)}(0)+\rho^{(3)}(0)\|\bm{t}\|^2\right)(\delta_{i_1,j_1}\bm{t}^T+t_{i_1}\bm{v}_{j_1}^T+t_{j_1}\bm{v}_{i_1}^T)\bm{t}\\
&~~~~+32\rho^{(3)}(0)t_{i_1}t_{j_1}\left(\rho^{(2)}(0)+\rho^{(3)}(0)\|\bm{t}\|^2\right)(\delta_{i_2,j_2}\bm{t}^T+t_{i_2}\bm{v}_{j_2}^T+t_{j_2}\bm{v}_{i_2}^T)\bm{t}+o(\|\bm{t}\|^4)\\
&=16\left(\rho^{(2)}(0)^2+2\rho^{(2)}(0)\rho^{(3)}(0)\|\bm{t}\|^2\right)\left(\delta_{i_1,j_1}\|\bm{t}\|^2+2t_{i_1}t_{j_1}\right)\delta_{i_2,j_2}\\
&~~~~+16\left(\rho^{(2)}(0)^2+2\rho^{(2)}(0)\rho^{(3)}(0)\|\bm{t}\|^2\right)\left(\delta_{i_1,j_1}t_{i_2}t_{j_2}+\delta_{j_1,j_2}t_{i_1}t_{i_2}+\delta_{i_1,j_2}t_{j_1}t_{i_2}\right)\\
&~~~~+16\left(\rho^{(2)}(0)^2+2\rho^{(2)}(0)\rho^{(3)}(0)\|\bm{t}\|^2\right)\left(\delta_{i_1,j_1}t_{i_2}t_{j_2}+\delta_{i_2,j_1}t_{i_1}t_{j_2}+\delta_{i_1,i_2}t_{j_1}t_{j_2}\right)\\
&~~~~+32\rho^{(3)}(0)\left(\rho^{(2)}(0)+\rho^{(3)}(0)\|\bm{t}\|^2\right)\left(\delta_{i_1,j_1}t_{i_2}t_{j_2}\|\bm{t}\|^2+\delta_{i_2,j_2}t_{i_1}t_{j_1}\|\bm{t}\|^2+4t_{i_1}t_{j_1}t_{i_2}t_{j_2}\right)\\
&~~~~+o(\|\bm{t}\|^4)\\
&=:a_1\|\bm{t}\|^2+b_1\|\bm{t}\|^4+o(\|\bm{t}\|^4),
\end{aligned}
\end{equation}
where 
$$
\begin{aligned}
a_1
&=16\rho^{(2)}(0)^2\big(\delta_{i_1,j_1}\delta_{i_2,j_2}+2\delta_{i_2,j_2}u_{i_1}u_{j_1}+2\delta_{i_1,j_1}u_{i_2}u_{j_2}\\
&~~~~+\delta_{j_1,j_2}u_{i_1}u_{i_2}+\delta_{i_1,j_2}u_{j_1}u_{i_2}+\delta_{i_2,j_1}u_{i_1}u_{j_2}+\delta_{i_1,i_2}u_{j_1}u_{j_2}\big),
\end{aligned}
$$
$$
\begin{aligned}
b_1&=2\rho^{(2)}(0)^{-1}\rho^{(3)}(0)a_1+32\rho^{(3)}(0)\rho^{(2)}(0)\left(\delta_{i_1,j_1}u_{i_2}u_{j_2}+\delta_{i_2,j_2}u_{i_1}u_{j_1}+4u_{i_1}u_{j_1}u_{i_2}u_{j_2}\right)\\
&=32\rho^{(3)}(0)\rho^{(2)}(0)\big(\delta_{i_1,j_1}\delta_{i_2,j_2}+3\delta_{i_2,j_2}u_{i_1}u_{j_1}+3\delta_{i_1,j_1}u_{i_2}u_{j_2}\\
&~~~~+\delta_{j_1,j_2}u_{i_1}u_{i_2}+\delta_{i_1,j_2}u_{j_1}u_{i_2}+\delta_{i_2,j_1}u_{i_1}u_{j_2}+\delta_{i_1,i_2}u_{j_1}u_{j_2}+4u_{i_1}u_{j_1}u_{i_2}u_{j_2}\big),
\end{aligned}
$$
and we recall that $\bm{u}=(u_1,\dots,u_N)^T\in\mathbb{S}^{N-1}$ and $\bm{t}=\|\bm{t}\|\bm{u}$ in Lemma \ref{Lem:Sigma}.

By (\ref{G21}), we also have
\begin{equation}\label{G21t}
\begin{aligned}
&~~~~\bm{G}_{21}(\bm{t})_{\left(i_1+j_1(j_1-1)/2\right)}\bm{t}=\left(\bm{G}_{21}(\bm{t})\bm{t}\right)_{\left(i_1+j_1(j_1-1)/2\right)}\\
&=\left(4\rho^{(2)}(\|\bm{t}\|^2)(\delta_{i,j}\bm{t}^T+t_i\bm{v}_j^T+t_j\bm{v}_i^T)+8\rho^{(3)}(\|\bm{t}\|^2)t_it_j\bm{t}^T\right)\bm{t}\\
&=4\rho^{(2)}(\|\bm{t}\|^2)(\delta_{i,j}\|\bm{t}\|^2+2t_it_j)+8\rho^{(3)}(\|\bm{t}\|^2)t_it_j\|\bm{t}\|^2\\
&=4\left(\rho^{(2)}(0)+\rho^{(3)}(0)\|\bm{t}\|^2\right)(\delta_{i,j}\|\bm{t}\|^2+2t_it_j)+8\rho^{(3)}(0)t_it_j\|\bm{t}\|^2+o(\|\bm{t}\|^4),
\end{aligned}
\end{equation}
and thus
\begin{equation}\label{G21ttG21}
\begin{aligned}
&~~~~\bm{G}_{21}(\bm{t})\bm{t}\bm{t}^T\bm{G}_{21}^T(\bm{t})\left[i_1+j_1(j_1-1)/2,i_2+j_2(j_2-1)/2\right]\\
&=16\left(\rho^{(2)}(0)+\rho^{(3)}(0)\|\bm{t}\|^2\right)^2(\delta_{i_1,j_1}\|\bm{t}\|^2+2t_{i_1}t_{j_1})(\delta_{i_2,j_2}\|\bm{t}\|^2+2t_{i_2}t_{j_2})\\
&~~~~+4\left(\rho^{(2)}(0)+\rho^{(3)}(0)\|\bm{t}\|^2\right)(\delta_{i_1,j_1}\|\bm{t}\|^2+2t_{i_1}t_{j_1})8\rho^{(3)}(0)t_{i_2}t_{j_2}\|\bm{t}\|^2\\
&~~~~+4\left(\rho^{(2)}(0)+\rho^{(3)}(0)\|\bm{t}\|^2\right)(\delta_{i_2,j_2}\|\bm{t}\|^2+2t_{i_2}t_{j_2})8\rho^{(3)}(0)t_{i_1}t_{j_1}\|\bm{t}\|^2\\
&~~~~+8\rho^{(3)}(0)t_{i_1}t_{j_1}\|\bm{t}\|^28\rho^{(3)}(0)t_{i_2}t_{j_2}\|\bm{t}\|^2+o(\|\bm{t}\|^6)\\
&=16\left(\rho^{(2)}(0)^2+2\rho^{(2)}(0)\rho^{(3)}(0)\|\bm{t}\|^2+\rho^{(3)}(0)^2\|\bm{t}\|^4\right)(\delta_{i_1,j_1}\|\bm{t}\|^2+2t_{i_1}t_{j_1})(\delta_{i_2,j_2}\|\bm{t}\|^2+2t_{i_2}t_{j_2})\\
&~~~~+32\rho^{(3)}(0)\left(\rho^{(2)}(0)+\rho^{(3)}(0)\|\bm{t}\|^2\right)(\delta_{i_1,j_1}\|\bm{t}\|^2+2t_{i_1}t_{j_1})t_{i_2}t_{j_2}\|\bm{t}\|^2\\
&~~~~+32\rho^{(3)}(0)\left(\rho^{(2)}(0)+\rho^{(3)}(0)\|\bm{t}\|^2\right)(\delta_{i_2,j_2}\|\bm{t}\|^2+2t_{i_2}t_{j_2})t_{i_1}t_{j_1}\|\bm{t}\|^2+o(\|\bm{t}\|^6)\\
&=:a_2\|\bm{t}\|^4+b_2\|\bm{t}\|^6+o(\|\bm{t}\|^6),
\end{aligned}
\end{equation}
where 
$$
\begin{aligned}
a_2&=16\rho^{(2)}(0)^2(\delta_{i_1,j_1}+2u_{i_1}u_{j_1})(\delta_{i_2,j_2}+2u_{i_2}u_{j_2})
\end{aligned}
$$
and
$$
\begin{aligned}
b_2
&=32\rho^{(2)}(0)\rho^{(3)}(0)(\delta_{i_1,j_1}+2u_{i_1}u_{j_1})(\delta_{i_2,j_2}+2u_{i_2}u_{j_2})\\
&~~~~+32\rho^{(2)}(0)\rho^{(3)}(0)(\delta_{i_1,j_1}u_{i_2}u_{j_2}+\delta_{i_2,j_2}u_{i_1}u_{j_1}+4u_{i_1}u_{j_1}u_{i_2}u_{j_2})\\
&=32\rho^{(2)}(0)\rho^{(3)}(0)\left(\delta_{i_1,j_1}\delta_{i_2,j_2}+3\delta_{i_1,j_1}u_{i_2}u_{j_2}+3\delta_{i_2,j_2}u_{i_1}u_{j_1}+8u_{i_1}u_{j_1}u_{i_2}u_{j_2}\right).
\end{aligned}
$$
By (\ref{R4}),
\begin{equation}\label{G220}
\begin{aligned}
\bm{G}_{22}(\bm{0})\left[i_1+j_1(j_1-1)/2, i_2+j_2(j_2-1)/2\right]&=
R_{i_1j_1i_2j_2}(\bm{0})\\
&=4\rho^{(2)}(0)\left(\delta_{i_1,j_1}\delta_{i_2,j_2}+\delta_{i_2,j_1}\delta_{i_1,j_2}+\delta_{i_1,i_2}\delta_{j_1,j_2}\right).
\end{aligned}
\end{equation}
Finally, by (\ref{1exp}), (\ref{k4exp}), (\ref{mainpart}), (\ref{G21G21}), (\ref{G21ttG21}) and (\ref{G220})  we have for integers $1\le i_1\le j_1\le N$ and $1\le i_2\le j_2\le N$,
\begin{equation}\notag
\begin{aligned}
&~~~~\bm{\Sigma}(\bm{t})\left[i_1+j_1(j_1-1)/2, i_2+j_2(j_2-1)/2\right]\\
&=4\rho^{(2)}(0)\left(\delta_{i_1,j_1}\delta_{i_2,j_2}+\delta_{i_2,j_1}\delta_{i_1,j_2}+\delta_{i_1,i_2}\delta_{j_1,j_2}\right)\\
&~~~~+\|\bm{t}\|^{-2}\left(a_0+b_0\|\bm{t}\|^{2}+o(\|\bm{t}\|^{2})\right)\left(a_1\|\bm{t}\|^2+b_1\|\bm{t}\|^4+o(\|\bm{t}\|^4)\right)\\
&~~~~+\|\bm{t}\|^{-4}\left(a_0'+b_0'\|\bm{t}\|^{2}+o(\|\bm{t}\|^{2})\right)\left(a_2\|\bm{t}\|^4+b_2\|\bm{t}\|^6+o(\|\bm{t}\|^6)\right)\\
&=4\rho^{(2)}(0)\left(\delta_{i_1,j_1}\delta_{i_2,j_2}+\delta_{i_2,j_1}\delta_{i_1,j_2}+\delta_{i_1,i_2}\delta_{j_1,j_2}\right)\\
&~~~~+\left(a_0+b_0\|\bm{t}\|^2+o(\|\bm{t}\|^2)\right)\left(a_1+b_1\|\bm{t}\|^2+o(\|\bm{t}\|^2)\right)\\
&~~~~+\left(a_0'+b_0'\|\bm{t}\|^2+o(\|\bm{t}\|^2)\right)\left(a_2+b_2\|\bm{t}\|^2+o(\|\bm{t}\|^2)\right)\\
&=:a_3+b_3\|\bm{t}\|^2+o(\|\bm{t}\|^2),
\end{aligned}
\end{equation}
where
$$
\begin{aligned}
a_3&=4\rho^{(2)}(0)\left(\delta_{i_1,j_1}\delta_{i_2,j_2}+\delta_{i_2,j_1}\delta_{i_1,j_2}+\delta_{i_1,i_2}\delta_{j_1,j_2}\right)+a_0a_1+a_0'a_2\\
&=4\rho^{(2)}(0)(\delta_{i_2,j_1}\delta_{i_1,j_2}+\delta_{i_1,i_2}\delta_{j_1,j_2}-\delta_{j_1,j_2}u_{i_1}u_{i_2}-\delta_{i_1,j_2}u_{j_1}u_{i_2}\\
&~~~~-\delta_{i_2,j_1}u_{i_1}u_{j_2}-\delta_{i_1,i_2}u_{j_1}u_{j_2}+2u_{i_1}u_{j_1}u_{i_2}u_{j_2})\\
&~~~~+\frac{8}{3}\rho^{(2)}(0)(\delta_{i_1,j_1}-u_{i_1}u_{j_1})(\delta_{i_2,j_2}-u_{i_2}u_{j_2}),
\end{aligned}
$$
and by letting $\alpha:=\rho^{(1)}(0)^{-1}\rho^{(2)}(0)^2$ and $\beta:=\rho^{(3)}(0)$,
$$
\begin{aligned}
b_3
&=a_0b_1+b_0a_1+a_0'b_2+b_0'a_2\\
&=\left(2\alpha-\frac{14}{9}\beta\right)\delta_{i_1,j_1}\delta_{i_2,j_2}
+\left(4\alpha-\frac{52}{9}\beta\right)\delta_{i_2,j_2}u_{i_1}u_{j_1}
+\left(4\alpha-\frac{52}{9}\beta\right)\delta_{i_1,j_1}u_{i_2}u_{j_2} \\   
&~~~~
+ \left(2\alpha-6\beta\right)\delta_{j_1,j_2}u_{i_1}u_{i_2}
+ \left(2\alpha-6\beta\right)\delta_{i_1,j_2}u_{j_1}u_{i_2}
+ \left(2\alpha-6\beta\right)\delta_{i_2,j_1}u_{i_1}u_{j_2}\\
&~~~~
+ \left(2\alpha-6\beta\right)\delta_{i_1,i_2}u_{j_1}u_{j_2}
+ \frac{64}{9}\beta u_{i_1}u_{j_1}u_{i_2}u_{j_2},
\end{aligned}$$
which can be verified by lengthy but straightforward calculation.

\subsection{Asymptotic Expansions of the Side Parts}
In this part, we would like to get the asymptotic expansion of 
\begin{equation}\label{Side1}
\begin{aligned}
&~~~~\bm{\Sigma}(\bm{t})[1:(L-2),L-1]\\
&=\bm{G}_{20}(\bm{0})+\frac{1}{2\rho^{(1)}(0)(1-k_1^2(\bm{t}))}\bm{G}_{21}(\bm{t})\bm{G}_{01}^T(\bm{t})
+\frac{k_4(\bm{t})}{2\rho^{(1)}(0)(1-k_1^2(\bm{t}))}\bm{G}_{21}(\bm{t})\bm{t}\bm{t}^T\bm{G}_{01}^T(\bm{t})
\end{aligned}
\end{equation}
and
\begin{equation}\label{Side2}
\begin{aligned}
&~~~~\bm{\Sigma}(\bm{t})[1:(L-2),L]\\
&=\bm{G}_{20}(\bm{t})+\frac{k_1(\bm{t})}{2\rho^{(1)}(0)(1-k_1^2(\bm{t}))}\bm{G}_{21}(\bm{t})\bm{G}_{01}^T(\bm{t})
+\frac{k_5(\bm{t})}{2\rho^{(1)}(0)(1-k_1^2(\bm{t}))}\bm{G}_{21}(\bm{t})\bm{t}\bm{t}^T\bm{G}_{01}^T(\bm{t})
\end{aligned}
\end{equation}
as $\|\bm{t}\|\to 0$.
To this end, we still need to expand
\begin{enumerate}[label=(\roman*)]
    \item $\bm{G}_{20}(\bm{t})[i+j(j-1)/2]$,
    \item $(\bm{G}_{21}(\bm{t})\bm{G}_{01}^T(\bm{t}))[i+j(j-1)/2]$,
    \item $(\bm{G}_{21}(\bm{t})\bm{t}\bm{t}^T\bm{G}_{01}^T(\bm{t}))[i+j(j-1)/2]$
\end{enumerate}
for any integers  $1\le i\le j\le N$.

By (\ref{R2}), we have
\begin{equation}\label{G200}
\begin{aligned}
\bm{G}_{20}(\bm{0})[i+j(j-1)/2]=R_{ij}(\bm{0})=2\rho^{(1)}(0)\delta_{i,j}
\end{aligned}
\end{equation}
and
\begin{equation}\label{G20}
\begin{aligned}
&~~~~\bm{G}_{20}(\bm{t})[i+j(j-1)/2]\\
&=R_{ij}(\bm{t})\\
&=2\rho^{(1)}(\|\bm{t}\|^2)\delta_{i,j}+4t_it_j\rho^{(2)}(\|\bm{t}\|^2)\\
&=2\delta_{i,j}\left(\rho^{(1)}(0)+\rho^{(2)}(0)\|\bm{t}\|^2+o(\|\bm{t}\|^2)\right)+4u_iu_j\|\bm{t}\|^2\left(\rho^{(2)}(0)+\rho^{(3)}(0)\|\bm{t}\|^2+o(\|\bm{t}\|^2)\right)\\
&=2\delta_{i,j}\rho^{(1)}(0)+\left(2\delta_{i,j}+4u_iu_j\right)\rho^{(2)}(0)\|\bm{t}\|^2+o(\|\bm{t}\|^2).
\end{aligned}
\end{equation}
By (\ref{R1}),
\begin{equation}\label{G01}
\bm{G}_{01}(\bm{t})=(R_1(\bm{t}),\dots,R_N(\bm{t}))=2\rho^{(1)}(\|\bm{t}\|^2)\bm{t}^T.
\end{equation}
Then by (\ref{G21t}) and (\ref{G01}),
\begin{equation}\label{G21G01}
\begin{aligned}
&~~~~\left(\bm{G}_{21}(\bm{t})\bm{G}_{01}^T(\bm{t})\right)[i+j(j-1)/2]\\
&=2\rho^{(1)}(\|\bm{t}\|^2)\bm{G}_{21}(\bm{t})_{\left(i+j(j-1)/2\right)}\bm{t}\\
&=2\rho^{(1)}(\|\bm{t}\|^2)\left(4\rho^{(2)}(\|\bm{t}\|^2)(\delta_{i,j}\|\bm{t}\|^2+2t_it_j)+8\rho^{(3)}(\|\bm{t}\|^2)t_it_j\|\bm{t}\|^2\right)\\
&=2\left(\rho^{(1)}(0)+\rho^{(2)}(0)\|\bm{t}\|^2+o(\|\bm{t}\|^2)\right)\left(4\rho^{(2)}(\|\bm{t}\|^2)(\delta_{i,j}\|\bm{t}\|^2+2t_it_j)+8\rho^{(3)}(\|\bm{t}\|^2)t_it_j\|\bm{t}\|^2\right)\\
&=8\left(\rho^{(1)}(0)\rho^{(2)}(0)+\left(\rho^{(2)}(0)^2+\rho^{(1)}(0)\rho^{(3)}(0)\right)\|\bm{t}\|^2\right)(\delta_{i,j}+2u_iu_j)\|\bm{t}\|^2\\
&~~~~+16\rho^{(1)}(0)\rho^{(3)}(0)u_iu_j\|\bm{t}\|^4+o(\|\bm{t}\|^4)\\
&=:\widetilde{a}_1\|\bm{t}\|^2+\widetilde{b}_1\|\bm{t}\|^4+o(\|\bm{t}\|^4),
\end{aligned}
\end{equation}
where 
$$\widetilde{a}_1=8\rho^{(1)}(0)\rho^{(2)}(0)(\delta_{i,j}+2u_iu_j)$$
and
$$\widetilde{b}_1=8\left(\rho^{(2)}(0)^2+\rho^{(1)}(0)\rho^{(3)}(0)\right)(\delta_{i,j}+2u_iu_j)+16\rho^{(1)}(0)\rho^{(3)}(0)u_iu_j.$$
By (\ref{G01}),
\begin{equation}\label{tG01}
\bm{t}^T\bm{G}_{01}^T(\bm{t})=2\rho^{(1)}(\|\bm{t}\|^2)\|\bm{t}\|^2.
\end{equation}
Then we have
\begin{equation}\label{G21ttG01}
\begin{aligned}
\left(\bm{G}_{21}(\bm{t})\bm{t}\bm{t}^T\bm{G}_{01}^T(\bm{t})\right)[i+j(j-1)/2]
&=2\rho^{(1)}(\|\bm{t}\|^2)\bm{G}_{21}(\bm{t})_{\left(i+j(j-1)/2\right)}\bm{t}\|\bm{t}\|^2\\
&=:\widetilde{a}_2\|\bm{t}\|^4+\widetilde{b}_2\|\bm{t}\|^6+o(\|\bm{t}\|^6),
\end{aligned}
\end{equation}
where by (\ref{G21G01}),
$$\widetilde{a}_2=\widetilde{a}_1~~\text{and}~~\widetilde{b}_2=\widetilde{b}_1.$$

Now combining (\ref{1exp}), (\ref{k4exp}), (\ref{Side1}),  (\ref{G200}), (\ref{G21G01}) and (\ref{G21ttG01}) implies for any integers $1\le i\le j\le N$,
$$
\begin{aligned}
&~~~~\bm{\Sigma}(\bm{t})[i+j(j-1)/2,L-1]\\
&=2\rho^{(1)}(0)\delta_{i,j}+\left(a_0+b_0\|\bm{t}\|^2+o(\|\bm{t}\|^2)\right)\left(\widetilde{a}_1+\widetilde{b}_1\|\bm{t}\|^2+o(\|\bm{t}\|^2)\right)\\
&~~~~+\left(a_0'+b_0'\|\bm{t}\|^{2}+o(\|\bm{t}\|^2)\right)\left(\widetilde{a}_2+\widetilde{b}_2\|\bm{t}\|^2+o(\|\bm{t}\|^2)\right)\\
&=:\widetilde{a}_3+\widetilde{b}_3\|\bm{t}\|^2+o(\|\bm{t}\|^2),
\end{aligned}
$$
where 
$$
\begin{aligned}
\widetilde{a}_3
&=2\rho^{(1)}(0)\delta_{i,j}+a_0\widetilde{a}_1+a_0'\widetilde{a}_2\\
&=2\rho^{(1)}(0)\delta_{i,j}+8\rho^{(1)}(0)\rho^{(2)}(0)(\delta_{i,j}+2u_iu_j)\left(-\frac{1}{4}\rho^{(2)}(0)^{-1}+\frac{1}{6}\rho^{(2)}(0)^{-1}\right)\\
&=\frac{4}{3}\rho^{(1)}(0)(\delta_{i,j}-u_iu_j),
\end{aligned}
$$
and by $\widetilde{a}_1=\widetilde{a}_2$ and $\widetilde{b}_1=\widetilde{b}_2$,
$$
\begin{aligned}
\widetilde{b}_3
&=a_0\widetilde{b}_1+b_0\widetilde{a}_1+a_0'\widetilde{b}_2+b_0'\widetilde{a}_2\\
&=\left(-\frac{1}{4}\rho^{(2)}(0)^{-1}+\frac{1}{6}\rho^{(2)}(0)^{-1}\right)\\
&~~~~\left(8\left(\rho^{(2)}(0)^2+\rho^{(1)}(0)\rho^{(3)}(0)\right)(\delta_{i,j}+2u_iu_j)+16\rho^{(1)}(0)\rho^{(3)}(0)u_iu_j
\right)\\
&~~~~+\left(\frac{1}{8}\left(\rho^{(1)}(0)^{-1}+\rho^{(2)}(0)^{-2}\rho^{(3)}(0)\right)-\frac{1}{18}\rho^{(2)}(0)^{-2}\rho^{(3)}(0)
\right)\\
&~~~~\left(8\rho^{(1)}(0)\rho^{(2)}(0)(\delta_{i,j}+2u_iu_j)\right)\\
&=\left(\frac{1}{3}\alpha'-\frac{1}{9}\beta'\right)\delta_{i,j}+\left(\frac{2}{3}\alpha'-\frac{14}{9}\beta'\right) u_iu_j,
\end{aligned}
$$
where $\alpha':=\rho^{(2)}(0)$ and $\beta':=\rho^{(1)}(0)\rho^{(2)}(0)^{-1}\rho^{(3)}(0)$.
Also combining (\ref{k1exp}), (\ref{k5exp}), (\ref{Side2}),  (\ref{G20}), (\ref{G21G01}) and (\ref{G21ttG01}) implies for any integers $1\le i\le j\le N$,
$$
\begin{aligned}
\bm{\Sigma}(\bm{t})[i+j(j-1)/2,L]
&=2\rho^{(1)}(0)\delta_{i,j}+\left(2\delta_{i,j}+4u_iu_j\right)\rho^{(2)}(0)\|\bm{t}\|^2+o(\|\bm{t}\|^2)\\
&~~~~+\left(a_0''+b_0''\|\bm{t}\|^2+o(\|\bm{t}\|^2)\right)\left(\widetilde{a}_1+\widetilde{b}_1\|\bm{t}\|^2+o(\|\bm{t}\|^2)\right)\\
&~~~~+\left(a_0'+b_0'\|\bm{t}\|^2+o(\|\bm{t}\|^2)\right)\left(\widetilde{a}_2+\widetilde{b}_2\|\bm{t}\|^2+o(\|\bm{t}\|^2)\right)\\
&=:\widetilde{a}_4+\widetilde{b}_4\|\bm{t}\|^2+o(\|\bm{t}\|^2),
\end{aligned}
$$
where by $a_0=a_0''$,
$$
\begin{aligned}
\widetilde{a}_4
=2\rho^{(1)}(0)\delta_{i,j}+a_0''\widetilde{a}_1+a_0'\widetilde{a}_2
=2\rho^{(1)}(0)\delta_{i,j}+a_0\widetilde{a}_1+a_0'\widetilde{a}_2
=\widetilde{a}_3,
\end{aligned}$$
and by $(2\delta_{i,j}+4u_iu_j)\rho^{(2)}(0)+b_0''\widetilde{a}_1=b_0\widetilde{a}_1$,
$$
\begin{aligned}
\widetilde{b}_4
&=\left(2\delta_{i,j}+4u_iu_j\right)\rho^{(2)}(0)+a_0''\widetilde{b}_1+b_0''\widetilde{a}_1+a_0'\widetilde{b}_2+b_0'\widetilde{a}_2=a_0\widetilde{b}_1+b_0\widetilde{a}_1+a_0'\widetilde{b}_2+b_0'\widetilde{a}_2=\widetilde{b}_3.
\end{aligned}
$$

\subsection{Asymptotic Expansions of the Corner Part}
In this part, we would like to get the asymptotic expansion of 
\begin{equation}\label{Tail1}
\begin{aligned}
&~~~~\bm{\Sigma}(\bm{t})[L-1,L-1]=\bm{\Sigma}(\bm{t})[L,L]\\
&=\bm{G}_{00}(\bm{0})+\frac{1}{2\rho^{(1)}(0)(1-k_1^2(\bm{t}))}\bm{G}_{01}(\bm{t})\bm{G}_{01}^T(\bm{t})
+\frac{k_4(\bm{t})}{2\rho^{(1)}(0)(1-k_1^2(\bm{t}))}\bm{G}_{01}(\bm{t})\bm{t}\bm{t}^T\bm{G}_{01}^T(\bm{t}) 
\end{aligned}
\end{equation}
and
\begin{equation}\label{Tail2}
\begin{aligned}
&~~~~\bm{\Sigma}(\bm{t})[L-1,L]=\bm{\Sigma}(\bm{t})[L,L-1]\\
&=\bm{G}_{00}(\bm{t})+\frac{k_1(\bm{t})}{2\rho^{(1)}(0)(1-k_1^2(\bm{t}))}\bm{G}_{01}(\bm{t})\bm{G}_{01}^T(\bm{t})
+\frac{k_5(\bm{t})}{2\rho^{(1)}(0)(1-k_1^2(\bm{t}))}\bm{G}_{01}(\bm{t})\bm{t}\bm{t}^T\bm{G}_{01}^T(\bm{t}) 
\end{aligned}
\end{equation}
as $\|\bm{t}\|\to 0$.
To this end, we still need to expand
\begin{enumerate}[label=(\roman*)]
    \item $\bm{G}_{00}(\bm{t})$,
    \item $\bm{G}_{01}(\bm{t})\bm{G}_{01}^T(\bm{t})$,
    \item $\bm{G}_{01}(\bm{t})\bm{t}\bm{t}^T\bm{G}_{01}^T(\bm{t})$.
\end{enumerate}

For (i),
it is easy to see
\begin{equation}\label{G00}
\bm{G}_{00}(\bm{t})=R(\bm{t})=\rho(0)+\rho^{(1)}(0)\|\bm{t}\|^2+o(\|\bm{t}\|^2)\text{ and }\bm{G}_{00}(\bm{0})=R(\bm{0})=\rho(0).
\end{equation}
For (ii), by (\ref{G01}), we have
\begin{equation}\label{G01G01}
\begin{aligned}
\bm{G}_{01}(\bm{t})\bm{G}_{01}^T(\bm{t})
&=4\left(\rho^{(1)}(0)+\rho^{(2)}(0)\|\bm{t}\|^2+o(\|\bm{t}\|^2)\right)^2\|\bm{t}\|^2\\
&=4\rho^{(1)}(0)^2\|\bm{t}\|^2+8\rho^{(1)}(0)\rho^{(2)}(0)\|\bm{t}\|^4+o(\|\bm{t}\|^4)\\
&=:\widehat{a}_1\|\bm{t}\|^2+\widehat{b}_1\|\bm{t}\|^4+o(\|\bm{t}\|^4),
\end{aligned}    
\end{equation}
where 
$$\widehat{a}_1=4\rho^{(1)}(0)^2\text{  and  }\widehat{b}_1=8\rho^{(1)}(0)\rho^{(2)}(0),$$
By (\ref{tG01}),
\begin{equation}\label{G01ttG01}
\begin{aligned}
\bm{G}_{01}(\bm{t})\bm{t}\bm{t}^T\bm{G}_{01}^T(\bm{t})
&=4\rho^{(1)}(0)^2\|\bm{t}\|^4+8\rho^{(1)}(0)\rho^{(2)}(0)\|\bm{t}\|^6+o(\|\bm{t}\|^6)\\
&=:\widehat{a}_2\|\bm{t}\|^4+\widehat{b}_2\|\bm{t}\|^6+o(\|\bm{t}\|^6),
\end{aligned}    
\end{equation}
where 
$$\widehat{a}_2=4\rho^{(1)}(0)^2\text{  and  }\widehat{b}_2=8\rho^{(1)}(0)\rho^{(2)}(0).$$
Therefore, combining (\ref{1exp}), (\ref{k4exp}), (\ref{Tail1}), (\ref{G00}), (\ref{G01G01}) and (\ref{G01ttG01}) implies
$$
\begin{aligned}
&~~~~\bm{\Sigma}(\bm{t})[L-1,L-1]=\bm{\Sigma}(\bm{t})[L,L]\\
&=\rho(0)+\left(a_0+b_0\|\bm{t}\|^2+o(\|\bm{t}\|^2)\right)\left(\widehat{a}_1+\widehat{b}_1\|\bm{t}\|^2+o(\|\bm{t}\|^2)\right)\\
&~~~~+\left(a_0'+b_0'\|\bm{t}\|^{2}+o(\|\bm{t}\|^2)\right)\left(\widehat{a}_2+\widehat{b}_2\|\bm{t}\|^2+o(\|\bm{t}\|^2)\right)\\
&=:\widehat{a}_3+\widehat{b}_3\|\bm{t}\|^2+o(\|\bm{t}\|^2),
\end{aligned}
$$
where 
$$
\begin{aligned}
\widehat{a}_3
&=\rho(0)+a_0\widehat{a}_1+a_0'\widehat{a}_2\\
&=\rho(0)+\left(-\frac{1}{4}\rho^{(2)}(0)^{-1}+\frac{1}{6}\rho^{(2)}(0)^{-1}
\right)4\rho^{(1)}(0)^2\\
&=\rho(0)-\frac{1}{3}\rho^{(1)}(0)^2\rho^{(2)}(0)^{-1}\\
\end{aligned}
$$
and by $\widehat{a}_1=\widehat{a}_2$ and $\widehat{b}_1=\widehat{b}_2$,
$$
\begin{aligned}
\widehat{b}_3
&=a_0\widehat{b}_1+b_0\widehat{a}_1+a_0'\widehat{b}_2+b_0'\widehat{a}_2\\
&=(a_0+a_0')\widehat{b}_1+(b_0+b_0')\widehat{a}_1\\
&=\left(-\frac{1}{4}\rho^{(2)}(0)^{-1}+\frac{1}{6}\rho^{(2)}(0)^{-1}\right)8\rho^{(1)}(0)\rho^{(2)}(0)\\
&~~~~+\left(\frac{1}{8}\left(\rho^{(1)}(0)^{-1}+\rho^{(2)}(0)^{-2}\rho^{(3)}(0)\right)
-\frac{1}{18}\rho^{(2)}(0)^{-2}\rho^{(3)}(0)\right)4\rho^{(1)}(0)^2\\
&=-\frac{1}{6}\rho^{(1)}(0)+\frac{5}{18}\rho^{(1)}(0)^2\rho^{(2)}(0)^{-2}\rho^{(3)}(0).
\end{aligned}
$$
Also, combining (\ref{k1exp}), (\ref{k5exp}), (\ref{Tail2}), (\ref{G00}), (\ref{G01G01}) and (\ref{G01ttG01}) implies
$$
\begin{aligned}
&~~~~\bm{\Sigma}(\bm{t})[L-1,L]=\bm{\Sigma}(\bm{t})[L,L-1]\\
&=\rho(0)+\rho^{(1)}(0)\|\bm{t}\|^2+o(\|\bm{t}\|^2)\\
&~~~~+\left(a_0''+b_0''\|\bm{t}\|^2+o(\|\bm{t}\|^2)\right)\left(\widehat{a}_1+\widehat{b}_1\|\bm{t}\|^2+o(\|\bm{t}\|^2)\right)\\
&~~~~+\left(a_0'+b_0'\|\bm{t}\|^{2}+o(\|\bm{t}\|^2)\right)\left(\widehat{a}_2+\widehat{b}_2\|\bm{t}\|^2+o(\|\bm{t}\|^2)\right)\\
&=:\widehat{a}_4+\widehat{b}_4\|\bm{t}\|^2+o(\|\bm{t}\|^2),
\end{aligned}
$$
where by $a_0=a_0''$,
$$
\begin{aligned}
\widehat{a}_4
=\rho(0)+a_0''\widehat{a}_1+a_0'\widehat{a}_2
=\rho(0)+a_0\widehat{a}_1+a_0'\widehat{a}_2
=\widehat{a}_3,
\end{aligned}
$$
and by $\rho^{(1)}(0)+b_0''\widehat{a}_1=b_0\widehat{a}_1$,
$$
\begin{aligned}
\widehat{b}_4=\rho^{(1)}(0)+a_0''\widehat{b}_1+b_0''\widehat{a}_1+a_0'\widehat{b}_2+b_0'\widehat{a}_2=a_0\widehat{b}_1+b_0\widehat{a}_1+a_0'\widehat{b}_2+b_0'\widehat{a}_2=\widehat{b}_3.
\end{aligned}
$$
Hence the proof of Lemma \ref{Lem:Sigma} is completed.
     
\section{Features of the Side Parts}\label{FSP}
Fix $N\ge 2$. Let $X$ be qualified. Suppose that $X$ also satisfies (\ref{Con:GC1}) for some $\tilde{\delta}_{\rho}>0$.
Recall in Section \ref{Sec:S}, $\bm{u}_0=(0,\dots,0,1)\in \mathbb{R}^{N}$. 
In this section, we would like to calculate the side parts (\ref{Side1}) and (\ref{Side2}) when $\bm{t}$ has the form $\bm{t}:=\bm{u}_0r$ for any $r\in[0,\tilde{\delta}_{\rho}]$.
Recall that by (\ref{G200})
\begin{equation}\label{G200prec}
\begin{aligned}
\bm{G}_{20}(\bm{0})[i+j(j-1)/2]=2\rho^{(1)}(0)\delta_{i,j},
\end{aligned}
\end{equation}
by (\ref{G20})
\begin{equation}\label{G20prec}
\bm{G}_{20}(\bm{t})[i+j(j-1)/2]
=2\rho^{(1)}(\|\bm{t}\|^2)\delta_{i,j}+4t_it_j\rho^{(2)}(\|\bm{t}\|^2),
\end{equation}
by (\ref{G21G01})
\begin{equation}\label{G21G01prec}
\begin{aligned}
&~~~~\left(\bm{G}_{21}(\bm{t})\bm{G}_{01}^T(\bm{t})\right)[i+j(j-1)/2]\\
&=2\rho^{(1)}(\|\bm{t}\|^2)\left(4\rho^{(2)}(\|\bm{t}\|^2)(\delta_{i,j}\|\bm{t}\|^2+2t_it_j)+8\rho^{(3)}(\|\bm{t}\|^2)t_it_j\|\bm{t}\|^2\right),
\end{aligned}
\end{equation}
and by (\ref{G21ttG01}) 
\begin{equation}\label{G21ttG01prec}
\begin{aligned}
&~~~~\left(\bm{G}_{21}(\bm{t})\bm{t}\bm{t}^T\bm{G}_{01}^T(\bm{t})\right)[i+j(j-1)/2]\\
&=2\rho^{(1)}(\|\bm{t}\|^2)\|\bm{t}\|^2\left(4\rho^{(2)}(\|\bm{t}\|^2)(\delta_{i,j}\|\bm{t}\|^2+2t_it_j)+8\rho^{(3)}(\|\bm{t}\|^2)t_it_j\|\bm{t}\|^2\right)
\end{aligned}
\end{equation}
for any $\bm{t}\in B(\bm{0}_N,\delta_{\rho})\setminus\{\bm{0}_N\}$ and integers $1\le i\le j\le N$.
Then by taking $\bm{t}:=\bm{u}_0r$ into (\ref{G200prec})-(\ref{G21ttG01prec}), one can easily check
\begin{enumerate}[label=(\roman*)]
    \item for any integers $1\le i<j\le N$
    $$\bm{G}_{20}(\bm{0})[i+j(j-1)/2]=0,$$
    $$\bm{G}_{20}(\bm{t})[i+j(j-1)/2]=0,$$
    $$\left(\bm{G}_{21}(\bm{t})\bm{G}_{01}^T(\bm{t})\right)[i+j(j-1)/2]=0,$$
    and
    $$\left(\bm{G}_{21}(\bm{t})\bm{t}\bm{t}^T\bm{G}_{01}^T(\bm{t})\right)[i+j(j-1)/2]=0;$$
    
    \item for any integer $1\le k\le N-1$
    $$\bm{G}_{20}(\bm{0})[k+k(k-1)/2]=2\rho^{(1)}(0),$$
    $$\bm{G}_{20}(\bm{t})[k+k(k-1)/2]=2\rho^{(1)}(r^2),$$
    $$\left(\bm{G}_{21}(\bm{t})\bm{G}_{01}^T(\bm{t})\right)[k+k(k-1)/2]=8\rho^{(1)}(r^2)\rho^{(2)}(r^2)r^2,$$
    and
    $$\left(\bm{G}_{21}(\bm{t})\bm{t}\bm{t}^T\bm{G}_{01}^T(\bm{t})\right)[k+k(k-1)/2]=8\rho^{(1)}(r^2)\rho^{(2)}(r^2)r^4;$$
    \item when $k=N$
    $$\bm{G}_{20}(\bm{0})[k+k(k-1)/2]=2\rho^{(1)}(0),$$
    $$\bm{G}_{20}(\bm{t})[k+k(k-1)/2]=2\rho^{(1)}(r^2)+4r^2\rho^{(2)}(r^2),$$
    $$\left(\bm{G}_{21}(\bm{t})\bm{G}_{01}^T(\bm{t})\right)[k+k(k-1)/2]=24\rho^{(1)}(r^2)\rho^{(2)}(r^2)r^2+16\rho^{(1)}(r^2)\rho^{(3)}(r^2)r^4,$$
    and
    $$\left(\bm{G}_{21}(\bm{t})\bm{t}\bm{t}^T\bm{G}_{01}^T(\bm{t})\right)[k+k(k-1)/2]=24\rho^{(1)}(r^2)\rho^{(2)}(r^2)r^4+16\rho^{(1)}(r^2)\rho^{(3)}(r^2)r^6.$$
\end{enumerate}
Then by (\ref{Side1}) and (\ref{Side2}), we have for $k=1,2,\dots,N-1$
\begin{equation}\label{SPL1}
    \begin{aligned}
    &~~~~\bm{\Sigma}(\bm{u}_0r)[k+k(k-1)/2,L-1]\\
    &=2\rho^{(1)}(0)+\frac{1}{2\rho^{(1)}(0)(1-k_1^2(\bm{u}_0r))}8\rho^{(1)}(r^2)\rho^{(2)}(r^2)r^2+\frac{k_4(\bm{u}_0r)}{2\rho^{(1)}(0)(1-k_1^2(\bm{u}_0r))}8\rho^{(1)}(r^2)\rho^{(2)}(r^2)r^4\\
    &=2\rho^{(1)}(0)+8\rho^{(1)}(r^2)r^2\rho^{(2)}(r^2)\frac{1+k_4(\bm{u}_0r)r^2}{2\rho^{(1)}(0)(1-k_1^2(\bm{u}_0r))}\\
    &=2\rho^{(1)}(0)\left(1+k_1(\bm{u}_0r)k_2(\bm{u}_0r)r^2\frac{1+k_4(\bm{u}_0r)r^2}{1-k_1^2(\bm{u}_0r)}\right)\\
    &=2\rho^{(1)}(0)\left(1+\frac{k_1(\bm{u}_0r)k_2(\bm{u}_0r)r^2}{1-k_*^2(\bm{u}_0r)}\right)\\
    &=2\rho^{(1)}(0)\left(\frac{1-k_1(\bm{u}_0r)k_*(\bm{u}_0r)-k_2^2(\bm{u}_0r)r^4}{1-k_*^2(\bm{u}_0r)}\right)
    \end{aligned}
\end{equation}
and
\begin{equation}\label{SPL}
    \begin{aligned}
    &~~~~\bm{\Sigma}(\bm{u}_0r)[k+k(k-1)/2,L]\\
    &=2\rho^{(1)}(r^2)+\frac{k_1(\bm{u}_0r)}{2\rho^{(1)}(0)(1-k_1^2(\bm{u}_0r))}8\rho^{(1)}(r^2)\rho^{(2)}(r^2)r^2+\frac{k_5(\bm{u}_0r)}{2\rho^{(1)}(0)(1-k_1^2(\bm{u}_0r))}8\rho^{(1)}(r^2)\rho^{(2)}(r^2)r^4\\
    &=2\rho^{(1)}(r^2)+8\rho^{(1)}(r^2)r^2\rho^{(2)}(r^2)\frac{k_1(\bm{u}_0r)+k_5(\bm{u}_0r)r^2}{2\rho^{(1)}(0)(1-k_1^2(\bm{u}_0r))}\\
    &=2\rho^{(1)}(0)k_1(\bm{u}_0r)\left(1+k_2(\bm{u}_0r)r^2\frac{k_1(\bm{u}_0r)+k_5(\bm{u}_0r)r^2}{1-k_1^2(\bm{u}_0r)}\right)\\
    &=2\rho^{(1)}(0)k_1(\bm{u}_0r)\left(1+\frac{k_2(\bm{u}_0r)k_*(\bm{u}_0r)r^2}{1-k_*^2(\bm{u}_0r)}\right)\\
    &=2\rho^{(1)}(0)k_1(\bm{u}_0r)\left(\frac{1-k_1(\bm{u}_0r)k_*(\bm{u}_0r)}{1-k_*^2(\bm{u}_0r)}\right).
    \end{aligned}
\end{equation}

\section{Proof of Lemma \ref{Lem:sixorder}}\label{subsec:sixorder}
\begin{proof}
Since $X$ is stationary,
it suffices to show that there exist finite constants $K >0$ and $\alpha>0$ such that 
$$
\max_{1\le i_1,i_2,i_3,i_4\le N}\left|r_{i_1i_2i_3i_4}(\bm{0})-r_{i_1i_2i_3i_4}(\bm{t})\right|\le K|\log(\|\bm{t}\|)|^{-(1+\alpha)}
$$
for all $\|\bm{t}\|>0$ small enough.
Since $x\le (-\log(x))^{-2}$ for any $0< x< 1$, we have 
$\|\bm{t}\|<\left(-\log\left(\|\bm{t}\|\right)\right)^{-2}$ when $0<\|\bm{t}\|<1$.
Thus, it suffices to show that there exists a constant $C>0$ such that
\begin{equation}\label{eq:r_fourth-order}
\max_{1\le i_1,i_2,i_3,i_4\le N}\left|r_{i_1i_2i_3i_4}(\bm{0})-r_{i_1i_2i_3i_4}(\bm{t})\right|\le C\|\bm{t}\|    
\end{equation}
for all $\|\bm{t}\|>0$ small enough. Since all of the sixth-order partial derivatives of $r(\bm{t})$ exist at $\bm{t}=\bm{0}$, there exists a constant $\varepsilon>0$ small enough such that for any $1\le i_1,i_2,i_3,i_4,i_5\le N$, $r_{i_1i_2i_3i_4}(\bm{t})$ is continuous and $r_{i_1i_2i_3i_4i_5}(\bm{t})$ is bounded on $\bm{t}\in B(\bm{0},\varepsilon)$, where $B(\bm{0},\varepsilon)$ is the $N$-dimensional open ball centered at the origin with radius $\varepsilon$. Hence (\ref{eq:r_fourth-order}) holds for $\bm{t}\in B(\bm{0},\varepsilon)$.

\end{proof}

\end{appendices}


\begin{thebibliography}{11}
\bibitem{ald} Aldous, D. (1989). {\it Probability Approximation via the Poisson Clumping Heuristic}. Springer, New York.

\bibitem{adl10} Adler, R. J., Bobrowski, O., Borman, M. S., Subag, E., Weinberger, S. (2010). Persistent homology for random fields and complexes. {\it Borrowing strength: theory powering applications---a Festschrift for Lawrence D. Brown}, 124-143. Inst. Math. Statist., Beachwood, OH.

\bibitem{adl} Adler, R. J., Taylor, J. E. (2007). {\it Random fields and geometry}. Springer, New York.

\bibitem{che} Cheng, D., Schwartzman, A. (2018). Expected number and height distribution of critical points of smooth isotropic Gaussian random fields. {\it Bernoulli}, 24(4B), 3422.

\bibitem{hor} Horn, R. A., Johnson, C. R. (1985). {\it Matrix analysis}. Cambridge University Press. 

\bibitem{kat} Kato, Tosio. (1980). {\it Perturbation theory for linear operators}. Volume 132. Springer Science \& Business Media.

\bibitem{kra} Krantz, S. G., Parks, H. R. (2002). {\it A primer of real analytic functions}. Springer Science \& Business Media.

\bibitem{kri} Kriegl, A., Michor, P. W., Rainer, A. (2011). Denjoy–Carleman differentiable perturbation of polynomials and unbounded operators. {\it Integral equations and operator theory}, 71, 407-416.

\bibitem{mor} Morse, M., Cairns, S. (1969). {\it Critical point theory in global analysis and differential topology: an introduction}. Academic Press, New York.

\bibitem{was} Wasserman, L. (2018). Topological data analysis. {\it Annual review of statistics and its application}, 5, 501-532. 









\end{thebibliography}
\end{document}